\documentclass[11pt]{article}

\usepackage[margin=1.17in]{geometry}

\usepackage{amssymb,amsmath}
\usepackage{amsthm}
\usepackage{hyperref}
\usepackage{mathrsfs}
\usepackage{textcomp}
\hypersetup{
    colorlinks,%
    citecolor=black,%
    filecolor=black,%
    linkcolor=black,%
    urlcolor=black
}

\usepackage{verbatim}
\usepackage{sectsty}

\usepackage[titletoc,toc,title]{appendix}

\makeatletter

\newdimen\bibspace
\setlength\bibspace{0pt}   
\renewenvironment{thebibliography}[1]{%
 \section*{\refname 
       \@mkboth{\MakeUppercase\refname}{\MakeUppercase\refname}}%
     \list{\@biblabel{\@arabic\c@enumiv}}%
          {\settowidth\labelwidth{\@biblabel{#1}}%
           \leftmargin\labelwidth
           \advance\leftmargin\labelsep
           \itemsep\bibspace
           \parsep\z@skip     %
           \@openbib@code
           \usecounter{enumiv}%
           \let\p@enumiv\@empty
           \renewcommand\theenumiv{\@arabic\c@enumiv}}%
     \sloppy\clubpenalty4000\widowpenalty4000%
     \sfcode`\.\@m}
    {\def\@noitemerr
      {\@latex@warning{Empty `thebibliography' environment}}%
     \endlist}

\makeatother

\makeatletter



\newtheorem{thm}{Theorem}[section]
\newtheorem{lem}[thm]{Lemma}
\newtheorem{prop}[thm]{Proposition}
\newtheorem{defn}[thm]{Definition}

\newtheorem{cor}[thm]{Corollary}
\newtheorem{rem}[thm]{Remark}

\newtheorem*{rem*}{Remark}


\def\Xint#1{\mathchoice
  {\XXint\displaystyle\textstyle{#1}}%
  {\XXint\textstyle\scriptstyle{#1}}%
  {\XXint\scriptstyle\scriptscriptstyle{#1}}%
  {\XXint\scriptscriptstyle\scriptscriptstyle{#1}}%
  \!\int}
\def\XXint#1#2#3{{\setbox0=\hbox{$#1{#2#3}{\int}$}
  \vcenter{\hbox{$#2#3$}}\kern-.5\wd0}}

\def\dashint{\Xint-}

\newcommand{\al}{\alpha}                \newcommand{\lda}{\lambda}
\newcommand{\om}{\Omega}                \newcommand{\pa}{\partial}
\newcommand{\va}{\varepsilon}           \newcommand{\ud}{\mathrm{d}}
\newcommand{\be}{\begin{equation}}      \newcommand{\ee}{\end{equation}}
                 
\newcommand{\Lda}{\Lambda}              
\newcommand{\R}{\mathbb{R}}

\begin{document}

\title{\textbf{Optimal boundary regularity for fast diffusion equations in bounded domains}
\bigskip}

\author{\medskip  Tianling Jin\footnote{T. Jin was partially supported by Hong Kong RGC grants ECS 26300716 and GRF 16302217.}\quad and \quad
Jingang Xiong\footnote{J. Xiong was partially supported by NSFC 11501034, a key project of NSFC 11631002 and NSFC 11571019.}}

\date{\today}

\maketitle

\begin{abstract} We prove optimal boundary regularity for bounded positive weak solutions of fast diffusion equations in smooth bounded  domains. This solves a problem raised by Berryman and Holland in 1980 for these equations in the subcritical and critical regimes. Our proof of the a priori estimates uses a geometric type structure of the fast diffusion equations, where an important ingredient is an evolution equation for a curvature-like quantity. 

\medskip

\noindent{\it Keywords:} Fast diffusion equation, Boundary regularity, Extinction profile

\medskip

\noindent {\it MSC (2010): } Primary 35B65; Secondary 35B40, 35K59
\end{abstract}

\section{Introduction} 

Let $\om $ be a smooth bounded domain in $\R^n$ with $n\ge 1$. We consider the fast diffusion equation
\be \label{eq:main}
\pa_t u^p -\Delta u=0 \quad \mbox{in }\om \times (0,\infty)
\ee
with the Dirichlet condition
\be \label{eq:main-d}
u=0 \quad \mbox{on }\pa \om \times (0,\infty)
\ee
in the range of subcritical and critical Sobolev exponents, i.e., $1<p<\infty$ if $n=1,2$ and $1<p\le \frac{n+2}{n-2}$ if $n\ge 3$.  The fast diffusion equations arise in the modelling of gas-kinetics,  plasmas, thin liquid film dynamics driven by Van der Waals forces and etc.  In particular, the case $p=2$ corresponds to the scaling predicted for Okuda-Dawson diffusion; see Okuda-Dawson \cite{OD} and Drake-Greenwood-Navratil-Post \cite{DGNP}.  The critical case $p=\frac{n+2}{n-2}$ in dimension $n\ge 3$ describes the evolution of a conformal metric by the Yamabe flow.

It is well known that the solutions of \eqref{eq:main} will be extinct in finite time, which can date back to the work of Sabinina \cite{Sabinina1,Sabinina2}. We denote $T^*$ as the extinction time. In the classical paper \cite{BH}, Berryman-Holland established the asymptotic behavior of solutions to \eqref{eq:main} near the extinction time $T^*$ in the $H^1_0(\om)$ space, assuming that $\partial_t u, \nabla u, \nabla\partial_t u, \nabla^2 u\in C(\overline \om\times (0,T^*))$. However,  this \emph{a priori} smoothness condition was left as an assumption, and was listed as the first  unsolved problem in their paper. 

The theory of weak solutions of \eqref{eq:main} has been studied extensively; see B\'enilan-Crandall \cite{BC}, Brezis-Friedman \cite{BF},  Herrero-Pierre \cite{HP},   Pierre \cite{P}, Dahlberg-Kenig \cite{DKenig},  Chasseigne-V\'azquez \cite{CV, CV0} and see also the monographs of  Daskalopoulos-Kenig  \cite{DaK} and V\'azquez \cite{Vaz,Vaz1}. In this paper, we say that $u$ is a  weak solution of \eqref{eq:main} and \eqref{eq:main-d} if $u\in L^2_{loc}([0,\infty); H^1_0(\Omega))$, $u\ge 0$, $u^p\in C([0,\infty);L^{1}(\Omega))$, and satisfies 
\[
\int_{\Omega\times(0,\infty)} \Big(u^p\pa_t\varphi -\nabla u\cdot\nabla\varphi\Big)\,\ud x\ud t=0\quad\forall\,\varphi\in C_c^1(\Omega\times(0,\infty)).
\]

As for the regularity of nonnegative (nontrivial) weak solutions to \eqref{eq:main} and \eqref{eq:main-d},  the following results were proved: 

\begin{itemize}
\item If $u(\cdot,0)\in L^q(\Omega)$, with $q\ge p$ when $p\in (1,\frac{n}{n-2})$, and $q>\frac{n(p-1)}{2}$ when $p>\frac{n}{n-2}$, then the so-called smoothing effect holds true
\[
0\le u(\cdot, t)\le c \frac{\|u(\cdot,0)\|^{2q\nu}_{L^{q}(\Omega)}}{t^{n\nu}}\quad \mbox{for all }t>0,\ 
\]
where $\nu=\frac{1}{2q-n(p-1)}$. The above estimates can be found in \cite{BCV,BV,DaK, DGV,DK,DKV,Vaz} for instance. 

\item Moreover, there exists the so-called finite extinction time, i.e. a time $T^*$ such that $u(\cdot, t)\equiv 0$ for all $t>T^*$, and $u(\cdot, t)>0$ for $t<T^*$. For more precise local lower bounds and Harnack inequalities we refer to DiBenedetto-Gianazza-Vespri \cite{DGV,DGV-b} and Bonforte-V\'azquez \cite{BV}.

\item Bounded solutions are H\"older continuous up to the $\pa \om \times (0,T^*)$ and smooth in the interior; see Chen-DiBenedetto \cite{CDi}. Continuity of bounded solutions were proved earlier by, e.g., DiBenedetto \cite{DiBenedetto}, Sacks \cite{Sacks} and Kwong \cite{KwongY}.

\item  Suppose that $1<p<\frac{n+2}{n-2}$ if $n\ge 3$, and $1<p<\infty$ if $n=1,2$.  Then it has been proved in DiBenedetto-Kwong-Vespri \cite{DKV} that for $0<\delta\le t\le T^*$,   there hold
\begin{equation}\label{eq:dkvsubcritical}
\frac{1}{c_0} d (x)(T^*-t)^{\frac{1}{p-1}} \le u(x,t)  \le c_0 d (x)(T^*-t)^{\frac{1}{p-1}} , 
\end{equation}   
\[
|D_x^k  u(x,t)|  \le C_1 d (x)^{1-k}(T^*-t)^{\frac{1}{p-1}}, \quad k=1,2,\cdots,
\]
and
\[
|\pa_t^k  u(x,t)| \le C_1 d (x)^{1-k(p+1)}(T^*-t)^{\frac{1}{p-1}-k},
\]
where  $d(x)=dist(x,\pa \om)$, $c_0\ge 1$ depends only on  $\|u(\cdot,0)\|_{H^{1}_0(\om)}$, $\|u(\cdot,0)\|_{L^{p+1}(\om)}$, $n$, $\om$, $\delta$  and $p$; and the constant $C_1>0$ depends only on $\|u(\cdot,0)\|_{H^{1}_0(\om)}$, $\|u(\cdot,0)\|_{L^{p+1}(\om)}$, $n$, $\om$, $\delta$, $p$ and also $k$. The formula \eqref{eq:dkvsubcritical} is called global Harnack principle in \cite{DKV} and the subsequent literature. Furthermore, the estimate \eqref{eq:dkvsubcritical}  has been improved in Bonforte-Grillo-V\'azquez \cite{BCV}, where the estimate \eqref{eq:relativeerror} in the below is proved.

If $p=\frac{n+2}{n-2}$, then the proof in \cite{DKV} also implies that for $ (x,t) \in \om \times (\delta, T^*-\delta)$,   
\begin{equation}\label{eq:dkvcritical}
\frac{1}{\tilde c_0}d(x)\le u(x,t) \le \tilde c_0 d(x), 
\end{equation}   
where $\tilde c_0\ge 1$ depends only on $n,\om, \|u\|_{L^\infty(\Omega\times(0,T^*-\delta))}, \delta $ and $ p$.

\end{itemize}
 
From the above result of \cite{DKV} , we have  $|\pa_t u(x, t)|\le C d(x)^{-p}\notin L^1(\om)$ for any fixed $0<t<T^*$, due to $p>1$. And the derivation in Berryman-Holland \cite{BH}  of the monotonicity of the Dirichlet integral for bounded weak solutions 
\be \label{eq:energy-identity}
\frac{\ud }{\ud t} \int_\om |\nabla u(\cdot,t)|^2= -2 \int_{\om} \nabla u \nabla \pa_t u = -2p \int_{\om }u^{p-1} |\pa_t u|^2
\ee 
is in question. In fact, only the energy inequality
\be\label{eq:energy-ineq}
\begin{split}
&\int_\om |\nabla u(\cdot,t_1)|^2- \int_\om |\nabla u(\cdot,t_2)|^2 + 2p\int_{t_2}^{t_1} \int_{\om }u^{p-1} |\pa_t u|^2  \,\ud x\ud t\\
&\quad \le 0, \quad \forall~ 0<t_2<t_1<T^*
\end{split}
 \ee  
was verified without regularity assumptions; see Feireisl-Simondon \cite{FS}.    
 
In this paper, we  prove the a priori smoothness hypothesis in Berryman-Holland \cite{BH}. 

\begin{thm}\label{thm:main} Suppose $1<p<\infty$ if $n=1$ or $2$,  and $1<p\le \frac{n+2}{n-2}$ if $n\ge 3$.   Let $u$ be a bounded nonnegative weak solution of \eqref{eq:main} and \eqref{eq:main-d}, and $T^*>0$ be its extinction time.  If $p$ is an integer, then $u\in C^\infty(\overline \om\times (0,T^*))$. If $p$ is not an integer, then $u(x,\cdot)\in C^\infty(0,T^*)$ for every $x\in\overline\Omega$, and $ \pa_t^l u(\cdot,t) \in C^{2
+p}(\overline \om)$ for all $l\ge 0$ and all $t\in(0,T^*)$.  
\end{thm}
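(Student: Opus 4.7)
The plan is to translate the fast diffusion equation into a parabolic evolution for an auxiliary ``curvature-like'' quantity whose uniform bound captures the missing Berryman--Holland hypothesis. Using the equation itself, I would introduce
\[
Q:=\frac{\Delta u}{u^{p}}=\frac{p\,\partial_{t}u}{u},
\]
so that a bound $\|Q(\cdot,t)\|_{L^{\infty}(\Omega)}\le C$ sharpens the DKV estimate $|\partial_{t}u|\lesssim d^{1-p}$ into the optimal $|\partial_{t}u|\lesssim u\lesssim d$, which in particular legitimises the energy identity~\eqref{eq:energy-identity} left as a hypothesis in~\cite{BH}. A direct computation starting from $\partial_{t}u^{p}=\Delta u$ yields
\[
\partial_{t}Q+\frac{p-1}{p}\,Q^{2}\;=\;\frac{1}{p\,u^{p+1}}\,\operatorname{div}\!\bigl(u^{2}\,\nabla Q\bigr),
\]
a linear parabolic equation for $Q$ in divergence form with a weight $u^{p+1}\sim d^{p+1}$ degenerating on $\partial\Omega$ and a quadratic absorption term of the favourable sign.

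The separable solution $u_{\ast}(x,t)=(T^{\ast}-t)^{1/(p-1)}S(x)$ of~\eqref{eq:main}--\eqref{eq:main-d} satisfies the same equation with $Q_{\ast}\equiv -p/((p-1)(T^{\ast}-t))$, spatially constant. Subtracting this off by setting $V:=Q+p/((p-1)(T^{\ast}-t))$ removes the time-singular part; I would then prove a uniform bound on $V$ on each compact subinterval of $(0,T^{\ast})$ by running a weighted maximum principle on the resulting equation, using the DKV estimates at earlier times and the quantitative closeness $u\to u_{\ast}$ from Bonforte--Grillo--V\'azquez~\cite{BCV} at later times. Once $Q$ is controlled, differentiating the $Q$-equation in $t$ gives linear parabolic equations for $\partial_{t}^{k}Q$, yielding the relative bounds $|\partial_{t}^{k}u|\lesssim u$ for every $k\ge 0$. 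Feeding $\Delta u=Q\,u^{p}$ into elliptic Schauder theory, together with $u=d\cdot v$ for some $v>0$ smooth and bounded below, then bootstraps to the claimed regularity: the quantity $u^{p}$ is $C^{\infty}$ up to the boundary when $p\in\mathbb{N}$, and $C^{\lfloor p\rfloor,\{p\}}$ otherwise, which is the precise cap producing $u\in C^{\infty}(\overline{\Omega}\times(0,T^{\ast}))$ in the first case and $\partial_{t}^{l}u(\cdot,t)\in C^{2+p}(\overline{\Omega})$ in the second.

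The main obstacle is that the entire derivation is purely formal at the level of a weak solution, since $\partial_{t}u$ is not even locally integrable near $\partial\Omega$. I plan to regularise the problem --- for instance by replacing $u^{p}$ by $(u+\varepsilon)^{p}-\varepsilon^{p}$ so that the linearisation is uniformly non-degenerate and the approximate solutions are classical --- establish the $L^{\infty}$ bound on the corresponding $Q_{\varepsilon}$ uniformly in $\varepsilon$, and pass to the limit using uniqueness of bounded nonnegative weak solutions. The critical exponent $p=(n+2)/(n-2)$ brings an extra subtlety, because conformal invariance means the separable profile is no longer unique, but the argument is local on each time interval $[t_{1},t_{2}]\Subset(0,T^{\ast})$ and uses only the linear boundary decay~\eqref{eq:dkvcritical}, so no convergence to a particular profile is required. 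The most delicate step throughout is the weighted maximum principle for $V_{\varepsilon}$ up to the degenerate boundary, where the divergence structure of the diffusion and the good sign of the $Q^{2}$ absorption must be used carefully; this is where the ``geometric'' structure mentioned in the abstract genuinely enters.
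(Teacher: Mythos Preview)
Your identification of the curvature-type quantity $Q=p\,\partial_t u/u$ and its evolution equation is exactly the object the paper works with (after the rescaling~\eqref{eq:changeutoU}, the paper's $\mathcal{R}-1$ is essentially $-Q$ in your notation; see~\eqref{eq:defnQ} and~\eqref{eq:Q-evolve}). The broad strategy --- control $Q$ uniformly, then bootstrap via elliptic Schauder on time slices --- is also correct and matches the paper.

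The genuine gap is your reliance on a weighted maximum principle to bound $V$ from below. The quadratic absorption has the favourable sign only for the \emph{upper} bound of $\partial_t u$ (equivalently the lower bound of $\mathcal{R}-1$): at an interior minimum of $\mathcal{R}-1$ the reaction $(\mathcal{R}-1)^2+(\mathcal{R}-1)$ is nonnegative once $\mathcal{R}-1<-1$, so the minimum cannot run to $-\infty$. But at an interior maximum the same reaction term is nonnegative and gives only $W'\le W^2+W$, which permits finite-time blow-up. The paper states this explicitly in the introduction and in the remark after Proposition~\ref{prop:time-up}: ``we cannot obtain a lower bound of $\partial_t u$ via the above proofs\ldots\ In fact, this is the main difficulty.'' The paper's substitute for a maximum principle is an integral iteration: one differentiates $M_q(t)=\int_\Omega|\mathcal{R}-1|^q u^{p+1}$, uses the weighted Sobolev inequality of Lemma~\ref{lem:sob} (this is precisely where the restriction $p\le(n+2)/(n-2)$ enters) to convert the good gradient term into a gain of integrability, and then runs the ODE comparison Lemma~\ref{lem:ODE} to propagate $M_q\le C$ from $q=2$ up through all $q<\infty$. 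Only then is the \emph{lower} bound on $\partial_t u$ recovered, by applying the De Giorgi iteration of Proposition~\ref{prop:local-bound} to the linear equation~\eqref{eq:derivative} with the now-bounded coefficient $u^{p-2}v$.

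Your regularisation $(u+\varepsilon)^p-\varepsilon^p$ is also weaker than what the paper needs. The issue is not making the equation uniformly parabolic, but ensuring that $Q_\varepsilon$ and $\nabla Q_\varepsilon$ are well-defined up to $\partial\Omega$ so that the evolution identity~\eqref{eq:evolution} can be justified (cf.~\eqref{eq:Q-regular}). The paper handles this by building a Schauder theory for the linearised singular operator $d^{p-1}\partial_t-\Delta$ in a tailored Campanato space (Section~\ref{sec:linear}), then using the implicit function theorem to construct $C^{3,2}(\overline\Omega\times[0,T])$ solutions from a dense class $\mathcal{S}\subset H^1_0$ of initial data (Theorem~\ref{thm:short-time}); a priori estimates are derived for these genuinely smooth approximants and then passed to the weak solution by approximation in $H^1_0\cap C_0$ (Step~2 of the proof of Theorem~\ref{thm:last}). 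Your $(u+\varepsilon)$-regularisation does not obviously deliver uniform-in-$\varepsilon$ control of $Q_\varepsilon$ at any positive time, since you would still need to know that $\partial_t u_\varepsilon$ vanishes to first order at $\partial\Omega$.
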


The sharpness of the regularity of the solutions in the $x$ variable when $p$ is not an integer can be seen from the exact solution $u(x,t)=[(p-1)(1-t)/p]^{\frac{1}{p-1}} S(x)$, where $S$ is a positive solution of \eqref{eq:steady}.  If $p$ is not an integer, then $S$ is precisely of $C^{2+p}(\overline\Omega)$. 

Now we can justify \eqref{eq:energy-identity} for weak solutions, and thus,  \eqref{eq:energy-ineq} becomes an identity.   

The structure of our proof of this regularity is a combination of a priori estimates and a short time existence theorem. The difficulties and our ideas of each of these two parts are explained in order as follows:

\smallskip

(i). First, we  note that neither the H\"older regularity of the solution of \eqref{eq:main}, nor the estimate \eqref{eq:dkvsubcritical} (or \eqref{eq:dkvcritical}), is sufficient to bootstrap its regularity. Possible bootstrap should require that $u(x,t)/d(x)$ is in some, say, H\"older spaces, which will require some global H\"older estimates for $\nabla u$. Secondly, we want to derive a priori estimates by differentiating the equation. For the equation \eqref{eq:main}, differentiating it in the time variable is more natural, because this will keep the structure of the equation as well as the zero boundary condition. If we let $v=u_t$, then
\[
pu^{p-1} \partial_t v+p(p-1)u^{p-2}  v^2=\Delta v.
\]
With the help of  the nonnegative term $p(p-1)u^{p-2}v^2$, using the De Giorgi-Nash-Moser iteration we can show that $\partial_t u=v\le C$. The difficulty is to show the lower bound of $\partial_t u$. Here, we overcome this difficulty by proving arbitrarily high integrability of $\partial_t u/u$, which is our main contribution in this part. To do this, we construct a curvature-like quantity, and derive its evolution equation. This evolution equation extends the scalar curvature evolution  equation along the Yamabe flow or the scalar curvature flow; see Schwetlick-Struwe  \cite{SS}, Brendle \cite{Br05} and Chen-Xu \cite{CX}. Then we prove the arbitrarily high integrability estimates by an ODE argument. These a priori estimates depend only on the constant $c_0$ in \eqref{eq:dkvsubcritical} or $\tilde c_0$ in \eqref{eq:dkvcritical}, and also the $H^1_0$ norm of the initial data. In this step, our proof requires the exponent $p$ to be subcritical or critical. 

\smallskip

(ii). For the short time existence of regular (up to the boundary $\partial\Omega$) solutions to \eqref{eq:main}, we need to construct a suitable set of initial data, which are dense in $H^1_0$, and establish a Schauder theory for a class of singular parabolic equations, so that the linearized operator for \eqref{eq:main} will be invertible in these spaces and we can apply the implicit function theorem.  The Schauder theory is proved in a well-designed Campanato space.   When applying the implicit function theorem, we need a second order approximating solution. The reason is that the equation after differentiating \eqref{eq:main} in $t$ is still nonlinear (semi-linear) and singular, and we would not be able to bootstrap the regularity if the approximation is of first order. We need the regularity of the solutions in short time to be high enough to carry out our proof of a priori estimates. The short time existence theorem holds for all $p\in (1,\infty)$. A Schauder theory for a class of degenerate parabolic equations and short time existence of smooth solutions of porous medium equations $(0<p<1)$ with compact support smooth initial data were established by Daskalopoulos-Hamilton \cite{DH}. Since our equations are singular parabolic,  our proofs are different from theirs.  

\smallskip

The equation \eqref{eq:main} with $p\in (1,\infty)$ has infinite speed of propagation. That is, if $u(\cdot,0)\not\equiv 0$ is nonnegative and compactly supported in $\Omega$, then the solution $u(\cdot,t)$ will  immediately become positive everywhere in $\Omega$. However, if $p\in (0,1)$, for which the equation \eqref{eq:main} becomes the so-called  porous medium equation,  the solution $u(\cdot,t)$ with such initial data will still be compactly supported in $\Omega$ at least for a short time, and thus, $\partial\{u>0\}$ is a free boundary. H\"older continuity of the solutions to porous medium equations and their free boundaries were proved by Caffarelli-Friedman \cite{CF}. Their higher regularities were obtained under extra assumptions, such as a non-degeneracy condition of the initial data by Caffarelli-V\'azquez-Wolanski \cite{CVW}, Caffarelli-Wolanski \cite{CW}, Koch \cite{Koch}, Daskalopoulos-Hamilton-Lee \cite{DHL}, or a flatness assumption of the solution by Kienzler-Koch-V\'azquez \cite{KKV}.

The solution in Theorem \ref{thm:main} also satisfies the following quantitative estimates. When $p$ is a subcritical Sobolev exponent, we have the local boundedness estimate of the solution, and can use the  explicit bound \eqref{eq:dkvsubcritical} to obtain the estimate up to the extinction time.

\begin{thm}[Subcritical case]\label{thm:main2} 
Let $1<p<\infty$ for $n=1, 2$,  or $1<p< \frac{n+2}{n-2}$ for $n\ge 3$.  Let $u$ be a bounded nonnegative weak solution of \eqref{eq:main} and \eqref{eq:main-d}, and $T^*>0$ be its extinction time.  

 If $p$ is an integer, then we have, for $ (x,t) \in \overline\om \times (\delta, T^*)$ with $0<\delta<T^*/4$ that
 \[
d(x)^{-1}|\pa_t^l  u(x,t)| +\|D_x^k\pa_t^l   u(\cdot,t)\|_{L^{\infty}(\overline\Omega)}  \le C(T^*-t)^{\frac{1}{p-1}-l},
\] 
where $C$ depends only on $n,\om, l, p,\delta, k$, $\|u(\cdot,0)\|_{H^{1}_0(\om)}$ and $\|u(\cdot,0)\|_{L^{p+1}(\om)}$.

If $p$ is not an integer, then we have, for $ (x,t) \in \overline\om \times (\delta, T^*)$ with $0<\delta<T^*/4$ that
 \[
d(x)^{-1}|\pa_t^l  u(x,t)| +\|\pa_t^l  u (\cdot,t)\|_{C^{2+p}(\overline\Omega)}  \le \widetilde{C} (T^*-t)^{\frac{1}{p-1}-l}, 
\]   
where $\widetilde{C}$ depends only on $n,\om, l, p,\delta$, $\|u(\cdot,0)\|_{H^{1}_0(\om)}$ and $\|u(\cdot,0)\|_{L^{p+1}(\om)}$.
\end{thm}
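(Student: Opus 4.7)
The plan is to reduce Theorem~\ref{thm:main2} to the quantitative a priori estimates obtained in the course of proving Theorem~\ref{thm:main} by means of a parabolic self-similar rescaling, exploiting the scaling invariance of \eqref{eq:main} together with the global Harnack principle \eqref{eq:dkvsubcritical}. For a time $t_0\in[\delta, T^*)$ set $\lambda := T^*-t_0$ and define the rescaled solution
\[
\tilde u(x,s) := \lambda^{-1/(p-1)}\, u(x, t_0 + \lambda s), \qquad s\in[-1/2,\, 3/4].
\]
After shrinking $\delta$ if necessary so that $t_0+\lambda s\in[\delta/2, T^*)$ on this interval, a routine calculation shows $\tilde u$ is again a bounded nonnegative weak solution of \eqref{eq:main} and \eqref{eq:main-d}, whose extinction time (in the rescaled time variable) is $s=1$.

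The second step is to observe that the key inputs of the a priori theory---namely, the two-sided bound on $\tilde u/d(x)$ and the $H^1_0$ norm of $\tilde u$ at an initial time---are uniform in $t_0$. Indeed, \eqref{eq:dkvsubcritical} applied to $u$ on $[\delta/2, T^*)$ rescales to
\[
c_0^{-1}\, d(x)(1-s)^{1/(p-1)} \le \tilde u(x,s) \le c_0\, d(x)(1-s)^{1/(p-1)}
\]
on $s\in[-1/2, 3/4]$, with exactly the same constant $c_0$ as for $u$. The $k=1$ case of the DKV spatial bound gives $\|\nabla u(\cdot,t)\|_{L^\infty(\om)}\le C(T^*-t)^{1/(p-1)}$ on $[\delta/2, T^*)$, which under the rescaling yields $\|\nabla \tilde u(\cdot,-1/2)\|_{L^\infty(\om)}\le C$, and hence $\|\tilde u(\cdot,-1/2)\|_{H^1_0(\om)}\le C$. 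All constants depend only on $n,\om,p,\delta,\|u(\cdot,0)\|_{H^1_0}$ and $\|u(\cdot,0)\|_{L^{p+1}}$.

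I would then apply the quantitative a priori estimates from the proof of Theorem~\ref{thm:main} to $\tilde u$ on the fixed sub-interval $[-1/4, 1/2]$, obtaining
\[
d(x)^{-1}|\pa_s^l \tilde u(x,0)| + \|D_x^k \pa_s^l \tilde u(\cdot,0)\|_{L^\infty(\om)} \le C_{k,l},
\]
with $C_{k,l}$ depending only on the listed parameters. Since $\pa_s^l D_x^k \tilde u(x,0) = \lambda^{l-1/(p-1)}\, \pa_t^l D_x^k u(x,t_0)$, undoing the rescaling delivers exactly the integer-$p$ estimate of Theorem~\ref{thm:main2} at time $t_0$, and $t_0\in[\delta, T^*)$ was arbitrary. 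For non-integer $p$ the argument is identical, with the $C^k$ norm replaced by the $C^{2+p}$ norm; the latter is the natural spatial regularity in view of the explicit example $u(x,t)=[(p-1)(1-t)/p]^{1/(p-1)}S(x)$ from the introduction.

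The main obstacle is not the scaling itself but ensuring that the a priori estimates of Theorem~\ref{thm:main} really depend only on $c_0$ and $\|u(\cdot,0)\|_{H^1_0}$ in a scale-invariant way, so that no blow-up constants leak in as $t_0\to T^*$. The curvature-like quantity and the ODE-based integrability argument sketched in the introduction appear to be engineered precisely to furnish such a scale-invariant a priori theory; granting this, the reduction outlined above is essentially automatic.
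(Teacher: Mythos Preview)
Your approach is correct and is essentially the paper's: the paper passes once to the logarithmically rescaled solution $U(x,t)=\big(p/((p-1)(T^*-\tau))\big)^{1/(p-1)}u(x,\tau)$, for which \eqref{eq:dkvsubcritical} becomes the time-uniform two-sided bound $U\sim d$ on $(\delta,\infty)$, and then invokes Theorem~\ref{thm:last} (with $b=0$) whose uniform derivative estimates on $U$ translate back to the stated $(T^*-\tau)^{1/(p-1)-l}$ bounds on $u$. Your family of linear rescalings $\tilde u_{t_0}$ is the same reduction in a different parametrization---log-rescaling $\tilde u_{t_0}$ produces a time-translate of the paper's $U$---and your closing paragraph correctly isolates the only real content, namely that the constants in the a priori theory depend on $c_0$ and the $H^1_0$ data alone.
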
  

When $p$ is the critical Sobolev exponent, the rescaled solution in the time variable (that is, the $v$ in the equation \eqref{eq:rescaling-main} in the below) may blow up as the time approaches to infinity (see, e.g., Galaktionov-King \cite{GKing} and Sire-Wei-Zheng \cite{SWZ}), and therefore, we will not have the same uniform estimate of $u$ up to the extinction time as that for the subcritical Sobolev case.

\begin{thm}[Critical case]\label{thm:main3}  
Let $n\ge 3$, $p= \frac{n+2}{n-2}$, $u$ be a bounded nonnegative weak solution of \eqref{eq:main} and \eqref{eq:main-d}, and $T^*>0$  be its extinction time.  

 If $p$ is an integer, then we have, for $ (x,t) \in \overline\om \times (\delta, T^*-\delta)$ with $0<\delta<T^*/4$ that
 \[
d(x)^{-1}|\pa_t^l  u(x,t)| +\|D_x^k\pa_t^l   u(\cdot,t)\|_{L^{\infty}(\overline\Omega)}  \le C,
\] 
where $C>0$ depends only on $n,\om, l, p,\delta, k$, $\|u(\cdot,0)\|_{H^{1}_0(\om)}$, $\|u\|_{L^\infty(\Omega\times(0,T^*-\delta))}$ and  $\|u(\cdot,0)\|_{L^{p+1}(\om)}$.

If $p$ is not an integer, then we have, for $ (x,t) \in \overline\om \times (\delta, T^*-\delta)$ with $0<\delta<T^*/4$ that
 \[
d(x)^{-1}|\pa_t^l u (x,t)| +\|\pa_t^l u  (\cdot,t)\|_{C^{2+p}(\overline\Omega)}  \le \widetilde{C} 
\]   
where $\widetilde{C}>0$ depends only on $n,\om, l, p,\delta$, $\|u(\cdot,0)\|_{H^{1}_0(\om)}$,  $\|u\|_{L^\infty(\Omega\times(0,T^*-\delta))}$ and $\|u(\cdot,0)\|_{L^{p+1}(\om)}$.      
\end{thm}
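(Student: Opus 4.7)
The plan is to run essentially the same argument as for Theorem~\ref{thm:main2}, substituting the critical global Harnack principle \eqref{eq:dkvcritical} for its subcritical counterpart \eqref{eq:dkvsubcritical}. Because we now restrict to the interval $(\delta, T^*-\delta)$ bounded away from the extinction time, and because $\|u\|_{L^\infty(\Omega\times(0,T^*-\delta))}$ is allowed in the list of admissible data, the resulting bounds are uniform in $t$ rather than decaying in $(T^*-t)^{1/(p-1)-l}$. The estimate \eqref{eq:dkvcritical} furnishes the two-sided comparison $\tilde c_0^{-1} d(x)\le u(x,t)\le \tilde c_0 d(x)$ on $\om\times(\delta/2,T^*-\delta/2)$, with $\tilde c_0$ depending on $n,\om,\delta,p$ and $\|u\|_{L^\infty(\Omega\times(0,T^*-\delta))}$, and this is the only ingredient that needs to be swapped in.

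With this comparison in hand I would execute the a priori estimates of step~(i) of the strategy. Differentiating \eqref{eq:main} in $t$ gives, for $v=\pa_t u$, the equation
\[
p u^{p-1}\pa_t v + p(p-1)u^{p-2} v^2 = \Delta v,
\]
whose nonnegative zeroth-order term allows a De~Giorgi-Nash-Moser bound of $v$ from above; arbitrarily high integrability of $\pa_t u/u$ then follows from the evolution equation for the curvature-like quantity together with the accompanying ODE argument. This portion of the proof uses only the Harnack comparison of $u$ with $d$ and the $H^1_0$ control on the initial datum, so it is insensitive to whether $p$ is strictly subcritical or exactly critical, which is precisely why the scheme goes through at the endpoint. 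I would then combine these a priori estimates with the short-time existence theorem and the Schauder theory in the designed Campanato space from step~(ii), both of which hold for every $p\in(1,\infty)$, and bootstrap to obtain $D_x^k \pa_t^l u$ uniformly bounded on $\overline\om\times(\delta,T^*-\delta)$ when $p$ is an integer; for non-integer $p$ the spatial boundary regularity saturates at $C^{2+p}(\overline\om)$ because of the Dirichlet behavior $u\sim d$ together with the factor $u^{p-1}$ in \eqref{eq:main}, exactly as in Theorem~\ref{thm:main2}. The pointwise bound on $d(x)^{-1}|\pa_t^l u|$ then follows from the vanishing of $\pa_t^l u$ on $\pa\om$ together with the spatial regularity just obtained.

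The main obstacle I anticipate is bookkeeping how $\tilde c_0$ and $\|u\|_{L^\infty(\Omega\times(0,T^*-\delta))}$ propagate through the curvature-like evolution and the ODE argument for integrability, since the homogeneous scaling factor $(T^*-t)^{1/(p-1)}$ that made the subcritical estimates clean in time is no longer at our disposal; the rescaled solution may blow up as $t\to T^*$, so the localization to $t\le T^*-\delta$ is essential and must be used explicitly at each iteration. Once this dependence is tracked carefully, the uniform constant bound on $(\delta,T^*-\delta)$ with the stated list of parameters follows by the same bootstrap that produced Theorem~\ref{thm:main2}.
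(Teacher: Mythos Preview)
Your proposal is correct and follows essentially the same approach as the paper: the proof of Theorem~\ref{thm:main3} there is a one-line reduction to Theorem~\ref{thm:last} (which packages the a priori estimates of Section~\ref{sec:time-derivative}, the short-time existence of Section~\ref{sec:short-time}, and the Schauder theory of Section~\ref{sec:linear}) with $b=0$, using \eqref{eq:dkvcritical} in place of \eqref{eq:dkvsubcritical}. Your concern about bookkeeping is handled in the paper by passing to the rescaled solution $U$ of \eqref{eq:main-scl-1}, for which the critical Harnack bound \eqref{eq:pre-3} holds on any finite time interval with a constant depending on $\|U\|_{L^\infty}$; all subsequent estimates in Section~\ref{sec:time-derivative} depend only on this $c_0$, so no extra work is needed beyond what you described.
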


In the end, we give an application of  Theorems \ref{thm:main} and \ref{thm:main2}.   
Let  
\[
v(x,t)=\Big (\frac{p}{(p-1)(T^*-\tau)}\Big)^{\frac{1}{p-1}} u(x,\tau), \quad t =\frac{(p-1)}{p}   \ln \Big(\frac{T^*}{T^*-\tau}\Big). 
\]
Then
\be \label{eq:rescaling-main}
\pa_t v^p= \Delta v + v^{p} \quad \mbox{in }\om \times (0,\infty).
\ee
If $p$ is a subcritical exponent, then
\be  \label{eq:rescaling-main-b}
\frac{1}{c_0} d(x) \le v(x,t) \le  c_0d(x)  \quad \mbox{in }\om \times (1,\infty), 
\ee
where $c_0>1$ is independent of $t$.   Assuming that $\partial_t u, \nabla u, \nabla\partial_t u, \nabla^2 u\in C(\overline \om\times (0,T^*))$,  Berryman-Holland \cite{BH} proved that $v(\cdot,t)\to S$ in $H_0^1(\om)$ along a subsequence, where $S$ is a positive solution of 
\be \label{eq:steady}
-\Delta S=S^{p} \quad \mbox{in }\om\quad\mbox{and}\quad
S=0 \quad \mbox{on }\pa \om.
\ee   
Feireisl-Simondon \cite{FS} removed the regularity assumption and proved  that $v(\cdot,t)\to S$ in $C^0(\overline \om)$ as $t\to \infty$.  Bonforte-Grillo-V\'azquez \cite{BCV} proved that 
 \begin{equation}\label{eq:relativeerror}
\lim_{t\to \infty}\Big\| \frac{v(\cdot,t)}{S}-1\Big\|_{L^\infty(\om)}  = 0.
 \end{equation}
  Recently,  Bonforte-Figalli \cite{BFig} proved that for generic bounded domains $\om$  there exists   $\gamma_p>0, C>0 $ such that 
  \begin{equation}\label{eq:sharpgammap}
  \int_{\Omega}\left|\frac{v(x,t)}{S(x)}-1\right|^2 S^{p+1}(x)\,\ud x\le C e^{-2\gamma_pt},
  \end{equation}
  and the decay rate $\gamma_p>0$ is sharp. They also prove that 
  \[
\Big\| \frac{v(\cdot,t)}{S}-1\Big\|_{L^\infty(\om)}  \le Ce^{-\frac{\gamma_p}{4n} t}. 
 \]
 See \cite{BFig} for the meaning of  generic domains. As mentioned in \cite{BFig}, examples of such domains includes balls and most $C^{2,\alpha}$ perturbations of annuli, but do not include certain annuli themselves. Combing with our main theorem, we can show the sharp exponential  convergence rate in regular norms.

\begin{cor} \label{cor:main} 
Suppose $1<p<\infty$ if $n=1$ or $2$,  and $1<p< \frac{n+2}{n-2}$ if $n\ge 3$. 
Suppose $v$ is a solution of \eqref{eq:rescaling-main} satisfying  \eqref{eq:rescaling-main-b}  and \eqref{eq:relativeerror}.  We have that if $p$ is an integer, then
\[
\lim_{t\to \infty} \left\|\frac{v(\cdot,t)}{S}-1\right\|_{C^{k}(\overline \om)}=0
\] 
for any positive integer $k$; and if $p$ is not an integer, then
\[
\lim_{t\to \infty} \left\|\frac{v(\cdot,t)}{S}-1\right\|_{C^{1+p}(\overline \om)}=0.
\] 
And, for generic smooth domains $\om$, if $p$ is an integer, then for any positive integer $k$, there exists $C(k)>0$ such that
 \begin{equation}\label{eq:generalconvergesto0}
\left\|\frac{v(\cdot,t)}{S}-1\right\|_{C^{k}(\overline \om)}  \le C(k) e^{-\gamma_p t}\ \mbox{for all large }t,
 \end{equation}
 where $\gamma_p>0$ is the one in \eqref{eq:sharpgammap}.  If $p$ is not an integer, then
 \begin{equation}\label{eq:genericexp}
\left\|\frac{v(\cdot,t)}{S}-1\right\|_{C^{1+p}(\overline \om)} \le C e^{-\gamma_p t}\ \mbox{for all large }t.
 \end{equation}
 \end{cor}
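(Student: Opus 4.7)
The plan is to combine the uniform a priori estimates of Theorem \ref{thm:main2} with the two inputs quoted from the literature, namely the $L^\infty$ relative-error convergence \eqref{eq:relativeerror} (used for the qualitative limits) and the sharp weighted $L^2$ exponential decay \eqref{eq:sharpgammap} (used for the quantitative decay on generic domains). The preparatory step is to translate Theorem \ref{thm:main2} into the rescaled variable $v$. Under the change of variables $t=\frac{p-1}{p}\ln\frac{T^*}{T^*-\tau}$ and $v(x,t)=(\frac{p}{(p-1)(T^*-\tau)})^{1/(p-1)}u(x,\tau)$, the singular prefactor $(T^*-\tau)^{1/(p-1)-l}$ in Theorem \ref{thm:main2} is exactly cancelled, which yields the uniform-in-$t$ bound
\[
\|\partial_t^l v(\cdot,t)\|_{C^{2+p}(\overline\Omega)}\le M_l \quad \text{for all } t\ge 1,\ l\ge 0
\]
(with $C^k$, $k$ arbitrary, in place of $C^{2+p}$ when $p$ is an integer).

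For the qualitative convergence of $w(\cdot,t):=v(\cdot,t)/S-1$ to zero in $C^{1+p}(\overline\Omega)$ (respectively $C^k$), I would proceed as follows. Combining the global Harnack principle \eqref{eq:rescaling-main-b} with the Hopf lemma applied to $S$ gives $c^{-1}\le v(\cdot,t)/d,\,S/d\le c$ on $\overline\Omega$; since the operator $f\mapsto f/d$ carries $\{f\in C^{2+p}(\overline\Omega):f|_{\partial\Omega}=0\}$ boundedly into $C^{1+p}(\overline\Omega)$, the function $w$ is uniformly bounded in $C^{1+p}(\overline\Omega)$. To upgrade the $L^\infty$ convergence \eqref{eq:relativeerror} into $C^{1+p}$-convergence I would use the subtracted elliptic equation
\[
\Delta(v-S)=\partial_t v^p -(v^p-S^p),
\]
together with the energy identity $\frac{d}{dt}\bigl(\int_\Omega|\nabla v|^2-\frac{2}{p+1}v^{p+1}\,dx\bigr)=-2p\int_\Omega v^{p-1}(\partial_t v)^2\,dx$ (which yields $\int_0^\infty\int_\Omega v^{p-1}(\partial_t v)^2\,dx\,dt<\infty$) and the uniform $C^{2+p}$ bound on $\partial_t v$, to deduce via interpolation that $\partial_t v(\cdot,t)\to 0$ uniformly on $\overline\Omega$ as $t\to\infty$. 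This, together with $v\to S$ in $L^\infty$ and the uniform $C^{2+p}$ bound on $v$, gives convergence to zero of the right-hand side of the subtracted equation in $C^p(\overline\Omega)$. Elliptic Schauder theory with zero boundary data then forces $v\to S$ in $C^{2+p}(\overline\Omega)$, and dividing by $d$ followed by $S/d$ converts this into $w\to 0$ in $C^{1+p}(\overline\Omega)$. The integer case is identical with $C^{k+1}$ and $C^k$ replacing $C^{2+p}$ and $C^{1+p}$.

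For the exponential decay \eqref{eq:generalconvergesto0}--\eqref{eq:genericexp} on generic domains, I start from \eqref{eq:sharpgammap}, which since $S^{p+1}\ge c\,d^{p+1}$ gives $\|w(\cdot,t)\|_{L^2(d^{p+1}dx)}\le Ce^{-\gamma_p t}$. Combined with the uniform $C^{2+p}$ (resp.\ $C^{k+1}$) bound on $w$, a Gagliardo-Nirenberg interpolation adapted to the boundary weight upgrades this to $L^\infty$ decay of $w$ at some rate $\gamma_p'>0$. The main obstacle is recovering the \emph{sharp} rate $\gamma_p$ in the $C^k$ (or $C^{1+p}$) norm, since pure interpolation between weighted $L^2$-decay and uniform Hölder bounds loses a factor $\theta<1$ in the exponent. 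To preserve the sharp rate, I would use that $w$ satisfies a nonlinear parabolic equation whose linearization at zero is $\partial_t w=Lw$ with $L=(pS^{p-1})^{-1}\Delta+1$ self-adjoint in the weighted $L^2(S^{p+1}dx)$-space and of spectral gap exactly $\gamma_p$ on generic domains, and then apply the Duhamel formula over a unit time interval together with the parabolic-smoothing semigroup estimates $\|e^{\tau L}\|_{L^2\to L^2}\le e^{-\gamma_p \tau}$ and $\|e^{\tau L}\|_{L^2\to C^k}\lesssim\tau^{-\alpha}$; the quadratic nonlinear correction $N(w)=O(w^2)$ is absorbed using the $L^\infty$-smallness of $w$ already proved.
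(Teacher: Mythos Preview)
Your approach is correct in outline but takes a substantially longer route than the paper, and the detour introduces avoidable technical friction.

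The paper's proof is a one-step application of the linear theory of Section~\ref{sec:linear}. Set $w=v-S$ (not $v/S-1$); then $w$ satisfies the \emph{linear} singular parabolic equation
\[
pv^{p-1}\,w_t=\Delta w+c(x,t)\,d(x)^{p-1}w,\qquad c=\int_0^1\Big[\tfrac{\theta v+(1-\theta)S}{d}\Big]^{p-1}\,d\theta,
\]
with bounded coefficients (by \eqref{eq:rescaling-main-b} and Theorem~\ref{thm:main2}). Proposition~\ref{prop:local-bound} applied on a unit time window gives
\[
\|w\|_{L^\infty(\overline\Omega\times[T-2,T])}\le C\Big(\int_{\Omega\times[T-3,T]}w^2S^{p-1}\Big)^{1/2},
\]
and then Theorem~\ref{thm:global-schauder} together with elliptic Schauder on time slices upgrades this to $\|w(\cdot,T)\|_{C^{2+p}}\le C\|w\|_{L^\infty(\overline\Omega\times[T-2,T])}$. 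Since $\int w^2S^{p-1}=\int(v/S-1)^2S^{p+1}$, the right-hand side is exactly the quantity controlled by \eqref{eq:relativeerror} and \eqref{eq:sharpgammap}, so no rate is lost. Dividing by $d$ and then by $S/d$ converts $C^{2+p}$ into $C^{1+p}$.

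By contrast, you first prove $\partial_t v\to0$ via the energy identity and interpolation, then run elliptic Schauder on the subtracted elliptic equation; for the sharp rate you invoke a Duhamel/semigroup argument because pure interpolation loses a factor. Two comments. First, your qualitative argument as written only yields $v\to S$ in $C^{2+p-\varepsilon}$ (the bootstrap $v\to S$ in $L^\infty$, $v$ bounded in $C^{2+p}$ gives $v^p-S^p\to0$ only in $C^{p-\varepsilon}$, and elliptic Schauder returns $C^{2+p-\varepsilon}$); you would need one more iteration through the parabolic equation to recover the endpoint. Second, your Duhamel step linearizes at $S$, so the ``nonlinearity'' is not purely $O(w^2)$: it contains the term $\big((1+w)^{p-1}-1\big)\partial_t w$, which is $O(w\,\partial_t w)$. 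This can be absorbed once $\|w\|_{L^\infty}$ is small, but it is exactly the kind of variable-coefficient issue that the paper's approach sidesteps by applying the Section~\ref{sec:linear} estimates directly to the equation with coefficients $v^{p-1}$, not $S^{p-1}$. The semigroup smoothing $\|e^{\tau L}\|_{L^2\to C^k}$ you would need is, in effect, Proposition~\ref{prop:local-bound} plus Theorem~\ref{thm:global-schauder}; invoking them directly is both shorter and cleaner.
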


 The Sobolev critical case $p=\frac{n+2}{n-2}$ when $n\ge 3$ is very intriguing. Pohozaev \cite{Poh} proved the non-existence of positive solutions of \eqref{eq:steady} if $p\ge \frac{n+2}{n-2}$ and $\om$ is star-shaped.   On the other hand,  if the topology of $\om$ is non-trivial, the existence of positive solutions of \eqref{eq:steady} was obtained by Bahri-Coron \cite{Bahri-C}. Regardless the topology of $\om$, Brezis-Nirenberg \cite{BN} proved the existence of solutions to
\be 
 -\Delta S-bS=S^{\frac{n+2}{n-2}} \quad \mbox{in }\om \quad \mbox{and} \quad S=0 \quad \mbox{on }\pa \om
 \ee
in dimension $n\ge 4$,  where $0<b<\lda_1$ and $\lda_1$ is the first Dirichlet eigenvalue of $-\Delta$ in $\om$. Druet \cite{D} proved the existence of solution in dimension $3$, under a positive mass type  assumption on the Green function of the operator  $-\Delta -b$.  In our preprint \cite{JX20}, we  studied the extinction behavior of positive solutions of the fast diffusion equation  $\pa_t u^{\frac{n+2}{n-2}}=\Delta u +bu$ with the Dirichlet condition  \eqref{eq:main-d}, where the curvature-like quantity we defined is crucial in carrying out the concentration compactness.

 The paper is organized as follows. In Section \ref{sec:linear}, we will establish regularity results for a class of singular parabolic equations, which fit for the linearized equations of the fast diffusion equations. There is no restriction on $p$ for all dimensions. 
To prove a Schauder type estimates, we introduce a Campanato space matching  the scaling of the singular parabolic equation.   In Section \ref{sec:short-time}, we construct local in time  smooth solutions of the fast diffusion equation for some initial data, the set of which is dense in $H^1_0(\om)$.  
In Section \ref{sec:time-derivative}, we establish the crucial a priori estimates for the time derivatives, where we use a curvature-like evolution equation.   In Section \ref{sec:last}, we prove our main theorems and Corollary \ref{cor:main}.

\bigskip

\noindent \textbf{Acknowledgement:} Part of this work was completed while the second named author was visiting the Hong Kong University of Science and Technology and University of Rome Tor Vergata, to which he is grateful for providing  the very stimulating research environments and supports. Both authors would like to thank Professor YanYan Li for his interests and constant encouragement. Finally, we would like to thank the anonymous referee for his/her careful reading of the paper and invaluable suggestions that greatly improved the presentation and the quality of the paper.

\section{Linear singular parabolic equations} 
\label{sec:linear}

This section is devoted to the well-posedness  of the linearized equation of \eqref{eq:main}. Let $\omega$ be a smooth function in $\overline\Omega$ comparable to the distance function $d(x)$, that is, $0<\inf_{\Omega}\frac{w}{d}\le \sup_{\Omega}\frac{w}{d}<\infty$. For example, $w$ can be taken as the normalized first eigenfunction of $-\Delta$ in $\Omega$ with Dirichlet zero boundary condition. Because of \eqref{eq:dkvsubcritical} and \eqref{eq:dkvcritical}, the linearized equation of \eqref{eq:main} will be of the form:
\begin{align} \label{eq:general}
a(x,t) \omega(x)^{p-1} \pa_t u-\mathrm{div}(A(x,t) \nabla u)&+b_1(x,t)u+b_2 (x,t) \omega(x)^{p-1}u  \nonumber\\
&= \omega(x)^{p-1}f \quad \mbox{in }\Omega \times(-1,0],
\end{align}
where $A(x,t)=(a_{ij}(x,t))_{n\times n}$ is a symmetric matrix. We suppose that $p>1$ and 
\be \label{eq:ellip}
\lda\le a(x,t)\le \Lda,  \quad \lda |\xi|^2 \le \sum_{i,j=1}^na_{ij}(x,t)\xi_i\xi_j\le \Lda |\xi|^2 \quad\forall\ \xi\in\R^n,
\ee
where $0<\lda\le \Lda<\infty$. We also assume that there exists a constant $\bar \lda>0$ such that
\be \label{eq:coeromega}
\int_{\Omega} [(A\nabla \phi)\cdot \nabla \phi +b_1 \phi^2]\,\ud x \ge \bar \lda \int_{\Omega} \phi^2 \quad \quad \forall \phi\in H^1_0(\Omega), \forall \ t\in(-1,0),
\ee
and
\begin{equation}\label{eq:positivelower}
b_2\ge \lambda \quad\mbox{in } \Omega \times(-1,0].
\end{equation}
Under the above conditions, the equation \eqref{eq:general} is uniformly parabolic when $x$ stays away from the boundary $\partial\Omega$. Therefore, to obtain global estimates for \eqref{eq:general}, we only need to establish estimates near $\partial\Omega$, that is in $(B_r(x_0)\cap\Omega)\times(-1,0]$, where $x_0\in\partial\Omega,r>0$ and $B_r(x_0)$ is the open ball in $\R^n$ centered at $x_0$ with radius $r$. By the standard flattening the boundary techniques for studying boundary estimates, we only need to consider the equation in the half ball case. We can choose $r$ small enough to ensure that the coercivity \eqref{eq:coeromega} (possibly with $\bar\lambda/2$ instead of $\bar\lambda$) still holds after flattening the boundary.

Now we would like to consider the half ball case, i.e., $\Omega$ is a half ball.  For $\bar x=(\bar x', 0)$,  denote $B_R^+(\bar x) =B_R(\bar x)\cap \{(x',x_n):x_n>0\}$,
\[
Q_R^+(\bar x, \bar t)= B_R^+(\bar x)  \times [\bar t-R^2, \bar t], \quad \mathcal{Q}_R^+(\bar x,\bar t)= B_R^+(\bar x) \times [\bar t-R^{p+1}, \bar t].
\] 
For brevity, we drop $(\bar x)$ and $(\bar x,\bar t)$ in the above notations if $\bar x=0$ or $(\bar x,\bar t)=(0,0)$.  

Consider the equation
\be \label{eq:linear-eq}
ax_n^{p-1} \pa_t u-\mathrm{div}(A\nabla u )+b_1u+b_2  x_n^{p-1}  u= x_n^{p-1} f \quad \mbox{in }Q_1^+
\ee
with partial Dirichlet condition
\be \label{eq:linear-eq-D}
u=0 \quad \mbox{on }\pa' B_1^+\times[-1,0],
\ee
where $\pa' B_R^+= B_R \cap \{x_n=0\}$.  The equation \eqref{eq:linear-eq} is uniformly parabolic away from $\{x_n=0\}$, where classical estimates would apply directly. Here, we would like to establish  regularity estimates for solutions of \eqref{eq:linear-eq} and \eqref{eq:linear-eq-D} up to the boundary $\{x_n=0\}$, that is, in $B_{1/2}^+\times[-1,0]$.

We also denote $\pa'' B_R^+=\pa B_R^+\setminus \pa' B_R^+$,  and $\partial_{pa} Q_R^+ $ as the standard parabolic boundary of $Q_R^+ $.

\subsection{Local boundedness}

We need some Sobolev type inequalities first. 

\begin{lem}\label{lem:Hardy-Sobolev} Let $s\in (0,2)$ and $R\ge 1$. Then
\begin{equation}\label{eq:hardysobolev}
\Big(\int_{B_R^+}\frac{|u(x)|^{\frac{2(n-s)}{n-2}}}{x_n^s}\Big)^{\frac{n-2}{n-s}}\le C(n)^{\frac{n-2}{n-s}}\int_{B_R^+}|\nabla u|^2\quad\forall\ u\in H_0^1(B_R^+),
\end{equation}
when $n\ge 3$, and for $s\le r<\infty$,
\begin{equation}\label{eq:hardysobolev-2d}
\Big(\int_{B_R^+}\frac{|u(x)|^{r}}{x_n^s}\Big)^{\frac{2}{r}}\le C(R, r,s)\int_{B_R^+}|\nabla u|^2\quad\forall\ u\in H_0^1(B_R^+),
\end{equation}
when $n=1,2$.
\end{lem}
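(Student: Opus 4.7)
The plan is to reduce both inequalities to a single interpolation between the one-dimensional Hardy inequality in the $x_n$ direction and the standard Sobolev embedding on $B_R^+$. Since $u\in H_0^1(B_R^+)$ vanishes on all of $\partial B_R^+$, in particular on the flat piece $\{x_n=0\}\cap \overline{B_R}$, I may extend $u$ by zero across the curved part of the boundary and use inequalities on the half-space.

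Step one is the Hardy estimate. Applying the one-dimensional Hardy inequality $\int_0^\infty v(t)^2/t^2\,\ud t\le 4\int_0^\infty v'(t)^2\,\ud t$ in the $x_n$-variable and integrating in $x'\in\R^{n-1}$ gives
\[
\int_{B_R^+}\frac{u^2}{x_n^2}\,\ud x\le 4\int_{B_R^+}|\partial_n u|^2\,\ud x\le 4\int_{B_R^+}|\nabla u|^2\,\ud x.
\]
Step two is the Sobolev embedding: for $n\ge3$, $\|u\|_{L^{2n/(n-2)}(B_R^+)}^2\le S_n\int_{B_R^+}|\nabla u|^2$ with $S_n$ depending only on $n$ (obtained by extension by zero to $\R^n$); for $n=1,2$ and any $\sigma<\infty$, Poincar\'e together with the Sobolev/Morrey embedding gives $\|u\|_{L^\sigma(B_R^+)}^2\le C(R,\sigma)\int_{B_R^+}|\nabla u|^2$.

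Step three is the interpolation. Split the weighted integrand as $|u|^q/x_n^s=(|u|/x_n)^s\cdot|u|^{q-s}$ and apply H\"older with conjugate exponents $2/s$ and $2/(2-s)$:
\[
\int_{B_R^+}\frac{|u|^q}{x_n^s}\,\ud x\le \Big(\int_{B_R^+}\frac{u^2}{x_n^2}\,\ud x\Big)^{s/2}\Big(\int_{B_R^+}|u|^{2(q-s)/(2-s)}\,\ud x\Big)^{(2-s)/2}.
\]
For $n\ge 3$ I choose $q=2(n-s)/(n-2)$ so that the second exponent equals the Sobolev exponent $2n/(n-2)$; substituting Hardy and Sobolev and collecting powers gives
\[
\int_{B_R^+}\frac{|u|^q}{x_n^s}\,\ud x\le C(n)\Big(\int_{B_R^+}|\nabla u|^2\,\ud x\Big)^{(n-s)/(n-2)},
\]
and raising to the power $(n-2)/(n-s)$ yields \eqref{eq:hardysobolev}. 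For $n=1,2$, given $r\ge s$, I set $\sigma=2(r-s)/(2-s)\in[0,\infty)$ and use the $(r,R,s)$-dependent Sobolev embedding; a straightforward check $s/2+\sigma(2-s)/4=r/2$ shows the total power of $\int|\nabla u|^2$ is exactly $r/2$, giving \eqref{eq:hardysobolev-2d}.

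There is no real obstacle here; the only subtle point is picking the H\"older split so that the singular factor $|u|^s/x_n^s$ is absorbed by Hardy while the remaining $L^\sigma$ norm of $|u|$ lands exactly at a Sobolev exponent. Once the exponent $q$ (resp.\ $r$) is forced by this matching, the estimates follow by direct bookkeeping of powers, and the $R$-dependence in dimensions $1$ and $2$ enters only through the Sobolev/Poincar\'e embedding constant, consistent with the scaling $R^{2(n-s)/r-(n-2)}$.
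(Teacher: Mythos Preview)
Your proposal is correct and follows essentially the same route as the paper: split $|u|^q/x_n^s=(|u|/x_n)^s\,|u|^{q-s}$, apply H\"older with exponents $2/s$ and $2/(2-s)$, then feed the two factors into the one-dimensional Hardy inequality and the Sobolev (or Poincar\'e--Sobolev) embedding. The exponent matching you carry out is identical to the paper's computation.
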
 

\begin{proof}  If $n\ge 3$, using the H\"older inequality,  Hardy inequality and Sobolev inequality,  we have 
\begin{align*}
\int_{B_R^+}\frac{|u(x)|^{\frac{2(n-s)}{n-2}}}{x_n^s}& = \int_{B_R^+}\frac{|u(x)|^{s}}{x_n^s} |u(x)|^{\frac{2n-sn}{n-2}} 
\\&  \le \Big( \int_{B_R^+}\frac{|u(x)|^{2}}{x_n^2} \Big)^{\frac{s}{2}}  \Big( \int_{B_R^+}|u|^{\frac{2n}{n-2}} \Big)^{\frac{2-s}{2}} \\&
\le C(n)\Big( \int_{B_R^+} |\nabla u|^2 \Big)^{\frac{s}{2}} \Big( \int_{B_R^+} |\nabla u|^2 \Big)^{\frac{n}{n-2}\frac{2-s}{2}} \\&= C(n)\Big( \int_{B_R^+} |\nabla u|^2 \Big)^{\frac{n-s}{n-2}}  .
\end{align*}
If $n=1,2$, we have 
\begin{align*}
\int_{B_R^+}\frac{|u(x)|^{r}}{x_n^s} &= \int_{B_R^+}\frac{|u(x)|^{s}}{x_n^s} |u|^{r-s}  \\&
\le \Big( \int_{B_R^+}\frac{|u(x)|^{2}}{x_n^2} \Big)^{\frac{s}{2}}  \Big( \int_{B_R^+}|u|^{\frac{2(r-s)}{2-s}} \Big)^{\frac{2-s}{2}}  \\&
\le C(R,r,s)  \Big( \int_{B_R^+} |\nabla u|^2 \Big)^{\frac{r}{2}} . 
\end{align*}
Therefore, we complete the proof. 
\end{proof}

Let 
\[
V_0^1(B_R^+\times (-T,0]) =L^\infty ((-T,0]; L^2(B_R^+,x_n^{p-1}\ud x)) \cap  L^2((-T,0];H_0^1(B_R^+)) 
\]
and 
 \begin{equation}\label{eq:V10norm}
  \|u\|_{V_0^1(B_R^+\times (-T,0])}^2= \sup_{-T<t<0} \int_{B_R^+ }u^2 x_n^{p-1}\,\ud x +  \|\nabla u\|_{L^2(B_R^+ \times(-T,0])} ^2 
  \end{equation}
    for $u\in V_0^1(B_R^+\times (-T,0])  $.

\begin{lem}\label{lem:weightedsobolev} Let $R\ge 1$ and $T>0$.
For every $u\in V_0^1(B_R^+\times (-T,0])$, we have
 \[
 \Big(\int_{B_R^+ \times (-T,0]} |u|^{2\chi}\ud x \ud t \Big)^{\frac{1}{\chi}} \le C \|u\|_{V_0^1(B_R^+\times (-T,0])}^2,
 \]
 where $\chi =\frac{(n+2-2s)}{n-s} >1$ with $s=\frac{2(p-1)}{n+p-3}$ and $C$ depends only on $n$ and $p$  if $n\ge 3$;  while $\chi=\frac{p+1}{p}$ and $C$  depends only on $R$ and $p$ if $n=1,2$.
\end{lem}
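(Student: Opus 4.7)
The plan is to establish a parabolic Sobolev-type embedding by interpolating between the two parts of the norm \eqref{eq:V10norm}: the weighted $L^\infty_t L^2_x$ part and the $L^2_t H^1_x$ part, using the Hardy--Sobolev inequality of Lemma~\ref{lem:Hardy-Sobolev} to absorb the degenerate weight $x_n^{p-1}$. The key algebraic step is to split the integrand as a product
\[
|u|^{2\chi}\;=\;\bigl(u^2 x_n^{p-1}\bigr)^{a}\cdot\bigl(|u|^r x_n^{-s}\bigr)^{1-a}
\]
where $a\in(0,1)$, $s\in(0,2)$ and $r$ are chosen so that (i) the $x_n$-powers cancel, i.e.\ $(p-1)a=s(1-a)$, and (ii) H\"older in $x$ with dual exponents $1/a$ and $1/(1-a)$ produces on the right the weighted $L^2$ norm and the Hardy--Sobolev quantity from Lemma~\ref{lem:Hardy-Sobolev}.

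For $n\ge 3$, I would take $s=\frac{2(p-1)}{n+p-3}$ (which is the exponent appearing in the statement) and $r=\frac{2(n-s)}{n-2}$, so that Lemma~\ref{lem:Hardy-Sobolev} gives $\int |u|^r x_n^{-s}\le C(\int|\nabla u|^2)^{r/2}$. The matching condition $(p-1)a=s(1-a)$ then forces $a=\frac{2}{n+p-1}$. One further asks that the power of $\int|\nabla u|^2$ after applying Lemma~\ref{lem:Hardy-Sobolev} equals exactly $1$, i.e.\ $\frac{r(1-a)}{2}=1$; this holds because the identity $n-s=\frac{(n-2)(n+p-1)}{n+p-3}$ (a direct computation from the definition of $s$) yields $\frac{n-2}{n-s}=1-a$. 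With these choices, the exponent of $u$ becomes $2a+r(1-a)=2(a+1)$, so $\chi=a+1=\frac{n+p+1}{n+p-1}$, and the same identity shows $\frac{n+p+1}{n+p-1}=\frac{n+2-2s}{n-s}$, agreeing with the claim. For $n=1,2$, the analogous choice is $s=1$, $r=\frac{2p}{p-1}$ (note $s\le r$), giving $a=1/p$ and $\chi=1+1/p=\frac{p+1}{p}$, and using instead the $n=1,2$ version of Lemma~\ref{lem:Hardy-Sobolev}.

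Having set the exponents, the rest is routine. Applying H\"older in $x$ for fixed $t$ and then Lemma~\ref{lem:Hardy-Sobolev} gives
\[
\int_{B_R^+}|u(\cdot,t)|^{2\chi}\,\ud x\;\le\;C\Bigl(\int_{B_R^+} u^2 x_n^{p-1}\,\ud x\Bigr)^{a}\,\int_{B_R^+}|\nabla u|^2\,\ud x.
\]
Integrating in $t\in(-T,0]$, pulling out the supremum in time of the first factor, and using the definition \eqref{eq:V10norm} yields
\[
\int_{B_R^+\times(-T,0]}|u|^{2\chi}\,\ud x\ud t\;\le\;C\Bigl(\sup_{-T<t<0}\int_{B_R^+}u^2 x_n^{p-1}\Bigr)^{a}\,\|\nabla u\|_{L^2}^{2}\;\le\;C\|u\|_{V_0^1}^{2(a+1)}=C\|u\|_{V_0^1}^{2\chi},
\]
which is the desired bound after taking the $1/\chi$ power.

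There is no real obstacle here once the exponents are identified; the entire content of the lemma is to select $s$ so that the $x_n$-weights match and the time integration lands exactly on $\int|\nabla u|^2\le \|u\|_{V_0^1}^2$. The only place where care is needed is the arithmetic verification that the numerically defined $\chi=a+1$ coincides with the intrinsic quantity $\frac{n+2-2s}{n-s}$ stated in the lemma, which reduces to the single identity $n-s=\frac{(n-2)(n+p-1)}{n+p-3}$ for $n\ge 3$.
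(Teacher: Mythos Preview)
Your proof is correct and is essentially the same argument as the paper's: your H\"older exponent $a=\frac{2}{n+p-1}$ coincides with the paper's $\frac{2-s}{n-s}$, and both proofs split $|u|^{2\chi}$ into the Hardy--Sobolev integrand $|u|^{\frac{2(n-s)}{n-2}}x_n^{-s}$ and the weighted factor $u^2 x_n^{p-1}$ before integrating in $t$. The only cosmetic difference is the final step, where the paper applies Young's inequality after taking the $1/\chi$ power while you use the elementary bound $A^aB\le(A+B)^{a+1}$ before; both give the same conclusion.
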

\begin{proof} We prove the case $n\ge 3$ first.
Note that  $\frac{s(n-2)}{2-s}=p-1$. By \eqref{eq:hardysobolev} and the H\"older inequality, we have
\begin{align*}
\int_{B_R^+  }|u|^{\frac{2(n+2-2s)}{n-s} } \,\ud x &=\int_{B_R^+  }|u|^{2} x_n^{-\frac{s(n-2)}{n-s}}|u|^{\frac{2(2-s)}{n-s} } x_n^{\frac{s(n-2)}{n-s}}\,\ud x  \\
&\le \Big(\int_{B_R^+}\frac{|u|^{\frac{2(n-s)}{n-2}}}{x_n^s}\,\ud x\Big)^{\frac{n-2}{n-s}} \Big( \int_{B_R^+ } u^2 x_n^{\frac{s(n-2)}{2-s}}\,\ud x\Big)^{\frac{2-s}{n-s}} \\&
\le C \Big(\int_{B_R^+  } |\nabla u|^2\,\ud x\Big)   \Big( \int_{B_R^+ } u^2 x_n^{p-1}\,\ud x\Big)^{\frac{2-s}{n-s}}.
\end{align*}
Integrating the above inequality in $t$, we have
\begin{align*}
&\Big(\int_{-T}^0\int_{B_R^+  }|u(x,t)|^{\frac{2(n+2-2s)}{n-s}} \,\ud x \ud t\Big)^{\frac{n-s}{n+2-2s}}\\
& \le C  \sup_{-T<t<0}\Big( \int_{B_R^+  }u^2 x_n^{p-1}\,\ud x\Big)^{\frac{2-s}{n+2-2s}} \Big(\int_{B_R^+ \times[-T,0]} |\nabla u|^2 \,\ud x \ud t  \Big)^{\frac{n-s}{n+2-2s}}\\&
\le C \Big( \|\nabla u\|_{L^2(B_R^+ \times(-T,0])} ^2 + \sup_{-T<t<0} \int_{B_R^+  }u^2 x_n^{p-1}\,\ud x \Big),
\end{align*}
where we have used the Young inequality in the last inequality.

If $n=1,2$, using \eqref{eq:hardysobolev-2d} and the H\"older inequality, we have 
\begin{align*}
\int_{B_R^+  }|u|^{2+\frac{2}{p}} \,\ud x&= \int_{B_R^+  }|u|^{2} x_n^{-\frac{p-1}{p}} |u|^{\frac{2}{p}} x_n^{\frac{p-1}{p}}  \,\ud x \\&
\le \Big( \int_{B_R^+  }  \frac{|u|^{\frac{2p}{p-1}}}{x_n}\,\ud x \Big)^{\frac{p-1}{p }} \Big( \int_{B_R^+ } u^2 x_n^{p-1}\,\ud x\Big)^{\frac{1}{p}}  \\&
\le C\Big( \int_{B_R^+  }  |\nabla u|^2\,\ud x \Big) \Big( \int_{B_R^+ } u^2 x_n^{p-1}\,\ud x\Big)^{\frac{1}{p}} .
\end{align*}
Integrating the above inequality in $t$ and using the Young inequality, we then complete the proof.
\end{proof}

To accommodate the partial boundary condition \eqref{eq:linear-eq-D}, we define the following space:
\[
H^1_{0,L}(B_R^+)=\{u\in H^1(B_R^+): u\equiv 0\ \mbox{on}\ \partial'B_R^+\}.
\]
By the  Poincar\'e inequality, $\|\nabla w\|_{L^2(B^+_R)}$ is  a norm for $w\in H^1_{0,L}(B_R^+)$.
Now we can prove local boundedness of weak solutions to
\be\label{eq:degiorgilinear}
a(x,t) x_n^{p-1} \pa_t u -\mathrm{div}(A(x,t) \nabla u)+b(x,t) u=f(x,t) \quad \mbox{in }Q_1^+
\ee
with the partial boundary condition \eqref{eq:linear-eq-D}, where the weak solutions are defined in a standard distribution sense in Definition \ref{defn:weaksolutionp}.

\begin{prop}\label{prop:local-bound} Suppose that $a$ and $A$ satisfy  \eqref{eq:ellip}, and $\pa_ta ,b, f\in L^q(Q_1^+)$ for some constant $q>\max\{\frac{\chi}{\chi-1},\frac{n+p+1}{2}\}$, where $\chi>1$ is the one in Lemma \ref{lem:weightedsobolev}. Let $u\in C ((-1,0]; L^2(B_1^+,x_n^{p-1}\ud x)) \cap  L^2((-1,0];H_{0,L}^1(B_1^+)) $ be a weak solution of \eqref{eq:degiorgilinear}.  Then we have, for any $\gamma>0$,
\[
\|u\|_{L^\infty(Q_{1/2}^+)} \le C\Big(\|u\|_{L^\gamma(Q_{1}^+)} +\|f\|_{L^q(Q_1^+)}\Big),
\]
where $C>0$ depends only on $n$, $\lda, \Lda$, $\gamma$, $p$, $\|\pa_t a\|_{L^q(Q_1^+)}$ and  $\|b\|_{L^q(Q_1^+)}$.
\end{prop}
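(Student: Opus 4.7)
The plan is to run a Moser iteration adapted to the singular weight $x_n^{p-1}$, with Lemma \ref{lem:weightedsobolev} serving as the Sobolev embedding that closes the scheme. Fix $\beta \ge 2$ and nested cylinders $Q_{\rho'}^+ \subset Q_\rho^+$ with $1/2 \le \rho' < \rho \le 1$, and choose a cutoff $\eta$ supported in $Q_\rho^+$, equal to $1$ on $Q_{\rho'}^+$, vanishing on $\pa''B_\rho^+\times(-\rho^2,0]$ and at $t=-\rho^2$, with $|\nabla\eta|+|\pa_t\eta|^{1/2}\le C/(\rho-\rho')$. Set $v=|u|+k$ where $k=\|f\|_{L^q(Q_1^+)}$, and test \eqref{eq:degiorgilinear} against $\eta^2 v^{\beta-1}\mathrm{sgn}(u)$ (equivalently, handle $u^+$ and $u^-$ separately). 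The time-derivative term produces, after integration by parts in $t$ and using $\partial_t v^\beta=\beta v^{\beta-1}\partial_t u\,\mathrm{sgn}(u)$, the weighted sup $\sup_{t}\int_{B_\rho^+} x_n^{p-1} v^\beta \eta^2\,\ud x$ together with error terms involving $\pa_t a$ and $\pa_t\eta$; the divergence-form term yields $\int\int|\nabla(v^{\beta/2}\eta)|^2\,\ud x\ud t$ up to standard cross terms absorbed via Cauchy-Schwarz into $\int\int|\nabla\eta|^2 v^\beta$. The net output is a weighted energy inequality controlling $\|v^{\beta/2}\eta\|_{V_0^1(B_\rho^+\times(-\rho^2,0])}^2$ by the right-hand side.

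Next, Lemma \ref{lem:weightedsobolev} applied to $w=v^{\beta/2}\eta$ converts the energy control into the unweighted bound $\|w\|_{L^{2\chi}(Q_\rho^+)}^{2}\le C\|w\|_{V_0^1}^2$. The lower-order and forcing terms $|\pa_t a|v^\beta$, $|b|v^\beta$, and $|f|v^{\beta-1}\le k^{-1}|f|v^\beta$ are estimated by H\"older's inequality against the $L^q$ norms of the coefficients; the hypothesis $q>\chi/(\chi-1)$ ensures the H\"older dual $q'<\chi$, so each such term is dominated by $\|v\|_{L^{\beta\chi}}^\beta$ with a prefactor that can be made small by Young's inequality and absorbed on the left (the threshold $q>(n+p+1)/2$ provides the extra room in the parabolic scaling). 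The outcome is a reverse-H\"older inequality
\begin{equation*}
\|v\|_{L^{\beta\chi}(Q_{\rho'}^+)}^\beta \;\le\; \frac{C\,\beta^{C}}{(\rho-\rho')^{C}}\,\|v\|_{L^{\beta}(Q_{\rho}^+)}^\beta,
\end{equation*}
with constants independent of $\beta$.

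Step three is the standard Moser iteration: choose $\beta_j=2\chi^j$ and $\rho_j=1/2+2^{-j-1}$, multiply the inequalities telescopically, and use $\sum j\chi^{-j}<\infty$ together with $\sum\chi^{-j}<\infty$ to obtain $\|u\|_{L^\infty(Q_{1/2}^+)}\le C(\|v\|_{L^{2}(Q_1^+)}+k)$. To descend from $L^2$ to any $L^\gamma$ with $\gamma\in(0,2)$, one applies the $L^\infty$ bound on intermediate cylinders $Q_{r}^+$, $1/2<r<1$, interpolates $\|v\|_{L^2}\le \|v\|_{L^\infty}^{1-\gamma/2}\|v\|_{L^\gamma}^{\gamma/2}$, and then uses a standard iteration lemma on the family $\phi(r)=\|u\|_{L^\infty(Q_r^+)}$ to absorb the $L^\infty$ factor. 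This yields the stated bound.

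The principal technical obstacle is the mismatch between the weighted time-derivative term and the unweighted gradient term in the energy estimate, and then verifying that Lemma \ref{lem:weightedsobolev} is the correct bridge: its energy space $V_0^1$ with weighted $L^2$-sup and unweighted $H^1$-gradient is exactly the one produced by the test-function computation, and its gain exponent $\chi>1$ is exactly what allows iteration. Once this identification is made, the integrability hypotheses $q>\max\{\chi/(\chi-1),(n+p+1)/2\}$ are precisely what makes every lower-order and forcing contribution absorbable with constants that behave polynomially in $\beta$, keeping the Moser recursion summable.
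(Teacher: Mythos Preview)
Your Moser iteration is a valid alternative to the paper's De Giorgi iteration, and you have correctly identified Lemma~\ref{lem:weightedsobolev} as the Sobolev-type input that closes either scheme, as well as the role of the hypothesis $q>\chi/(\chi-1)$ in making the lower-order terms absorbable. The final $L^2\to L^\gamma$ descent via interpolation and the iteration lemma is also the same in spirit as what the paper does (the paper scales first, which is where $q>(n+p+1)/2$ enters; your interpolation route does not obviously need it).

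There is, however, one concrete step in your write-up that fails as stated. Lemma~\ref{lem:weightedsobolev} is proved for functions in $V_0^1$, i.e.\ in $H_0^1(B_R^+)$ for a.e.\ $t$, and its proof goes through the Hardy inequality, which genuinely requires vanishing on $\{x_n=0\}$. Your shifted function $v=|u|+k$ satisfies $v=k>0$ on $\pa'B_1^+$, so $w=v^{\beta/2}\eta$ does \emph{not} lie in $H_0^1(B_\rho^+)$ (your cutoff $\eta$ is, correctly, not zero on the flat boundary), and you cannot apply Lemma~\ref{lem:weightedsobolev} to $w$ directly. The paper's De Giorgi truncation $(u-k)^+$ avoids this for free, since $u=0\le k$ on $\pa'B_1^+$. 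Your scheme can be repaired---for instance, apply Lemma~\ref{lem:weightedsobolev} to $(v^{\beta/2}-k^{\beta/2})\eta\in H_0^1$ and observe that the discrepancy contributes only $O(k^{\beta})$, which is dominated by $\|v\|_{L^\beta}^\beta$ since $v\ge k$---but this correction should be made explicit, as it is exactly the place where the singular weight interacts with the boundary condition.
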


\begin{proof} 
We provide a proof using the De Giorgi iteration with the help of the Sobolev inequality in Lemma \ref{lem:weightedsobolev}.

For $\theta\in (0,1)$, we first show that
\be \label{eq:l2-l_infty}
 \|u\|_{L^\infty(Q_{\theta}^+)} \le C\Big(\frac{1}{(1-\theta)^{\frac{1}{\delta}}}\|u\|_{L^2(Q_{1}^+)} +\|f\|_{L^q(Q_1^+)}\Big),
\ee
where $\delta=1-\frac{1}{q}-\frac{1}{\chi} $, and $C>0$ depends only on $n$, $\lda, \Lda$, $\gamma$, $p$,  $\|\pa_t a\|_{L^q(Q_1^+)}$ and  $\|b\|_{L^q(Q_1^+)}$. Clearly, we only need to consider $\theta\ge 1/2$.

Let $-1<t_1<t_2\le -\frac12$, and $0\le \eta_1(t)\le 1$ be a smooth function in $\R$ satisfying that $\eta_1(t)=0$ for $t\le t_1$, $\eta_1(t)=1$ for $t\ge t_2$ and $|\eta_1'(t)|\le \frac{C}{t_2-t_1}$, where $C>0$ is independent of $t_1$ and $t_2$. Let $\frac12\le r_2\le r_1 <1$, and $ 0\le \eta_2(x)\le 1$ be a smooth function satisfying that $\eta_2(x)=1$ if $|x|\le r_2$, $\eta_2(x)=0$ for $|x|\ge r_1$ and $|\nabla \eta |\le \frac{C}{r_1-r_2}$, where $C>0$ is independent of $r_1 $ and $r_2$. Let $\eta(x,t)=\eta_1(t)\eta_2(x)$.

For $k\ge 0$, let $v= (u-k)^+$. Multiplying $v\eta^2$ to the equation and integrating by parts (in this step, we may assume that $\eta^2v\in V_0^{1,1}(Q_1^+)$ (defined in \eqref{eq:V110}), since otherwise we can replace $u$ by its Steklov average, and take a limit in the end), we have
\begin{align*}
&\sup_{-1<t<0} \int_{B_1^+} (v\eta)^2 x_n^{p-1} \,\ud x  +\int_{Q_1^+} |\nabla (v\eta)|^2 \,\ud x \ud t\\& \le C \int_{Q_{1}^+} \Big \{v^2 ((|b|+|\pa_t a|)\eta^2+|\pa_t\eta|\eta +|\nabla \eta|^2)  +  f^{+} v\eta^2\Big\} \\
&\quad+C k^2\int_{\{v\eta\neq 0 \}}  |b| \eta^2.
\end{align*}

By the H\"older inequality and Lemma \ref{lem:weightedsobolev}, we have
\begin{align*}
\int_{Q_1^+} f^+ v \eta^2 & \le \Big (\int_{Q_1^+} (f^+)^q \Big)^{\frac{1}{q}}  \Big(\int_{Q_1^+} (v\eta)^{2\chi} \Big)^{\frac{1}{2\chi}} |\{v\eta \neq 0\}|^{1-\frac{1}{q}- \frac{1}{2\chi}}\\&
\le C(n,p)\Big (\int_{Q_1^+} (f^+)^q \Big)^{\frac{1}{q}}  \|v\eta\|_{V_0^1 (Q_1^+)} |\{v\eta \neq 0\}|^{1-\frac{1}{q}- \frac{1}{2\chi}} \\&
\le \va \|v\eta\|_{V_0^1 (Q_1^+)} ^2 + C \|f^+\|_{L^q(Q_1^+)}^2    |\{v\eta \neq 0\}|^{2-\frac{2}{q}- \frac{1}{\chi}}, 
\end{align*}
\begin{align*}
&\int_{Q_1^+}  (|\pa_t a| +|b|) v^2 \eta^2 \\
& \le \Big (\int_{Q_1^+} (|\pa_t a| +|b|)^q \Big)^{\frac{1}{q}}  \Big(\int_{Q_1^+} (v\eta)^{2\chi} \Big)^{\frac{1}{\chi}} |\{v\eta \neq 0\}|^{1-\frac{1}{q}- \frac{1}{\chi}}\\&
\le C(n,p)^2\Big (\int_{Q_1^+} (|\pa_t a| +|b|)^q \Big)^{\frac{1}{q}}  \|v\eta\|_{V_0^1 (Q_1^+)}^2 |\{v\eta \neq 0\}|^{1-\frac{1}{q}- \frac{1}{\chi}}
\end{align*}
and 
 \begin{align*}
\int_{\{v\eta\neq 0 \}}  |b| \eta^2 & \le \Big (\int_{\{v\eta\neq 0\} } |b|^q\Big)^{\frac{1}{q}}   |\{v\eta \neq 0\}|^{1-\frac{1}{q}}.
\end{align*}
 It follows that
 \[
 \|v \eta\|_{V_0^1 (Q_1^+)}^2 \le C \Big( \int_{Q_1^+} v^2 (\eta |\pa_t \eta| +|\nabla \eta|^2) \ud x\ud t+  (k^2 +F^2 ) |\{v\eta \neq 0\}|^{1-\frac{1}{q}} \Big)
 \]
provided that $|\{v\eta \neq 0\}|$ is small, where $F^2= \|f^+\|_{L^q(Q_1^+)}^2$. By the H\"older inequality and Lemma \ref{lem:weightedsobolev}, we have
 \begin{align*}
 \int_{Q_1^+} (v\eta)^2 &\le  \Big( \int_{Q_1^+} (v\eta)^{2\chi}  \Big)^{\frac{1}{\chi}} |\{v\eta\neq 0\}|^{1-\frac{1}{\chi}}\\&
 \le C  \Big( \int_{Q_1^+} v^2 (\eta |\pa_t \eta| +|\nabla \eta|^2) \ud x\ud t |\{v\eta\neq 0\}|^{1-\frac{1}{\chi}} \\& \qquad +  (k^2 +F^2 ) |\{v\eta \neq 0\}|^{2-\frac{1}{q}-\frac{1}{\chi}} \Big) .
 \end{align*}
 Let $A(k,R)= \{(x,t)\in Q_{R}^+: u\ge k\}$. Then
 \begin{align}
 \int_{A(k,r_2)} (u-k)^2 \le& \frac{C}{(r_1-r_2)^2} |A(k,r_1)|^\delta \int_{A(k,r_1)} (u-k)^2 \nonumber \\&+C(k^2+ F^2 ) |A(k,r_1)|^{1+\delta} \label{eq:degiorgi}
 \end{align}
 if $|A(k,r_1)|$ is small, where $\delta=1-\frac{1}{q}-\frac{1}{\chi}$. Note that
\[
|A(k,R)| \le \frac{1}{k} \int_{A(k,R)} u^+ \,\ud x \le  \frac{1}{k} \|u\|_{L^2(Q_1^+)} |A(k,R)|^{1/2}.
\]
Hence, $|A(k,R)| \le \frac{1}{k^2} \|u\|_{L^2(Q_1^+)}^2 $. Hence, \eqref{eq:degiorgi} holds when $k^2\ge k_0^2=C \|u\|_{L^2}^2$ for some constant $C$ depending only on $n, \lda, \Lda$, $\|\pa_t a\|_{L^q(Q_1)}$ and $\|b\|_{L^q(Q_1)}$.

For any $m>k\ge k_0$ and $0<r<1$, we have
\[
\int_{A(m,r)} (u-m)^2 \le \int_{A(k,r)} (u-k)^2
\]
and
\[
|A(m,r) | = |Q_r^+ \cap \{u-k>m-k\}| \le \frac{1}{(m-k)^2} \int_{A(k,r)} (u-k)^2.
\]
By \eqref{eq:degiorgi}, we have
 \begin{align*}
&\int_{A(m,r_2)} (u-m)^2 \\
&\le C  \left(\frac{1}{(r_1-r_2)^2} +\frac{m^2+F^2}{(m-k)^2} \right) \frac{1}{(m-k)^{2\delta}} \left(\int_{A(k,r_1)} (u-k)^2\right)^{1+\delta}
 \end{align*}
or
\be\label{eq:degiorgi-1}
\|(u-m)^+\|_{L^2(Q_{r_2}^+)} \le C  \left(\frac{1}{r_1-r_2} +\frac{m+F}{m-k} \right) \frac{1}{(m-k)^{\delta}} \|(u-k)^+\|_{L^2(Q_{r_1}^+)}^{1+\delta}.
\ee
Set $\phi(k,r)=\|(u-k)^+\|_{L^2(Q_{r}^+)}  $. Let $k>0$ to be fixed,
\[
k_\ell= k_0 + k(1-2^{-\ell}) \quad \mbox{and}\quad  r_\ell= \theta+ 2^{-\ell} (1-\theta) \quad \mbox{for } \quad \ell=0,1,\dots.
\]
By \eqref{eq:degiorgi-1}, we have
\begin{align*}
\phi(k_\ell, r_\ell) &\le C \left( \frac{2^\ell}{1-\theta} + \frac{2^{\ell}(k_0+F+k)}{k} \right) \frac{2^{\delta \ell}}{k^{\delta}} \phi(k_{\ell-1}, r_{\ell-1})^{1+\delta} \\&
\le   \frac{C}{1-\theta} \cdot \frac{(k_0+F+k)}{k}  \frac{2^{(1+\delta) \ell}}{k^{\delta}} \phi(k_{\ell-1}, r_{\ell-1})^{1+\delta}.
\end{align*}
By induction, one can show that
\be\label{eq:degiorgiphi}
\phi(k_\ell, r_\ell) \le \frac{\phi(k_0,r_0)}{2^{\ell \frac{1+\delta}{\delta}}}
\ee
provided that
\be\label{eq:choiceofk}
  \frac{C 2^{\frac{(1+\delta)^2}{\delta}}}{1-\theta} \frac{(k_0+F+k)}{k}  \left(\frac{\phi(k_0, r_0)}{k}\right)^{\delta} \le 1.
\ee
Indeed, given \eqref{eq:choiceofk}, for $\ell=1$, we have
\[
\phi(k_1, r_1)\le   \frac{C}{1-\theta} \cdot \frac{(k_0+F+k)}{k}  \frac{2^{1+\delta}}{k^{\delta}} \phi(k_{0}, r_{0})^{1+\delta}\le \frac{\phi(k_0,r_0)}{2^{\frac{1+\delta}{\delta}}}.
\]
Now, suppose that \eqref{eq:degiorgiphi} holds for $\ell$. Then for $\ell+1$, we have
\begin{align*}
\phi(k_{\ell+1}, r_{\ell+1}) & \le   \frac{C}{1-\theta}  \frac{(k_0+F+k)}{k}  \frac{2^{(1+\delta) (\ell+1)}}{k^{\delta}} \phi(k_{\ell}, r_{\ell})^{1+\delta}\\
&\le  \frac{C}{1-\theta}  \frac{(k_0+F+k)}{k}  \frac{2^{(1+\delta) (\ell+1)}}{k^{\delta}} \left[\frac{\phi(k_0,r_0)}{2^{\ell \frac{1+\delta}{\delta}}}\right]^{1+\delta}\\
&= \frac{C2^{\frac{(1+\delta)^2}{\delta}}}{1-\theta} \frac{(k_0+F+k)}{k}  \frac{\phi(k_0,r_0)^\delta}{k^{\delta}} \frac{\phi(k_0,r_0)}{2^{(\ell+1) \frac{1+\delta}{\delta}}}\\
&\le \frac{\phi(k_0,r_0)}{2^{(\ell+1) \frac{1+\delta}{\delta}}}.
\end{align*}
This proves \eqref{eq:degiorgiphi} assuming \eqref{eq:choiceofk}. By choosing
\[
k= C^{\frac{1}{\delta}} 2^{\frac{1}{\delta}+\frac{(1+\delta)^2}{\delta^2}} (k_0+F +(1-\theta)^{-\frac{1}{\delta}} \phi(k_0, r_0) ),
\]
we  have $\frac{(k_0+F+k)}{k}\le 2$ and $  \frac{C 2^{\frac{(1+\delta)^2}{\delta}}}{1-\theta}  \left(\frac{\phi(k_0, r_0)}{k}\right)^{\delta} \le \frac 12$, so that  \eqref{eq:choiceofk} holds.

Sending $\ell\to \infty$, we conclude that
\[
\phi(k_0+k,\theta)=0.
\]
It yields \eqref{eq:l2-l_infty}, since $k_0 =C\|u^+\|_{L^2(Q_1^+)}  $ and $\phi(k_0, r_0) \le \|u^+\|_{L^2(Q_1^+)} $.

For any $R\le 1$, define
\begin{align*}
&\tilde u(x,t)= u(Rx, R^{p+1}t),\quad \tilde a(x,t)= a(Rx, R^{p+1}t), \quad \tilde A(x,t)= A(Rx, R^{p+1}t),\\
& \tilde b(x,t)= b(Rx, R^{p+1}t), \quad\mbox{and}\quad \tilde f(x,t)= f(Rx, R^{p+1}t).
\end{align*}
Then
\[
\tilde a x_n^{p-1}  \pa_t \tilde u- \mathrm{div}(\tilde A \nabla \tilde u) +R^2 \tilde b \tilde u = R^2 \tilde f \quad \mbox{in }Q_1^+
\]
and $\tilde u=0$ on $\pa' B_1\times (-1,0]$. Moreover, $\tilde a$ and $\tilde A$ also satisfy \eqref{eq:ellip}, and $$\|R^2\tilde b\|_{L^q(Q_1^+)}= R^{2-\frac{n+p+1}{q}}\|b\|_{L^q(\mathcal{Q}_R^+)}\le \|b\|_{L^q(Q_1^+)}$$ (since $q>\frac{n+p+1}{2}$), and $\|\partial_t \tilde a\|_{L^q(Q_1^+)}= R^{p+1-\frac{n+p+1}{q}}\|\partial_t a\|_{L^q(\mathcal{Q}_R^+)}\le \|\partial_t a\|_{L^q(Q_1^+)}.$
By the estimates \eqref{eq:l2-l_infty}, we have
\[
\|\tilde u\|_{L^\infty (Q_{\theta}^+)} \le C\Big(\frac{1}{(1-\theta)^{\frac{1}{\delta}}}\|\tilde u\|_{L^2(Q_1^+)} +R^2 \|\tilde f\|_{L^q(Q_1^+)}\Big).
\]
Scaling back to $u$,  we obtain
\begin{align*}
\| u\|_{L^\infty (\mathcal{Q}_{\theta R}^+)} &\le C\Big(\frac{1}{(1-\theta)^{\frac{1}{\delta}}R^{\frac{n+p+1}{2}}}\| u\|_{L^2(\mathcal{Q}_R^+)} +R^{2-\frac{n+p+1}{q}} \|f\|_{L^q(\mathcal{Q}_R^+)}\Big)\\
&\le C\Big(\frac{1}{(1-\theta)^{\frac{1}{\delta}}R^{\frac{n+p+1}{2}}}\| u\|_{L^\infty(\mathcal{Q}_R^+)}^{\frac{2-\gamma}{2}}\| u\|_{L^\gamma(\mathcal{Q}_R^+)}^{\frac{\gamma}{2}} +\|f\|_{L^q(\mathcal{Q}_1^+)}\Big)\\
&\le \frac{1}{2}\| u\|_{L^\infty (\mathcal{Q}_{R}^+)} +\frac{C}{(1-\theta)^{\frac{2}{\delta\gamma}}R^{\frac{n+p+1}{\gamma}}}\| u\|_{L^\gamma(\mathcal{Q}_R^+)}+C\|f\|_{L^q(\mathcal{Q}_1^+)},
\end{align*}
where we have used $\theta^2 R^{p+1} \ge (\theta R)^{p+1}$ in the first inequality, and H\"older's inequality in the last inequality. By the De Giorgi iteration lemma, that is Lemma \ref{lem:lemma4.3inHL},   the proposition follows immediately.
\end{proof}

If we assume in addition that $u(\cdot,-1)\equiv 0$, then one can obtain the boundedness in $B_{1/2}^+\times[-1,0])$, i.e., up to the initial time. The definition of such weak solutions with initial condition is given in Definition \ref{defn:weaksolutionwithinitialtime}.

\begin{prop}\label{prop:global-bound}
Assume the assumptions in Proposition \ref{prop:local-bound}. Let $u\in C ([-1,0]; L^2(B_1^+,x_n^{p-1}\ud x)) \cap  L^2((-1,0];H_{0,L}^1(B_1^+)) $ be a weak solution of \eqref{eq:degiorgilinear} with the partial boundary condition \eqref{eq:linear-eq-D} and also the initial condition $u(\cdot,-1)\equiv 0$.  Then we have, for any $\gamma>0$,
\[
\|u\|_{L^\infty(B_{1/2}^+\times[-1,0]))} \le C\Big(\|u\|_{L^\gamma(Q_{1}^+)} +\|f\|_{L^q(Q_1^+)}\Big),
\]
where $C>0$ depends only on $n$, $\lda, \Lda$, $\gamma$, $p$, $\|\pa_t a\|_{L^q(Q_1^+)}$ and  $\|b\|_{L^q(Q_1^+)}$.
\end{prop}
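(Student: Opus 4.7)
The plan is to adapt the De Giorgi iteration from the proof of Proposition \ref{prop:local-bound} by exploiting the vanishing initial condition $u(\cdot,-1)\equiv 0$ to eliminate the need for a time cutoff. Recall that in Proposition \ref{prop:local-bound} the bound was proved on $Q_\theta^+\subset Q_1^+$ with $\theta<1$ because the test function had to be cut off near $t=-1$; with $u(\cdot,-1)\equiv 0$, the boundary term at the initial time will vanish on its own, which allows the iteration to reach $t=-1$.

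More precisely, I would fix $\tfrac12\le r_2< r_1\le 1$ and choose a purely spatial cutoff $\eta=\eta_2(x)$ with $\eta\equiv 1$ on $B_{r_2}^+$, $\eta\equiv 0$ outside $B_{r_1}^+$, and $|\nabla\eta|\le C/(r_1-r_2)$. Testing the equation (after Steklov averaging if needed) against $(u-k)^+\eta^2$ over $B_1^+\times[-1,t]$ for any $t\in(-1,0]$ and integrating by parts in time, the term
\[
\int_{B_1^+} a\,x_n^{p-1}\,\bigl((u-k)^+\bigr)^2\,\eta^2\,\ud x\Bigl|_{-1}^{t}
\]
has no contribution at $-1$ because $(u-k)^+(\cdot,-1)\equiv 0$ for $k\ge 0$. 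Together with \eqref{eq:ellip}, Lemma \ref{lem:weightedsobolev}, and the H\"older estimates on the terms involving $\partial_t a$, $b$ and $f$ exactly as in Proposition \ref{prop:local-bound}, this yields
\[
\|(u-k)^+\eta\|_{V_0^1(B_1^+\times[-1,0])}^2\le C\int_{B_1^+\times[-1,0]}(u-k)^2_+|\nabla\eta|^2+C(k^2+F^2)|A(k,r_1)|^{1-1/q},
\]
with $F=\|f^+\|_{L^q(Q_1^+)}$ and $A(k,r)=\{u\ge k\}\cap(B_r^+\times[-1,0])$, provided $|A(k,r_1)|$ is small, which follows from $k\ge k_0:=C\|u\|_{L^2(Q_1^+)}$ as before.

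The resulting inequality is exactly the analogue of the one that produced \eqref{eq:degiorgi}, so the same De Giorgi iteration with $k_\ell=k_0+k(1-2^{-\ell})$ and $r_\ell=\tfrac12+2^{-\ell}\cdot\tfrac12$ applies verbatim and gives
\[
\|u\|_{L^\infty(B_{1/2}^+\times[-1,0])}\le C\bigl(\|u\|_{L^2(Q_1^+)}+\|f\|_{L^q(Q_1^+)}\bigr).
\]
Finally, to upgrade from $L^2$ to $L^\gamma$ for an arbitrary $\gamma>0$, I would apply the same spatial-scaling/interpolation bootstrap used at the end of the proof of Proposition \ref{prop:local-bound}: scale $x\mapsto Rx$ on balls $B_R^+\subset B_1^+$ while keeping the time interval $[-1,0]$ fixed (the drift and lower-order terms only shrink, since $q>(n+p+1)/2$), interpolate $\|u\|_{L^2}\le \|u\|_{L^\infty}^{(2-\gamma)/2}\|u\|_{L^\gamma}^{\gamma/2}$, and absorb the $L^\infty$ factor via Lemma \ref{lem:lemma4.3inHL}.

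The only delicate point, and the place I would be most careful with, is the justification of testing against $(u-k)^+\eta^2$ up to the initial time for weak solutions in $C([-1,0];L^2(B_1^+,x_n^{p-1}\ud x))$ under the singular weight $x_n^{p-1}$; this is where Steklov averaging (as suggested parenthetically in the previous proof) is needed, followed by a passage to the limit using the continuity of $u$ in $t$ with values in the weighted $L^2$ space, so that the weighted energy at $t=-1$ vanishes cleanly.
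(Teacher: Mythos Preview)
Your proposal is correct and follows exactly the approach the paper indicates: drop the time cutoff (the initial boundary term vanishes because $u(\cdot,-1)\equiv 0$) and run the De Giorgi iteration of Proposition~\ref{prop:local-bound} unchanged. One small caveat: the spatial-only rescaling $x\mapsto Rx$ you mention at the end is both unnecessary and slightly off (it would multiply the leading coefficient $a\,x_n^{p-1}$ by $R^{p+1}$, breaking the uniform lower bound on $a$); instead, just note that the De Giorgi step already gives the $L^2\to L^\infty$ estimate between any radii $\tfrac12\le r_2<r_1\le 1$ on the full time interval, then interpolate $\|u\|_{L^2}\le\|u\|_{L^\infty}^{1-\gamma/2}\|u\|_{L^\gamma}^{\gamma/2}$ and invoke Lemma~\ref{lem:lemma4.3inHL} directly.
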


The proof of Proposition \ref{prop:global-bound} is almost identical to that of Proposition \ref{prop:local-bound}. Actually, it will be slightly easier since one does not need the cut-off in $t$. We omit the details.

\begin{rem}\label{rem:Linfinitywithepsilon} 
Proposition \ref{prop:local-bound} and Proposition \ref{prop:global-bound} still hold if one replaces the weight $x_n^{p-1}$ by $(x_n+\va)^{p-1}$ in the equation \eqref{eq:degiorgilinear} for all $\va\in [0,1)$, with the constant $C$ in the estimates independent of $\va$. Indeed, the only change in the proof is using the scaling 
\[
\tilde u(x,t)= u(Rx, R^2 (R\vee\va)^{p-1} t)
\]
with adapted mollifications of the cylinders, where $R\vee\va=\max(R,\va)$.
\end{rem}

\subsection{Smooth coefficients}

Next, we would like to study the regularity for weak solutions of \eqref{eq:linear-eq} when its coefficients are regular. We suppose that $-\mathrm{div} (A \nabla ) +b_1$ is coercive, i.e., there exists a constant $\bar \lda>0$ such that
\be \label{eq:coer-app-smooth}
\int_{B_1^+} A\nabla \phi \nabla \phi +b_1 \phi^2 \ge \bar \lda \int_{B_1^+} \phi^2 \quad \quad \forall \phi\in H^1_0(B_1^+), a.e.\ t\in[-1,1].
\ee
We use the notation that
\[
\|u\|_{L^2(Q_1; x_n^{p-1})}=\left(\int_{Q_1^+}u(x,t)^2 x_n^{p-1}\,\ud x\ud t\right)^{\frac 12}.
\]
Let $W^{1,1}_2(Q_1^+)=\{g\in L^2(Q_1^+): \pa_tg\in L^2(Q_1^+), D_x g \in L^2(Q_1^+)\}$ be the standard Sobolev space.
\begin{prop} \label{prop:smooth-co}  
Suppose  $a, A, b_1, b_2, f\in C^3(\overline{Q_1^+})$, $a$ and $A$ satisfy  \eqref{eq:ellip}. Suppose also that $A$ and $b_1$ are independent of $t$ and satisfy \eqref{eq:coer-app-smooth}.  Let $g\in W^{1,1}_2(Q_1^+)$ satisfying $g\equiv 0$ on $\pa'B_1\times[-1,0]$. Then there exists a unique weak solution of \eqref{eq:linear-eq} with the boundary condition $u=g$ on $\pa_{pa}Q_1^+$. Moreover, $u\in C^{2} (\overline{Q_{1/2}^+})$, all the second order derivatives of $u$ are Lipschitz continuous in $\overline{Q_{1/2}^+}$, and there holds
\[
\sum_{i+j\le 3,i\ge 0,j\ge 0}\|D_x^j\pa_t^ju\|_{L^\infty(\overline{Q_{1/2}^+})} \le C\left(\|u\|_{L^2(Q_1^+; x_n^{p-1})}+ \|f\|_{C^3(Q_1^+)}\right),
\]
where $C>0$ depends only on $n,p,\lda, \Lda$, and the $C^3(\overline{Q_1^+})$ norms  of $a, A, b_1$ and $b_2$. 
\end{prop}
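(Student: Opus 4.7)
The plan is to establish existence by regularizing the weight $x_n^{p-1}$, then to bootstrap regularity by differentiating in the tangential and time directions --- which preserve both the structure of the equation and the partial Dirichlet condition --- and finally to recover normal derivatives algebraically from the equation. For existence, I would replace $x_n^{p-1}$ on both sides of \eqref{eq:linear-eq} by $(x_n+\va)^{p-1}$, which makes the equation uniformly parabolic with $C^3$ coefficients. Classical parabolic theory, together with the compatibility $g\equiv 0$ on $\pa' B_1^+\times[-1,0]$, gives a unique weak solution $u_\va$. Testing against $u_\va - g$ and using coercivity \eqref{eq:coer-app-smooth} together with $b_2\ge\lda$ yields uniform-in-$\va$ energy bounds, and Remark \ref{rem:Linfinitywithepsilon} lifts these to uniform $L^\infty$ bounds up to $\{x_n=0\}$. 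Passing to a weak limit produces the weak solution $u$ of \eqref{eq:linear-eq} with $u=g$ on $\pa_{pa}Q_1^+$; uniqueness follows by testing the difference of two solutions against itself and using \eqref{eq:coer-app-smooth}.

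For the regularity, interior bounds in $\{x_n>\delta\}$ are standard since the equation is uniformly parabolic there. The key observation near $\{x_n=0\}$ is that differentiating \eqref{eq:linear-eq} in any tangential direction $\pa_{x_i}$ ($i<n$) or in $\pa_t$ yields an equation of the same form, with the same weight $x_n^{p-1}$ on both $\pa_t$ and the source, and with the same ellipticity and coercivity constants (the latter using $t$-independence of $A$ and $b_1$). The boundary condition propagates as well: $u\equiv 0$ on $\pa' B_1^+\times[-1,0]$ gives $\pa_{x_i}u=\pa_t u=0$ there. Applying Proposition \ref{prop:local-bound} iteratively at the $\va$-level, with constants independent of $\va$ by Remark \ref{rem:Linfinitywithepsilon}, yields uniform $L^\infty$ bounds on all mixed tangential-time derivatives $\pa_{x'}^{\alpha}\pa_t^{\ell}u_\va$ of sufficiently high total order.

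The remaining derivatives involving $\pa_{x_n}$ are recovered from the equation itself. Solving \eqref{eq:linear-eq} for $\pa_{nn}u$,
\[
a_{nn}\pa_{nn}u = a\,x_n^{p-1}\pa_t u - \!\!\sum_{(i,j)\ne (n,n)}\!\! a_{ij}\pa_{ij}u - \sum_{i,j}(\pa_i a_{ij})\pa_j u + b_1 u + b_2 x_n^{p-1} u - x_n^{p-1}f,
\]
and using $a_{nn}\ge\lda$ by \eqref{eq:ellip}, we express $\pa_{nn}u$ in terms of already controlled quantities; differentiating this identity further in $x$ and $t$ generates all higher normal derivatives, and $L^\infty$ control of third-order derivatives gives Lipschitz continuity of the second-order ones. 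Passing to the limit $\va\to 0$ concludes the proof. The main technical obstacle is verifying that all bounds are uniform in $\va$, which forces one to check that Proposition \ref{prop:local-bound} and Remark \ref{rem:Linfinitywithepsilon} apply not just to $u_\va$ but to each differentiated quantity --- the hypothesis $b_2\ge\lda$ is used repeatedly to preserve coercivity after differentiation. A secondary subtlety is the factor $x_n^{p-1}\pa_t u$ in the recovery formula: since $\pa_t u = 0$ on $\{x_n=0\}$, tangential regularity already established gives $|\pa_t u|\le C x_n$, so $x_n^{p-1}\pa_t u = O(x_n^p)$ is bounded near the boundary because $p>1$.
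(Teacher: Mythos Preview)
Your overall architecture --- regularize to $(x_n+\va)^{p-1}$, derive uniform-in-$\va$ estimates, pass to the limit, and recover normal derivatives algebraically from the equation --- matches the paper. The gap is in the tangential-differentiation step. Differentiating \eqref{eq:linear-eq} in $\pa_{x_i}$ for $i<n$ does \emph{not} yield an equation to which Proposition~\ref{prop:local-bound} directly applies: since $A$ depends on $x$, the source acquires the term $\mathrm{div}((\pa_{x_i}A)\nabla u)$, which when expanded contains $(\pa_{x_i}a_{jk})\,\pa_{jk}u$. Proposition~\ref{prop:local-bound} requires the right-hand side in $L^q$, so you would need $\nabla^2 u\in L^q$ up to $\{x_n=0\}$ \emph{before} running the iteration --- and that is precisely the spatial regularity you are trying to establish. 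The claim that the tangentially differentiated equation is ``of the same form'' therefore hides a circularity.

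The paper avoids this by exploiting the hypothesis that $A$ and $b_1$ are $t$-independent, so that differentiation in $t$ alone is genuinely clean. It iterates Proposition~\ref{prop:local-bound} only on $\pa_t u,\pa_t^2 u,\pa_t^3 u$ (feeding in an intermediate weighted $L^2$ bound on $\pa_t u_\va$ obtained by testing against a cutoff times $(x_n+\va)^{p+1}\pa_t u_\va$). All spatial regularity is then obtained not from Proposition~\ref{prop:local-bound} but from \emph{elliptic} $W^{2,q}$ and Schauder estimates applied on each time slice to $-\mathrm{div}(A\nabla u)+b_1u = x_n^{p-1}(f-a\,\pa_t u - b_2 u)$ and its $t$- and $x'$-differentiated versions, whose right-hand sides are by then known to be H\"older. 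Two smaller points: Proposition~\ref{prop:local-bound} uses no coercivity at all, and $b_2\ge\lda$ is neither a hypothesis of the proposition nor what preserves structure under differentiation --- it is the $t$-independence of $A,b_1$ that keeps the principal part intact.
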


The definition of weak solution $u$ to \eqref{eq:linear-eq} with the boundary condition $u=g$ on $\pa_{pa}Q_1^+$ is given in Definition \ref{defn:weaksolutionglobalin}.

\begin{proof} 
The existence and uniqueness of the desired weak solution follow from Theorem \ref{thm:existenceofweaksolution}. In the below, we shall prove the regularity.

Let
$u_\va\in C ([-1,0]; L^2(B_1^+)) \cap  L^2((-1,0];H^1(B_1^+))$ be the unique weak solution of
\be\label{eq:degiorgilinearappsmooth}
\begin{split}
&a\cdot  (x_n+\va)^{p-1} \pa_t u_\va -\mathrm{div}(A\nabla u_\va)+b_1 u_\va+b_2\cdot (x_n+\va)^{p-1}u_\va\\
&\quad=(x_n+\va)^{p-1}f \quad \mbox{in }Q_1^+.
\end{split}
\ee
satisfying $u_\va=g$ on $\pa_{pa}Q_1^+$. Since \eqref{eq:degiorgilinearappsmooth} is uniformly parabolic, it follows from classical regularity theory for uniformly parabolic equations that $u_\va\in C^4(Q_{1-\delta}^+)$ for all $\delta>0$. We will derive some uniform estimates of $u_\va$ independent on $\va.$

Applying Proposition \ref{prop:local-bound} and Remark \ref{rem:Linfinitywithepsilon} with a small $\gamma$, and using the H\"older inequality, we obtain 
\be \label{eq:prop24-1}
\|u_\va\|_{L^\infty(Q_{15/16}^+)}\le C\Big(\|u_\va\|_{L^2(Q_1^+;x_n^{p-1})}+\|f\|_{L^\infty(Q_1^+)}\Big).
\ee
Using $u_\va \eta^2$ as a test function with $\eta\in C_c^{2}(Q_{15/16})$ being some cutoff function such that $\eta\equiv 1$ in $Q_{14/16}$, the standard energy estimates argument with the estimate \eqref{eq:prop24-1} yields 
\begin{align}
\|u_\va\|_{V_0^1(Q_{14/16}^+)}& \le C \Big (\|u_\va\|_{L^2(Q_{15/16})}+\|f\|_{L^\infty(Q_{15/16}^+)}\Big) \nonumber \\&\le C\Big(\|u_\va\|_{L^2(Q_1^+;x_n^{p-1})}+\|f\|_{L^\infty(Q_1^+)}\Big). 
 \label{eq:prop24-2}
\end{align}
Using $\tilde\eta^2 (x_n+\va)^{p+1}\pa_tu_\va $ as a test function with $\tilde\eta\in C_c^{2}(Q_{14/16})$ being a cutoff function such that $\tilde\eta\equiv 1$ in $Q_{14/16}$, we  obtain  
\begin{align*}
&\int_{B_{14/16}^+} (x_n+\va)^{2p} |\pa_t u_\va|^2 \tilde\eta^2 + \frac{\ud }{\ud t} \int_{B_{14/16}^+}\tilde\eta^2 (x_n+\va)^{p+1} (A\nabla u_\va)\nabla u_\va \\&
\le C \Big(\int_{B_{14/16}^+} |\nabla u_\va|^2 +\tilde\eta|\nabla u_\va||\partial_t u| (x_n+\va)^{p}+\tilde\eta |u||\pa_t u_\va|(x_n+\va)^{p+1} \\
&\quad\quad\quad+ (x_n+\va)^{2p}f^2\Big). 
\end{align*}
Using the H\"older inequality
\begin{align*}
&\int_{B_{14/16}^+} (x_n+\va)^{2p} |\pa_t u_\va|^2 \tilde\eta^2 + \frac{\ud }{\ud t} \int_{B_{14/16}^+} \tilde\eta^2 (x_n+\va)^{p+1}(A\nabla u_\va)\nabla u_\va \\
&\le C \Big(\int_{B_{14/16}^+} |\nabla u_\va|^2 +u_\va^2 +  (x_n+\va)^{2p}f^2\Big). 
\end{align*}
Integrating both sides in $t$ and making use of \eqref{eq:prop24-1} and \eqref{eq:prop24-2}, we have 
\be \label{eq:t-derive}
\Big(\int_{Q_{13/16}^+} x_n^{2p} |\pa_t u_\va|^2\,\ud x\ud t\Big)^{1/2} \le C \Big(\|u_\va\|_{L^2(Q_1;x_n^{p-1})}+\|f\|_{L^\infty(Q_1^+)}\Big). 
\ee
Let $v_\va=\pa_t u_\va$. We find 
\begin{align*}
&a (x_n+\va)^{p-1} \pa_t v_\va-\mathrm{div}(A \nabla v_\va)+b_1v_\va+(\pa_ta+b_2) (x_n+\va)^{p-1} v_\va\\
&=(x_n+\va)^{p-1} (\pa_t f-\pa_t b_2 u_\va) \quad \mbox{in }Q_{1}^+.
\end{align*}
and $v_\va=0$ on $\pa' B_1^+\times (-1,0]$. By Proposition \ref{prop:local-bound}, we have 
\[
\|v_\va\|_{L^\infty(Q_{12/16}^+)} \le C\Big (\|v_\va\|_{L^{\gamma}(Q_{13/16}^+)} +\|\partial_t f\|_{L^\infty(Q_{13/16}^+)} +\|u_\va\|_{L^\infty(Q_{13/16}^+)} \Big),
\]
where $\gamma>0$. Choose $\frac{2\gamma p}{2-\gamma}<1$.   Using the H\"older inequality,  \eqref{eq:t-derive} and \eqref{eq:prop24-1}, we have 
\[
\|\pa_t u_\va \|_{L^\infty(Q_{12/16}^+)} \le C\Big(\|u_\va\|_{L^2(Q_1;x_n^{p-1})}+\|f\|_{C^1(Q_1^+)}\Big). 
\]
By considering the equations of $\pa_t^2 u_\va$ and $\pa_t^3 u_\va$ and repeating the above arguments, we have
\begin{align*}
\|\pa_t^2 u_\va \|_{L^\infty(Q_{10/16}^+)}+\|\pa_t^3 u_\va \|_{L^\infty(Q_{10/16}^+)} \le C \Big(\|u_\va\|_{L^2(Q_1;x_n^{p-1})}+\|f\|_{C^3(Q_1^+)} \Big).
\end{align*}
Applying the $W^{2,p}$ estimates from linear uniformly elliptic equations with bounded right hand sides to the equations of $\pa_t u_\va$ and $\pa_t^2 u_\va$, we have 
\begin{align*}
&\sup_{-\frac{81}{256}<t<0}\left(|D_x \pa_t u_\va(\cdot,t)|_{C^{1/2}(B_{9/16}^+)}+|D_x \pa_t^2 u_\va(\cdot,t)|_{C^{1/2}(B_{9/16}^+)}\right)\\
&\le C \Big(\|u_\va\|_{L^2(Q_1;x_n^{p-1})}+\|f\|_{C^3(Q_1^+)} \Big).
\end{align*}
Applying the Schauder estimates from linear uniformly elliptic equations to the equation of $\pa_t u_\va$  on each time slice, we have 
\[
|D_x^2 \pa_t u_\va|_{L^\infty(Q_{9/16}^+)}\le C \Big(\|u\|_{L^2(Q_1;x_n^{p-1})}+\|f\|_{C^3(Q_1^+)} \Big).
\]
Differentiating the equation in $x'=(x_1,\cdots,x_n)$ and using the Schauder estimates, we have
\[
|D_x^2 D_{x'}u_\va|_{L^\infty(Q_{9/16}^+)}\le C \Big(\|u_\va\|_{L^2(Q_1;x_n^{p-1})}+\|f\|_{C^3(Q_1^+)} \Big).
\]
Differentiating the equation in $x_n$, using the equation and facts that
\[
|u(x,t)|+|\partial_t u(x,t)|\le C \Big(\|u_\va\|_{L^2(Q_1;x_n^{p-1})}+\|f\|_{C^3(Q_1^+)} \Big)\quad\forall\ (x,t)\in Q_{9/16}^+,
\]
we obtain
\[
|D_{x_n}^3u_\va|_{L^\infty(Q_{9/16}^+)}\le C \Big(\|u_\va\|_{L^2(Q_1;x_n^{p-1})}+\|f\|_{C^3(Q_1^+)} \Big).
\]
Hence,
\begin{equation}\label{eq:thirdorderestimateuep}
\sum_{i+j\le 3,i\ge 0,j\ge 0}\|D_x^j\pa_t^ju_\va\|_{L^\infty(\overline{Q_{1/2}^+})} \le C\left(\|u_\va\|_{L^2(Q_1^+; x_n^{p-1})}+ \|f\|_{C^3(Q_1^+)}\right),
\end{equation}
Since 
\[
\|u_\va\|_{V^1_0(Q_1^+)}\le C (\|f\|_{L^2}+\|g\|_{W^{1,1}_2(Q_1^+)}),
\] 
we can pass along a subsequence that $u_{\va_j}\to u$ in $C^2(\overline Q_{1/2}^+)$, and inherit the estimate \eqref{eq:thirdorderestimateuep}. This finishes the proof.
\end{proof}

If we in addition assume $u(\cdot,-1)\equiv 0$ and further required compatibility conditions on $\pa'B\times\{t=-1\}$ (such as $f=0$ near $\{x_n=0,t=-1\}$), then one can show the above estimate in $B_{1/2}^+\times[-1,0])$, i.e., up to the initial time,  in a similar way. 

\begin{prop} \label{prop:smooth-co-boundary}  
Assume the assumptions in Proposition \ref{prop:smooth-co}. Suppose in addition that $f\in C^6(\overline Q_1^+)$,  $f$ vanishes in $\{0<x_n<\delta\}\times[-1,-1+\delta]$ for some $\delta>0$, and $g(\cdot,-1)\equiv 0$. Then there exists a unique weak solution of \eqref{eq:linear-eq} with the boundary condition $u=g$ on $\pa_{pa}Q_1^+$. Moreover, $u\in C^{2} (\overline{B_{1/2}^+}\times[-1,0])$,  all the second order derivatives of $u$ are Lipschitz continuous in $\overline{B_{1/2}^+}\times[-1,0]$, and there holds
\[
\sum_{i+j\le 3,i\ge 0,j\ge 0}\|D_x^j\pa_t^ju\|_{L^\infty(\overline{B_{1/2}^+}\times[-1,0])} \le C\left(\|u\|_{L^2(Q_1^+; x_n^{p-1})}+ \|f\|_{C^6(Q_1^+)}\right),
\]
where $C>0$ depends only on $\delta, n,p,\lda, \Lda$, and the $C^3(\overline{Q_1^+})$ norms  of $a, A, b_1$ and $b_2$. 
\end{prop}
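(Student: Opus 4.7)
\emph{Plan.} I would follow the same regularization scheme as in the proof of Proposition~\ref{prop:smooth-co}: replace the weight $x_n^{p-1}$ by $(x_n+\va)^{p-1}$, produce smooth solutions $u_\va$ of a uniformly parabolic problem, and then pass $\va\to 0$. Existence and uniqueness of the weak solution come directly from Theorem~\ref{thm:existenceofweaksolution}. The only new task compared to Proposition~\ref{prop:smooth-co} is to push every pointwise estimate up to the initial time $t=-1$, and the extra hypotheses $g(\cdot,-1)\equiv 0$ and $f\equiv 0$ in $\{0<x_n<\delta\}\times[-1,-1+\delta]$ are exactly what makes this possible.

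The first step will be to establish a compatibility propagation. Since $u_\va(\cdot,-1)\equiv 0$, evaluating the equation at $t=-1$ gives $\pa_t u_\va(x,-1)=f(x,-1)/a(x,-1)$, which vanishes for $x_n<\delta$. Differentiating the equation $k$ times in $t$ and setting $t=-1$, I can check inductively that $\pa_t^k u_\va(\cdot,-1)\equiv 0$ in $\{x_n<\delta\}$ for every $k\ge 0$ up to the required order; the hypothesis $f\in C^6$ is there exactly to feed this induction through the three $t$-derivatives that appear in the final estimate. This propagation is the new ingredient beyond Proposition~\ref{prop:smooth-co}.

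Using this compatibility, I would split the target region $\overline{B_{1/2}^+}\times[-1,0]$ into a neighborhood of the corner set $\{x_n=0\}\cap\overline{B_{1/2}^+}\times\{-1\}$ and its complement. Near the corner, I extend $u_\va$ (and analogously each $\pa_t^k u_\va$) by zero to $t\in[-1-\tau,-1]$ inside the strip $\{0<x_n<\delta/2\}$; because all relevant data and all time derivatives of $u_\va$ up to the required order vanish there, this extension is a genuine weak solution of the same equation on the enlarged cylinder, after smoothly extending the coefficients in $t<-1$ arbitrarily. A translated and rescaled version of Proposition~\ref{prop:smooth-co} applied to the extended $u_\va$ on a cylinder whose time interval contains $t=-1$ strictly in its interior then produces the desired pointwise bounds up to $t=-1$ in a neighborhood of $\{x_n=0\}$. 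On the remaining region $\{x_n\ge \delta/4\}\cap\overline{B_{1/2}^+}\times[-1,0]$, the regularized equation is uniformly parabolic with constants independent of $\va$, the initial datum $u_\va(\cdot,-1)\equiv 0$ is smooth, and standard interior parabolic Schauder theory (after a spatial cutoff supported in $\{x_n>\delta/8\}\cap B_{3/4}^+$) delivers the $C^3$ bounds up to $t=-1$. Patching the two regions and following the final limit step of Proposition~\ref{prop:smooth-co}, namely extracting a $C^2$-convergent subsequence as $\va\to 0$ and inheriting the estimate, completes the proof.

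The main obstacle I anticipate is the careful verification that the zero-extension is a bona fide weak solution in the enlarged space-time cylinder and that the inductive compatibility $\pa_t^k u_\va(\cdot,-1)\equiv 0$ in $\{x_n<\delta\}$ propagates cleanly through all the orders needed for the third-order estimate. Once these two points are secured, the remaining estimates reduce to routine invocations of Propositions~\ref{prop:global-bound} and~\ref{prop:smooth-co}, essentially mimicking the proof of Proposition~\ref{prop:smooth-co} with the time cutoffs removed and replaced by the vanishing of $u_\va$ and its $t$-derivatives at $t=-1$.
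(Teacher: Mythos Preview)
Your proposal is correct and aligns with the paper's approach. The paper's own proof is a one-paragraph sketch: the vanishing hypothesis on $f$ guarantees the compatibility conditions for the equations of $\pa_t^k u_\va$, $k=0,1,2,3$, so one can re-run the bootstrap of Proposition~\ref{prop:smooth-co} up to $t=-1$ (in effect, replacing Proposition~\ref{prop:local-bound} by Proposition~\ref{prop:global-bound} and dropping the time cutoffs), and the details are omitted. Your zero-extension device---extending $u_\va$ by zero into $t<-1$ in the strip $\{x_n<\delta/2\}$ and invoking the already-proved interior-in-time Proposition~\ref{prop:smooth-co}---is a minor technical variation on this, trading a repetition of the bootstrap for a reduction to it; both rest on exactly the same observation that $\pa_t^k u_\va(\cdot,-1)\equiv 0$ near $\{x_n=0\}$.
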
  

The additional assumption on $f$ in Proposition \ref{prop:smooth-co-boundary} ensures the compatibility condition for the equation of $\pa_t^k u_\va$, $k=0,1,2,3$, where $u_\va$ is the solution to \eqref{eq:degiorgilinearappsmooth} with  $u_\va=g$ on $\pa_{pa}Q_1^+$. Then one can obtain the uniform $C^3(\overline{B_{1/2}^+}\times[-1,0])$ estimates for $u_\va$ independent on $\va$. We omit the details. 

For $\delta>0$ small, let $\Omega_\delta=\{x\in\Omega:d(x)<\delta\}$.

\begin{thm}\label{thm:global-smooth} 
Assume $A$ and $b_1$ are independent of $t$, belong to  $C^3(\overline\Omega)$ and satisfy \eqref{eq:coeromega},  while $a , b_2$  belong to $C^{3}(\overline\Omega \times[-1,0])$, \eqref{eq:ellip} holds, and $f\in C^6(\overline\Omega \times[-1,0])$  vanishes in $\Omega_\delta\times[-1,-1+\delta]$ for some $\delta>0$. Then there exists a unique classical solution of \eqref{eq:general} satisfying $u=0$ on $\partial_{pa}(\Omega \times(-1,0])$. Moreover,
\begin{align*}
&\|u\|_{C^{2+\gamma} (\overline \Omega \times [-1,0])}\le C\|f\|_{C^{6} (\overline \Omega \times [-1,0])}
\end{align*}
for every $\gamma\in (0,1)$, where $C>0$ depends only on $n,p,\bar \lda, \lda, \Lda,\gamma,\delta,\Omega$, and the $C^3(\overline \Omega \times [-1,0]))$ norms  of $a, A, b_1$ and $b_2$. 
\end{thm}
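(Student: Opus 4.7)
The plan is to combine a weak-existence step, an interior parabolic Schauder estimate, and the half-ball boundary result Proposition \ref{prop:smooth-co-boundary}, via a covering and boundary-flattening argument.

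First, I would obtain existence and uniqueness of a weak solution. Under \eqref{eq:ellip}, \eqref{eq:coeromega}, and \eqref{eq:positivelower}, the Dirichlet problem with zero initial and lateral data admits a unique weak solution by the existence theorem referenced in the proof of Proposition \ref{prop:smooth-co}. Testing the equation against $u$ itself, using \eqref{eq:coeromega} and the upper bound on $\pa_t a$, gives the a priori energy bound
\[
\sup_{t\in[-1,0]}\int_\Omega a\,\omega^{p-1} u^2\,\ud x + \int_{-1}^0\!\!\int_\Omega |\nabla u|^2\,\ud x\ud t \le C\|f\|_{L^2(\Omega\times[-1,0];\omega^{p-1})}^2 \le C\|f\|_{C^6}^2,
\]
so, in particular, $\|u\|_{L^2(\Omega\times[-1,0];\omega^{p-1})}\le C\|f\|_{C^6}$.

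Second, for interior regularity, note that on any relatively compact set $K\Subset\Omega$ the weight $\omega(x)^{p-1}$ is bounded above and below by positive constants, so equation \eqref{eq:general} is a classical linear uniformly parabolic equation with smooth coefficients on $K\times[-1,0]$. Since $u(\cdot,-1)\equiv 0$, $f\in C^6$, and the coefficients are smooth, standard interior Schauder theory gives $u\in C^{2+\gamma}(K\times[-1,0])$ together with an estimate $\|u\|_{C^{2+\gamma}(K\times[-1,0])}\le C_K(\|f\|_{C^\gamma}+\|u\|_{L^2})$; the compatibility at $t=-1$ is automatic because $u(\cdot,-1)=0$ determines all required higher compatibilities from the equation on $K$.

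Third, for boundary regularity, I cover $\partial\Omega$ by finitely many balls $B_{2r_j}(x_j)$ with $r_j$ small, and for each $j$ choose a smooth diffeomorphism $\Phi_j$ that straightens $\partial\Omega\cap B_{2r_j}(x_j)$ into a piece of $\{y_n=0\}$. Pushing forward \eqref{eq:general} through $\Phi_j$ produces an equation of the form \eqref{eq:linear-eq} in the half-ball, with the transformed weight $\tilde\omega(y)^{p-1}$; writing $\tilde\omega(y)=y_n\cdot\rho(y)$ with $\rho\in C^\infty$ and $\rho\ge c>0$ and absorbing $\rho^{p-1}$ into the new coefficient $\tilde a$, we arrive exactly at \eqref{eq:linear-eq} with smooth coefficients. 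Choosing $r_j$ small guarantees that the coercivity \eqref{eq:coeromega} survives flattening with constant $\bar\lambda/2$, as remarked before \eqref{eq:linear-eq}. A parabolic rescaling $(y,\tau)\mapsto(y/r_j,\tau/r_j^{p+1})$ places the problem on $Q_1^+$; the assumption that $f$ vanishes on $\Omega_\delta\times[-1,-1+\delta]$, together with $g\equiv 0$, supplies exactly the additional hypothesis on $f$ required by Proposition \ref{prop:smooth-co-boundary}. That proposition then provides $C^{2+\gamma}$ control up to the flat face and up to $t=-1$, controlled by $\|u\|_{L^2(Q_1^+;x_n^{p-1})}+\|f\|_{C^6}$.

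Finally, combining the finitely many boundary patches with the interior estimate and the global $L^2$-bound from the first step yields the global bound
\[
\|u\|_{C^{2+\gamma}(\overline\Omega\times[-1,0])} \le C\|f\|_{C^6(\overline\Omega\times[-1,0])},
\]
and uniqueness in the classical sense follows from uniqueness of the weak solution. The main obstacle is step three, verifying that flattening the boundary and rescaling reduce the problem cleanly to Proposition \ref{prop:smooth-co-boundary}: specifically, checking that the transformed weight factors as a smooth positive function times $y_n^{p-1}$, that coercivity is preserved under flattening by shrinking the patch, and that the compatibility hypothesis $f=0$ near the initial parabolic corner passes unchanged through both the diffeomorphism and the $r_j^{p+1}$-time rescaling.
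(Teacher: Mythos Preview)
Your overall architecture---weak existence, interior parabolic theory, boundary flattening plus Proposition~\ref{prop:smooth-co-boundary}---is reasonable, but the paper proceeds differently and for a concrete reason. The paper regularizes \eqref{eq:general} globally, replacing $\omega(x)^{p-1}$ by $(\omega(x)+\va)^{p-1}$; the resulting equation is uniformly parabolic on all of $\Omega\times(-1,0]$, so it has a classical solution $u_\va$ that is smooth up to $\partial\Omega$. One then applies Propositions~\ref{prop:smooth-co} and~\ref{prop:smooth-co-boundary} (after flattening) to $u_\va$, obtains $\|u_\va\|_{C^{2+\gamma}}\le C(\|u_\va\|_{L^2(\omega^{p-1})}+\|f\|_{C^6})$ with $C$ independent of $\va$, controls the $L^2$ term by the energy estimate, and sends $\va\to 0$.

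The gap in your route is in step three. Propositions~\ref{prop:smooth-co} and~\ref{prop:smooth-co-boundary} require the parabolic boundary data $g\in W^{1,1}_2(Q_1^+)$, because their proofs solve the $\va$-regularized (uniformly parabolic) problem with this same $g$, which needs $\partial_t g\in L^2$. When you restrict your global weak solution $u$ to a flattened boundary half-ball, the induced data on $\pa''B_1^+\times(-1,0]$ is just $u$ itself; but the weak solution only lies in $C([-1,0];L^2(\omega^{p-1}\ud x))\cap L^2((-1,0];H_0^1)$, and nothing so far gives $\partial_t u\in L^2(\ud x\ud t)$ near $\partial\Omega$ (at best $\partial_t u\in L^2(x_n^{p-1}\ud x\ud t)$ from Theorem~\ref{thm:energyestimateut}). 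Since $\pa''B_1^+$ meets $\{x_n=0\}$, your interior estimate from step two does not cover this data either. Thus you cannot invoke the propositions as black boxes on the singular weak solution. The paper's global $\va$-regularization sidesteps exactly this: $u_\va$ is classically smooth everywhere, so its restriction to any patch is trivially in $W^{1,1}_2$, and because the final estimate depends only on $\|u_\va\|_{L^2(\omega^{p-1})}+\|f\|_{C^6}$ (not on any $W^{1,1}_2$ norm of $g$), it survives the limit.
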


Here, $u$ is classical solution of \eqref{eq:general} means that $u, \pa_t u, D_x u, D^2_x u\in C(\overline\Omega\times[-1,0])$ and $u$ satisfies \eqref{eq:general}.

\begin{proof}
For $\va>0$, we consider the regularized equation
\begin{align} \label{eq:generalregularized}
a\cdot (\omega(x)+\va)^{p-1} \pa_t u_\va-\mathrm{div}(A \nabla u_\va)&+b_1u_\va+b_2 \cdot (\omega(x)+\va)^{p-1}u_\va  \nonumber\\
&= (\omega(x)+\va)^{p-1}f \quad \mbox{in }\Omega \times(-1,0].
\end{align}
Then there exists a unique classical solution of \eqref{eq:generalregularized} satisfying $u_\va=0$ on $\partial_{pa}(\Omega \times(-1,0])$. Using Propositions \ref{prop:smooth-co} and  \ref{prop:smooth-co-boundary}, we have that 
\begin{align*}
\|u_\va\|_{C^{2+\gamma} (\overline\Omega \times [-1,0])}&\le C(\|u_\va\|_{L^2(\overline\Omega \times [-1,0];x_n^{p-1})} +\|f\|_{C^{6} (\overline\Omega \times [-1,0])})\\
&\le C\|f\|_{C^{6} (\overline\Omega \times [-1,0])},
\end{align*}
where $C$ is independent of $\va$ and we used the energy estimates of $u_\va$ in the second inequality. The conclusion follows by taking $\va\to 0^+$.
\end{proof}

\subsection{A Campanato space}

We are going to establish Schauder type estimates for solutions of \eqref{eq:linear-eq} by the Campanato method.  We first define a Campanato type space for our purpose.

Denote
$$\ud \mu=|x_n|^{p-1} \ud x\ud t,$$ and for every measurable set $E\subset\R^{n+1}$, we use the notations of integral averages: 
\[
\dashint_E u(x,t) \ud x\ud t= \frac{1}{|E|}\int_E u(x,t) \ud x\ud t,\quad \dashint_E u(x,t) \ud \mu= \frac{1}{\mu(E)}\int_E u(x,t) \ud \mu,
\]
where $|E|$ is the Lebesgue measure of $E$, and $\mu(E)$ is the measure of $E$ with respect to $\mu$. For short, we denote $(u)_{E}$ and $(u)^\mu_{E}$ as the integral averages of $u$ over the set $E$ with respect to $\ud x\ud t$ and  $\ud\mu$, respectively. 

For $\bar x\in B_1^+$ and $\rho<1/2$,  denote 
\begin{align*}
\mathcal{D}_\rho(\bar x,\bar t)&=B_{\rho \bar x_n}(\bar x)\times [\bar t-\bar x_n^{p+1} \rho^2,\bar t],\\
\mathcal{\widetilde D}_\rho(\bar x,\bar t)&=B_{\rho \bar x_n}(\bar x)\times [\bar t-\bar x_n^{p+1} \rho^2,\bar t+\bar x_n^{p+1} \rho^2],\\
\mathcal{Q}_R(\bar x,\bar t)&= B_R(\bar x) \times [\bar t-R^{p+1}, \bar t],\\
\mathcal{\widetilde Q}_R(\bar x,\bar t)&= B_R(\bar x) \times [\bar t-R^{p+1}, \bar t+R^{p+1}].
\end{align*}
For a cylinder $Q=B_r^+\times I$ with an interval $I\subset[0,1]$ of length $r^2$ for some $0<r<1$, and for $\al\in (0,1)$ and $u\in L^2(Q)$, define   
\begin{align*}
[u]_{\mathscr{C}^{\al}(Q)}= \Big(& \sup_{(\bar x,\bar t)\in Q, 0<\rho<1 /2} \frac{1}{|\bar x_n\rho|^{2\al}}\dashint_{\mathcal{\widetilde D}_{\rho}(\bar x,\bar t)\cap Q} |u-(u)_{\mathcal{D}_{\rho}(\bar x,\bar t)\cap Q }|^2 \ud x\ud t\Big)^{1/2} \\&+\Big(\sup_{(\bar x,\bar t)\in\overline Q, \bar x_n=0, R>0 } \frac{1}{R^{2\al}}\dashint_{\mathcal{\widetilde Q}_R (\bar x,\bar t)\cap Q} |u-(u)^\mu_{\mathcal{\widetilde Q}_R (\bar x,\bar t)\cap Q}|^2 \ud \mu \Big)^{1/2}.
\end{align*}
 Denote $$\|u\|_{\mathscr{C}^{\al}(Q)}= \|u\|_{L^2(Q)}+[u]_{\mathscr{C}^{\al}(Q)}.$$  
The interior oscillation integral and the boundary oscillation integral in the definition of $[u]_{\mathscr{C}^{\al}(Q)}$ can be bridged as follows: $\forall \ (\bar x,\bar t)\in Q$,
\begin{equation}  \label{eq:connection}
\begin{split}
& \frac{1}{|\bar x_n|^{2\al}}\dashint_{\mathcal{\widetilde D}_{1/2} (\bar x,\bar t)\cap Q} |u-(u)_{\mathcal{\widetilde D}_{1/2} (\bar x,\bar t)\cap Q }|^2 \ud x\ud t  \\&\le  \frac{1}{|\bar x_n|^{2\al}}\dashint_{\mathcal{\widetilde D}_{1/2} (\bar x,\bar t)\cap Q} |u-(u)^\mu_{\mathcal{\widetilde D}_{1/2} (\bar x,\bar t)\cap Q } |^2 \ud x\ud t   \\& 
 \le \frac{C(n,p)}{|\bar x_n|^{2\al}}\dashint_{\mathcal{\widetilde D}_{1/2} (\bar x,\bar t)\cap Q} |u-(u)^\mu_{\mathcal{\widetilde D}_{1/2} (\bar x,\bar t)\cap Q }|^2 \ud \mu  \\& \le
 \frac{C(n,p)}{|\bar x_n|^{2\al}}\dashint_{\mathcal{\widetilde D}_{1/2} (\bar x,\bar t)\cap Q} |u-(u)^\mu_{\mathcal{\widetilde Q}_{2\bar x_n} (\bar x',0,\bar t)\cap Q} |^2 \ud \mu  \\&
 \le  \frac{C(n,p)}{|2 \bar x_n|^{2\al}}\dashint_{\mathcal{\widetilde Q}_{2\bar x_n} (\bar x',0,\bar t)\cap Q } |u-(u)^\mu_{\mathcal{\widetilde Q}_{2\bar x_n} (\bar x',0,\bar t)\cap Q} |^2 \ud \mu, 
 \end{split}
\end{equation}
where we used $\mathcal{\widetilde D}_{1/2} (\bar x,\bar t)\subset \mathcal{\widetilde Q}_{2\bar x_n} (\bar x',0,\bar t)$ in the last inequality. 

This Campanato space has the following two properties.

\begin{lem}\label{lem:embed}
$\mathscr{C}^{\al}(Q)$ is a Banach space. Moreover,
\[ 
C^{\al, \al/2}_{x,t}(Q)\subset \mathscr{C}^{\al}(Q)\subset C^{\al, \al/(p+1)}_{x,t}(Q).
\]
\end{lem}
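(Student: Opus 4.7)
The plan is to proceed in three stages: completeness, the forward embedding $C^{\alpha,\alpha/2}_{x,t}\subset\mathscr{C}^\alpha$, and the backward embedding $\mathscr{C}^\alpha\subset C^{\alpha,\alpha/(p+1)}_{x,t}$. For completeness, I would observe that $[u]_{\mathscr{C}^\alpha(Q)}$ is the supremum of a family of functionals of the form $u\mapsto\bigl(\dashint_E|u-(u)_E|^2\bigr)^{1/2}$ (and the $\mathrm{d}\mu$-analogue), each convex and continuous on $L^2(Q)$. Hence $[\cdot]_{\mathscr{C}^\alpha(Q)}$ is lower semicontinuous on $L^2(Q)$, so any Cauchy sequence $\{u_k\}$ converges in $L^2(Q)$ to some $u$ satisfying $[u_k-u]_{\mathscr{C}^\alpha(Q)}\le\liminf_j[u_k-u_j]_{\mathscr{C}^\alpha(Q)}\to 0$.

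For the forward embedding, let $u\in C^{\alpha,\alpha/2}_{x,t}(Q)$ with seminorm $[u]_0$. On any $E\subset Q$ of spatial diameter $d_x$ and temporal length $\tau$, one has $\mathrm{osc}_E\,u\le[u]_0(d_x^\alpha+\tau^{\alpha/2})$. For interior cylinders $\mathcal{\widetilde D}_\rho(\bar x,\bar t)$, $d_x\le 2\bar x_n\rho$ and $\tau=2\bar x_n^{p+1}\rho^2$, so since $p\ge 1$ and $\bar x_n\le 1$,
\[
\tau^{\alpha/2}=(2\bar x_n^{p+1}\rho^2)^{\alpha/2}\le C(\bar x_n\rho)^\alpha.
\]
For boundary cylinders $\mathcal{\widetilde Q}_R(\bar x',0,\bar t)$, $d_x=2R$, $\tau=2R^{p+1}$, and $\tau^{\alpha/2}\le CR^\alpha$ for $R\le 1$. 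Dividing by $(\bar x_n\rho)^{2\alpha}$ and $R^{2\alpha}$ respectively bounds both terms in the seminorm by $C[u]_0^2$.

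For the backward embedding, I carry out the standard Campanato iteration in two regimes and then patch them. \emph{Interior step.} Fix $(\bar x,\bar t)\in Q$ with $\bar x_n>0$ and set $E_k=\mathcal{D}_{2^{-k}}(\bar x,\bar t)$. The nesting $E_{k+1}\subset E_k\subset\mathcal{\widetilde D}_{2^{-k}}$ with comparable measures gives, via Cauchy--Schwarz,
\[
|(u)_{E_{k+1}}-(u)_{E_k}|^2\le C\dashint_{\mathcal{\widetilde D}_{2^{-k}}}|u-(u)_{E_k}|^2\le C[u]_{\mathscr{C}^\alpha}^2(\bar x_n\,2^{-k})^{2\alpha}.
\]
Summing the telescoping series produces a H\"older representative, and comparing averages on overlapping interior cylinders centered at nearby points yields, for $(x,t),(y,s)\in\mathcal{D}_{1/2}(\bar x,\bar t)$,
\[
|u(x,t)-u(y,s)|\le C[u]_{\mathscr{C}^\alpha}\Bigl(|x-y|^\alpha+\bar x_n^{-(p-1)\alpha/2}|t-s|^{\alpha/2}\Bigr).
\]
\emph{Boundary step.} The same iteration on $\mathcal{\widetilde Q}_{2^{-k}R}(\bar x',0,\bar t)$ with the $\mathrm{d}\mu$-averages gives $|u(x,t)-u(y,s)|\le C[u]_{\mathscr{C}^\alpha}(|x-y|^\alpha+|t-s|^{\alpha/(p+1)})$ on any such cylinder inside $Q$. \emph{Patching.} For $(x,t),(y,s)\in Q$, set $\delta=|x-y|+|t-s|^{1/(p+1)}$. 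If $\delta\le\tfrac18\min(x_n,y_n)$, the interior step applies with $\bar x=x$, $\bar t=t$, and the constraint $|t-s|\le Cx_n^{p+1}$ converts $\bar x_n^{-(p-1)\alpha/2}|t-s|^{\alpha/2}$ into $C|t-s|^{\alpha/(p+1)}$; otherwise both points lie in a single boundary cylinder $\mathcal{\widetilde Q}_{C\delta}(\bar x',0,t)$ with $\bar x'\in\partial'B_1^+$ and the boundary step applies.

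The main obstacle is this patching: the interior estimate degenerates like $\bar x_n^{-(p-1)\alpha/2}$ as one approaches $\partial'B_1^+$, while the boundary estimate only controls the time variable with the weaker exponent $\alpha/(p+1)$. The two are balanced precisely at the parabolic scale $|t-s|^{1/(p+1)}$ against $\min(x_n,y_n)$, and the inequality chain \eqref{eq:connection} from the excerpt--which bounds the interior $L^2$-oscillation at an interior point by the boundary $L^2$-oscillation on the enclosing boundary cylinder--is what ensures the constants match across the two regimes and forces the target exponent to be $\alpha/(p+1)$ rather than $\alpha/2$.
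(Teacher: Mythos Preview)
Your approach is sound and reaches the same conclusion, but by a genuinely different route than the paper. The paper does not iterate and patch; instead it shows, for every $(\bar x,\bar t)\in Q$ and every $R>0$, that
\[
\frac{1}{R^{2\alpha}}\dashint_{\mathcal{Q}_R(\bar x,\bar t)\cap Q}|u-(u)^\mu_{\mathcal{Q}_R\cap Q}|^2\,\ud\mu\le C\|u\|_{\mathscr{C}^\alpha(Q)}^2,
\]
handling the cases $R<\bar x_n/5$ (rescale by $\ell=\bar x_n$ and apply the classical Campanato--H\"older equivalence on the rescaled parabolic cylinder) and $R\ge\bar x_n/5$ (enlarge to $\mathcal{Q}_{10R}((\bar x',0),\bar t)$ and use the boundary part of the seminorm) separately, and then invokes the Campanato embedding on metric measure spaces (G\'orka \cite{G}) for the doubling measure $\mu$ and the family of balls $\mathcal{Q}_R$ to conclude $u\in C^{\alpha,\alpha/(p+1)}_{x,t}(Q)$. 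Thus the paper pushes all the ``patching'' into a single appeal to an external theorem, while your argument is more hands-on and self-contained.

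One point in your write-up needs care. Your ``boundary step'' asserts that iterating on the concentric family $\mathcal{\widetilde Q}_{2^{-k}R}(\bar x',0,\bar t)$ already yields the H\"older bound for all $(x,t),(y,s)\in\mathcal{\widetilde Q}_R\cap Q$. But that family only shrinks to the boundary point $(\bar x',0,\bar t)$; for an interior point $(x,t)$ with $x_n>0$ the telescoping sum stops at scale $\sim x_n$ and controls $|(u)^\mu_{\mathcal{\widetilde Q}_{x_n}}-(u)^\mu_{\mathcal{\widetilde Q}_R}|$, not $|u(x,t)-(u)^\mu_{\mathcal{\widetilde Q}_{x_n}}|$. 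You need your interior iteration (which does shrink $\mathcal{D}_\rho(x,t)$ down to $(x,t)$) together with \eqref{eq:connection} to close that last gap: $|u(x,t)-(u)_{\mathcal{D}_{1/2}(x,t)}|\le Cx_n^\alpha[u]$, and then \eqref{eq:connection} hands this off to $(u)^\mu_{\mathcal{\widetilde Q}_{2x_n}((x',0),t)}$. So \eqref{eq:connection} is not merely a device for ``matching constants across regimes''; it is the essential bridge that lets the interior iteration feed into the boundary iteration at scale $x_n$. With this adjustment your patching argument is complete.
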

\begin{proof}
It is clear that $C^{\al, \al/2}_{x,t}(Q)\subset \mathscr{C}^{\al}(Q)$. We will prove the second claim. 

Let $u\in \mathscr{C}^{\al}(Q)$.  For any $(\bar x,\bar t)\in Q, R =\rho \bar x_n<\bar x_n/5$,  we have
\begin{align*}
\frac{1}{R^{2\al}} \dashint_{\mathcal{\widetilde Q}_{R} (\bar x,\bar t)\cap Q} |u-(u)^\mu_{\mathcal{\widetilde Q}_{R}(\bar x,\bar t) \cap Q }|^2 \ud \mu 
&\le  [u]^2_{C^{\al,\frac{\al}{p+1}}(\mathcal{\widetilde Q}_{R} (\bar x,\bar t)\cap Q)} \\
&\le [u]^2_{C^{\al,\frac{\al}{p+1}}(\mathcal{\widetilde D}_{\rho} (\bar x,\bar t)\cap Q)}.
\end{align*}
Let $\tilde u(x,t)=u(\ell x+\bar x, \ell^{p+1}t+\bar t)$ with $\ell=|\bar x_n|$, and $A=\{(\frac{y-\bar x}{\ell}, \frac{s-\bar t}{\ell^{p+1}})\ |\ (y,s)\in Q\}$, and denote $\widetilde Q_\rho(y,s)=B_\rho(y)\times (s-\rho^2,s+\rho^2)$ (writing $\widetilde Q_\rho=\widetilde Q_\rho(0,0)$). Then by using the usual Campanato norm, we have
\be \label{eq:connection-1}
\begin{split}
&[u]^2_{C^{\al,\frac{\al}{p+1}}(\mathcal{\widetilde D}_{\rho} (\bar x,\bar t)\cap Q)}\\
&\le C \ell^{-2\alpha} [\tilde u]^2_{C^{\al,\frac{\al}{2}}(\widetilde Q_{\rho}\cap A)}\\
&\le C\ell^{-2\alpha} \sup_{(\bar y,\bar s)\in \widetilde Q_{\rho}\cap A, 0<\delta\le 2\rho} \frac{1}{\delta^{2\alpha}} \dashint_{\widetilde Q_{\delta}(\bar y,\bar s)\cap \widetilde Q_{\rho} \cap A} |\tilde u-(\tilde u)_{\widetilde Q_{\delta}(\bar y,\bar s)\cap \widetilde Q_{\rho}\cap A}|^2\\
&\le C\ell^{-2\alpha} \sup_{(\bar y,\bar s)\in \widetilde Q_{\rho}\cap A, 0<\delta\le 2\rho} \frac{1}{\delta^{2\alpha}} \dashint_{\widetilde Q_{\delta}(\bar y,\bar s) \cap A} |\tilde u-(\tilde u)_{\widetilde Q_{\delta}(\bar y,\bar s)\cap A}|^2.
\end{split}
\ee
Noticing that under the transform $(x,t)\to (\ell x+\bar x, \ell ^{p+1}t+\bar t)$, 
\[
\widetilde Q_{\delta}(\bar y,\bar s) \to B_{\delta \ell }(\ell \bar y+\bar x)\times (\ell^{p+1}\bar s+\bar t- \ell^{p+1}\delta^2, \ell^{p+1}\bar s+\bar t+ \ell^{p+1}\delta^2).
\]
Since $\bar y\in \widetilde Q_\rho$ and $\rho<1/5$, we have, by denoting that  $(\bar z,\bar \tau) := (\ell \bar y+\bar x, \ell^{p+1}\bar s+\bar t)$,
\[
\mathcal{\widetilde D}_{5\delta/6}(\bar z,\bar\tau)\subset B_{\delta \ell }(\ell \bar y+\bar x)\times (\ell^{p+1}\bar s+\bar t- \ell^{p+1}\delta^2, \ell^{p+1}\bar s+\bar t+ \ell^{p+1}\delta^2)\subset \mathcal{\widetilde D}_{5\delta/4}(\bar z,\bar\tau).
\]
Therefore, by changing variables on the right hand side of \eqref{eq:connection-1} back to $u$, we obtain 
\[
[u]^2_{C^{\al,\frac{\al}{2}}(\mathcal{\widetilde D}_{\rho} (\bar z,\bar \tau)\cap Q)} 
\le \sup_{(\bar z,\bar \tau)\in Q, 0<\delta<1 /2} \frac{1}{|\bar z_n\delta|^{2\al}}\dashint_{\mathcal{\widetilde D}_{\delta}(\bar z,\bar \tau)\cap Q} |u-(u)_{\mathcal{\widetilde D}_{\delta}(\bar z,\bar \tau)\cap Q }|^2 \ud x\ud t.
\]
Hence, for any $(\bar x,\bar t)\in Q, R =\rho \bar x_n<\bar x_n/5$,  we have
\[
\frac{1}{R^{2\al}} \dashint_{\mathcal{\widetilde Q}_{R} (\bar x,\bar t)\cap Q} |u-(u)^\mu_{\mathcal{\widetilde Q}_{R}(\bar x,\bar t) \cap Q }|^2 \ud \mu \le C \|u\|_{\mathscr{C}^{\al}(Q)}.
\]

For $\bar x_n/5\le R$, 
\[
\begin{split}
&\frac{1}{R^{2\al}} \dashint_{\mathcal{Q}_{R} (\bar x,\bar t)\cap Q} |u-(u)^\mu_{\mathcal{Q}_{R}(\bar x,\bar t) \cap Q }|^2 \ud \mu \\
&\le \frac{1}{R^{2\al}} \dashint_{\mathcal{Q}_{R} (\bar x,\bar t)\cap Q} |u-(u)^\mu_{\mathcal{Q}_{10R} (\bar x',0,\bar t)\cap Q}|^2 \ud \mu   \\&
\le \frac{C(n,p)}{|10R|^{2\al}} \dashint_{\mathcal{Q}_{10R} (\bar x',0,\bar t)\cap Q} |u-(u)^\mu_{\mathcal{Q}_{10R} (\bar x',0,\bar t)\cap Q}|^2 \ud \mu. 
\end{split}
\]
Therefore, if $u\in \mathscr{C}^{\al}(Q)$, then  
\begin{align*}
&\int_{Q} |u|^2\ud \mu+\sup_{(\bar x,\bar t)\in Q, 0<R<1} \frac{1}{R^{2\al}}  \dashint_{\mathcal{Q}_R(\bar x,\bar t) \cap Q} |u-(u)^\mu_{\mathcal{Q}_R(\bar x,\bar t) \cap Q}|^2 \,\ud \mu  \\
&\quad \le C(n,p) \|u\|_{\mathscr{C}^{\al}(Q)}^2. 
\end{align*}
Consequently, it follows from G\'orka \cite{G} that $$u\in C^{\al, \al/(p+1)}_{x,t}(Q).$$ Then one can show that  $\mathscr{C}^{\al}(Q)$ is a Banach space, whose proof is very similar to those for the standard Campanato spaces. 
\end{proof} 

\begin{lem}\label{lem:equivalentnorms}
Let $\alpha\in (0,1)$ and 
\begin{align*}
[u]_{\mathscr{\widetilde C}^\alpha(Q)}&:=\sup_{(x,t),(y,t)\in Q}\frac{|u(x,t)-u(y,t)|}{|x-y|^\alpha}+ \sup_{(x,t),(x,s)\in Q}\frac{|u(x,t)-u(x,s)|}{|t-s|^\frac{\alpha}{p+1}}\\
&\quad+\sup_{(x,t),(x,s)\in Q}x_n^{\frac{(p-1)\alpha}{2}}\frac{|u(x,t)-u(x,s)|}{|t-s|^\frac{\alpha}{2}}.
\end{align*}
There exists $C>0$ depending only on $n,p,\alpha$ such that
\[
\frac{1}{C}[u]_{\mathscr{C}^\alpha(Q)}\le [u]_{\mathscr{\widetilde C}^\alpha(Q)}\le C[u]_{\mathscr{C}^\alpha(Q)}.
\]
\end{lem}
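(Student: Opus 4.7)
The plan is to prove the two inequalities separately, with the easier direction $[u]_{\mathscr{C}^\alpha(Q)}\le C[u]_{\mathscr{\widetilde C}^\alpha(Q)}$ handled first by plugging pointwise H\"older bounds into the defining integrals, and the harder direction $[u]_{\mathscr{\widetilde C}^\alpha(Q)}\le C[u]_{\mathscr{C}^\alpha(Q)}$ handled by reduction to the standard parabolic Campanato characterization after an interior rescaling.

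For the direction $[u]_{\mathscr{C}^\alpha}\le C[u]_{\mathscr{\widetilde C}^\alpha}$, I would first treat the interior cylinders $\mathcal{\widetilde D}_\rho(\bar x,\bar t)$. For $(x,t),(y,s)$ in such a cylinder one has $|x-y|\le 2\rho\bar x_n$, $|t-s|\le 2\bar x_n^{p+1}\rho^2$ and $x_n\sim\bar x_n$, so splitting $u(x,t)-u(y,s)$ along a corner and applying the spatial H\"older bound plus the $x_n$-weighted time H\"older bound yields $|u(x,t)-u(y,s)|\le C[u]_{\mathscr{\widetilde C}^\alpha}(\bar x_n\rho)^\alpha$; this controls the interior integral oscillation. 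For the boundary cylinders $\mathcal{\widetilde Q}_R(\bar x,\bar t)$ with $\bar x_n=0$, I instead use the spatial H\"older bound together with the unweighted time H\"older bound with exponent $\alpha/(p+1)$ (the second term of $\mathscr{\widetilde C}^\alpha$); since $|x-y|\le 2R$ and $|t-s|\le 2R^{p+1}$, the pointwise oscillation is $\le C[u]R^\alpha$, which bounds the $\ud\mu$-averaged oscillation.

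For the reverse direction, Lemma~\ref{lem:embed} already gives the first two seminorms of $\mathscr{\widetilde C}^\alpha$, so only the weighted time seminorm requires work. Fix $(\bar x,t),(\bar x,s)\in Q$ with $\bar x_n=:\ell>0$. When $|t-s|\le c\ell^{p+1}$ for a small absolute constant $c$, I rescale by $\tilde u(y,\sigma)=u(\bar x+\ell y,\bar t+\ell^{p+1}\sigma)$. As in the proof of Lemma~\ref{lem:embed}, the interior part of the $\mathscr{C}^\alpha$ seminorm transforms into a standard parabolic Campanato seminorm for $\tilde u$ on a fixed small cylinder with constant $\le C\ell^\alpha[u]_{\mathscr{C}^\alpha}$, so $\tilde u\in C^{\alpha,\alpha/2}_{x,t}$ with seminorm $\le C\ell^\alpha[u]_{\mathscr{C}^\alpha}$. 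Unwinding the rescaling, this produces $|u(\bar x,t)-u(\bar x,s)|\le C[u]_{\mathscr{C}^\alpha}\,\ell^{-(p-1)\alpha/2}|t-s|^{\alpha/2}$, which is exactly the weighted time estimate on this scale. When $|t-s|>c\ell^{p+1}$, I simply invoke Lemma~\ref{lem:embed} to get $|u(\bar x,t)-u(\bar x,s)|\le C[u]_{\mathscr{C}^\alpha}|t-s|^{\alpha/(p+1)}$, and the algebraic identity $\alpha/(p+1)-\alpha/2=-(p-1)\alpha/[2(p+1)]$ combined with $\ell^{p+1}/|t-s|<c^{-1}$ converts this to the desired weighted bound.

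The main obstacle is the careful bookkeeping of scaling exponents in the interior rescaling, specifically verifying that the weight $\ell^{(p-1)\alpha/2}$ is precisely what is needed to absorb the mismatch between the spatial scale $\ell$ and the time scale $\ell^{p+1}$ appearing in $\mathcal{\widetilde D}_\rho(\bar x,\bar t)$, and bridging the two regimes $|t-s|\lessgtr c\ell^{p+1}$ so that a single constant works. Beyond that, every step is either a direct consequence of the definitions or an application of the standard parabolic Campanato characterization already used in the proof of Lemma~\ref{lem:embed}.
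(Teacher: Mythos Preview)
Your proposal is correct and follows essentially the same approach as the paper: the easy direction is handled by plugging the pointwise H\"older bounds into the oscillation integrals (the paper simply calls this ``clear''), and for the harder direction you reduce to the weighted time seminorm via Lemma~\ref{lem:embed} and then split into the two regimes $|t-s|\lessgtr c\,x_n^{p+1}$, using the interior rescaling argument from the proof of Lemma~\ref{lem:embed} in the small-time case and the $C^{\alpha,\alpha/(p+1)}$ embedding in the large-time case. The constants and cutoff match up exactly as you describe.
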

\begin{proof}
The first inequality is clear. For the second inequality, using Lemma \ref{lem:embed},  we only need to prove that 
\[
\sup_{(x,t),(x,s)\in Q}x_n^{\frac{(p-1)\alpha}{2}}\frac{|u(x,t)-u(x,s)|}{|t-s|^\frac{\alpha}{2}}\le C[u]_{\mathscr{C}^\alpha(Q)}
\]

For every $(x,t),(x,s)\in Q$, if $|t-s|\ge (x_n/5)^{p+1}$, then
\[
x_n^{\frac{(p-1)\alpha}{2}}\frac{|u(x,t)-u(x,s)|}{|t-s|^\frac{\alpha}{2}}\le 5^{\frac{(p-1)\alpha}{2}} \frac{|u(x,t)-u(x,s)|}{|t-s|^\frac{\alpha}{p+1}}\le C[u]_{\mathscr{C}^\alpha(Q)}.
\]
If $|t-s|\le (x_n/5)^{p+1}$, then let $\ell=x_n$ and $\tilde u(y,\tau)=u(\ell y+x,\ell^{p+1}\tau+t)$, and then using a similar proof in that of Lemma \ref{lem:embed}, we have
\begin{align*}
x_n^{\frac{(p-1)\alpha}{2}}\frac{|u(x,t)-u(x,s)|}{|t-s|^\frac{\alpha}{2}}\le C\ell^{-\alpha}[\tilde u]_{C^{\alpha,\alpha/2}(\widetilde Q_{1/5}\cap A)}\le C[u]_{\mathscr{C}^\alpha(Q)},
\end{align*}
where $A$ is the image of $Q$ under the scaling $(y,\tau)\to ((y-x)/\ell,(\tau-t)/(\ell^{p+1}))$.
\end{proof}

\begin{rem}\label{rem:dividecheck}
If $D_xu\in\mathscr{C}^\alpha(Q)$, and $u(x,t)=0$ for $x_n=0$, then 
\[
\frac{u(x,t)}{x_n}=\int_0^1 \partial_{x_n} u(x', sx_n,t)\,\ud s.
\] 
Therefore, one can check by Lemma \ref{lem:equivalentnorms} that if $\alpha<\frac{2}{p-1}$ then $\frac{u(x,t)}{x_n}\in\mathscr{C}^\alpha(Q)$, and 
\[
\left[\frac{u(x,t)}{x_n}\right]_{\mathscr{C}^\alpha(Q)}\le C(n,p,\alpha,Q) [D_xu]_{\mathscr{C}^\alpha(Q)}.
\]
\end{rem}

\subsection{Schauder estimates} 
In this section, we will prove Schauder estimates for solutions of \eqref{eq:general}. We will assume that the coefficients of the equation \eqref{eq:linear-eq} are sufficiently regular, and by Theorem \ref{thm:global-smooth}, its solutions are also regular. Then, we only need to derive the a priori estimates, and pass to a limit in the end. The justification of this process is given in Theorem \ref{thm:global-schauder}.

We will use Campanato's method to prove Schauder estimates. The Campanato method has been used to prove Schauder estimates for uniformly parabolic equations in Lieberman \cite{L}, and uniformly parabolic systems in Schlag \cite{Sch} and Dong-Zhang \cite{DZ}. Therefore, for equation  \eqref{eq:linear-eq}, we only need to take care of the estimates near $\pa\Omega$. The Campanato norm $\mathscr{C}^{\al}$ defined in the previous section is to serve this purpose. We will follow the steps of the Campanato's method in Schlag \cite{Sch}, where the only difference is that we will need to obtain the estimates for $\pa_t u$ first (with a price that we will assume $A$ and $b_1$ are independent of $t$ and have sufficient regularity in $x$).

Starting from Proposition \ref{prop:interior} to Corollary \ref{cor:space-schauder}, we always  assume 
\begin{itemize}
\item[(i).] $A$ and $b_1$ are independent of $t$ and belong to  $C^3(\overline B_1^+)$,  while $a , b_2$ belong to $C^{3}(\overline Q_1^+)$ and $f\in C^6(\overline Q_1^+)$;
\item[(ii).] The ellipticity condition \eqref{eq:ellip} holds, and the operator $
-\mathrm{div} (A \nabla ) +b_1$ is coercive, which means there exists a constant $\bar \lda>0$ such that
\be \label{eq:coer}
\int_{B_1^+} A\nabla \phi \nabla \phi +b_1 \phi^2 \ge \bar \lda \int_{B_1^+} \phi^2 \quad \quad \forall \phi\in H^1_0(B_1^+);
\ee
\item[(iii).] Without loss of generality,
\[
b_2\ge \lambda \quad\mbox{in } Q_1^+,
\]
since otherwise one can consider the equation for $e^{\Lambda t}u$ instead.
\end{itemize}
and assume $u\in C^{2+\gamma}(\overline Q_1^+)$ ($\forall\gamma\in(0,1)$) is a classical solution of \eqref{eq:linear-eq}-\eqref{eq:linear-eq-D}.

First, we study the interior estimates of such $u$, for which we can use the theory of linear uniformly parabolic equations. 

\begin{prop} \label{prop:interior} Let $\bar x\in B_{1/2}^+$ and $\bar t\in [-1/4,0]$.  For any $0<\rho\le R\le 1/4$, we have 
\begin{align*}
&\frac{1}{|\bar x_n\rho|^{2\al}}\dashint_{\mathcal{D}_{\rho}(\bar x,\bar t)} |u_t-(u_t)_{\mathcal{D}_{\rho}(\bar x,\bar t)}|^2 \ud x\ud t \\& \le C \frac{1}{|\bar x_nR|^{2\al}}\dashint_{\mathcal{D}_{R} (\bar x,\bar t)} |u_t-(u_t)_{\mathcal{D}_{R} (\bar x,\bar t)}|^2 \ud x\ud t \\
&\quad+C\big[\|\pa_tu\|_{L^\infty (Q_1^+)}+\|u\|_{L^\infty (Q_1^+)} +\|f\|_{\mathscr{C}^{\al} (Q_1^+)}\big]^2
\end{align*}
and 
\begin{align*}
&\frac{1}{|\bar x_n\rho|^{2\al}}\dashint_{\mathcal{D}_{\rho}(\bar x,\bar t)} |\nabla u-(\nabla u)_{\mathcal{D}_{\rho}(\bar x,\bar t)}|^2 \ud x\ud t \\& \le C \frac{1}{|\bar x_nR|^{2\al}}\dashint_{\mathcal{D}_{R} (\bar x,\bar t)} |\nabla u-(\nabla u)_{\mathcal{D} _{R} (\bar x,\bar t)}|^2 \ud x\ud t \\
&\quad+C\big[\|\pa_tu\|_{L^\infty (Q_1^+)}+\|u\|_{L^\infty (Q_1^+)} +\|f\|_{\mathscr{C}^{\al} (Q_1^+)}\big]^2, 
\end{align*} 
where  $C>0$ is a  constant depending only on $n,p,\lda,\Lda,\alpha$, the $C^3(\overline B_1^+)$ norms of $A,b_1,$ and the $\mathscr{C}^{\al} (Q_1^+)$ norms of $a, b_2$. 
\end{prop}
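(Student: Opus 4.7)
On any cylinder $\mathcal{D}_R(\bar x,\bar t)\subset B_{1/2}^+\times[-1/4,0]$ with $R\le 1/4$, one has $x_n\in[\tfrac{3}{4}\bar x_n,\tfrac{5}{4}\bar x_n]$, so the singular weight $x_n^{p-1}$ is uniformly comparable to $\bar x_n^{p-1}$ and \eqref{eq:linear-eq} is effectively uniformly parabolic there. My plan is to pull back to a standard parabolic unit cylinder via the intrinsic scaling $y=(x-\bar x)/\bar x_n$, $s=(t-\bar t)/\bar x_n^{p+1}$, apply the classical Campanato-type interior decay estimate for uniformly parabolic equations with H\"older coefficients, and scale back.

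Concretely, I would set $\ell=\bar x_n$ and $\tilde u(y,s)=u(\ell y+\bar x,\ell^{p+1}s+\bar t)$, and rescale $a,A,b_1,b_2,f$ analogously. The cylinder $\mathcal{D}_R$ pulls back to $\widetilde Q_R=B_R\times[-R^2,0]$, and multiplying \eqref{eq:linear-eq} by $\ell^2$ while using $x_n=\ell(1+y_n)$ yields
\[
\tilde a(1+y_n)^{p-1}\pa_s\tilde u-\mathrm{div}_y(\tilde A\nabla_y\tilde u)+\ell^2\tilde b_1\tilde u+\ell^{p+1}(1+y_n)^{p-1}\tilde b_2\tilde u=\ell^{p+1}(1+y_n)^{p-1}\tilde f
\]
on $\widetilde Q_{1/4}$. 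Since $(1+y_n)^{p-1}$ is smooth and uniformly bounded above and below on $\widetilde Q_{1/4}$, this equation is uniformly parabolic. Its leading coefficients lie in parabolic $C^\al$ with norms bounded independently of $\ell$, which for $\tilde a$ and $\tilde b_2$ is precisely the purpose of the space $\mathscr{C}^\al$: the interior H\"older oscillation on cylinders $\mathcal{D}_\rho$ becomes standard parabolic H\"older oscillation on $\widetilde Q_\rho$ after the rescaling, with $[\tilde a]_{C^\al(\widetilde Q_1)}\lesssim \ell^\al[a]_{\mathscr{C}^\al}+\|a\|_{L^\infty}$. The lower-order terms and the right-hand side carry positive powers of $\ell\le 1/2$.

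Next, I would apply the classical interior Campanato-type decay estimate for uniformly parabolic equations with H\"older coefficients (as in Lieberman \cite{L} or Schlag \cite{Sch}) to the rescaled equation: for $\rho\le R\le 1/4$,
\[
\dashint_{\widetilde Q_\rho}|\pa_s\tilde u-(\pa_s\tilde u)_{\widetilde Q_\rho}|^2\le C\Big(\frac{\rho}{R}\Big)^{2\al}\dashint_{\widetilde Q_R}|\pa_s\tilde u-(\pa_s\tilde u)_{\widetilde Q_R}|^2+CN^2,
\]
together with the analogous estimate with $\nabla_y\tilde u$ in place of $\pa_s\tilde u$, where $N$ encodes the $L^\infty$ norms of $\tilde u$ and $\pa_s\tilde u$ on $\widetilde Q_R$ and the parabolic $C^\al$ norm of the rescaled right-hand side. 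I would then translate back through $\pa_tu=\ell^{-(p+1)}\pa_s\tilde u$ and $\nabla u=\ell^{-1}\nabla_y\tilde u$, which give $\dashint_{\widetilde Q_\rho}|\pa_s\tilde u-\cdots|^2=\ell^{2(p+1)}\dashint_{\mathcal{D}_\rho}|\pa_tu-\cdots|^2$ and the analogous $\ell^2$-factor for the gradient, and then divide through by $\rho^{2\al}$ or $R^{2\al}$ paired with the appropriate power of $\ell^{-1}$ to arrive at the two Campanato-type inequalities stated in the proposition.

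The main obstacle will be the careful bookkeeping of the powers of $\ell=\bar x_n$, so that the final error collapses to $C[\|\pa_tu\|_{L^\infty}+\|u\|_{L^\infty}+\|f\|_{\mathscr{C}^\al}]^2$ with no surviving negative power of $\bar x_n$. This works precisely because $\mathscr{C}^\al$ is tailored to the intrinsic scaling: $[\tilde f]_{C^\al(\widetilde Q_1)}\lesssim\ell^\al[f]_{\mathscr{C}^\al}$ contributes an extra $\ell^\al$, while $\|\pa_s\tilde u\|_{L^\infty}=\ell^{p+1}\|\pa_tu\|_{L^\infty}$ is itself small, and together these absorb the $\ell^{-2\al}$ that arises when one converts between Campanato norms at $\rho^{2\al}$-scale and at $(\bar x_n\rho)^{2\al}$-scale.
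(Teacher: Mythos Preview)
Your approach is essentially the same as the paper's: rescale by $\ell=\bar x_n$ to a uniformly parabolic equation on the unit cylinder, apply interior Campanato estimates, and scale back while tracking the $\ell$-powers. The paper carries this out by proving bespoke versions of the interior estimates (its Propositions~\ref{prop:appendixCvariablecoefficientfort} and~\ref{prop:appendixCvariablecoefficientforx} in Appendix~C) that explicitly keep track of the parameter $\ell$, rather than citing \cite{L} or \cite{Sch} off the shelf; the crucial $\ell^{2\alpha}$-gain in the error term comes from the fact that the \emph{coefficients} $\tilde a,\tilde b_2,\tilde a_{ij},\tilde b_1$ have $C^{\alpha}$-seminorms of size $O(\ell^\alpha)$ (not merely $\tilde f$), and the paper also uses an elliptic estimate on each time slice to absorb the $\|D_x u\|_{L^\infty}$ term arising in the gradient inequality.
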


\begin{proof} Set $\ell=\bar x_n$ and

\begin{align*}
\tilde u(x,t)&= u(\ell x+\bar x, \ell^{p+1}t+\bar t),\quad \tilde a(x,t)= a(\ell x+\bar x, \ell^{p+1}t+\bar t),\\ 
\tilde A(x)&= A(\ell x+\bar x),\quad \tilde b_1(x)= b_1(\ell x+\bar x),\\ 
\tilde b_2(x,t)&= b_2(\ell x+\bar x, \ell^{p+1}t+\bar t),\quad \tilde f(x,t)= f(\ell x+\bar x, \ell^{p+1}t+\bar t).
\end{align*}
Then all $\tilde a,\tilde b_2, \tilde f$ are in the standard parabolic H\"older space $C^{\alpha,\alpha/2}(Q_{3/4})$, 
\begin{align*}
[\tilde a]_{C^{\alpha,\alpha/2}(Q_{3/4})}&\le \ell^\alpha  [a]_{\mathscr{C}^{\al} (Q_1^+)},\  [\tilde b_2]_{C^{\alpha,\alpha/2}(Q_{3/4})}\le \ell^\alpha  [b_2]_{\mathscr{C}^{\al} (Q_1^+)}, \\
[\tilde a_{ij}]_{C^{\alpha}(B_{3/4})}&\le \ell^\alpha  [a_{ij}]_{C^{\al} (B_1^+)},\  [\tilde b_1]_{C^{\alpha}(B_{3/4})}\le \ell^\alpha  [b_1]_{C^{\al} (B_1^+)},\\
[\tilde f]_{C^{\alpha,\alpha/2}(Q_{3/4})}&\le \ell^\alpha  [f]_{\mathscr{C}^{\al} (Q_1^+)},
\end{align*}
and 
\begin{align*}
&\tilde a(x,t)\cdot  (x_n+1)^{p-1}  \pa_t \tilde u- \mathrm{div}(\tilde A \nabla \tilde u) +\ell^2 \tilde b_1 \tilde u + \ell^{p+1} \tilde b_2(x,t)\cdot (x_{n}+1)^{p-1} \tilde u \\
&\quad= \ell^{p+1}(x_n+1)^{p-1} \tilde f \quad \mbox{in }Q_1,
\end{align*}
which is a uniform parabolic equation in $Q_{3/4}$.  
The coercivity \eqref{eq:coer} implies  that  $\int_{B_1^+} A\nabla \phi \nabla \phi +b_1 \phi^2 \ge \tilde \lda \int_{B_1^+} |\nabla\phi|^2,  \ \forall \phi\in H^1_0(B_1^+)$, with another constant $\tilde \lda>0$, from which one can see that the operator $- \mathrm{div}(\tilde A \nabla \cdot) +\ell^2 \tilde b_1\cdot $ is also coercive in $Q_1$ with a uniform constant independent of $\ell$.  Applying Proposition \ref{prop:appendixCvariablecoefficientfort} and Proposition \ref{prop:appendixCvariablecoefficientforx} to $\tilde u$, and then scaling back to $u$, we have
\begin{align*}
&\frac{1}{|\ell \rho|^{2\al}}\dashint_{\mathcal{D}_{\rho}(\bar x,\bar t)} |u_t-(u_t)_{\mathcal{D}_{\rho}(\bar x,\bar t)}|^2 \ud x\ud t \\& \le C \frac{1}{|\ell R|^{2\al}}\dashint_{\mathcal{D}_{R} (\bar x,\bar t)} |u_t-(u_t)_{\mathcal{D}_{R} (\bar x,\bar t)}|^2 \ud x\ud t \\
&\quad+C\big[\|\pa_tu\|_{L^\infty (Q_{3/4}^+)}+\|u\|_{L^\infty (Q_{3/4}^+)} +\|f\|_{\mathscr{C}^{\al} (Q_{3/4}^+)}\big]^2
\end{align*}
and 
\begin{align*}
&\frac{1}{|\ell\rho|^{2\al}}\dashint_{\mathcal{D}_{\rho}(\bar x,\bar t)} |\nabla u-(\nabla u)_{\mathcal{D}_{\rho}(\bar x,\bar t)}|^2 \ud x\ud t \\& \le C \frac{1}{|\ell R|^{2\al}}\dashint_{\mathcal{D}_{R} (\bar x,\bar t)} |\nabla u-(\nabla u)_{\mathcal{D} _{R} (\bar x,\bar t)}|^2 \ud x\ud t \\
&\quad + C\big[\|\pa_tu\|_{L^\infty (Q_{3/4}^+)}+\|D_x u\|_{L^\infty (Q_{3/4}^+)}+\|u\|_{L^\infty (Q_{3/4}^+)} +\|f\|_{\mathscr{C}^{\al} (Q_{3/4}^+)}\big]^2.
\end{align*} 
Applying elliptic estimates on each time slice, we have
\[
\|D_x u\|_{L^\infty (Q_{3/4}^+)}\le C\big[\|\pa_tu\|_{L^\infty (Q_{1}^+)}+\|u\|_{L^\infty (Q_{1}^+)} +\|f\|_{L^{\infty} (Q_{1}^+)}\big]. 
\]
This finishes the proof.
\end{proof}

Next, we study the boundary estimates.    We start with equations having regular coefficients.

 \begin{lem} \label{lem:cc-2} Let $0<\rho\le R/4$.  
 
 If $a\equiv \bar a$, $b_2\equiv \bar b$ and $f\equiv \bar f$ for some constants $\bar a, \bar b$ and $\bar f$,  we have
 \be \label{2.6-1}
 \int_{\mathcal{Q}_\rho^+}x_n^{p-1}|\pa_t u|^2 \, \ud x\ud t\le C\left(\frac{\rho}{R}\right)^{n+2p+2} \int_{\mathcal{Q}_R^+} x_n^{p-1} |\pa_tu|^2\, \ud x\ud t,
 \ee
where $C>0$ is a constant depending only on $n,p,\lda,\Lda,\bar a, \bar b$, and the $C^{3} (\overline B_1^+)$ norms of $A,b_1$. 

If in addition that $a_{ij}$ and $b_1$ are also constants, then we have 
\begin{align}
 \int_{\mathcal{Q}_\rho^+}|\nabla_{x'} u|^2\, \ud x\ud t &\le C\left(\frac{\rho}{R}\right)^{n+p+3} \int_{\mathcal{Q}_R^+} |\nabla_{x'} u|^2\, \ud x\ud t, \label{2.6-2}\\
 \rho^2  \int_{\mathcal{Q}_\rho^+}  |\nabla \nabla_{x'}u|^2\, \ud x\ud t  &\le C \int_{\mathcal{Q}_{2\rho}^+}  |\nabla_{x'}u|^2 \, \ud x\ud t,\label{2.6-3}
 \end{align}
 and, for any vector $X$, 
 \be
 \begin{split}
 &\rho^{p+1}\int_{\mathcal{Q}_\rho^+} x_n^{p-1} |\pa_t u|^2\, \ud x\ud t  \\
 &\le C\Big(\frac{\rho}{R}\Big)^{n+3p+1}  \int_{\mathcal{Q}_{R}^+} ( |\nabla u-X|^2 + R^2u^2) \, \ud x\ud t+CR^{n+3p+1} \bar f^2 ,
 \end{split}
 \ee
 where $C>0$ is a constant depending only on $n,p,\lda,\Lda,\bar a, \bar b, b_1$.
 \end{lem}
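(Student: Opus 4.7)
The plan is to prove the four displayed inequalities in order, exploiting a common mechanism: when the appropriate coefficients are constants, differentiating \eqref{eq:linear-eq}--\eqref{eq:linear-eq-D} in $t$ (or in tangential $x_i$) yields a homogeneous equation of the same form for the derivative, with zero Dirichlet trace on $\{x_n=0\}$. For \eqref{2.6-1}, first parabolically rescale $\mathcal Q_R^+\to Q_1^+$ via $(y,\tau)=(x/R,t/R^{p+1})$; because $\bar a,\bar b,\bar f$ are constants the rescaled equation has coefficients satisfying the hypotheses of Proposition~\ref{prop:smooth-co} uniformly in $R\le 1$. Differentiating the rescaled equation in $\tau$ shows $V:=\pa_\tau\tilde u$ solves a homogeneous problem with $V=0$ on $\{y_n=0\}$; Proposition~\ref{prop:smooth-co} gives $\|V\|_{C^1(\overline{Q_{1/2}^+})}\le C\|V\|_{L^2(Q_1^+;y_n^{p-1})}$, whence the vanishing at $y_n=0$ yields the pointwise bound $|V(y,\tau)|\le Cy_n\|V\|_{L^2(Q_1^+;y_n^{p-1})}$. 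Integrating $y_n^{p-1}|V|^2$ over $\mathcal Q_{\rho/R}^+$ contributes the factor $(\rho/R)^{n+p-1+2+p+1}=(\rho/R)^{n+2p+2}$ (from $|V|^2\le Cy_n^2$ combined with the natural parabolic volume), and reversing the rescaling cancels a common $R$-dependent factor on both sides, yielding \eqref{2.6-1}.

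For \eqref{2.6-2} the same strategy applies to the tangential derivative $w:=\pa_{x_i}u$, $i<n$: with $a_{ij}$ and $b_1$ also constant, $w$ solves the corresponding homogeneous equation, and $u=0$ on $\{x_n=0\}$ gives $w=0$ there. Proposition~\ref{prop:smooth-co}, together with $y_n^{p-1}\le 1$ on $Q_1^+$, gives $\|w\|_{C^1(\overline{Q_{1/2}^+})}\le C\|w\|_{L^2(Q_1^+)}$ after rescaling, hence $|w|\le Cy_n\|w\|_{L^2(Q_1^+)}$. Integrating $|w|^2$ over the rescaled half-cylinder produces $(\rho/R)^{2+n+p+1}=(\rho/R)^{n+p+3}$, and undoing the rescaling yields \eqref{2.6-2}. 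Estimate \eqref{2.6-3} is a standard parabolic Caccioppoli on the homogeneous equation for $w$: testing by $\phi^2w$ with a cutoff $\phi\equiv 1$ on $\mathcal Q_\rho^+$ supported in $\mathcal Q_{2\rho}^+$ and integrating by parts, the only noteworthy point is that the time-derivative contribution $\int x_n^{p-1}w^2\phi|\pa_t\phi|$ is controlled by $C\rho^{-2}\int w^2$ using the pointwise bound $x_n^{p-1}\le (2\rho)^{p-1}$ on $\operatorname{supp}\phi$ combined with $|\pa_t\phi|\le C\rho^{-(p+1)}$, matching the $|\nabla\phi|^2\le C\rho^{-2}$ spatial contribution.

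The delicate step is the final estimate. My plan is to combine \eqref{2.6-1} at scale $R/2$ with an energy estimate of the form
\[
\int_{\mathcal Q_{R/2}^+}x_n^{p-1}|\pa_t u|^2\le C R^{-(p-1)}\!\int_{\mathcal Q_R^+}(|\nabla u-X|^2+R^2u^2)+ CR^{n+2p}\bar f^2,
\]
which, once inserted into \eqref{2.6-1} and multiplied by $\rho^{p+1}$, produces exactly the factors $(\rho/R)^{n+3p+1}$ and $R^{n+3p+1}$ claimed. The structure $|\nabla u-X|^2$ is produced by using that $A$ is constant, so $\mathrm{div}(AX)=0$ and $\pa_t(\langle X,x\rangle+L)=0$: setting $\tilde u:=u-\langle X,x\rangle-L$ rewrites the equation as $\bar a x_n^{p-1}\pa_t u=\mathrm{div}(A\nabla\tilde u)-b_1u-\bar b x_n^{p-1}u+x_n^{p-1}\bar f$. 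Testing by $\phi^2\pa_t u$ and integrating by parts in $t$ converts the principal term into $\tfrac12[\int A|\nabla\tilde u|^2\phi^2]_{t=0}-\int A|\nabla\tilde u|^2\phi\pa_t\phi$ plus a cross term $2\int A\nabla\tilde u\cdot\pa_t u\phi\nabla\phi$, while the $X$-linear pieces cancel (after a further integration by parts in $x$) because the contributions from $\int AX\cdot\phi^2\pa_t\nabla\tilde u$ and from the $X$-part of $2\int A\nabla u\cdot\pa_t u\phi\nabla\phi$ exactly match with opposite signs. The main obstacle is controlling the remaining cross term $2\int A\nabla\tilde u\cdot\pa_t u\phi\nabla\phi$, since a naive Young inequality absorbing the $\pa_t u$ factor into the left-hand side brings in the singular weight $x_n^{-(p-1)}|\nabla\phi|^2|\nabla\tilde u|^2$ that is not uniformly integrable near $\{x_n=0\}$ for large $p$. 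I would deal with this by arranging $\phi$ to avoid the worst part of the boundary layer (so that the singular weight is bounded on $\operatorname{supp}\nabla\phi$) and estimating the residual boundary strip by applying \eqref{2.6-1} on a slightly smaller scale, and by handling the lower-order terms $\int b_1 u\phi^2\pa_t u$ and $\int\bar b x_n^{p-1}u\phi^2\pa_t u$ via weighted Young inequalities whose bad weights are offset by a Hardy-type bound $|u|\le Cx_n$ near $\{x_n=0\}$, which produces the $R^2u^2$ correction on the right-hand side.
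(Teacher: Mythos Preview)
Your treatment of \eqref{2.6-1}, \eqref{2.6-2} and \eqref{2.6-3} is essentially the paper's argument: differentiate in $t$ (resp.\ in a tangential $x_i$) to get a homogeneous equation for $v$ with $v=0$ on $\{x_n=0\}$, invoke Proposition~\ref{prop:smooth-co} for a $C^1$ bound, and use $|v|\le x_n\,\|D_{x_n}v\|_{L^\infty}$ to gain the extra power in $\rho/R$. The paper does not explicitly pass through a rescaling to the unit cylinder, but the content is the same.

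For the last inequality your plan has a genuine gap. Testing with $\phi^2\pa_t u$ inevitably produces the cross term $\int A(\nabla u-X)\cdot\pa_t u\,\phi\nabla\phi$, and absorbing the $\pa_t u$ factor into the left forces the weight $x_n^{-(p-1)}|\nabla\phi|^2$ on the right. Since $\operatorname{supp}|\nabla\phi|$ must contain points $(x',x_n)$ with $|x'|\sim R$ and $x_n\to 0$, this weight is genuinely non-integrable for $p\ge 2$; your proposed fixes (pushing $\phi$ away from the boundary, invoking \eqref{2.6-1} on a strip, or a Hardy bound $|u|\le Cx_n$) are not made precise and, in particular, the Hardy bound is not available a priori and in any case addresses only the lower-order terms, not this principal cross term.

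The paper sidesteps the singularity altogether by inserting an \emph{extra} weight in the test function: one tests \eqref{eq:linear-eq} (rewritten as $\bar a x_n^{p-1}\pa_t u-\mathrm{div}(A(\nabla u-X))+b_1u+\bar b x_n^{p-1}u=x_n^{p-1}\bar f$) with $x_n^{p+1}\eta^2\pa_t u$. Then the left side is $\int x_n^{2p}|\pa_t u|^2\eta^2$, and every cross term---from $\nabla(x_n^{p+1})$, from $\nabla\eta$, and from the lower-order terms---pairs $\pa_t u$ against a factor carrying at least $x_n^{p}$, so Young's inequality absorbs $\pa_t u$ without introducing any negative power of $x_n$. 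After scaling this yields
\[
\int_{\mathcal Q_{R/2}^+} x_n^{2p}|\pa_t u|^2 \le C\!\int_{\mathcal Q_R^+}(|\nabla u-X|^2+R^2u^2)+CR^{n+3p+1}\bar f^2.
\]
The passage to the left side of the claimed estimate is \emph{not} via \eqref{2.6-1} but via the De~Giorgi--Moser bound of Proposition~\ref{prop:local-bound} applied to $v=\pa_t u$ (which solves a homogeneous equation of the same form): choosing $\gamma<\tfrac{2}{2p+1}$ and H\"older's inequality give
\[
\sup_{\mathcal Q_\rho^+}|\pa_t u|^2\le \frac{C}{R^{n+3p+1}}\int_{\mathcal Q_{R/2}^+} x_n^{2p}|\pa_t u|^2,
\]
and multiplying by $\rho^{p+1}\int_{\mathcal Q_\rho^+}x_n^{p-1}\le C\rho^{n+3p+1}$ gives the stated inequality. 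In short, the missing idea is the $x_n^{p+1}$-weighted test function that converts the singular cross term into a harmless one.
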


 \begin{proof} Note that $v:=\pa_t u$ satisfies 
 \be \label{eq:const-co}
 \bar a x_n^{p-1} \pa_t v-\mathrm{div}(A\nabla v)+b_1 v+x_n^{p-1}\bar b v=0 \quad \mbox{in }Q_1^+
 \ee
 and $v=0$ on $\pa' B_1^+ \times (-1,0]$.   By Proposition \ref{prop:smooth-co}, we have 
 \[
 \|D_x v\|_{L^\infty(\mathcal{Q}_{\rho}^+ )} ^2 \le  \frac{C}{R^{n+2(p+1)}} \int_{\mathcal{Q}_R^+} x_n^{p-1} v^2\, \ud x\ud t
 \]
 for any $0<\rho\le R/2$. It follows that 
 \begin{align*}
  \int_{\mathcal{Q}_\rho^+}x_n^{p-1}|\pa_t u|^2\, \ud x\ud t& = \int_{\mathcal{Q}_\rho^+}x_n^{p-1}|\pa_t u(x',x_n,t)- \pa_t u(x',0, t)|^2 \, \ud x\ud t\\&
  \le C \rho^{p+1} \int_{\mathcal{Q}_\rho^+}|D_{x_n} \pa_t u(x,t)|^2 \, \ud x\ud t\\&
  \le C \rho^{n+2(p+1)}\sup_{\mathcal{Q}_\rho^+ }|D_{x_n} \pa_t u|^2 \\&
  \le C\left(\frac{\rho}{R}\right)^{n+2p+2} \int_{\mathcal{Q}_R^+} x_n^{p-1} |\pa_tu|^2\, \ud x\ud t.
 \end{align*}
This proves the first inequality. 

Now let us suppose in addition that $a_{ij}$ and $b_1$ are also constants.
 
 If we let $w=D_{x_i} u$ for any $1\le i\le n-1$, then $w$ also satisfies \eqref{eq:const-co} and $w=0$  on $\pa' B_1^+ \times (-1,0]$.  By Proposition \ref{prop:smooth-co}, we have 
 \[
 \|\nabla w\|_{L^\infty(\mathcal{Q}_{\rho}^+)} ^2 \le  \frac{C}{R^{n+p+3}} \int_{\mathcal{Q}_R^+} w^2\, \ud x\ud t
 \]
 for any $0<\rho\le R/2$. It follows that 
 \begin{align*}
  \int_{\mathcal{Q}_\rho^+}|D_{x_i} u|^2\, \ud x\ud t& = \int_{Q_\rho^+}|D_{x_i} u(x',x_n,t)- D_{x_i} u(x',0, t)|^2\, \ud x\ud t \\&
  \le C \rho^{2} \int_{\mathcal{Q}_\rho^+}|D_{x_n}D_{x_i} u(x,t)|^2\, \ud x\ud t \\&
  \le C \rho^{n+p+3}\sup_{\mathcal{Q}_\rho ^+}|D_{x_n}D_{x_i} u|^2 \\&
  \le C\left(\frac{\rho}{R}\right)^{n+p+3} \int_{\mathcal{Q}_R^+} |D_{x_i} u|^2 \, \ud x\ud t.
 \end{align*}
 This proves the second inequality.  While for \eqref{2.6-3}, it follows from the local estimates of $v$ using Proposition \ref{prop:local-bound}. 
 
 Finally, by Proposition \ref{prop:local-bound} and H\"older inequality, using \eqref{eq:const-co} with $v=\pa_t u$ we have, for $\rho<R/4$,
 \[
 \sup_{\mathcal{Q}_\rho^+ }|\pa_t u|^2 \le  C\frac{1}{R^{n+3p+1}} \int_{\mathcal{Q}_{R/2}^+} x_n^{2p} |\pa_tu|^2\, \ud x\ud t.
 \]
 Hence, 
 \[
 \rho^{p+1}\int_{\mathcal{Q}_\rho^+} x_n^{p-1} |\pa_t u|^2 \, \ud x\ud t  \le C\Big(\frac{\rho}{R}\Big)^{n+3p+1}  \int_{\mathcal{Q}_{R/2}^+} x_n^{2p} |\pa_tu|^2 \, \ud x\ud t. 
 \]
 
Let $a_{ij}$ and $b_1$ be constants  and $X$ be an arbitrary constant vector.  Then we can rewrite the equation $u$, which is \eqref{eq:linear-eq}, as 
 \[
\bar a x_n^{p-1} \pa_t u -\mathrm{div}(A(\nabla u-X)) +b_1 u+ \bar b x_n^{p-1} u=x_n^{p-1} \bar f. 
 \]
Using $ x_n^{p+1}\eta^2 \pa_t u $ as a test function, where $\eta$ is a smooth cutoff function vanishing in $Q_1^+\setminus\mathcal{Q}_{3/4}^+$, we have  
 \[
  \int_{\mathcal{Q}_{1/2}^+} x_n^{2p} |\pa_tu|^2 \, \ud x\ud t \le C \int_{\mathcal{Q}_{1}^+} ( |\nabla u-X|^2 + (|b_1|^2 +|b_2|^2)u^2) \, \ud x\ud t +C \bar f^2,
 \]
 where $C>0 $ is a  constant independent of $X$. By scaling, we have 
 \begin{align*}
  \int_{\mathcal{Q}_{R/2}^+} x_n^{2p} |\pa_tu|^2 \, \ud x\ud t 
  \le C \int_{\mathcal{Q}_{R}^+} ( |\nabla u-X|^2 + R^2u^2)\, \ud x\ud t + C R^{n+3p+1} \bar f^2.
 \end{align*}
   Therefore, we have 
 \begin{align*}
& \rho^{p+1}\int_{\mathcal{Q}_\rho^+} x_n^{p-1} |\pa_t u|^2\, \ud x\ud t  \\
& \le C\Big(\frac{\rho}{R}\Big)^{n+3p+1}  \int_{\mathcal{Q}_{R}^+} ( |\nabla u-X|^2 + R^2u^2) \, \ud x\ud t + CR^{n+3p+1} \bar f^2.
 \end{align*}
 This proves the last inequality.
 \end{proof}
 
We now use the freezing coefficients method to prove Schauder estimates.  
 
 \begin{prop} \label{prop:campanato}  $\forall\ 0<\rho<R$ (small), we have
 \begin{align*}
&\frac{1}{\rho^{n+2p+2\al} }\int_{\mathcal{Q}_\rho^+} x_n^{p-1} |\pa_t u|^2  \\
&\le  \frac{C}{R^{n+2p+2\al}}\int_{\mathcal{Q}_R^+} x_n^{p-1} |\pa_t u|^2+C\big[\|u\|_{L^\infty (Q_1^+)} +\|f\|_{\mathscr{C}^{\al} (Q_1^+)}\big]^2,
 \end{align*}
where $C>0$ is a  constant depending only on $n,p,\lda,\bar\lambda, \Lda,\alpha$, the $C^3(\overline B_1^+)$ norms of $A,b_1,$ and the $\mathscr{C}^{\al} (Q_1^+)$ norms of $a, b_2$. 
 \end{prop}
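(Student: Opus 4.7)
I would follow Campanato's freezing-coefficients method at the boundary point $0$ on $\{x_n=0\}$, with estimate \eqref{2.6-1} of Lemma \ref{lem:cc-2} playing the role of the decay estimate for the reference problem with frozen coefficients. By the standing hypotheses $u\in C^{2+\gamma}(\overline{Q_1^+})$, so the manipulations below are legitimate. For each small $R>0$, freeze $\bar a := a(0,0)$, $\bar b := b_2(0,0)$, $\bar f:=(f)^\mu_{\mathcal{Q}_R^+}$, and let $w$ be the solution on $\mathcal{Q}_R^+$ of the frozen problem
\[
\bar a\, x_n^{p-1}\pa_t w - \mathrm{div}(A\nabla w) + b_1 w + \bar b\, x_n^{p-1}w = x_n^{p-1}\bar f,
\]
with $w=u$ on $\pa_{pa}\mathcal{Q}_R^+$ (existence by a rescaled Proposition \ref{prop:smooth-co}). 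Since $u\equiv 0$ on $\pa' B_R^+\times[-R^{p+1},0]$, so does $w$, and (after scaling) Lemma \ref{lem:cc-2}, estimate \eqref{2.6-1}, applied to $w$ gives, for all $\rho\le R/4$,
\[
\int_{\mathcal{Q}_\rho^+}x_n^{p-1}|\pa_t w|^2 \le C\Big(\tfrac{\rho}{R}\Big)^{n+2p+2}\int_{\mathcal{Q}_R^+}x_n^{p-1}|\pa_t w|^2.
\]

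Next set $v := u-w$; then $v\equiv 0$ on $\pa_{pa}\mathcal{Q}_R^+$ and $v$ solves the frozen equation with right-hand side $G:=x_n^{p-1}[(f-\bar f) + (\bar a - a)\pa_t u + (\bar b - b_2)u]$. Test with $\pa_t v$ and integrate over $\mathcal{Q}_R^+$; since $A$ and $b_1$ are $t$-independent, the elliptic and zeroth-order terms combine into the perfect time-derivative $\tfrac{1}{2}\pa_t[A\nabla v\cdot\nabla v + b_1 v^2 + \bar b\, x_n^{p-1}v^2]$, whose $t$-integral equals a boundary contribution at $t=0$ that is non-negative by the coercivity \eqref{eq:coer} (applied to $v(\cdot,0)\in H_0^1(B_R^+)$) together with $\bar b\ge\lambda$, while the contribution at $t=-R^{p+1}$ vanishes. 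Cauchy--Schwarz and absorption then yield
\[
\int_{\mathcal{Q}_R^+}x_n^{p-1}|\pa_t v|^2 \le C\int_{\mathcal{Q}_R^+}x_n^{p-1}\bigl[|f-\bar f|^2+|\bar a-a|^2|\pa_t u|^2+|\bar b-b_2|^2 u^2\bigr].
\]
Lemma \ref{lem:embed} gives $\mathscr{C}^\al\subset C^{\al,\al/(p+1)}$, hence $\|a-\bar a\|_{L^\infty(\mathcal{Q}_R^+)}+\|b_2-\bar b\|_{L^\infty(\mathcal{Q}_R^+)}\le CR^\al$; and $\mu(\mathcal{Q}_R^+)\sim R^{n+2p}$, combined with the boundary part of $[f]_{\mathscr{C}^\al}$, gives $\int_{\mathcal{Q}_R^+}|f-\bar f|^2\,\ud\mu\le CR^{n+2p+2\al}[f]^2_{\mathscr{C}^\al}$. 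Therefore
\[
\int_{\mathcal{Q}_R^+}x_n^{p-1}|\pa_t v|^2 \le CR^{2\al}\int_{\mathcal{Q}_R^+}x_n^{p-1}|\pa_t u|^2 + CR^{n+2p+2\al}\Lambda,
\]
where $\Lambda := \|u\|^2_{L^\infty(Q_1^+)}+\|f\|^2_{\mathscr{C}^\al(Q_1^+)}$.

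Using $|\pa_t u|^2\le 2|\pa_t w|^2+2|\pa_t v|^2$, for $\rho\le R/4$ one obtains
\[
\int_{\mathcal{Q}_\rho^+}x_n^{p-1}|\pa_t u|^2 \le C\bigl[(\rho/R)^{n+2p+2}+R^{2\al}\bigr]\int_{\mathcal{Q}_R^+}x_n^{p-1}|\pa_t u|^2 + CR^{n+2p+2\al}\Lambda.
\]
Since $n+2p+2 > n+2p+2\al$ (because $\al<1$) and $R^{2\al}\le R_0^{2\al}$ can be made as small as needed by shrinking the admissible range $R\le R_0$, the standard Campanato iteration lemma (e.g.\ Giaquinta--Martinazzi) converts this into $\int_{\mathcal{Q}_\rho^+}x_n^{p-1}|\pa_t u|^2 \le C[(\rho/R)^{n+2p+2\al}\int_{\mathcal{Q}_R^+}x_n^{p-1}|\pa_t u|^2 + \rho^{n+2p+2\al}\Lambda]$, which is the claim after dividing by $\rho^{n+2p+2\al}$. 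The most delicate step is the energy identity used for $v$: it requires enough integrability of $\pa_t v$ (in particular of $\pa_t u$) to multiply by $G$ and to justify the time integration-by-parts. I would handle this by first carrying out the argument on the regularized solutions $u_\va$ from Proposition \ref{prop:smooth-co} (with weight $(x_n+\va)^{p-1}$), which are classical, and then passing to the limit $\va\to 0$ using the $\va$-uniform $C^3$ bounds. A minor subtlety is that bounding $|a-\bar a|$ and $|b_2-\bar b|$ on $\mathcal{Q}_R^+$ by $R^\al$ relies on both the interior and the boundary oscillation integrals in the definition of $[\cdot]_{\mathscr{C}^\al}$---exactly what Lemma \ref{lem:embed} delivers via the bridging chain \eqref{eq:connection}.
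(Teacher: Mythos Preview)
Your proof is correct and follows essentially the same freezing-coefficients strategy as the paper. The only cosmetic differences are that the paper freezes $a$ and $b_2$ at their $\mu$-averages $a_R^\mu:=(a)^\mu_{\mathcal{Q}_R^+}$ and $(b_2)^\mu_R$ rather than at the point values $a(0,0)$, $b_2(0,0)$, and the paper invokes the black-box energy estimate Theorem~\ref{thm:energyestimateut} for $u_2=u-u_1$ rather than deriving it inline as you do; both choices lead to the same bound on $\int_{\mathcal{Q}_R^+}x_n^{p-1}|\pa_t v|^2$ and the same iteration.
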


 \begin{proof} We denote $f_R^\mu= (f)^\mu_{\mathcal{Q}_R^+}$ for short. Let $u_1$ be the unique weak solution of
 \begin{align*}
 a_R^\mu x_n^{p-1} \pa_t u_1 -\mathrm{div}(A\nabla u_1)+b_1 u_1+x_n^{p-1}(b_2)^\mu_R u_1&=x_n^{p-1}  f_R^\mu \quad \mbox{in }\mathcal{Q}_R^+, \\ u_1&=u \quad \mbox{on }  \pa_{pa} \mathcal{Q}_R^+.
 \end{align*}
Theorem \ref{thm:existenceofweaksolution} guarantees the existence and uniqueness of $u_1$. Let $u_2=u-u_1$. Then $u_2$ is a weak solution  of
 \begin{align*}
 &a_R^\mu x_n^{p-1} \pa_t u_2  -\mathrm{div}(A\nabla u_2)+b_1 u_2+x_n^{p-1}(b_2)_R^\mu u_2\\&=x_n^{p-1}  (f-f_R^\mu) -x_n^{p-1}(b_2-(b_2)_R^\mu) u-x_n^{p-1}(a-a_R^\mu) \pa_t u \quad \mbox{in }\mathcal{Q}_R^+
 \end{align*} 
 with $
 u_2=0 \mbox{ on }  \pa_{pa} \mathcal{Q}_R^+$. By Lemma \ref{lem:cc-2},  we have for $0<\rho<R/4$ that
 \begin{align*}
 \int_{\mathcal{Q}_\rho^+} x_n^{p-1} |\pa_t u|^2& \le 2 \int_{\mathcal{Q}_\rho^+} x_n^{p-1} |\pa_t u_1|^2 +2 \int_{\mathcal{Q}_\rho^+} x_n^{p-1} |\pa_t u_2|^2 \\&
 \le C\frac{\rho^{n+2p+2} }{R^{n+2p+2} } \int_{\mathcal{Q}_R^+} x_n^{p-1} |\pa_tu_1|^2 + 2 \int_{\mathcal{Q}_R^+} x_n^{p-1} |\pa_t u_2|^2\\&
 \le C\frac{\rho^{n+2p+2} }{R^{n+2p+2} } \int_{\mathcal{Q}_R^+} x_n^{p-1} |\pa_tu|^2 + C\int_{\mathcal{Q}_R^+} x_n^{p-1} |\pa_t u_2|^2.
 \end{align*}
Since we assumed that $-\mathrm{div}(A \nabla )+b_1$ is coercive and $b_2>0$, by Theorem \ref{thm:energyestimateut}, we have
 \begin{align*}
 &\int_{\mathcal{Q}_R^+}  x_n^{p-1} |\pa_t u_2|^2 \\
 & \le  C \int_{\mathcal{Q}_R^+} x_n^{p-1}  \{(f-f_R^\mu)^2+(b_2- (b_2)_R^\mu)^2 u^2+(a-a_R^\mu)^2 |\pa_tu|^2)\} \\&
 \le CF^2 R^{n+2p+2\al } +CR^{2\al}   \int_{\mathcal{Q}_R^+} x_n^{p-1} |\pa_tu|^2,
 \end{align*}
where $$F= \|u\|_{L^\infty (Q_1^+)} +\|f\|_{\mathscr{C}^{\al} (Q_1^+)}.$$ Therefore, we have
\begin{align*}
 \int_{\mathcal{Q}_\rho^+} x_n^{p-1} |\pa_t u|^2 \le C\left\{ \left(\frac{\rho}{R}\right)^{n+2p+2} +R^{2\al}  \right\}   \int_{\mathcal{Q}_R^+} x_n^{p-1} |\pa_tu|^2  +CF^2 R^{n+2p+2\al }. 
\end{align*}
Also, the above inequality trivially holds for $\rho\in[R/4,R]$. By an iteration lemma, Lemma \ref{lem:lemma3.4inHL}, it follows that
 \[
\frac{1}{\rho^{n+2p+2\al} }\int_{\mathcal{Q}_\rho^+} x_n^{p-1} |\pa_t u|^2   \le  \frac{C}{R^{n+2p+2\al}}\int_{\mathcal{Q}_R^+} x_n^{p-1} |\pa_t u|^2+CF^2,
 \]
from which the proposition follows. 
 \end{proof}    

Since
 \[
\int_{\mathcal{Q}_\rho^+} x_n^{p-1} |\pa_t u-(\pa_t u)^\mu_{\mathcal{Q}_\rho^+}|^2   \le \int_{\mathcal{Q}_\rho^+} x_n^{p-1} |\pa_t u|^2,
 \] 
 it follows from Proposition \ref{prop:campanato} that
 \[
\frac{1}{\rho^{n+2p+2\al} }\int_{\mathcal{Q}_\rho^+} x_n^{p-1} |\pa_t u-(\pa_t u)^\mu_{\mathcal{Q}_\rho^+}|^2   \le  \frac{C}{R^{n+2p+2\al}}\int_{\mathcal{Q}_R^+} x_n^{p-1} |\pa_t u|^2+CF^2.
 \]

\begin{cor} \label{cor:time-schauder} We have 
\[
\|u\|_{C^1(\overline Q_{1/2}^+)}+\|\pa_t u\|_{ \mathscr{C}^{\al}(Q_{1/2}^+)} \le C \big[\|\pa_t u\|_{L^\infty (Q_1^+)}+\|u\|_{L^\infty (Q_1^+)} +\|f\|_{\mathscr{C}^{\al} (Q_1^+)}\big],
\]  
where $C>0$ is a  constant depending only on $n,p,\lda,\bar\lambda, \Lda,\alpha$, the $C^3(\overline B_1^+)$ norms of $A,b_1,$ and the $\mathscr{C}^{\al} (Q_1^+)$ norms of $a, b_2$.
\end{cor}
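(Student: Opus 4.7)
The plan is to derive the $C^1$ bound from elliptic regularity on time slices, and the $\mathscr{C}^\alpha$ bound on $\pa_t u$ from Proposition \ref{prop:interior} (interior) together with the inequality displayed immediately before the corollary (boundary), combined through the equivalent-norm characterization of Lemma \ref{lem:equivalentnorms}. Throughout, write
\[
F:=\|\pa_t u\|_{L^\infty(Q_1^+)}+\|u\|_{L^\infty(Q_1^+)}+\|f\|_{\mathscr{C}^\alpha(Q_1^+)}.
\]

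For the $C^1$ bound, freezing $t\in[-1/4,0]$ turns \eqref{eq:linear-eq} into the elliptic Dirichlet problem
\[
-\mathrm{div}(A\nabla u(\cdot,t))+b_1 u(\cdot,t)=x_n^{p-1}\bigl(f-a\,\pa_t u-b_2 u\bigr)(\cdot,t)\ \mbox{in }B_1^+,\quad u(\cdot,t)=0\ \mbox{on }\pa' B_1^+,
\]
whose right-hand side has $L^\infty$-norm $\le CF$. Boundary $W^{2,q}$ estimates for $q>n$, Sobolev embedding, and the coercivity \eqref{eq:coer} give $\|u(\cdot,t)\|_{C^1(\overline{B_{1/2}^+})}\le CF$ uniformly in $t$, yielding the $C^1$ piece of the claim.

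For the $\mathscr{C}^\alpha$ bound on $\pa_t u$, I would invoke Lemma \ref{lem:equivalentnorms} to replace $[\pa_t u]_{\mathscr{C}^\alpha}$ by the equivalent $[\pa_t u]_{\mathscr{\widetilde C}^\alpha}$, which is the sum of three Hölder-type seminorms: the spatial $\alpha$-seminorm, the unweighted temporal seminorm with exponent $\alpha/(p+1)$, and the weighted ($x_n^{(p-1)\alpha/2}$) temporal seminorm with exponent $\alpha/2$. Taking $R=1/4$ in Proposition \ref{prop:interior} and bounding its right-hand side by $CF^2$, the standard Campanato-to-Hölder argument on the anisotropic cylinders $\mathcal{D}_\rho(\bar x,\bar t)$, rescaled by $\bar x_n$ to unit-size parabolic cylinders, controls the spatial and weighted-temporal seminorms with constant $CF$. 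The displayed inequality immediately preceding the corollary, combined with Campanato's theorem for doubling measures (as in Górka, cited in the proof of Lemma \ref{lem:embed}) applied to the backward cylinders $\mathcal{Q}_\rho^+(\bar x,\bar t)$ with measure $\ud\mu$, controls both the spatial seminorm near $\{x_n=0\}$ and the unweighted temporal seminorm at boundary points.

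The only real obstacle is controlling the unweighted temporal seminorm for pairs $(x,t),(x,s)$ with $x_n>0$, which I would handle by scale separation. When $|t-s|^{1/(p+1)}\le x_n$, the weighted bound from the interior estimate directly yields $|\pa_t u(x,t)-\pa_t u(x,s)|\le CF|t-s|^{\alpha/(p+1)}$ via the algebraic identity $x_n^{-(p-1)\alpha/2}|t-s|^{\alpha/2}\le|t-s|^{\alpha/(p+1)}$ (obtained by writing $|t-s|^{\alpha/2}=(|t-s|^{1/(p+1)})^{(p-1)\alpha/2}\cdot |t-s|^{\alpha/(p+1)}$ and absorbing the first factor into $x_n^{(p-1)\alpha/2}$). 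When $|t-s|^{1/(p+1)}>x_n$, I would descend to the boundary point $(x',0,t)$ and use the spatial $\alpha$-seminorm together with the boundary temporal estimate at $(x',0)$. Combining the boundary and interior Hölder bounds then gives $\|\pa_t u\|_{\mathscr{C}^\alpha(Q_{1/2}^+)}\le CF$, completing the proof.
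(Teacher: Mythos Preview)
Your approach is correct in spirit but takes an unnecessary detour, and there is a gap in one step.

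The paper's proof is far more direct. Recall that the seminorm $[\,\cdot\,]_{\mathscr{C}^\alpha(Q)}$ is \emph{defined} as the sum of an interior Campanato quantity over the cylinders $\mathcal{\widetilde D}_\rho(\bar x,\bar t)$ and a boundary Campanato quantity over the cylinders $\mathcal{\widetilde Q}_R^+(\bar x',0,\bar t)$. Proposition~\ref{prop:campanato} (through the displayed inequality immediately preceding the corollary) bounds the boundary piece by $CF$ directly, and Proposition~\ref{prop:interior} together with the bridging inequality \eqref{eq:connection} bounds the interior piece. So one never needs to pass through the equivalent H\"older-type norm $\widetilde{\mathscr{C}}^\alpha$ of Lemma~\ref{lem:equivalentnorms} at all; the two Campanato estimates feed straight into the definition.

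Your detour through $\widetilde{\mathscr{C}}^\alpha$ can be made to work, but your claim ``Taking $R=1/4$ in Proposition~\ref{prop:interior} and bounding its right-hand side by $CF^2$'' is not justified as written. The right-hand side of Proposition~\ref{prop:interior} at $R=1/4$ contains the term
\[
\frac{C}{|\bar x_n/4|^{2\alpha}}\dashint_{\mathcal{D}_{1/4}(\bar x,\bar t)}|u_t-(u_t)_{\mathcal{D}_{1/4}(\bar x,\bar t)}|^2,
\]
which a priori is only $\le C\bar x_n^{-2\alpha}\|\pa_t u\|_{L^\infty}^2$ and blows up as $\bar x_n\to 0$. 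To bound it by $CF^2$ uniformly you must invoke the bridge \eqref{eq:connection}, which dominates this interior quantity at the fixed scale by the boundary Campanato quantity at radius $\sim\bar x_n$, and then apply Proposition~\ref{prop:campanato}. Once you insert \eqref{eq:connection} here, your argument goes through---but at that point you have reassembled exactly the paper's proof, with the extra work of translating back and forth through Lemma~\ref{lem:equivalentnorms}.
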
 

\begin{proof}
By using Proposition \ref{prop:interior}, Proposition \ref{prop:campanato}, and \eqref{eq:connection}, we have 
\begin{equation}\label{eq:partialtinfinity}
\|\pa_t u\|_{ \mathscr{C}^{\al}(Q_{1/2}^+)} \le C \big[\|\pa_t u\|_{L^\infty (Q_1^+)}+\|u\|_{L^\infty (Q_1^+)} +\|f\|_{\mathscr{C}^{\al} (Q_1^+)}\big].
\end{equation}
Since $A$ is $C^2$, we can apply elliptic estimates on each time slice to obtain the estimate for the spatial derivative $D_x u$.
\end{proof}
 
After deriving the estimate for $\partial_t u$, we move to deriving the estimate for $D_x u$. 
 \begin{prop} \label{prop:space} 
For small $R$, we have for $0<\rho< R$,
 \begin{align*}
& \frac{1}{\rho^{n+p+1+2\al}}\int_{\mathcal{Q}_\rho^+} |\nabla_{x'} u|^2 + |D_{x_n} u-(D_{x_n} u)_{\mathcal{Q}_\rho^+}|^2 \\&
\le \frac{C}{R^{n+p+1+2\al}} \int_{\mathcal{Q}_R^+} |\nabla_{x'} u|^2+  |D_{x_n} u-(D_{x_n} u)_{\mathcal{Q}_R^+}|^2 \\
&\quad+C \big[\|\pa_t u\|_{L^\infty (Q_1^+)}+\|u\|_{L^\infty (Q_1^+)} +\|f\|_{\mathscr{C}^{\al} (Q_1^+)}\big]^2,  
\end{align*}
where $C>0$ is a  constant depending only on $n,p,\lda,\bar\lambda, \Lda,\alpha$, the $C^3(\overline B_1^+)$ norms of $A,b_1,$ and the $\mathscr{C}^{\al} (Q_1^+)$ norms of $a, b_2$.
 \end{prop}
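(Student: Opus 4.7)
I plan to follow the freezing-coefficient scheme of Proposition~\ref{prop:campanato}, with the additional difficulty that $D_{x_n}u$ does not vanish on $\{x_n=0\}$ so its oscillation (not its size) must be estimated. Freeze all coefficients on $\mathcal Q_R^+$: let $\bar a=(a)^\mu_{\mathcal Q_R^+}$, $\bar b_2=(b_2)^\mu_{\mathcal Q_R^+}$, $\bar f=(f)^\mu_{\mathcal Q_R^+}$, $\bar A=A(0)$, $\bar b_1=b_1(0)$, and let $u_1$ be the regular solution of the resulting constant-coefficient equation on $\mathcal Q_R^+$ with $u_1=u$ on $\pa_{pa}\mathcal Q_R^+$ (Theorem~\ref{thm:global-smooth}); set $u_2:=u-u_1$.

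\emph{Decay for $u_1$.} Each tangential derivative $\pa_{x_i}u_1$ ($i<n$) again satisfies the same constant-coefficient equation, with zero source and zero boundary data on $\{x_n=0\}$, so \eqref{2.6-2} gives the desired $\int_{\mathcal Q_\rho^+}|\nabla_{x'}u_1|^2\le C(\rho/R)^{n+p+3}\int_{\mathcal Q_R^+}|\nabla_{x'}u_1|^2$. For $D_{x_n}u_1$, I would use a parabolic Poincar\'e inequality
\[
\int_{\mathcal Q_\rho^+}|D_{x_n}u_1-(D_{x_n}u_1)_{\mathcal Q_\rho^+}|^2\le C\rho^2\int_{\mathcal Q_\rho^+}|\nabla D_{x_n}u_1|^2+C\rho^{2(p+1)}\int_{\mathcal Q_\rho^+}|\pa_t D_{x_n}u_1|^2,
\]
handle the mixed components $D_{x_j}D_{x_n}u_1$ ($j<n$) by applying \eqref{2.6-3} to $\pa_{x_j}u_1$, and extract the pure normal component algebraically from the equation
\[
\bar A_{nn}\,D_{x_n}^2u_1=\bar a\, x_n^{p-1}\pa_t u_1-\sum_{(i,j)\ne(n,n)}\bar A_{ij}\,\pa_{ij}u_1+\bar b_1 u_1+x_n^{p-1}\bar b_2 u_1-x_n^{p-1}\bar f;
\]
on $\mathcal Q_\rho^+$ the bound $x_n\le\rho$ converts the extra weight $x_n^{p-1}$ into a harmless factor $\rho^{p-1}$, so the weighted decay \eqref{2.6-1} combined with the last estimate of Lemma~\ref{lem:cc-2} (taking $X=(\nabla u_1)_{\mathcal Q_R^+}$) suffices. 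The term $\rho^{2(p+1)}\int|\pa_tD_{x_n}u_1|^2$ is controlled by the pointwise bound for $\pa_tD_{x_n}u_1$ from Proposition~\ref{prop:smooth-co} applied to the constant-coefficient equation after rescaling to unit scale. Writing
\[
\Phi(\rho):=\int_{\mathcal Q_\rho^+}\bigl(|\nabla_{x'}u|^2+|D_{x_n}u-(D_{x_n}u)_{\mathcal Q_\rho^+}|^2\bigr),
\]
the outcome is $\int_{\mathcal Q_\rho^+}(|\nabla_{x'}u_1|^2+|D_{x_n}u_1-(D_{x_n}u_1)_{\mathcal Q_\rho^+}|^2)\le C(\rho/R)^{n+p+3}\Phi(R)$.

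\emph{Error estimate and iteration.} The residual $u_2$ solves the constant-coefficient equation with zero parabolic boundary data and right-hand side
\[
x_n^{p-1}(f-\bar f)+\mathrm{div}((\bar A-A)\nabla u)+(\bar b_1-b_1)u+x_n^{p-1}(\bar b_2-b_2)u+x_n^{p-1}(\bar a-a)\pa_tu.
\]
Testing with $u_2$, using the smallness estimates $|\bar A-A|+|\bar b_1-b_1|\le CR$ and $\|a-\bar a\|_{L^\infty(\mathcal Q_R^+)}+\|b_2-\bar b_2\|_{L^\infty(\mathcal Q_R^+)}+\|f-\bar f\|_{L^\infty(\mathcal Q_R^+)}\le CR^\al$, together with the Hardy-type inequality on $u_2$ (which vanishes on $\{x_n=0\}$) and the coercivity \eqref{eq:coer}, yields
\[
\int_{\mathcal Q_R^+}|\nabla u_2|^2\le CR^{2\al}\int_{\mathcal Q_R^+}|\nabla u|^2+CR^{n+p+1+2\al}F^2,\quad F:=\|\pa_tu\|_{L^\infty(Q_1^+)}+\|u\|_{L^\infty(Q_1^+)}+\|f\|_{\mathscr C^\al(Q_1^+)}.
\]
Combining via $u=u_1+u_2$ produces
\[
\Phi(\rho)\le C\bigl[(\rho/R)^{n+p+3}+R^{2\al}\bigr]\Phi(R)+CR^{n+p+1+2\al}F^2,
\]
and since $n+p+3>n+p+1+2\al$, the iteration Lemma~\ref{lem:lemma3.4inHL} delivers the stated Campanato decay. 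The main obstacle is the $D_{x_n}u_1$ step: with no vanishing boundary condition, \eqref{2.6-2} does not apply directly, and one must combine Poincar\'e, the equation's algebraic identity for $D_{x_n}^2 u_1$, and the weighted decay of $\pa_tu_1$ from \eqref{2.6-1}, all while carefully tracking the interaction between the weight $x_n^{p-1}$ and the scale $\rho$ so that the net decay rate remains strictly better than $(\rho/R)^{n+p+1+2\al}$.
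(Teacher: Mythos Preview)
Your proposal is correct and follows essentially the same route as the paper: freeze $a,b_2,f$ at their $\mu$-averages and $A,b_1$ at the origin, use \eqref{2.6-2}--\eqref{2.6-3} for the tangential part of $u_1$, handle the normal oscillation via Poincar\'e plus the algebraic identity for $D_{x_n}^2u_1$ (bringing in \eqref{2.6-1} and the last estimate of Lemma~\ref{lem:cc-2}), use the energy estimate \eqref{eq:energyestimateunoL2} for $u_2$, and close with Lemma~\ref{lem:lemma3.4inHL}. One small imprecision: your iteration inequality $\Phi(\rho)\le C[(\rho/R)^{n+p+3}+R^{2\al}]\Phi(R)+CR^{n+p+1+2\al}F^2$ does not follow directly from $\int_{\mathcal Q_R^+}|\nabla u_2|^2\le CR^{2\al}\int_{\mathcal Q_R^+}|\nabla u|^2+\cdots$, because $\int_{\mathcal Q_R^+}|\nabla u|^2=\Phi(R)+|(D_{x_n}u)_{\mathcal Q_R^+}|^2\,|\mathcal Q_R^+|$ and the second piece is not $\Phi(R)$; you must invoke Corollary~\ref{cor:time-schauder} to bound $\|\nabla u\|_{L^\infty}\le CF$ and absorb it into the constant term. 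The paper does exactly this absorption, obtaining the cleaner inequality $\Phi(\rho)\le C(\rho/R)^{n+p+3}\Phi(R)+CM^2R^{n+p+1+2\al}$ with no $R^{2\al}\Phi(R)$ term, but either form iterates.
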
 
 
 \begin{proof} 
Let $u_1$ be the unique weak solution of
 \begin{align*}
 a_R^\mu x_n^{p-1} \pa_t u_1 -\mathrm{div}(A(0)\nabla u_1)+b_1(0) u_1+x_n^{p-1}(b_2)^\mu_R u_1&=x_n^{p-1}  f^\mu_R \quad \mbox{in }\mathcal{Q}_R^+, \\ u_1&=u \quad \mbox{on }  \pa_{pa} \mathcal{Q}_R^+.
 \end{align*}
Let $u_2=u-u_1$. Then $u_2$ is a weak solution of 
 \begin{align*}
 &a^\mu_Rx_n^{p-1} \pa_t u_2  -\mathrm{div}(A(0)\nabla u_2)+b_1(0) u_2+x_n^{p-1}(b_2)^\mu_R u_2\\
 &\quad=x_n^{p-1}  (f-f^\mu_R) -x_n^{p-1}(b_2-(b_2)^\mu_R) u-x_n^{p-1}(a-a^\mu_R) \pa_t u\\
 &\quad\quad+\mathrm{div}((A-A(0))\nabla u) -(b_1-b_1(0))u  \quad \mbox{in }\mathcal{Q}_R^+
 \end{align*} 
 with $
 u_2=0 \mbox{ on }  \pa_{pa} \mathcal{Q}_R^+$. Then 
 \begin{align*}
& \int_{\mathcal{Q}_\rho^+} |\nabla_{x'} u|^2 + |D_{x_n} u-(D_{x_n} u)_{\mathcal{Q}_\rho^+}|^2 \\&
\le 2\int_{\mathcal{Q}_\rho^+} |\nabla_{x'} u_1|^2 + |D_{x_n} u-(D_{x_n} u_1)_{\mathcal{Q}_\rho^+}|^2\\
&\quad+2\int_{\mathcal{Q}_\rho^+} |\nabla_{x'} u_2|^2 + |D_{x_n} u_2-(D_{x_n} u_2)_{\mathcal{Q}_\rho^+}|^2.
 \end{align*}
Since $b_1$ is $C^1$, then by the ellipticity of $A$ and rescaling, we will have for all small $R$ that
\[
\int_{B_R^+} A(0)\nabla \phi \nabla \phi +b_1(0) \phi^2 \ge \bar \lda\int_{B_R^+} \phi^2 \quad \quad \forall \phi\in H^1_0(B_R^+).
\]   
By the Poincar\'e inequality and using the equation of $u_1$, we have  
 \begin{align*}
 &\int_{\mathcal{Q}_\rho^+} (D_{x_n} u_1-(D_{x_n} u_1)_{\mathcal{Q}_\rho^+})^2 \\&\le C\Big(\rho^2 \int_{\mathcal{Q}_\rho^+}|D_x D_{x_n}u_1|^2 +\rho^{2(p+1)} \int_{\mathcal{Q}_\rho^+}  |\pa_t D_{x_n} u_1|^2  \Big)  \\&
 \le C \Big(\rho^2  \int_{\mathcal{Q}_\rho^+} \{ |D_x D_{x'}u_1|^2  + u_1^2+x_n^{2(p-1)}( |\pa_t u_1|^2+(f_R^\mu)^2 )\}\\
 &\quad\quad\quad+\rho^{2(p+1)} \int_{\mathcal{Q}_\rho^+}  |\pa_t D_{x_n} u_1|^2   \Big) \\& 
 \le C \Big(\rho^2  \int_{\mathcal{Q}_\rho^+} \{ |D_x D_{x'}u_1|^2 + \rho^{p-1}x_n^{p-1} |\pa_t u_1|^2\}+\rho^{n+3(p+1)} \sup_{\mathcal{Q}_\rho^+}  |\pa_t D_{x_n} u_1|^2 \\
 &\quad\quad\quad+ C F^2 \rho^{n+p+3}  \Big)\\&
\le C \Big(\rho^2  \int_{\mathcal{Q}_\rho^+} \{ |D_x D_{x'}u_1|^2 + \rho^{p-1}x_n^{p-1} |\pa_t u_1|^2\}+\rho^{p+1} \int_{\mathcal{Q}_{2\rho}^+} x_n^{p-1} |\pa_t u_1|^2\Big)  \\
&\quad\quad\quad+ C F^2 \rho^{n+p+3},
 \end{align*}
 where $F= \|u\|_{L^\infty (Q_1^+)} +\|f\|_{\mathscr{C}^{\al} (Q_1^+)},$ and we have used Proposition \ref{prop:smooth-co} in the last inequality. By Lemma  \ref{lem:cc-2}, we have for $0<\rho\le R/8$ that
 \begin{align}
& \int_{\mathcal{Q}_\rho^+} |\nabla_{x'} u_1|^2+  |D_{x_n} u_1-(D_{x_n} u_1)_{\mathcal{Q}_\rho^+}|^2 \nonumber \\&\le C F^2 R^{n+p+3}+ C (\frac{\rho}{R})^{n+p+3} \int_{\mathcal{Q}_R^+} |\nabla_{x'} u_1|^2+  |D_{x_n} u_1-(D_{x_n} u_1)_{\mathcal{Q}_R^+}|^2.
\label{eq:good-iterate}
 \end{align}
By the energy estimates \eqref{eq:energyestimateunoL2} of $u_2$ and Corollary \ref{cor:time-schauder}, we have 
 \begin{align*}
 \int_{\mathcal{Q}_R^+} |\nabla u_2|^2 &\le  C M^2 R^{n+p+1+2\al} + CR^{2\al} \int_{\mathcal{Q}_R^+} |\nabla u|^2 \\& 
  \le C M^2 R^{n+p+1+2\al} + CR^{n+p+1+2\al } \|\nabla u\|_{L^\infty(\mathcal{Q}_R^+)}^2 \\&
 \le C M^2 R^{n+p+1+2\al},
 \end{align*}
 where
 \[
M=\|\pa_t u\|_{L^\infty (Q_1^+)}+\|u\|_{L^\infty (Q_1^+)} +\|f\|_{\mathscr{C}^{\al} (Q_1^+)}.
 \]
 Therefore, we obtain 
  \begin{align*}
& \int_{\mathcal{Q}_\rho^+} |\nabla_{x'} u|^2 + |D_{x_n} u-(D_{x_n} u)_{\mathcal{Q}_\rho^+}|^2 \\&
\le C \left(\frac{\rho}{R}\right)^{n+p+3} \int_{\mathcal{Q}_R^+} (|\nabla_{x'} u|^2+  |D_{x_n} u-(D_{x_n} u)_{\mathcal{Q}_R^+}|^2) +C M^2 R^{n+p+1+2\al} .
 \end{align*} 
Note that the above inequality trivially holds for $\rho\in[R/8,R]$. By an iteration lemma, Lemma \ref{lem:lemma3.4inHL}, it follows that
  \begin{align*}
& \int_{\mathcal{Q}_\rho^+} |\nabla_{x'} u|^2 + |D_{x_n} u-(D_{x_n} u)_{\mathcal{Q}_\rho^+}|^2 \\&
\le C \left(\frac{\rho}{R}\right)^{n+p+1+2\al} \int_{\mathcal{Q}_R^+} (|\nabla_{x'} u|^2+  |D_{x_n} u-(D_{x_n} u)_{\mathcal{Q}_R^+}|^2) +C M^2 \rho^{n+p+1+2\al} ,
\end{align*}
from which we complete the proof.  
 \end{proof}

\begin{cor} \label{cor:space-schauder} 
We have 
\begin{align*}
&\|D_x u\|_{\mathscr{C}^{\al} (Q_{1/2}^+)} +\sup_{t\in (-\frac14,0]}\|D_x^2 u(\cdot,t)\|_{C^\beta (B_{1/2}^+)}  \\
&\le C\big[\|\pa_t u\|_{L^\infty (Q_1^+)} +\|u\|_{L^\infty (Q_1^+)} +\|f\|_{\mathscr{C}^{\al} (Q_1^+)}\big],
\end{align*}
where $\beta=\min(\al,p-1)$, and $C>0$ is a  constant depending only on $n,p,\lda,\bar\lambda, \Lda,\alpha$, the $C^3(\overline B_1^+)$ norms of $A,b_1,$ and the $\mathscr{C}^{\al} (Q_1^+)$ norms of $a, b_2$.
\end{cor}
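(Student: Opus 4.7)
The plan is to bound the Campanato seminorm $[D_x u]_{\mathscr{C}^{\al}(Q_{1/2}^+)}$ by combining Proposition \ref{prop:space} (boundary freezing-coefficients estimate) with Proposition \ref{prop:interior} (interior estimate) via the bridging chain \eqref{eq:connection}, and then to read \eqref{eq:linear-eq} as an elliptic equation on each time slice to gain spatial $C^\beta$ regularity of $D_x^2 u$. Throughout, set
\[
M := \|\pa_t u\|_{L^\infty(Q_1^+)}+\|u\|_{L^\infty(Q_1^+)}+\|f\|_{\mathscr{C}^{\al}(Q_1^+)},
\]
and recall that Corollary \ref{cor:time-schauder} already gives $\|\pa_t u\|_{\mathscr{C}^{\al}(Q_{1/2}^+)}+\|u\|_{C^1(\overline{Q_{1/2}^+})}\le CM$.

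\emph{Boundary seminorm for $D_x u$.} At a flat boundary point $(\bar x',0,\bar t)$, the $\mu$-mean minimizes the $L^2(\ud\mu)$ distance, so using the pointwise bound $x_n^{p-1}\le R^{p-1}$ on $\mathcal{Q}_R^+$ together with $\mu(\mathcal{Q}_R^+)\asymp R^{n+2p}$ and $|\mathcal{Q}_R^+|\asymp R^{n+p+1}$,
\[
\frac{1}{R^{2\al}}\dashint_{\mathcal{Q}_R^+}|D_x u-(D_x u)^\mu_{\mathcal{Q}_R^+}|^2\,\ud\mu \le \frac{C}{R^{n+p+1+2\al}}\int_{\mathcal{Q}_R^+}|D_x u-(D_x u)_{\mathcal{Q}_R^+}|^2\,\ud x\ud t.
\]
Since $\nabla_{x'}u$ vanishes on $\pa'B^+$, the constant $0$ is an admissible center for $\nabla_{x'}u$, and Proposition \ref{prop:space} bounds the right-hand side by $CM^2$. (The definition uses $\mathcal{\widetilde Q}_R$ rather than $\mathcal{Q}_R$, but this only requires doubling $\mathcal{Q}_R^+$ forward and backward in time, which is harmless.)

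\emph{Interior seminorm for $D_x u$.} For $(\bar x,\bar t)\in Q_{1/2}^+$ with $\bar x_n>0$ and $0<\rho\le 1/2$, I split into two regimes. When $\rho\le 1/4$, Proposition \ref{prop:interior} (taking $R=1/4$) gives
\[
\frac{1}{|\bar x_n\rho|^{2\al}}\dashint_{\mathcal{D}_\rho}|D_x u-(D_x u)_{\mathcal{D}_\rho}|^2 \le \frac{C}{\bar x_n^{2\al}}\dashint_{\mathcal{D}_{1/4}}|D_x u-(D_x u)_{\mathcal{D}_{1/4}}|^2 + CM^2.
\]
Applied to $D_x u$, the chain \eqref{eq:connection} dominates the first right-hand term by a constant multiple of $\bar x_n^{-2\al}\dashint_{\mathcal{\widetilde Q}_{2\bar x_n}(\bar x',0,\bar t)\cap Q_{1/2}^+}|D_x u-(D_x u)^\mu|^2\,\ud\mu$, which in turn is $\le CM^2$ by the boundary case already established. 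The remaining range $1/4<\rho<1/2$ follows by inclusion since $|\mathcal{\widetilde D}_\rho|\asymp|\mathcal{\widetilde D}_{1/2}|$ and $(D_x u)_{\mathcal{\widetilde D}_\rho}$ is a minimizer.

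\emph{Spatial $C^\beta$ estimate of $D_x^2 u$.} For each fixed $t\in(-1/4,0]$, rewrite \eqref{eq:linear-eq} as the elliptic equation
\[
-\mathrm{div}(A(x)\nabla u(x,t))+b_1(x)u(x,t) = x_n^{p-1}\bigl[f(x,t)-a(x,t)\pa_tu(x,t)-b_2(x,t)u(x,t)\bigr]
\]
on $B_{1/2}^+$ with $u(\cdot,t)=0$ on $\pa'B_{1/2}^+$. The $\mathscr{C}^{\al}$ bounds on $u,\pa_tu,D_xu,a,b_2,f$ combined with Lemma \ref{lem:embed} show that each slice of the bracket is $C^{\al}(B_{1/2}^+)$ with norm $\le CM$, while $x_n^{p-1}\in C^{\min(1,p-1)}(\overline{B_{1/2}^+})$. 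Hence the right-hand side lies in $C^{\beta}(B_{1/2}^+)$ with $\beta=\min(\al,p-1)$ and norm $\le CM$. Since $A,b_1\in C^3(\overline B_1^+)$ and the operator $-\mathrm{div}(A\nabla\cdot)+b_1\cdot$ is coercive by \eqref{eq:coer}, classical elliptic $C^{2,\beta}$ Schauder estimates up to the flat boundary give $\|D_x^2 u(\cdot,t)\|_{C^\beta(B_{1/2}^+)}\le CM$ uniformly in $t\in(-1/4,0]$.

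\emph{Main obstacle.} The most delicate step is coupling the Lebesgue-averaged oscillation estimates in Propositions \ref{prop:interior}--\ref{prop:space} with the hybrid $\mathscr{C}^{\al}$-norm, whose boundary part uses $\ud\mu$-averages. At the boundary this is done using the smallness $x_n^{p-1}\le R^{p-1}$ on $\mathcal{Q}_R^+$ together with the measure comparisons. At an interior point with small $\bar x_n$, a naive application of Proposition \ref{prop:interior} would leave a term of order $\bar x_n^{-2\al}\|D_xu\|_{L^\infty}^2$, which is exactly absorbed by \eqref{eq:connection}, since the boundary $\mu$-oscillation at scale $2\bar x_n$ supplies the needed factor $\bar x_n^{2\al}$.
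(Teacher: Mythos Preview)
Your proof is correct and follows essentially the same route as the paper's: convert the boundary $\mu$-oscillation of $D_x u$ to the Lebesgue oscillation via $x_n^{p-1}\le R^{p-1}$ and the measure comparison, invoke Proposition~\ref{prop:space} at the boundary and Proposition~\ref{prop:interior} in the interior, bridge the two through \eqref{eq:connection}, and then read \eqref{eq:linear-eq} as an elliptic equation on each time slice to get the $C^\beta$ estimate of $D_x^2u$. Your write-up is in fact a bit more explicit than the paper's on how the interior term at scale $R=1/4$ is absorbed into the boundary $\mu$-oscillation via \eqref{eq:connection}.
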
 
\begin{proof}
We first notice that
\begin{align*}
\dashint_{\mathcal{Q}_\rho^+} |\nabla u-(\nabla u)^\mu_{\mathcal{Q}_\rho^+}|^2\,\ud \mu&\le \dashint_{\mathcal{Q}_\rho^+} |\nabla u-(\nabla u)_{\mathcal{Q}_\rho^+}|^2\,\ud \mu\\
&\le \frac{C}{\rho^{n+2p}}\int_{\mathcal{Q}_\rho^+} |\nabla u-(\nabla u)_{\mathcal{Q}_\rho^+}|^2x_n^{p-1} \ud x\ud t\\
&\le C \dashint_{\mathcal{Q}_\rho^+} |\nabla u-(\nabla u)_{\mathcal{Q}_\rho^+}|^2\,\ud x\ud t.
\end{align*}
Hence, it follows from Proposition \ref{prop:interior}, Proposition \ref{prop:space} and \eqref{eq:connection} that
\[
\|D_x u\|_{\mathscr{C}^{\al} (Q_{1/2}^+)}\le C \|\nabla u\|_{L^2(Q_{3/4}^+)}+CM\le CM,
\]
where $M= \|\pa_t u\|_{L^\infty (Q_1^+)} +\|u\|_{L^\infty (Q_1^+)} +\|f\|_{\mathscr{C}^{\al} (Q_1^+)}.$ This proves the  estimate for $D_x u$. The  estimate for $D_x^2 u$ can be obtained by using Corollary \ref{cor:time-schauder} and elliptic estimates for second order derivatives on each time slice. 
\end{proof}

Using Lemma 3.1 on page 78 in \cite{LSU}, which is stated in Lemma \ref{lem:lemma3.1inLSU}, the above corollary implies that $D_x^2 u\in C^\gamma(Q_{1/2}^+)$ for some $\gamma>0$.

\begin{thm}\label{thm:global-schauder1} Suppose $u$ is a classical solution of \eqref{eq:linear-eq} satisfying \eqref{eq:linear-eq-D} and $u(x,-1)=0$ for $x\in B_1^+$.    Assume $A, b_1$ are independent of $t$ and belong to  $C^3(\overline B_1^+)$,  while $a , b_2$ belong to $C^{3}(\overline Q_1^+)$, \eqref{eq:ellip} holds, $f\in C^{6}(\overline Q_1^+)$ and $f(x,-1)=0$ for all $x\in\partial' B_1^+$.    Moreover, we assume the operator $
-\mathrm{div} (A \nabla ) +b_1$ satisfies \eqref{eq:coer}.   Then
\begin{align*}
&\|\pa_t u\|_{\mathscr{C}^{\al} (B_{1/2}^+ \times [-1,0])}+\|D_x u\|_{\mathscr{C}^{\al} (B_{1/2}^+ \times [-1,0])} +\sup_{t\in(-1,0)}\|D_x^2 u(\cdot, t)\|_{C^{\beta} (B_{1/2}^+)}   \\
&\le C (\|\pa_t u\|_{L^\infty(Q_1^+)} +\|u\|_{L^\infty(Q_1^+)} +\|f\|_{\mathscr{C}^{\al} (Q_1^+)}),
\end{align*}
where  $\beta=\min(\alpha,p-1)$ and $C>0$ is a constant depending only on $n,p,\lda,\bar\lambda, \Lda,\alpha$, the $C^3(\overline B_1^+)$ norms of $A,b_1,$ and the $\mathscr{C}^{\al} (Q_1^+)$ norms of $a, b_2$. 
\end{thm}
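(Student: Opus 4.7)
The plan is to reduce the estimate up to the initial time $t=-1$ to the interior-in-time estimates already established in Corollaries \ref{cor:time-schauder} and \ref{cor:space-schauder}. The mechanism is to extend $u$ and the data to the slab $B_1^+\times[-2,0]$, exploiting the compatibility $u(\cdot,-1)\equiv 0$ and $f(\cdot,-1)|_{\pa'B_1^+}=0$, so that the shifted cylinders needed to apply the corollaries at an arbitrary base time $\bar t\in[-1,0]$ all fit inside the extended domain.

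First I would define $\tilde u(x,t)=u(x,t)$ for $t\in[-1,0]$ and $\tilde u(x,t)\equiv 0$ for $t\in[-2,-1)$. By the hypotheses, $\tilde u$ is continuous on $B_1^+\times[-2,0]$ and still vanishes on $\pa'B_1^+\times[-2,0]$. The time-independent coefficients $A,b_1$ require no extension; the coefficients $a$ and $b_2$ I would extend by even reflection, $\tilde a(x,t)=a(x,-2-t)$, $\tilde b_2(x,t)=b_2(x,-2-t)$, preserving their $C^3$ regularity, the ellipticity \eqref{eq:ellip}, and the coercivity. The source $\tilde f$ would be extended by zero to $t<-1$. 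To make $\tilde u$ a genuine classical solution of the extended equation, I would first approximate $f$ by sources $f_\va\in C^6$ satisfying the stronger vanishing condition of Proposition \ref{prop:smooth-co-boundary} (i.e. $f_\va=0$ in a neighborhood of $\{x_n=0,t=-1\}$), so that the corresponding solution $u_\va$ is in $C^3(\overline{B_{1/2}^+}\times[-1,0])$ with $\pa_tu_\va(\cdot,-1)$ controlled, and the zero extension $\tilde u_\va$ is a classical solution of the extended equation on $B_1^+\times(-2,0]$.

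Next, for each $\bar t\in[-1,0]$, I would apply the time-translated analogue of Corollaries \ref{cor:time-schauder} and \ref{cor:space-schauder} to $\tilde u_\va$ on the cylinder $B_1^+\times[\bar t-1,\bar t]\subset B_1^+\times[-2,0]$. This is legitimate since the Campanato decay statements of Propositions \ref{prop:interior}, \ref{prop:campanato}, and \ref{prop:space} are stated at arbitrary base points $(\bar x,\bar t)$ and the iteration is manifestly translation-invariant in $t$. This yields a uniform bound
\[
\|\pa_t\tilde u_\va\|_{\mathscr C^\alpha(B_{1/2}^+\times[\bar t-1/4,\bar t])}+\|D_x\tilde u_\va\|_{\mathscr C^\alpha(B_{1/2}^+\times[\bar t-1/4,\bar t])}\le C\big(\|\pa_tu_\va\|_{L^\infty}+\|u_\va\|_{L^\infty}+\|f_\va\|_{\mathscr C^\alpha}\big)
\]
with right-hand side taken over $B_1^+\times[\bar t-1,\bar t]$. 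Covering $[-1,0]$ by the finite collection of intervals $[\bar t-1/4,\bar t]$ for $\bar t\in\{-3/4,-1/2,-1/4,0\}$ and noting that $\tilde u_\va$ restricted to $[\bar t-1,\bar t]\cap[-1,0]$ coincides with $u_\va$ on that piece and vanishes otherwise, the norms on the extended cylinder reduce to norms of $u_\va, f_\va$ on $Q_1^+$. Finally, since the Schauder constants are independent of $\va$, I would pass to the limit $\va\to 0$ using the $C^{2+\gamma}$ stability from Theorem \ref{thm:global-smooth}, which gives the claimed bounds for $u$; the elliptic estimate for $\sup_t\|D_x^2u(\cdot,t)\|_{C^\beta}$ then follows from the equation on each time slice exactly as in the proof of Corollary \ref{cor:space-schauder}.

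The main obstacle is arranging the extension so that the Schauder estimate on $\tilde u_\va$ genuinely descends to $u$ without losing regularity or control of the constant. Because $\pa_tu(x,-1)=f(x,-1)/a(x,-1)$ need not vanish in the interior of $B_1^+$, the naive zero extension of $u$ itself has a jump in $\pa_tu$ at $t=-1$ and is not a classical solution of the extended equation; this is precisely why the approximation step through Proposition \ref{prop:smooth-co-boundary} is essential, and why the compatibility $f(\cdot,-1)=0$ on $\pa'B_1^+$ is needed so that the approximating $f_\va$ can be chosen converging to $f$ in the Campanato norm $\mathscr C^\alpha(Q_1^+)$ uniformly as $\va\to 0$.
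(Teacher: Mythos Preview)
There is a genuine gap in the interior-bottom region. Your zero extension $\tilde u_\va$ is \emph{not} a classical $C^{2+\gamma}$ solution on $B_1^+\times(-2,0]$: at any point $x_0\in B_1^+$ with $(x_0)_n>0$, the equation at $t=-1$ forces $\pa_t u_\va(x_0,-1)=f_\va(x_0,-1)/a(x_0,-1)$, which does not vanish in general, because the approximation through Proposition~\ref{prop:smooth-co-boundary} only makes $f_\va$ vanish in a neighborhood of the \emph{corner} $\{x_n=0,\,t=-1\}$, not on the whole initial slice. Hence $\pa_t\tilde u_\va$ jumps across $t=-1$ away from $\pa'B_1^+$, and Corollaries~\ref{cor:time-schauder}--\ref{cor:space-schauder} (whose proofs assume a $C^{2+\gamma}$ solution on the full cylinder) cannot be applied to $\tilde u_\va$ on the translated cylinders $B_1^+\times[\bar t-1,\bar t]$ when $\bar t$ is near $-1$. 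You correctly identify this jump in your final paragraph, but then assert that the approximation step removes it; it does not, since $f_\va(\cdot,-1)$ still fails to vanish in the interior.

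The paper avoids the extension entirely and instead repeats the Campanato freezing argument directly at the bottom. For the interior-in-space, bottom-in-time region it invokes Remark~\ref{rem:interiorbottom}, which establishes the required oscillation decay on cylinders $B_R\times[0,R^2]$ using $u(\cdot,-1)\equiv 0$ and the value $\pa_t u(0,-1)=\ell^{p+1}f(0,-1)/a(0,-1)$ as the centering constant; for the boundary-in-space corner it reruns Propositions~\ref{prop:campanato} and~\ref{prop:space} with $\bar f=0$ in the frozen equation (legitimate because $f(\cdot,-1)=0$ on $\pa'B_1^+$). Your extension idea is sound near the corner, where $f_\va$ truly vanishes and $\tilde u_\va$ is smooth, but for the interior-bottom you must either appeal to standard uniformly parabolic Schauder theory with zero initial data (the equation is nondegenerate once $x_n$ is bounded below) or carry out the direct argument of Remark~\ref{rem:interiorbottom}; the zero extension cannot deliver it.
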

\begin{proof} By Corollary \ref{cor:time-schauder}  and Corollary \ref{cor:space-schauder}, one only needs to prove the Schauder estimates up to the bottom. For the estimate at $\{x_n=0,t=0\}$, the force term of the freezing coefficient equations would be chosen to be identically zero (including letting $\bar f=0$ in Lemma \ref{lem:cc-2}), due to the compatibility condition assumption. Remark \ref{rem:interiorbottom} will also be used. The proof is similar  and we leave the details to readers.
\end{proof}

For $\al\in (0,1)$, $Q:=\Omega \times[-1,0]$ and $u\in L^2(Q)$, define $\ud \nu=d(x)^{p-1} \ud x\ud t$ and   
\begin{align*}
[u]_{\mathscr{C}^{\al}(Q)}= \Big(& \sup_{(\bar x,\bar t)\in Q, 0<\rho<1 /2} \frac{1}{|d(\bar x)\rho|^{2\al}}\dashint_{\mathcal{G}_{\rho}(\bar x,\bar t)\cap Q} |u-(u)_{\mathcal{G}_{\rho}(\bar x,\bar t)\cap Q }|^2 \ud x\ud t\Big)^{1/2} \\&+\Big(\sup_{(\bar x,\bar t)\in\overline Q, \bar x\in\partial\Omega, R>0 } \frac{1}{R^{2\al}}\dashint_{\mathcal{\widetilde Q}_R (\bar x,\bar t)\cap Q} |u-(u)^\nu_{\mathcal{\widetilde Q}_R (\bar x,\bar t)\cap Q}|^2 \ud \nu \Big)^{1/2},
\end{align*}
where
\[
\mathcal{G}_\rho(\bar x,\bar t)=B_{\rho d(\bar x)}(\bar x)\times (\bar t-d(\bar x)^{p+1} \rho^2,\bar t+d(\bar x)^{p+1} \rho^2).
\]

 Denote $$\|u\|_{\mathscr{C}^{\al}(Q)}= \|u\|_{L^2(Q)}+[u]_{\mathscr{C}^{\al}(Q)}.$$ 

\begin{thm}\label{thm:global-schauder} 
Assume $A$ and $b_1$ are independent of $t$, belong to  $C^3(\overline \Omega)$ and satisfy \eqref{eq:coeromega},  while $a , b_2$ and $f$ belong to $\mathscr{C}^{\al}(\Omega \times[-1,0])$, \eqref{eq:ellip} holds, and $f(x,-1)=0$ for all $x\in\partial\Omega$.  Then there exists a unique classical solution $u$ of \eqref{eq:general} satisfying $u=0$ on $\partial_{pa}(\Omega \times(-1,0])$. Moreover,
\begin{align*}
&\|\pa_t u\|_{\mathscr{C}^{\al} (\Omega \times [-1,0])}+\|D_x u\|_{\mathscr{C}^{\al} (\Omega \times [-1,0])} +\sup_{t\in(-1,0)}\|D_x^2 u(\cdot, t)\|_{C^{\beta} (\Omega)} \\
&\le  C\|f\|_{\mathscr{C}^{\al} (\Omega \times [-1,0])},
\end{align*}
where  $\beta=\min(\alpha,p-1)$ and $C>0$ depends only on $n,p,\lda,\bar\lambda, \Lda,\alpha,\Omega$, the $C^3(\overline \Omega)$ norms of $A,b_1,$ and the $\mathscr{C}^{\al} (\Omega\times[-1,0])$ norms of $a, b_2$. 
\end{thm}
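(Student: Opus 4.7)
\medskip

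\noindent\textbf{Proof proposal.} My plan is to obtain the conclusion by first establishing a priori Schauder estimates for classical solutions, and then constructing a solution by regularization. Since existence and uniqueness of a $C^{2+\gamma}$ classical solution already follow from Theorem \ref{thm:global-smooth} whenever the data are smooth and satisfy a mild compatibility condition, I would approximate $a, b_2, f$ in $\mathscr{C}^\al(\Omega\times[-1,0])$ by $C^6$ functions $a_\va, b_{2,\va}, f_\va$ (via mollification in the interior, combined with a cut-off ensuring that $f_\va$ vanishes on $\Omega_\delta\times[-1,-1+\delta]$ for some small $\delta=\delta_\va>0$; this is compatible with the hypothesis $f(\cdot,-1)\equiv 0$ on $\partial\Omega$). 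The approximations can be arranged so that $\|a_\va\|_{\mathscr{C}^\al}, \|b_{2,\va}\|_{\mathscr{C}^\al}, \|f_\va\|_{\mathscr{C}^\al}$ are uniformly controlled. Theorem \ref{thm:global-smooth} then provides classical solutions $u_\va\in C^{2+\gamma}(\overline\Omega\times[-1,0])$ with $u_\va=0$ on the parabolic boundary.

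Next, the core step is to derive a uniform $\mathscr{C}^\al$-type estimate for $u_\va$ depending only on the $\mathscr{C}^\al$ norms of the data. Away from $\partial\Omega$, the equation is uniformly parabolic (since $\omega(x)\ge c\,d(x)>0$ on compact subsets), so the standard interior Schauder theory for linear parabolic equations (see e.g.\ \cite{L}) gives $\|\partial_t u_\va\|_{C^{\al,\al/2}}+\|D^2_x u_\va\|_{C^{\al,\al/2}}$ estimates on compact subsets of $\Omega\times[-1,0]$, with constants blowing up like a power of $d(x)^{-1}$, which is precisely the type of anisotropic scaling encoded in the $\mathcal{G}_\rho(\bar x,\bar t)$ cylinders of $\mathscr{C}^\al(\Omega\times[-1,0])$. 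For the boundary estimate, I would cover $\partial\Omega$ by finitely many coordinate charts and, in each, flatten $\partial\Omega$ via a $C^3$ diffeomorphism $\Phi:U\to B_1$ with $\Phi(U\cap\Omega)=B_1^+$ and $\Phi(U\cap\partial\Omega)=\partial'B_1^+$. Writing $\omega(\Phi^{-1}(y))=y_n\cdot\psi(y)$ with $\psi$ smooth and bounded away from zero, and absorbing $\psi^{p-1}$ into the coefficient $a$, the transformed equation has exactly the form \eqref{eq:linear-eq} on $B_1^+$. By choosing the charts small enough, the coercivity \eqref{eq:coeromega} passes to the local coercivity \eqref{eq:coer} (with $\bar\lambda/2$), and Theorem \ref{thm:global-schauder1} then delivers the Schauder estimate for the transformed solution on $B_{1/2}^+\times[-1,0]$. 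Pulling back and patching with the interior estimates through a finite covering argument produces the desired global bound
\[
\|\partial_t u_\va\|_{\mathscr{C}^\al(\Omega\times[-1,0])}+\|D_x u_\va\|_{\mathscr{C}^\al(\Omega\times[-1,0])}+\sup_{t}\|D^2_x u_\va(\cdot,t)\|_{C^\beta(\Omega)}\le C\|f_\va\|_{\mathscr{C}^\al(\Omega\times[-1,0])},
\]
where I also use Lemma \ref{lem:embed} to convert $\mathscr{C}^\al$ estimates into standard H\"older estimates on compact parabolic cylinders staying away from $\partial\Omega$, and the $L^\infty$ norm of $u_\va$ and $\partial_t u_\va$ entering the right-hand side of Theorem \ref{thm:global-schauder1} are absorbed via Proposition \ref{prop:local-bound}/\ref{prop:global-bound} and interpolation against $\|f_\va\|_{\mathscr{C}^\al}$ (the vanishing initial condition eliminates the $L^\gamma$ norm of $u_\va$ by energy estimates).

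With the uniform bound in hand, Arzel\`a--Ascoli (using the embedding in Lemma \ref{lem:embed}) extracts a subsequence $u_{\va_j}\to u$ in $C^2_{\mathrm{loc}}$ of the open set $\Omega\times(-1,0]$ and uniformly on $\overline\Omega\times[-1,0]$. The limit $u$ inherits the Schauder estimate, solves \eqref{eq:general} classically in $\Omega\times(-1,0]$, is continuous up to $\partial_{pa}(\Omega\times(-1,0])$, and vanishes there. Uniqueness of classical solutions then follows from the maximum principle: the difference of two such solutions satisfies a linear equation of the same type with zero data and zero parabolic boundary values, and the assumption $b_2\ge\lambda>0$ combined with \eqref{eq:coeromega} yields via testing against the solution itself that it must vanish.

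The main obstacle I anticipate is the bookkeeping in the boundary-flattening step: one has to verify that the Campanato norm $\mathscr{C}^\al(\Omega\times[-1,0])$ defined with the weight $d(x)^{p-1}$ and the cylinders $\mathcal{G}_\rho$ is equivalent, after flattening, to the half-space norm $\mathscr{C}^\al(Q_1^+)$ defined with $x_n^{p-1}$ and $\mathcal{D}_\rho$, up to multiplicative constants depending only on the chart. This equivalence relies on the bilipschitz equivalence of $d\circ\Phi^{-1}$ and $y_n$ and on a careful inspection of the two pieces (interior $\mathcal{\widetilde D}_\rho$ oscillations and boundary $\mathcal{\widetilde Q}_R$ oscillations) in the definition of $[\,\cdot\,]_{\mathscr{C}^\al}$; it is also what makes the Schauder estimate truly global rather than only local near each boundary point. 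Everything else then reduces to machinery already proved in the preceding sections.
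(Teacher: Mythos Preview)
Your proposal is correct and follows essentially the same route as the paper: approximate $a,b_2,f$ by smooth data compatible with Theorem~\ref{thm:global-smooth}, invoke the local half-space Schauder estimate of Theorem~\ref{thm:global-schauder1} after flattening the boundary, pass to the limit by Arzel\`a--Ascoli, and prove uniqueness by an energy argument. The one place where the paper is slightly sharper is the absorption of $\|u_\va\|_{L^\infty}+\|\partial_t u_\va\|_{L^\infty}$: rather than going through Propositions~\ref{prop:local-bound}/\ref{prop:global-bound}, it uses the interpolation inequality
\[
\|\partial_t u_\va\|_{L^\infty}+\|u_\va\|_{L^\infty}\le \va\,[\partial_t u_\va]_{\mathscr{C}^\al}+\va\,[D_x u_\va]_{\mathscr{C}^\al}+C(\va)\,\|u_\va\|_{L^2}
\]
directly, and then controls $\|u_\va\|_{L^2}$ by the global energy estimate (Theorem~\ref{thm:energyestimateut}); this avoids having to separately bound $\|\partial_t u_\va\|_{L^\gamma}$ and makes the closing of the estimate transparent.
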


\begin{proof} 
Let $a^{(j)}, b_2^{(j)}$ be sufficiently smooth functions (say $C^6$) such that
$ a^{(j)} \to a$ and $b_2^{(j)}\to b_2$  uniformly in $\overline\Omega \times [-1,0]$ and
\begin{align*}
\|a^{(j)}\|_{\mathscr{C}^{\al} (\Omega \times [-1,0])}&\le C \|a\|_{\mathscr{C}^{\al} (\Omega \times [-1,0])}, \\\|b_2^{(j)}\|_{\mathscr{C}^{\al} (\Omega \times [-1,0])}&\le C \|b_2\|_{\mathscr{C}^{\al} (\Omega \times [-1,0])}.
\end{align*}
Let $f^{(j)}$ be sufficiently smooth functions (say $C^6$) such that $f^{(j)}$ vanishes near $\partial\Omega\times\{t=-1\}$, $f^{(j)} \to f$ uniformly in $\Omega \times [-1,0]$ and
\[
\|f^{(j)}\|_{\mathscr{C}^{\al} (\Omega \times [-1,0])}\le C \|f\|_{\mathscr{C}^{\al} (\Omega \times [-1,0])}.
\]
Such approximations can be achieved by standard mollifiers as in the usual H\"older space (recall that the space $\mathscr{C}^{\al}$ has a characterization of a weighted H\"older space in Lemma \ref{lem:equivalentnorms}).

By Theorem \ref{thm:global-smooth}, there exists a unique classical solution  $u^{(j)}$ of \eqref{eq:general} with $a,b_2,f$ replaced by $a^{(j)}, b_2^{(j)}, f^{(j)}$, respectively, satisfying $u^{(j)}=0$ on $\partial_{pa}(\Omega \times(-1,0])$. By Theorem \ref{thm:global-schauder1},
\begin{align*}
&\|\pa_t u^{(j)}\|_{\mathscr{C}^{\al} (\Omega \times [-1,0])}+\|D_x u^{(j)}\|_{\mathscr{C}^{\al} (\Omega \times [-1,0])} +\sup_{t\in(-1,0)}\|D_x^2 u^{(j)}\|_{C^{\beta} (\Omega)}\\
&\le C (\|\pa_t u^{(j)}\|_{L^\infty(\Omega \times [-1,0])}+\|u^{(j)}\|_{L^\infty(\Omega \times [-1,0])}+\|f\|_{\mathscr{C}^{\al} (\Omega \times [-1,0])}).
\end{align*}
Using the interpolation inequality:  for every $ \va>0$, 
\begin{align*}
&\|\pa_t u^{(j)}\|_{L^\infty(\Omega \times [-1,0])}+\|u^{(j)}\|_{L^\infty(\Omega \times [-1,0])}\\
&\le \va [\pa_t u^{(j)}]_{\mathscr{C}^{\al} (\Omega \times [-1,0])}+ \va [D_x u^{(j)}]_{\mathscr{C}^{\al} (\Omega \times [-1,0])}+C(n,\alpha, \va) \|u^{(j)}\|_{L^2(\Omega \times [-1,0])}
\end{align*}
with $C(n,\alpha,\va)>0$ depending only on $n,\alpha$ and $\va$, we obtain
\begin{align*}
&\|\pa_t u^{(j)}\|_{\mathscr{C}^{\al} (\Omega \times [-1,0])}+\|D_x u^{(j)}\|_{\mathscr{C}^{\al} (\Omega \times [-1,0])} +\sup_{t\in(-1,0)}\|D_x^2 u^{(j)}\|_{C^{\beta} (\Omega)}\\
&\le C (\|u^{(j)}\|_{L^2(\Omega \times [-1,0])}+\|f\|_{\mathscr{C}^{\al} (\Omega \times [-1,0])})\\
& \le C \|f\|_{\mathscr{C}^{\al} (\Omega \times [-1,0])}.
\end{align*}
where we used Theorem \ref{thm:energyestimateut} in the last inequality.
Thus, by using Ascoli-Arzela's theorem, there is a subsequence of $\{u^{(j)}\}$ converging uniformly to some function $u$. Moreover,  
\begin{align*}
&\|\pa_t u\|_{\mathscr{C}^{\al} (\Omega \times [-1,0])}+\|D_x u\|_{\mathscr{C}^{\al} (\Omega \times [-1,0])} +\sup_{t\in(-1,0)}\|D_x^2 u\|_{C^{\beta} (\Omega)} \\
&\le  C\|f\|_{\mathscr{C}^{\al} (\Omega \times [-1,0])},
\end{align*}
and $u$ is a solution of \eqref{eq:general}.

Now let us prove the uniqueness. Let $f\equiv 0$. Let $u, \pa_t u, D_x u, D^2_x u\in C(\overline\Omega\times[-1,0])$ and $u$ be a solution. For small $h>0$, we multiply $u^h(x,t)=\frac{u(x,t+h)-u(x,t)}{h}$ to \eqref{eq:general} and send $h\to 0$. By a similar argument to the derivation of \eqref{eq:L2control}, we obtain
\[
\int_{Q_1^+} u^2\,\ud x\ud s=0.
\]
So $u\equiv 0$ in $Q_1^+$. This proves the uniqueness.
\end{proof}

\section{Short time smooth solutions}  
\label{sec:short-time}

Let $\om \subset \R^n$, $n\ge 1$, be a smooth bounded  domain. Let 
\[
\mathcal{L}=- \Delta -b,
\] where $b<\lda_1$ is a constant and $\lda_1$ is the first Dirichlet  eigenvalue of $-\Delta $ on $\om$. Recall that $d(x)=dist(x,\pa \om)$.

Define
\begin{align} \label{eq:good-initial}
\mathcal{S}=\Big\{u\in C^\infty( \Omega) \cap C^{2+\alpha}_0(\overline \om):  \inf_{\om}\frac{u}{d}>0, &u^{1-p}\mathcal{L}u \in C_0^{2+\alpha}(\overline \om), \\
&(u^{1-p}\mathcal{L} )^2 u \in C_0^\alpha(\overline \om) \Big\}\nonumber,
\end{align}
where $$0<\al <\min \{1, p-1,\frac{2}{p-1}\},$$ and the subscript $0$ in $C_0^{k+\al}$  means that every function belonging to this set vanishes on $\partial\om$. The set $\mathcal{S}$ is not empty, because each positive eigenfunction of the problem
\[
\mathcal{L}u=\mu\omega(x)^q u\quad\mbox{in }\Omega,\quad u=0\mbox{ on }\partial\Omega,
\]
belongs to the set $\mathcal{S}$,  where $\omega(x)\ge 0$ is the first eigenfunction of $-\Delta$ in $\Omega$ with zero boundary condition and $q$ is a large positive constant.  Moreover, for every $u\in \mathcal{S}$ and every nonnegative $\tilde u\in C^\infty_c(\Omega)$ (smooth functions with compact support), then $a u + (1-a)\tilde u\in\mathcal{S}$ for all $a\in (0,1]$. Therefore, the set $\mathcal{S}$ is dense in the sets of nonnegative functions in $H^1_0(\Omega)$, $C_0(\overline\Omega)$ and $H^1_0(\Omega)\cap C_0(\overline\Omega)$, respectively.
Also, every positive solution of  $\mathcal{L} u=u^p$ in $\om$ and $u=0$ on $\pa \om$ belongs to the set $\mathcal{S}$.

\begin{prop} \label{prop:short-1} Let $p>1$ and $u_0\in \mathcal{S}$. Then there exists $T>0$ depending only on $n$, $\om$,  $b$, $\inf_{\Omega}u_0/d$, and $\|u_0^{1-p}\mathcal{L} u_0 \|_{C^{1}(\overline\om)}$ such that the Cauchy-Dirichlet problem of the linear parabolic equation
\[
p u_0^{p-1} \pa_tw =-\mathcal{L} w-t\cdot \frac{p-1}{p}u_0^{p-2} (u_0^{1-p}\mathcal{L}u_0)^2 \quad \mbox{in }\om \times [0,T],
\]
\[
w\big|_{t=0}= u_0, \quad w=0 \quad \mbox{on }\pa \om \times [0,T]
\]
has a unique classical nonnegative solution satisfying
\[
C^{-1}\le \frac{w(x,t)}{d(x)}\le C \quad \mbox{for }(x,t)\in \om \times [0,T]
\]
and 
\begin{align*}
&\sum_{i=0}^1\left[\|\partial_t^{i+1} w\|_{\mathscr{C}^{\al}(\om\times[0,T])} +\|D_x \partial_t^{i} w\|_{\mathscr{C}^{\al}(\om\times[0,T])} + \sup_{t\in[0,T]}\|D_x^2 \partial_t^{i} w(\cdot,t)\|_{C^\alpha(\Omega)} \right] \\
&\quad+ \sup_{t\in[0,T]}\|D_x^3 w(\cdot,t)\|_{C^\alpha(\Omega)}
\le C,
\end{align*}
where $C>0$ is a constant depending only on $n,p$, $\alpha$, $\om$, $b$, $\inf_{\Omega}\frac{u_0}{d}$, $\|u_0\|_{C^{2+\alpha}(\overline\Omega)}$, $  \|u_0^{1-p}\mathcal{L} u_0 \|_{C^{2+\alpha}(\overline\om)} $   and $  \| (u_0^{1-p}\mathcal{L} )^2 u_0 \|_{C^\alpha(\om)} $.  

\end{prop}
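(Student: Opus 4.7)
The plan is to reduce the problem to a zero-initial-value linear equation that fits the framework of Section \ref{sec:linear}, apply Theorem \ref{thm:global-schauder} once to obtain first-order regularity, and then bootstrap once by differentiating in $t$ to reach the higher-order estimates.

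First, set $v = w - u_0$ and $g := u_0^{1-p}\mathcal{L} u_0$, which by hypothesis lies in $C_0^{2+\alpha}(\overline\om)$. Using $\mathcal{L}u_0 = u_0^{p-1}g$, the equation for $v$ becomes
\[
p u_0^{p-1}\pa_t v + \mathcal{L} v = -u_0^{p-1} g - t \cdot \tfrac{p-1}{p}\, u_0^{p-1}\cdot \tfrac{g^2}{u_0},
\]
with $v|_{t=0}=0$ and $v|_{\pa\om}=0$. Using a smooth weight $\w$ comparable to $d$ (e.g.\ the first Dirichlet eigenfunction of $-\Delta$ in $\om$), this fits \eqref{eq:general} with $a = p(u_0/\w)^{p-1}$, $A = I$, $b_1 = -b$, $b_2 = 0$, and forcing $f = -(u_0/\w)^{p-1}\big[g + t\tfrac{p-1}{p}\tfrac{g^2}{u_0}\big]$. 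Coercivity \eqref{eq:coeromega} holds because $b<\lda_1$. Since $u_0\sim d$ and $g/u_0$ extends to a $C^{1+\alpha}(\overline\om)$ function (quotient of two $C^{2+\alpha}$ functions vanishing to first order on $\pa\om$), one checks $f \in \mathscr{C}^\alpha(\om\times[0,T])$, and $f(\cdot,0) = -(u_0/\w)^{p-1}g$ vanishes on $\pa\om$, so the compatibility hypothesis of Theorem \ref{thm:global-schauder} is met. This yields a unique classical solution $v$ with $\|\pa_t v\|_{\mathscr{C}^\alpha}+\|D_x v\|_{\mathscr{C}^\alpha}+\sup_t\|D_x^2 v(\cdot,t)\|_{C^\beta(\overline\om)}\le C$. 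Since $\pa_t v\in \mathscr{C}^\alpha$ vanishes on $\pa\om$, Lemma \ref{lem:equivalentnorms} gives $|\pa_t v(x,t)|\le C d(x)$, whence $|v(x,t)|\le Cd(x) t$; choosing $T$ small (depending only on $\inf_\om (u_0/d)$ and $\|u_0^{1-p}\mathcal{L}u_0\|_{C^1}$) produces $w/d\ge \tfrac12\inf_\om(u_0/d)$ on $[0,T]$, and the upper bound $w/d\le C$ is immediate.

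For higher-order regularity, I would differentiate the PDE once in $t$. Setting $z := \pa_t w$, the explicit factor of $t$ disappears and $z$ satisfies
\[
pu_0^{p-1}\pa_t z + \mathcal{L} z = -\tfrac{p-1}{p}u_0^{p-2}g^2,
\]
with $z|_{\pa\om}=0$ and, from the original equation at $t=0$, $z|_{t=0} = -\tfrac1p g$. Subtracting this initial value, $\tilde z := z + \tfrac1p g$ solves the same-type equation with zero initial and boundary data and forcing
\[
u_0^{p-1}\Big[\tfrac1p\, u_0^{1-p}\mathcal{L}g \;-\; \tfrac{p-1}{p}\tfrac{g^2}{u_0}\Big].
\]
Both bracketed terms vanish on $\pa\om$: the second because $g\sim d$, and the first precisely because $u_0^{1-p}\mathcal{L}g = (u_0^{1-p}\mathcal{L})^2 u_0 \in C_0^\alpha(\overline\om)$, which is the second defining condition of $\mathcal{S}$. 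Thus Theorem \ref{thm:global-schauder} applies again and produces the estimates on $\pa_t^2 w$, $D_x\pa_t w$, and $\sup_t \|D_x^2\pa_t w(\cdot,t)\|_{C^\beta}$. The formal $t$-differentiation is justified rigorously by working with difference quotients in $t$ and applying the first Schauder estimate, or equivalently by approximating $u_0$ by smoother data and passing to the limit. Finally, the bound on $\sup_t\|D_x^3 w(\cdot,t)\|_{C^\beta}$ follows by differentiating the elliptic identity $\Delta w = bw + pu_0^{p-1}\pa_t w + t\tfrac{p-1}{p}u_0^{p-2}g^2$ in $x$ and applying standard Schauder theory for uniformly elliptic equations on each time slice, using the already established $D_x\pa_t w$ estimate.

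The main obstacle — and the point where the peculiar structure of the PDE matters — is the compatibility at $\pa\om\times\{t=0\}$ after the $t$-differentiation. The explicit factor of $t$ in front of $\tfrac{p-1}{p}u_0^{p-2}(u_0^{1-p}\mathcal{L}u_0)^2$ in the original equation is chosen precisely so that, upon differentiating in $t$, the source for $\tilde z$ has vanishing boundary trace at $t=0$; and this works if and only if $(u_0^{1-p}\mathcal{L})^2 u_0 \in C_0^\alpha(\overline\om)$. The two conditions in the definition of $\mathcal{S}$ are calibrated exactly for this purpose, and the technical heart of the argument is verifying both compatibility and the weighted Campanato regularity of the intermediate quantities $g^2/u_0$ and $u_0^{1-p}\mathcal{L}g$.
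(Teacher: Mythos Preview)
Your overall architecture --- subtract the initial data, apply Theorem \ref{thm:global-schauder}, differentiate in $t$, subtract again and apply Theorem \ref{thm:global-schauder} a second time, then use elliptic estimates for $D_x^3 w$ --- is exactly what the paper does, and your identification of the two compatibility conditions with the two defining properties of $\mathcal{S}$ is right on target.

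The gap is in the lower bound $w/d\ge c$ and the claimed dependence of $T$. Your step ``$\partial_t v\in\mathscr{C}^\alpha$ vanishes on $\partial\Omega$, so Lemma \ref{lem:equivalentnorms} gives $|\partial_t v|\le Cd(x)$'' is not correct: Lemma \ref{lem:equivalentnorms} only furnishes $C^\alpha$ regularity in $x$, so vanishing on $\partial\Omega$ yields $|\partial_t v|\le Cd(x)^\alpha$, hence $|v|\le Ct\,d^\alpha$. Since $\alpha<1$, the quotient $v/d$ is not controlled near $\partial\Omega$ and you cannot conclude $w/d\ge c$ at this stage. You could postpone the lower bound until after the second Schauder application (which does give $D_x\partial_t w$ bounded, whence $|\partial_t w|\le Cd$ and $|v|\le Ctd$), but then the constant $C$ --- and therefore $T$ --- depends on the full second-order data $\|u_0\|_{C^{2+\alpha}}$, $\|g\|_{C^{2+\alpha}}$, $\|(u_0^{1-p}\mathcal{L})^2u_0\|_{C^\alpha}$, not merely on $\inf_\Omega u_0/d$ and $\|g\|_{C^1}$ as the proposition asserts.

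The paper obtains the sharp $T$-dependence by comparison principles instead: for the upper bound it compares $w$ to a multiple of the first Dirichlet eigenfunction $\psi$ (using that the forcing term is nonpositive), and for the lower bound it constructs an explicit sub-solution of the form $\eta(t)\tilde\psi(x)$ with $\tilde\psi$ a weighted eigenfunction and $\eta$ solving a first-order ODE whose coefficients involve only $M$ with $|g|\le Mu_0$, $|g|\le M\tilde\psi$. This barrier argument is what pins $T$ to the quantities listed.
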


\begin{proof}
For $u_0\in\mathcal{S}$, one can check that $u_0^{1-p}\mathcal{L} u_0 $ and $(u_0^{1-p}\mathcal{L}u_0 )^2/u_0 $ are in $C_0^\alpha(\overline\Omega)$, and thus, the existence and uniqueness of classical solutions follows from Theorem \ref{thm:global-schauder}.

Let $\psi$ be the normalized nonnegative first eigenfunction of $-\Delta$ in the $L^2$ norm, i.e., $\int_{\om}\psi^2=1$,
\[
-\Delta \psi=\lda_1 \psi \quad \mbox{in }\om, \quad \mbox{and}\quad \psi=0 \quad \mbox{on }\pa \om.
\]
Thus $0<\inf_{\om} \frac{\psi}{d} \le \sup _{\om} \frac{\psi}{d} <\infty$. Note that
\[
pu_0^{p-1} \pa_t \psi=0\ge -(\lda_1-b) \psi =\Delta \psi +b\psi=-\mathcal{L}\psi.
\]
Since $p u_0^{p-1} \pa_tw \le-\mathcal{L} w$, by the comparison principle, $w \le C\psi$ for some $C>0$ depending only on $n$, $\om$,  $T$,  $b$ and $\sup_\Omega u_0/d$.

Let $\tilde\psi$ be the normalized nonnegative weighted first eigenfunction of $\mathcal{L}$ in the $L^2$ norm, i.e., $\int_{\om}\tilde\psi^2=1$, satisfying
\[
\mathcal{L} \tilde\psi=\tilde \lda_1 d^{p-1}\tilde\psi \quad \mbox{in }\om, \quad \mbox{and}\quad \tilde\psi=0 \quad \mbox{on }\pa \om.
\]
Thus $0<\inf_{\om} \frac{\tilde\psi}{d} \le \sup _{\om} \frac{\tilde\psi}{d} <\infty$, and $\inf_{\om} \frac{\tilde\psi}{d}$ depends only on $n,b,\Omega$ by the Hopf lemma. Therefore, there exists $M>0$ depending only on $\inf_\Omega u_0/d$ and $\|u_0^{1-p}\mathcal{L} u_0 \|_{C^{1}(\overline\om)} $ such that $|u_0^{1-p}\mathcal{L} u_0|\le Mu_0$ and $|u_0^{1-p}\mathcal{L} u_0|\le M\tilde\psi$ in $\overline\Omega$, and thus, 
\begin{align*}
p u_0^{p-1} \pa_tw \ge-\mathcal{L} w-tM^2u_0^{p-1}\tilde\psi\quad \mbox{in }\om \times [0,T].
\end{align*}
Note that for a positive smooth function $\eta$ of $t$,
\begin{align*}
pu_0^{p-1} \pa_t [\eta(t)\tilde\psi(x)]+\mathcal{L} [\eta(t)\tilde\psi(x)] &= pu_0^{p-1} \tilde\psi\eta'+\eta\tilde\lambda_1d^{p-1}\tilde\psi\\
&\le u_0^{p-1} \tilde\psi (p\eta'+m\lambda_1\eta),
\end{align*}
where $m>0$ depends only on  $\inf_\Omega u_0/d$. Let
\[
\eta(t)=\frac{pM^2}{m^2\tilde\lambda_1^2}-\frac{tM^2}{m\tilde\lambda_1}-\mu e^{-\frac{m\tilde\lambda_1 t}{p}},
\]
where $\mu>0$ is chosen such that
\[
\inf_\Omega\frac{u_0}{d}\ge 2 \left(\frac{pM^2}{m^2\tilde\lambda_1^2}-\mu\right)\sup_\Omega \frac{\tilde \psi}{d}.
\]
Therefore, there exists $T>0$ (whose dependence is clear) such that $\eta>0$ in $[0,T]$, $\eta(0)\tilde\psi\le u_0$ in $\Omega$ and
\[
pu_0^{p-1} \pa_t [\eta(t)\tilde\psi(x)]+\mathcal{L} [\eta(t)\tilde\psi(x)] \le -tM^2u_0^{p-1}.
\]
By the comparison principle, $w \ge \tilde \psi/C$ in $\overline\Omega\times[0,T]$ for some $C>0$ depending only on $n$, $\om$,  $b$, $\inf_\Omega u_0/d$ and $\|u_0^{1-p}\mathcal{L} u_0 \|_{C^{1}(\overline\om)}$.

Applying the comparison principle again to the equation of $w_t$, we have
\[
|\pa_t w|\le C \psi
\]
for some $C>0$ depending only on $n$, $\om$,  $T$,  $b$,   and $  \| u_0^{1-p}\mathcal{L} u_0 \|_{C^1(\overline\om)}$.

If we denote $v=w_t-u_0^{1-p}\mathcal{L} u_0$, then
\[
p u_0^{p-1} \pa_tv =-\mathcal{L} v- u_0^{p-1}[(u_0^{1-p}\mathcal{L} )^2 u_0]-\frac{p-1}{p}u_0^{p-2} (u_0^{1-p}\mathcal{L}u_0)^2 \quad \mbox{in }\om \times [0,T],
\]
\[
v=0 \quad  \mbox{on }\pa_{pa} (\om \times [0,T]).
\]
Applying Theorem \ref{thm:global-schauder} to the equations of $w$ and $v$, we obtain
\begin{align*}
&\sum_{i=0}^1\left[\|\partial_t^{i+1} w\|_{\mathscr{C}^{\al}(\om\times[0,T])} +\|D_x \partial_t^{i} w\|_{\mathscr{C}^{\al}(\om\times[0,T])} + \sup_{t\in[0,T]}\|D_x^2 \partial_t^{i} w(\cdot,t)\|_{C^\alpha(\Omega)} \right]\\
&\quad\le C.
\end{align*}
Applying elliptic estimates to the equation of $w$ on each time slides (and differentiating it once in the $x$-variables), we obtain
\[
 \sup_{t\in(\delta,T)}\|D_x^3 w(\cdot,t)\|_{C^\alpha(\Omega)}\le C
\]
for all $\delta\in(0,T)$ (note that $C$ does not depends on $\delta$). Since $u_0^{1-p}\mathcal{L}u_0 \in C_0^{2+\alpha}(\overline \om)$, we have $\|\nabla^3 u_0\|_{C^\alpha(\Omega)}\le C$, and thus,
\[
 \sup_{t\in[0,T]}\|D_x^3 w(\cdot,t)\|_{C^\alpha(\Omega)}\le C.
\]
This finishes the proof.
\end{proof}
Using Lemma 3.1 on page 78 in \cite{LSU} (see Lemma \ref{lem:lemma3.1inLSU}), the estimates in the above proposition imply that all $D_x^2 w,D_x^2 \partial_t w$ and $D_x^3 w$ are H\"older continuous in the time variable as well.

Denote
\begin{align*}
C^{3, 2}(\overline \om\times [0,T])=\{&u\in C(\overline \om\times [0,T]): D_x u, D_x^2 u, D_x^3 u,\\ &\quad\quad\quad\quad\partial_t u, D_x \partial_t u, D_x^2 \partial_t u, \partial_t^2 u \in C(\overline \om\times [0,T])\}.
\end{align*}
\begin{thm} \label{thm:short-time} Let $p>1$ and $u_0\in \mathcal{S}$.   Then there exist  $T>0$ and a unique nonnegative $u\in C^{3, 2}(\overline \om\times [0,T])$ satisfying that
\[
\pa_t u^p= -\mathcal{L} u \quad \mbox{in }\om\times [0,T],
\]
\[
u(0)=u_0, \quad u=0 \quad \mbox{on }\pa \om \times [0,T].
\]
Moreover, $u(\cdot,t)\in\mathcal{S}$ for all $t\in [0,T]$. 
\end{thm}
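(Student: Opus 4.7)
The strategy is to solve $\pa_t u^p = -\mathcal{L}u$ by a contraction mapping argument (equivalently, the implicit function theorem) centered at the second-order approximating solution $w$ from Proposition \ref{prop:short-1}. Writing $u = w+\phi$ with $\phi = 0$ on $\pa_{pa}(\om\times[0,T])$, the equation for $\phi$ becomes
\[
p w^{p-1}\pa_t\phi + p(p-1)w^{p-2}(\pa_t w)\,\phi + \mathcal{L}\phi = -r(x,t) + N(\phi),
\]
where $r(x,t):= p w^{p-1}\pa_t w + \mathcal{L}w$ is the residual and $N(\phi)$ collects all terms at least quadratic in $\phi$ that arise from expanding $(w+\phi)^{p-1}\pa_t(w+\phi)$ around $w$.

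After dividing by $\omega^{p-1}$, the linear part is a singular parabolic operator fitting the form \eqref{eq:general}, with $a = p(w/\omega)^{p-1}$, $A=I$, $b_1=-b$ (so that the coercivity \eqref{eq:coeromega} reduces to the assumption $b<\lda_1$), and $b_2 = p(p-1)(w/\omega)^{p-2}(\pa_t w)/\omega$. By Proposition \ref{prop:short-1} together with Lemma \ref{lem:equivalentnorms}, these coefficients sit in $\mathscr{C}^\al$ with $a$ bounded above and below. After the standard change $\phi\mapsto e^{\Lda t}\phi$ to arrange $b_2\ge\lda$, the linearized operator
\[
L\phi := p w^{p-1}\pa_t\phi + p(p-1)w^{p-2}(\pa_t w)\,\phi + \mathcal{L}\phi
\]
falls under Theorem \ref{thm:global-schauder}, and hence admits a bounded inverse from $\mathscr{C}^\al$-data vanishing on $\pa\om\times\{0\}$ into the corresponding solution space.

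The reason for insisting on a second-order (rather than first-order) approximating solution is that $w$ is built so that both $r(\cdot,0)\equiv 0$ and $\pa_t r(\cdot,0)\equiv 0$ hold in $\overline\om$: the first identity is the $t=0$ trace of the defining equation for $w$, and the lower-order correction $t\cdot \tfrac{p-1}{p}u_0^{p-2}(u_0^{1-p}\mathcal{L}u_0)^2$ in Proposition \ref{prop:short-1} is engineered exactly to kill $\pa_t r$ at $t=0$, as a direct calculation using $p w^{p-1}\pa_t^2 w = -\mathcal{L}\pa_t w - \tfrac{p-1}{p}u_0^{p-2}(u_0^{1-p}\mathcal{L}u_0)^2$ shows. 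Consequently $|r(x,t)|+t|\pa_t r(x,t)|\le C t^2$, so $\|r\|_{\mathscr{C}^\al(\om\times[0,T])}\to 0$ at a polynomial rate as $T\to 0^+$; moreover $r$ satisfies the compatibility condition $r(\cdot,0)|_{\pa\om}=0$ required by Theorem \ref{thm:global-schauder}.

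Let $X$ be the Banach space of $\phi$ with $\pa_t\phi, D_x\phi\in\mathscr{C}^\al(\om\times[0,T])$ and $\sup_t\|D_x^2\phi(\cdot,t)\|_{C^\beta(\om)}<\infty$, vanishing on $\pa_{pa}(\om\times[0,T])$, and let $Y$ be the corresponding target space. The remainder $N(\phi)$ is a finite sum of terms of the type $w^{p-2-k}\phi^{k+1}\pa_t w$ and $w^{p-1-k}\phi^k\pa_t\phi$ with $k\ge 1$, via Taylor expansion and the fact that $w/d$ is bounded above and below. Using Lemma \ref{lem:equivalentnorms} and Remark \ref{rem:dividecheck}, one verifies that $N:X\to Y$ with $N(0)=0$ and $\|N(\phi)-N(\psi)\|_Y\le C(\|\phi\|_X+\|\psi\|_X)\|\phi-\psi\|_X$; the problem then becomes the fixed point $\phi = L^{-1}(-r+N(\phi))$, which is a contraction on a small ball of $X$ once $T$ is sufficiently small. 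A subsequent bootstrap using elliptic estimates on each time slice (for $D_x^3 u$) and time-differentiation of the equation (for $\pa_t^2 u$) upgrades $u = w+\phi$ to $C^{3,2}(\overline\om\times[0,T])$, as in Proposition \ref{prop:short-1}. Positivity and $u\sim d$ follow from $w\sim d$ and the smallness of $\phi$; the property $u(\cdot,t)\in\mathcal{S}$ reduces, via the identity $u^{1-p}\mathcal{L}u = -p\pa_t u$, to the $C^{3,2}$ bound just established; and uniqueness among classical $C^{3,2}$ solutions follows from a direct $L^2$ energy estimate on the difference of two solutions. The main obstacle will be the nonlinear estimate on $N$ in the anisotropic Campanato norm $\mathscr{C}^\al$: the factor $w^{p-2}\sim d^{p-2}$ is singular when $1<p<2$, so the bounds cannot be carried out in a standard H\"older space and require the systematic use of the weighted-H\"older characterization in Lemma \ref{lem:equivalentnorms} together with Remark \ref{rem:dividecheck} to control the boundary degeneracy.
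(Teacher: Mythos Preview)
Your proposal is correct and follows essentially the same route as the paper: perturb around the second-order approximant $w$ of Proposition~\ref{prop:short-1}, invert the linearization via Theorem~\ref{thm:global-schauder}, and use that the residual $F(w)=pw^{p-1}w_t+\mathcal{L}w$ vanishes to second order at $t=0$ (indeed the paper computes $\partial_tF(w)=O(t)\,d^{p-1}$ exactly as you do). The only noteworthy difference is organizational: the paper builds one extra time-derivative into both spaces from the start (its $\mathcal{Y}$ carries $\partial_t f\in\mathscr{C}^\alpha$ and its $\mathcal{X}$ carries $\partial_{tt}\varphi,\,D_x\partial_t\varphi\in\mathscr{C}^\alpha$, etc.), so that the implicit function theorem outputs a solution already of class $C^{3,2}$, whereas you solve in the thinner space $X$ and then bootstrap. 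Your bootstrap does work, but be aware that applying Theorem~\ref{thm:global-schauder} to the $t$-differentiated equation requires handling the nonzero initial datum $u_t(\cdot,0)$ and re-checking the compatibility condition; the paper's choice of spaces sidesteps this bookkeeping. Also, your sentence ``$|r|+t|\partial_t r|\le Ct^2$, so $\|r\|_{\mathscr{C}^\alpha}\to 0$'' tacitly uses more than $L^\infty$ control: you need the spatial H\"older quotient of $r/d^{p-1}$ to vanish as $T\to 0$ as well, which follows because $D_x(r/d^{p-1})$ inherits the same $O(t)$ vanishing from the regularity of $w$ in Proposition~\ref{prop:short-1}.
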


\begin{proof}  We shall use the implicit function theorem. Let $w$ be the solutions obtained in Proposition \ref{prop:short-1}. For any small $\va_0>0$, we have $\|w(\cdot, t)-u_0\|_{C^{2+\alpha}(\overline\om)} \le \va_0$ provided $t\le T_{\va_0}$, where $0<T_{\va_0} \le 1$ is a constant depending on $\va_0$. Note that
\[
1/C \le \frac{w}{d} \le C.
\]
Let
\begin{align*}
\mathcal{Y}= \Big\{ d^{p-1} f: f,\partial_t f\in \mathscr{C}^{\al}(\overline\om\times [0, T_{\va_0}]), \ &f(\cdot,0)\equiv 0 \mbox{ on }\overline\Omega, \\
&\partial_tf(x,0)= 0\mbox{ for }x\in\partial\Omega \Big\}
\end{align*}
with
\[
\|\phi\|_{\mathcal{Y}}= \|d^{1-p} \phi\|_{\mathscr{C}^{\al}(\overline\om\times [0, T_{\va_0}])}+\|d^{1-p} \partial_t\phi\|_{\mathscr{C}^{\al}(\overline\om\times [0, T_{\va_0}])},
\]
and let 
\begin{align*}
\mathcal{X}= \Big\{&\varphi\in C^{3, 2}(\overline\om\times [0, T_{\va_0}]):\ \pa_t \varphi, D_x \varphi, \pa_{tt} \varphi, D_x \partial_t\varphi \in  \mathscr{C}^{\al}(\overline\om\times [0, T_{\va_0}]),\\
&\quad\quad\quad  D_x^2 \varphi(\cdot, t),  D_x^2 \partial_t\varphi(\cdot, t), D_x^3\varphi(\cdot, t) \in C^\al(\overline\om) ~\forall ~0\le t\le T_{\va_0}, \\ 
& \quad\quad\quad \mathcal{L} \varphi \in \mathcal{Y}, \mbox{ and } \partial_t\varphi=\varphi= 0 \mbox{ on }\pa_{pa} (\overline\om \times [0, T_{\va_0}]) \Big\}
\end{align*}
with  
\begin{align*}
&\|\varphi \|_{\mathcal{X}}= \\
&\sum_{i=0}^1\Big[\|\partial_t^{i+1} \varphi\|_{\mathscr{C}^{\al}(\om\times[0,T])} +\|D_x \partial_t^{i} \varphi\|_{\mathscr{C}^{\al}(\om\times[0,T])} + \sup_{t\in[0,T]}\|D_x^2 \partial_t^{i} \varphi(\cdot,t)\|_{C^\alpha(\Omega)} \Big] \\
&\quad+ \sup_{t\in[0,T]}\|D_x^3 \varphi(\cdot,t)\|_{C^\alpha(\Omega)}+ \|\mathcal{L} \varphi\|_{\mathcal{Y}}+\|\varphi \|_{L^{\infty}(\overline\om\times [0, T_{\va_0}]) } ,
\end{align*}
where $\pa_{pa} (\overline\om \times [0, T_{\va_0}])  $  stands for the parabolic boundary of $\overline\om \times [0, T_{\va_0}] $. 
It is easy to check that both $\mathcal{X}$ and $\mathcal{Y}$ are Banach spaces, and $w\in\mathcal{X}$. 

Define $F(v):= p |v|^{p-1} \frac{\pa v}{\pa t} +\mathcal{L} v$ for $v \in \mathcal{X}$, and for all $\varphi\in \mathcal{X}$, 
\begin{align*}
\mathscr{P}(\varphi)&:= F(w+\varphi)-F(w)\\
&=p(|w+\varphi|^{p-1} -w^{p-1}) \pa_t w +p |w+\varphi|^{p-1} \pa_t \varphi + \mathcal{L}  \varphi.
\end{align*}
By the definition of $\mathcal{X}$, the properties of $w$ and Remark \ref{rem:dividecheck}, we have $\frac{\varphi}{w},\frac{\partial_t\varphi}{w},\frac{\partial_t w}{w}\in \mathscr{C}^{\alpha}(\overline\om\times [0, T_{\va_0}])$. Hence, $\mathscr{P}(\varphi)\in \mathcal{Y}$ if $\varphi\in \mathcal{X}$ and $\|\varphi\|_{\mathcal{X}}$ is small (so that $|\nabla\varphi|<|\nabla w|/2$ in $\{x\in\overline\Omega: d(x)<\delta_1\}\times[0,T_{\va_0}]$ and $w+\varphi>\delta_2$ in $\{x\in\overline\Omega: d(x)\ge \delta_1\}\times[0,T_{\va_0}])$.

Note that $\mathscr{P}(0)=0$ and
\[
\mathscr{P}'(0) \varphi =p w^{p-1} \pa_t \varphi + \mathcal{L}  \varphi +p(p-1) w^{p-2} \pa_t w \varphi \quad \mbox{for every }\varphi \in \mathcal{X},
\]
where $\mathscr{P}'(0)$ is the Fr\'echet derivative of $\mathscr{P}$ at $0$. 
It follows from Theorem \ref{thm:global-schauder} (our definitions on $\mathcal{X}$ and $\mathcal{Y}$ will ensure the compatibility conditions even after differentiating the linearized equation in the time variable) and elliptic estimates that  $\mathscr{P}'(0) $ is invertible when $\va_0$ is chosen sufficiently small. By the implicit function theorem, there exists a positive constant $\delta>0$ such that for any $\phi \in \mathcal{Y} $ with $\|\phi\|_{\mathcal{Y}} < \delta$ there exists a unique solution $\varphi \in \mathcal{X}$ of the equation $\mathscr{P}(\varphi)=\phi$.  Let $\overline T$ be sufficiently small. Pick a cutoff function $0\le \eta\le1 $ satisfying $\eta(s)=1$ for $s\le \overline T$ and $\eta(s)=0 $ for $s\ge 2\overline T$.   By the equation of $w$, we see that $\eta F(w)\in\mathcal{Y}$, and we claim that $\|\eta F(w) \|_{\mathcal{Y}} <\delta$ provided $\overline T$ is small. Indeed, we have that $F(w)|_{t=0}=0$ and 
\begin{align*}
\partial_t F(w)&=pw^{p-1}w_{tt}+Lw_t+p(p-1)w^{p-2}w_t^2\\
&=p(w^{p-1}-u_0^{p-1})w_{tt}+p(p-1)u_0^{p-2}(w_t^2(x,t)-w_t^2(x,0))\\
&=tp(p-1)w_{tt} \int_0^1 w^{p-2}(x,st)w_t(x,st)\,\ud s\\
&\quad+tp(p-1)u_0^{p-2}\Big(w_t(x,t)+w_t(x,0)\Big)\int_0^1w_{tt}(x,st)\,\ud s\\
&=O(t) d(x)^{p-1},
\end{align*}
from which the claim follows.

Therefore, there exists a function $\varphi\in \mathcal{X}$ such that
\[
\mathscr{P}(\varphi)=-\eta F(w).
\]
Thus $u=w+\varphi$ is a solution, since $u$ will be nonnegative using the estimates of $\varphi$ if $\overline T$ is sufficiently small.  

One can also verify that $u(\cdot,t)\in\mathcal{S}$ for all $t\in[0,T]$.

Finally, the uniqueness of $C^{3,2}(\overline\Omega\times[0,T])$ solutions follows from comparison principles (see, e.g., \cite{DaK, LiP}).  Therefore, we complete the proof. 
\end{proof}

\section{Time derivative estimates}  
\label{sec:time-derivative}

Consider nonnegative solutions of the equation
\be\label{eq:FDE-1}
\begin{cases}
\pa_t u^p =-\mathcal{L} u +u^p \quad \mbox{in } \om \times (0,T],\\
u(x,t)=0\quad \mbox{on } \partial\om \times (0,T],
\end{cases}
\ee
where $\Omega\subset\R^n$ is a smooth bounded domain, $1<p<\infty$ if $n=1,2$ and $1<p\le \frac{n+2}{n-2}$ if $n\ge 3$. Suppose that
\be \label{eq:pre-1}
\frac{1}{c_0}d(x) \le u(x,t)\le c_0 d(x)
\ee
for some constant $c_0\ge 1$, where $d(x)=dist(x,\partial\Omega)$.   Let 
\[
J[u(t)]= \int_{\om} \Big( |\nabla u(\cdot,t)|^2- bu(\cdot,t)^2 -\frac{2}{p+1} |u(\cdot,t)|^{p+1}\Big)\,\ud x.
\]

\subsection{Upper bounds}
Here, we prove the upper bounds of $\|u(\cdot,t)\|_{H^1_0(\Omega)}$ and $\partial_t u$.

 \begin{prop} \label{prop:H10} Let $u \in C^{3,2}(\overline \om \times [0,T])$  be a positive solution of  the fast diffusion equation \eqref{eq:FDE-1} satisfying \eqref{eq:pre-1}. Then there exists $C>0$ depending only on $n,\Omega$, $p,b,c_0$ and $ \int_\om |\nabla u(x,0)|^2$ (but not on $T$) such that
 \[
\|u(\cdot,t)\|_{H^1_0(\Omega)} \le C \quad \mbox{for all }t\in [0,T].
 \]
 \end{prop}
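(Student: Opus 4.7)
The natural strategy is to show that the Lyapunov functional $J[u(t)]$ is non-increasing along the flow and combine this with the a priori $L^\infty$ bound coming from \eqref{eq:pre-1} to extract the $H^1_0$ estimate. Concretely, the equation \eqref{eq:FDE-1} can be rewritten as $\partial_t u^p = \Delta u + bu + u^p$, i.e.\ $\partial_t u^p = -\mathrm{grad}_{L^2}\tfrac12 J[u]$, which suggests the energy identity
\[
\frac{\ud}{\ud t} J[u(t)] = -2p\int_\Omega u^{p-1}(\partial_t u)^2\,\ud x.
\]

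First I would justify this identity rigorously. Since $u\in C^{3,2}(\overline\om\times[0,T])$, one may differentiate $J[u(t)]$ under the integral to obtain
\[
\frac{\ud}{\ud t} J[u(t)] = 2\int_\Omega \Big(\nabla u\cdot\nabla\partial_t u - bu\partial_t u - u^p\partial_t u\Big)\,\ud x.
\]
The Dirichlet condition $u(\cdot,t)\equiv 0$ on $\partial\Omega$ gives $\partial_t u\equiv 0$ on $\partial\Omega$, so Green's formula yields $\int_\Omega \nabla u\cdot\nabla\partial_t u\,\ud x = -\int_\Omega \Delta u\,\partial_t u\,\ud x$ with no boundary contribution. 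Substituting the equation $\Delta u + bu + u^p = \partial_t u^p = p u^{p-1}\partial_t u$ produces the claimed identity, hence $J[u(t)]\le J[u(0)]$ for all $t\in[0,T]$.

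Next I would convert monotonicity of $J$ into an $H^1_0$ bound. Assumption \eqref{eq:pre-1} gives the pointwise bound $0\le u(x,t)\le c_0 d(x)\le c_0\,\mathrm{diam}(\Omega)$, so $\|u(\cdot,t)\|_{L^\infty(\Omega)}\le C(c_0,\Omega)$ uniformly in $t$. Consequently both $\int_\Omega u(\cdot,t)^2\,\ud x$ and $\int_\Omega u(\cdot,t)^{p+1}\,\ud x$ are bounded by a constant $C=C(c_0,\Omega,p)$, and rearranging the definition of $J$ gives
\[
\int_\Omega|\nabla u(\cdot,t)|^2\,\ud x \;=\; J[u(t)] + b\int_\Omega u^2 + \frac{2}{p+1}\int_\Omega u^{p+1}
\;\le\; J[u(0)] + C.
\]
Finally, $J[u(0)]\le \int_\Omega|\nabla u(x,0)|^2\,\ud x + |b|\int_\Omega u(x,0)^2\,\ud x + \tfrac{2}{p+1}\int_\Omega u(x,0)^{p+1}\,\ud x$, and the last two terms are again controlled by $C(c_0,\Omega,p)$, yielding the desired bound in terms of $\int_\Omega|\nabla u(x,0)|^2\,\ud x$.

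I do not expect any serious obstacle: the only delicate point is the legitimacy of the integration by parts, which is handled cleanly by the $C^{3,2}$ regularity and the vanishing of $\partial_t u$ on $\partial\Omega$; everything else is algebraic manipulation using the uniform $L^\infty$ bound from \eqref{eq:pre-1}. Note the argument does \emph{not} use the subcritical/critical restriction on $p$ (which is only used later, e.g.\ for the curvature-type quantity), nor does it require $T$-dependent quantities, so the bound is automatically uniform in $T$.
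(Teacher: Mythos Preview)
Your proposal is correct and follows exactly the paper's approach: derive the energy identity $\frac{\ud}{\ud t}J[u(t)]=-2p\int_\Omega u^{p-1}(\partial_t u)^2\le 0$ using the $C^{3,2}$ regularity and the fact that $\partial_t u=0$ on $\partial\Omega$, then extract the $H^1_0$ bound from $J[u(t)]\le J[u(0)]$ together with the uniform $L^\infty$ control from \eqref{eq:pre-1}. The paper's proof is essentially a one-line version of what you wrote.
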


 \begin{proof}   Since $\pa_t u=0$ on $\partial\Omega\times(0,T)$, we have 
\begin{equation}\label{eq:energyderivative} 
\begin{split}
\frac{\ud }{\ud t}J[(u(t))]    &=2 \int_{\om} \nabla u \nabla \pa_t u-b u \pa_tu -u^{p} \pa_tu \\& = -2\int_\Omega (\Delta u+bu +u^p) \pa_tu\\&=-2p \int_\Omega u^{p-1}(\pa _t u)^2.
\end{split}
\end{equation}
It follows that $J[u]$ is non-increasing in $[0,T]$. The conclusion follows by using \eqref{eq:pre-1}.
\end{proof}

 \begin{prop} \label{prop:time-up} Let $u \in C^{3,2}(\overline \om \times [0,T])$  be a positive solution of  the fast diffusion equation \eqref{eq:FDE-1} satisfying \eqref{eq:pre-1}. Then there exists $C>0$ depending only on $n,\Omega$, $T, p,b,c_0$ and $ \int_\om |\nabla u(x,0)|^2$ such that
 \[
 \pa_t u(x,t) \le C \quad \mbox{for all }(x,t)\in \overline\Omega\times[T/2,T].
 \]
 \end{prop}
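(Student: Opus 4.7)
Setting $v:=\pa_t u\in C^{2,1}(\overline\om\times[0,T])$ and differentiating \eqref{eq:FDE-1} in $t$, we see that $v$ satisfies the semilinear equation
\[
pu^{p-1}\pa_t v-\Delta v+p(p-1)u^{p-2}v^2=(b+pu^{p-1})v\qquad\text{in }\om\times(0,T],
\]
with $v=0$ on $\pa\om\times(0,T]$ (since $u\equiv 0$ on $\pa\om$). The crucial sign is that of the good term $p(p-1)u^{p-2}v^2\ge 0$. Integrating the identity \eqref{eq:energyderivative} in $t$ and using Proposition~\ref{prop:H10} together with \eqref{eq:pre-1} gives the starting weighted $L^2$ bound
\[
\int_0^T\!\!\int_\om u^{p-1}v^2\,\ud x\ud t\le C(c_0,b,p,\om,\|u(\cdot,0)\|_{H^1_0}).
\]

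The main tool is a Moser iteration. For $q\ge 2$ and $\eta=\eta(t)\in C^1([0,T])$ with $\eta(0)=0$, testing the equation for $v$ with $\eta^2(v^+)^{q-1}$ and integrating over $\om\times[0,T]$, using $v(v^+)^{q-1}=(v^+)^q$ on $\{v>0\}$, integration by parts in $x$ (boundary terms vanish) and in $t$ (exploiting $\pa_t u^{p-1}=(p-1)u^{p-2}v$), give after discarding the nonnegative term $p(p-1)(1-q^{-1})\int\!\int u^{p-2}(v^+)^{q+1}\eta^2$,
\[
\frac{p}{q}\sup_t\int_\om u^{p-1}(v^+)^q\eta^2+\frac{4(q-1)}{q^2}\int_0^T\!\int_\om|\nabla(v^+)^{q/2}|^2\eta^2\le C_1\int_0^T\!\int_\om(v^+)^q(\eta^2+\eta|\eta'|),
\]
with $C_1=C_1(b,c_0,p)$. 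A global version of Lemma~\ref{lem:weightedsobolev} on $\om$ with weight $u^{p-1}\sim d(x)^{p-1}$, obtained by flattening the boundary in finitely many charts and patching with the classical Sobolev embedding in the interior (where the weight is bounded below), applied to $w=\eta(v^+)^{q/2}\in H^1_0(\om)$, then yields
\[
\biggl(\int_0^T\!\int_\om\eta^{2\chi}(v^+)^{q\chi}\biggr)^{1/\chi}\le C_2\,q\,(1+\|\eta'\|_\infty)\int_0^T\!\int_\om(v^+)^q.
\]

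To initialize the iteration we need an unweighted $L^2$ bound on $v^+$. For $w\in H^1_0(\om)$ and any $\delta>0$, splitting $\om$ into $\{d(x)\ge\delta\}$ (where $u^{1-p}\le(c_0/\delta)^{p-1}$ by \eqref{eq:pre-1}) and $\{d(x)<\delta\}$ (where $w^2\le\delta^2(w/d)^2$ and Hardy's inequality $\int(w/d)^2\le C\int|\nabla w|^2$ applies) yields the ``bridge''
\[
\int_\om w^2\le(c_0/\delta)^{p-1}\int_\om u^{p-1}w^2+C\delta^2\int_\om|\nabla w|^2.
\]
Inserting this into the $q=2$ iteration inequality, controlling the right-hand side $\int\!\int u^{p-1}(v^+)^2\eta^2$ directly by the energy bound above (so as not to lose a factor of $T$), and choosing $\delta$ sufficiently small to absorb $\int\!\int|\nabla v^+|^2\eta^2$ into the left-hand side, we conclude for $\eta\equiv 1$ on $[T/8,T]$, $\eta\equiv 0$ on $[0,T/16]$ that
\[
\sup_{t\in[T/8,T]}\int_\om u^{p-1}(v^+)^2+\int_{T/8}^T\!\int_\om\bigl(|\nabla v^+|^2+(v^+)^2\bigr)\,\ud x\ud t\le C.
\]

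Finally, with $q_k=2\chi^k$ and time cutoffs $\eta_k$ supported in $[\tau_k,T]$, equal to $1$ on $[\tau_{k+1},T]$ with $\tau_k\nearrow T/2$ and $|\eta_k'|\le C\,2^k/T$, the Sobolev-Moser inequality yields
\[
\|v^+\|_{L^{q_{k+1}}([\tau_{k+1},T]\times\om)}\le(Cq_k 2^k)^{1/q_k}\|v^+\|_{L^{q_k}([\tau_k,T]\times\om)},
\]
and $\sum_k q_k^{-1}\log(Cq_k 2^k)<\infty$ since $\chi>1$. Sending $k\to\infty$ gives
\[
\|v^+\|_{L^\infty(\om\times[T/2,T])}\le C\|v^+\|_{L^2(\om\times[T/8,T])}\le C,
\]
which is the desired bound. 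The principal technical obstacles are (i)~the global weighted Sobolev inequality, proved by patching Lemma~\ref{lem:weightedsobolev} in boundary charts with classical Sobolev in interior ones via a partition of unity, and (ii)~the bridge step, which converts the naturally arising weighted $L^2$ bound into an unweighted one and is where the Hopf-type behavior \eqref{eq:pre-1} and the Dirichlet condition $v|_{\pa\om}=0$ play essential roles.
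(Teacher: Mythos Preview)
Your approach is essentially the paper's: differentiate in $t$, exploit the good sign of $p(p-1)u^{p-2}v^2$, start from the weighted energy bound $\int\!\int u^{p-1}v^2\le C$, and run an $L^\gamma\to L^\infty$ iteration. The paper uses De Giorgi truncation and simply invokes its already-proven Proposition~\ref{prop:local-bound} with \emph{any} $\gamma>0$, then closes by H\"older: choosing $\gamma$ so small that $\int d^{-\gamma(p-1)/(2-\gamma)}<\infty$ gives $\|v\|_{L^\gamma}\le C(\int u^{p-1}v^2)^{1/2}$ directly, with no bridge step or absorption needed. Your Moser variant with the Hardy-based bridge is a legitimate alternative.

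There is, however, a bookkeeping slip at the initialization. In deriving the $q$-inequality you integrated $pu^{p-1}v_t\cdot\eta^2(v^+)^{q-1}$ by parts in $t$; the resulting $\eta|\eta'|$ term is $\frac{2p}{q}\int\!\int u^{p-1}(v^+)^q\eta|\eta'|$, i.e.\ it carries the weight $u^{p-1}$ naturally. You then bounded $u^{p-1}\le c_0^{p-1}$ and wrote the right-hand side as $C_1\int\!\int(v^+)^q(\eta^2+\eta|\eta'|)$. That is harmless for the Moser steps $q>2$, but at $q=2$ it is exactly what breaks the absorption: after the bridge you produce $C\delta^2\int\!\int|\nabla v^+|^2\eta|\eta'|$, which lives on $\{\eta'\neq0\}$ where $\eta^2$ may be small, so it cannot be absorbed into $\int\!\int|\nabla v^+|^2\eta^2$ on the left. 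The fix is to \emph{keep} the weight on the $\eta|\eta'|$ term: then that term is $\le C\|\eta'\|_\infty\int\!\int u^{p-1}(v^+)^2$, bounded directly by the energy. The only unweighted term left is $b\int\!\int(v^+)^2\eta^2$, for which the bridge gives $C\delta^2\int\!\int|\nabla v^+|^2\eta^2$, and this \emph{does} absorb. With this correction your initialization goes through and the rest of the argument is fine.
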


 \begin{proof}     
 By integrating \eqref{eq:energyderivative} in $t$, we have
\be \label{eq:2-bound0}
\int_{0}^T \int_\om u^{p-1}(\pa _t u)^2 \,\ud x\ud t \le   \frac{1}{2p}  \int_\om |\nabla u(x,0)|^2\,\ud x +C,
\ee
where $C>0$ depends only on $n,\om,b$, $p$ and $c_0$ in \eqref{eq:pre-1}. 


Differentiating the equation \eqref{eq:FDE-1} in $t$ variable and denoting $v= \pa_t u$,  we have
\[
pu^{p-1} \pa_t v +p(p-1) u^{p-2} v^2= -\mathcal{L} v +pu^{p-1} v \quad \mbox{in }\om \times (0,T]
\]
and
\[
v=0 \quad \mbox{on }\pa \om \times (0,T].
\]
Let $0<T_1<T_2 <\frac{T}{2}$, $\eta_1(t)$ be a smooth  function so that $\eta_1(t)=0$ for $t\le T_1$,  $\eta=1$ for $t>T_2$ and $|\eta'_1|\le \frac{2}{T_2-T_1}$. For $x_0\in \pa \om$ and $R>0$ small,   let $\eta_2\in C_c^2( B_{R}(x_0)) $ be a nonnegative cutoff function, and $\eta=\eta_1 \eta_2$.   For $k>0$,  let $w= (v-k)^+$. Using $\eta^2 w$ as a test function, we have, for $t\in (T_1,T)$, 
\begin{align}
&\frac{p}{2} \int_{\om}  u^{p-1} \eta^2 w^2\,\ud x\Big|_t + \frac12\int_{T_1}^t\int_\om |\nabla (\eta w)|^2 \nonumber \\&  \le  C \int_{T_1}^t \int_\om (u^{p-1} \eta |\pa_t\eta|  +|\nabla \eta|^2)w^2 + \int_{T_1}^t \int_\om(|b| +pu^{p-1}) \eta^2 (k+w)w \nonumber  \\&\quad   - p(p-1) \int_{T_1}^t \int_\om u^{p-2} vw(v-w) \nonumber \\& 
\le C\int_{T_1}^t \int_\om (u^{p-1} \eta |\pa_t\eta|  +|\nabla \eta|^2)w^2 + \int_{T_1}^t \int_\om(|b| +pu^{p-1}) \eta^2 (k+w)w. 
\label{eq:energy-inequality}
\end{align} 
Given  \eqref{eq:pre-1}  and \eqref{eq:energy-inequality}, by the proof of Proposition \ref{prop:local-bound} and estimates for uniform elliptic equations, it follows that, for any $\gamma>0$
\[
\sup_{\Omega \times [T/2, T] }  v^+ \le C \|v\|_{L^\gamma(\om\times [T/4, T] ) },
\]
where $C >0$ depends only on $n, \om, T, b, c_0,p$ and $\gamma$.  Using the H\"older inequality and \eqref{eq:2-bound0}, the proposition follows.
 \end{proof}

\begin{rem}
Note that we cannot obtain a lower bound of $\pa_t u$ via the above proofs, since \eqref{eq:energy-inequality} does not hold for $v=-\pa_t u$. In fact, this  is the main difficulty.
\end{rem}

The upper bound in Proposition \ref{prop:time-up} can be improved.

 \begin{prop} \label{prop:time-up-2} Let $u \in C^{3,2}(\overline \om \times [0,T])$  be a positive solution of  \eqref{eq:FDE-1} satisfying \eqref{eq:pre-1}. Then there exists $C>0$ depending only on $n,\Omega,b$, $T, p,c_0$ and $ \int_\om |\nabla u(x,0)|^2$  such that
 \[
 \pa_t u(x,t) \le C d(x) \quad \mbox{for all }(x,t)\in \overline\Omega\times[T/2,T].
 \]
 \end{prop}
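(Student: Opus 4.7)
The plan is to combine the $L^\infty$ bound on $v:=\pa_tu$ from Proposition~\ref{prop:time-up} with a boundary $C^1$-type estimate obtained by applying the singular-parabolic theory of Section~\ref{sec:linear} to an auxiliary linear equation. First, applying Proposition~\ref{prop:time-up} once on $[0,T]$ and once on $[0,T/2]$ gives $v\le C_1$ on $\overline\om\times[T/4,T]$ with $C_1$ depending only on the stated quantities. Differentiating \eqref{eq:FDE-1} in $t$, $v$ satisfies
\[
pu^{p-1}\pa_tv-\Delta v+p(p-1)u^{p-2}v^2=(b+pu^{p-1})v\quad\mbox{in }\om\times(0,T],\ v|_{\pa\om\times(0,T]}=0.
\]
Since the damping $p(p-1)u^{p-2}v^2\ge 0$, the function $v$ is a subsolution of the linear singular parabolic operator $\tilde LV:=pu^{p-1}\pa_tV-\Delta V-(b+pu^{p-1})V$ with bounded coefficients, which bypasses the potentially singular linearization $u^{p-2}v$.

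The next step is to localize and rescale. Fix $x_0\in\pa\om$ and straighten $\pa\om$ near $x_0$ by a $C^{2,\al}$ change of variables. Setting $\tilde v(y,s):=v(Ry+x_0,R^{p+1}s+t_0)$ for small $R>0$ and $t_0\in[T/2,T-R^{p+1}]$, the parabolic scaling $R^{p+1}$ in time balances the degenerate weight $u^{p-1}\sim R^{p-1}y_n^{p-1}\kappa(y')^{p-1}$, so that after multiplying through by $R^2$ the leading operator $p\kappa^{p-1}y_n^{p-1}\pa_s-\Delta_y$ is $R$-independent while all remaining zeroth-order coefficients become $O(R^2)$-small. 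The rescaled $\tilde v$ is thus controlled by $C_1$ in $Q_1^+$ and vanishes on $\pa'B_1^+\times[-1,0]$. Let $V$ solve the corresponding \emph{purely linear} Dirichlet problem in $Q_1^+$ with the same lateral and initial data as $\tilde v$. By a comparison argument (after an exponential shift $V\mapsto e^{-\Lambda s}V$ to make the rescaled $b_2$ satisfy the positivity hypothesis of Section~\ref{sec:linear}), one has $\tilde v\le V$ in $Q_1^+$. Applying Theorem~\ref{thm:global-schauder1} to $V$ delivers $\|D_yV\|_{L^\infty(Q_{1/2}^+)}\le C$ with $C$ depending only on $\|V\|_{L^\infty(Q_1^+)}\le C_1$ and structural constants, so $V(y,s)\le Cy_n$ and hence $\tilde v\le Cy_n$ in $Q_{1/2}^+$. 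Undoing the scaling yields $v(x,t)\le Cd(x)$ in a neighborhood of $x_0$ for $t\in[T/2,T]$; a finite boundary cover combined with the trivial interior bound $v\le C_1\le(C_1/\delta)d$ on $\{d\ge\delta\}$ completes the proof.

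The main obstacle will be carrying out the comparison $\tilde v\le V$ and verifying that the linear equation for $V$ fits Section~\ref{sec:linear}'s framework with constants depending only on the stated quantities. A direct linearization --- treating $p(p-1)u^{p-2}v$ as the coefficient of $v$ --- produces a singular coefficient at $\pa\om$ when $p<2$, to which the Schauder machinery does not immediately apply. The resolution, and the reason the argument works uniformly in $p\in(1,\infty)$, is to keep the damping term as a positive quadratic and use subsolution comparison against the solution of a linear problem with bounded coefficients; the exponential shift then reconciles the sign of $b_2$ with hypothesis \eqref{eq:positivelower}, and the smallness of the zeroth-order coefficients after rescaling ensures all Schauder constants are uniform in $R$ and in the chosen boundary point.
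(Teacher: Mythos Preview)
Your overall strategy---drop the positive quadratic term so that $v=\pa_t u$ becomes a subsolution of a \emph{linear} singular parabolic operator with bounded coefficients, then compare with a solution $V$ of that linear problem and bound $V$ by $Cy_n$---is sound in spirit, and the subsolution observation is exactly the right way to avoid the singular factor $u^{p-2}v$ when $p<2$. However, the step where you invoke Theorem~\ref{thm:global-schauder1} to conclude $\|D_yV\|_{L^\infty(Q_{1/2}^+)}\le C$ ``depending only on $\|V\|_{L^\infty(Q_1^+)}$'' does not go through. That theorem (and likewise Corollaries~\ref{cor:time-schauder} and \ref{cor:space-schauder}) carries $\|\pa_tV\|_{L^\infty(Q_1^+)}$ on the right-hand side, and you have no a~priori control on $\pa_tV$; moreover the theorem requires the zero initial condition $V(\cdot,-1)\equiv0$ together with the compatibility $f(\cdot,-1)=0$ on $\pa'B_1^+$, neither of which your $V$ (with initial/lateral data $\tilde v$) satisfies. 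The alternative Proposition~\ref{prop:smooth-co} would demand $W^{1,1}_2$ control of the boundary datum $\tilde v$, i.e.\ uniform bounds on $D_xv$ and $\pa_tv$, which are precisely what is not yet available at this stage of the argument.

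The paper sidesteps all of this: once $v\le M$ on $\overline\om\times[T/4,T]$ is known from Proposition~\ref{prop:time-up}, one compares $v$ directly with an \emph{explicit} barrier $\Psi$ (the one from Theorem~4.1 of DiBenedetto--Kwong--Vespri \cite{DKV} with $m=1$), which, thanks to \eqref{eq:pre-1}, is a supersolution of $pu^{p-1}\pa_t\Psi=-\mathcal{L}\Psi$ and satisfies $\Psi\le Cd(x)$. No Schauder machinery is needed---just the comparison principle. Even more directly, the B\'enilan--Crandall estimate gives $u_t\le\frac{p}{(p-1)^2}\bigl[1-e^{-pt/(p-1)}\bigr]^{-1}u$, which with \eqref{eq:pre-1} immediately yields the claim. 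Your scheme would become correct if, after obtaining the subsolution inequality, you replaced the Schauder step by such a barrier construction; as written, the appeal to Theorem~\ref{thm:global-schauder1} is a genuine gap.
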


 \begin{proof}
 This follows from a slight modification of the proof of Theorem 4.1 in \cite{DKV} with $m=1$ (see Remark 4.2 there). Note that for $v=\partial_t u$, we know from Proposition \ref{prop:time-up} that $v\le M$ for in $\overline\Omega\times[T/4,T]$. Due to the condition \eqref{eq:pre-1}, the same barrier function in the proof of Theorem 4.1 of \cite{DKV} with $m=1$ will be a supersolution of $pu^{p-1} \pa_t \Psi =-\mathcal{L} \Psi$, from which we obtain the desired upper bound using the comparison principle.
 \end{proof}

As pointed out by the referee, the above estimate can follow quickly from the B\'enilan-Crandall \cite{BC} estimate, which implies $u_t\le \frac{p}{(p-1)^2}\big[1-e^{-\frac{pt}{p-1}}\big]^{-1} u$ in our setting.
 
\subsection{Two lemmas}

We will use the following two lemmas. The first one is a Sobolev type inequality.  The following weighted Sobolev type inequalities are well known and well studied in, e.g., Kufner-Opic \cite{KO1,KO2}, Davies \cite{Davies} and Maz'ya \cite{Mazya}. Here, we include a proof for completeness. 

\begin{lem} \label{lem:sob}  Let $d(x)=\mbox{dist}(x,\pa \om)$ as before. There exists a constant $C>0$ depending only on $n,p$ and $\Omega$ such that
\be \label{eq:weight-sob}
\int_{\om} |\nabla \phi|^2 d^2 + \phi^2 d^{p+1} \,\ud x \ge C\left( \int_{\om} \phi^{p+1} d^{p+1} \,\ud x\right)^{\frac{2}{p+1}}
\ee
for any function $\phi\in H^{1}(\om)$, where $1<p<\infty$ if $n=1,2$ and $1<p\le \frac{n+2}{n-2}$ if $n\ge 3$. 
\end{lem}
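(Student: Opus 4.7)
The plan is to localize by a partition of unity, dispose of the interior by the standard Sobolev embedding, and reduce the boundary contribution to a weighted Sobolev inequality on the half-ball that is derived from Lemma~\ref{lem:Hardy-Sobolev} by H\"older interpolation.

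Fix $\delta_0>0$ so small that $\{d<2\delta_0\}$ is covered by finitely many smooth charts flattening $\pa\om$ onto $\pa'B_R^+$ and mapping $d$ to a function comparable to $x_n$. Pick a subordinate smooth partition of unity $\{\eta_i\}_{i=0}^N$ on $\overline\om$ with $\eta_0$ supported in $\{d>\delta_0/2\}$ and each $\eta_i$ $(i\ge 1)$ supported in one chart. Using $|\phi|^{p+1}\le C\sum_i|\eta_i\phi|^{p+1}$ together with $(\sum a_i)^{2/(p+1)}\le \sum a_i^{2/(p+1)}$, the task reduces to bounding $(\int d^{p+1}|\eta_i\phi|^{p+1})^{2/(p+1)}$ separately for each $i$. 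On $\mathrm{supp}(\eta_0)$, $d$ is pinched between positive constants, so the standard embedding $H^1(\om)\hookrightarrow L^{p+1}(\om)$ (valid in our range) gives the bound; the commutator terms arising from $\nabla\eta_i$ are always supported where $d$ is bounded below, and there $d^2\sim d^{p+1}$, so they are absorbed in the same way.

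After pullback through any boundary chart, what remains is to show, for $\tilde\phi\in H^1(B_R^+)$ supported away from $\pa''B_R^+$, that
\[
\Bigl(\int_{B_R^+} x_n^{p+1}|\tilde\phi|^{p+1}\,\ud x\Bigr)^{2/(p+1)} \le C\int_{B_R^+}\bigl(x_n^2|\nabla\tilde\phi|^2 + x_n^{p+1}\tilde\phi^2\bigr)\,\ud x.
\]
I would set $\psi=x_n\tilde\phi\in H_0^1(B_R^+)$. Expanding $|\nabla\psi|^2 = x_n^2|\nabla\tilde\phi|^2+(x_n\tilde\phi^2)_{x_n}$ and integrating by parts in $x_n$ (boundary contributions vanish by the factor $x_n$ on $\pa'B_R^+$ and the support of $\tilde\phi$ on $\pa''B_R^+$) gives the identity $\int|\nabla\psi|^2=\int x_n^2|\nabla\tilde\phi|^2$. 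Since also $\int x_n^{p+1}|\tilde\phi|^{p+1}=\int|\psi|^{p+1}$ and $\int x_n^{p+1}\tilde\phi^2=\int x_n^{p-1}\psi^2$, the displayed inequality is equivalent to
\begin{equation}\label{eq:plan-key}
\Bigl(\int_{B_R^+}|\psi|^{p+1}\Bigr)^{2/(p+1)} \le C\Bigl(\int_{B_R^+}|\nabla\psi|^2 + \int_{B_R^+}x_n^{p-1}\psi^2\Bigr),\qquad \psi\in H_0^1(B_R^+).
\end{equation}

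For \eqref{eq:plan-key} with $n\ge 3$ and $1<p<\frac{n+2}{n-2}$, take $s=\tfrac{2n-(p+1)(n-2)}{n}\in(0,2)$ and $q=\tfrac{2(n-s)}{n-2}=p+1+s$, so Lemma~\ref{lem:Hardy-Sobolev} reads $(\int x_n^{-s}|\psi|^q)^{2/q}\le C\int|\nabla\psi|^2$. With $\theta=s/(p-1+s)$, the weighted H\"older inequality applied via the splitting $x_n^{\,0}=x_n^{(p-1)\theta}\cdot x_n^{-s(1-\theta)}$ yields
\[
\int_{B_R^+}|\psi|^{p+1}\le \Bigl(\int x_n^{p-1}\psi^2\Bigr)^{\theta}\Bigl(\int x_n^{-s}|\psi|^q\Bigr)^{1-\theta}
\]
because $2\theta+q(1-\theta)=p+1$; raising to the $2/(p+1)$ power produces two exponents that sum to $1$, so Young's inequality delivers \eqref{eq:plan-key}. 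The critical case $p=\tfrac{n+2}{n-2}$ corresponds to $s=0$ and is just Sobolev's inequality on $H_0^1(B_R^+)$; in dimensions $n=1,2$ the second estimate of Lemma~\ref{lem:Hardy-Sobolev} supplies an analogous Hardy bound with a free exponent $r$ in place of $q$, and the rest of the argument is identical. The main technical point is the exponent bookkeeping: the admissible range $s\in[0,2)$ corresponds precisely to $1<p\le\tfrac{n+2}{n-2}$, which is why the lemma is stated in exactly this range.
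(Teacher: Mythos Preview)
Your half-ball argument---the substitution $\psi=x_n\tilde\phi$ yielding the identity $\int|\nabla\psi|^2=\int x_n^2|\nabla\tilde\phi|^2$, followed by the H\"older interpolation between Lemma~\ref{lem:Hardy-Sobolev} and the weighted $L^2$ mass to obtain \eqref{eq:plan-key}---is correct, and the exponent bookkeeping is right. The gap is in the globalization. Your assertion that ``the commutator terms arising from $\nabla\eta_i$ are always supported where $d$ is bounded below'' is false whenever two boundary charts overlap: at any point of $\partial\om$ lying in two chart supports, both $\eta_i,\eta_j$ are strictly between $0$ and $1$, so $\nabla\eta_i\neq0$ right on $\partial\om$. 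After expanding $d^2|\nabla(\eta_i\phi)|^2$ you are therefore left with terms comparable to $\int_{\{d<2\delta_0\}} d^2\phi^2$, and for $p>1$ this is \emph{not} pointwise dominated by $d^{p+1}\phi^2$. The missing bound $\int d^2\phi^2\le C\bigl(\int d^2|\nabla\phi|^2+\int d^{p+1}\phi^2\bigr)$ does in fact hold, but it requires its own argument (e.g.\ a single cutoff near $\partial\om$, the global substitution $\Psi=d\phi$, and a Hardy-type absorption of the curvature term $d\,\Delta d$ in a thin enough tube); it is not the triviality you make it.

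The paper sidesteps all localization with a global ground-state substitution: it takes $v\in C^2(\overline\om)\cap H_0^1(\om)$ with $v\sim d$ and---this is the key---$-\Delta v=O(v^{p})$, obtained as the positive solution of a subcritical Lane--Emden problem on $\om$. Setting $\psi=v\phi\in H_0^1(\om)$, an integration by parts gives the exact identity $\int_\om|\nabla\psi|^2=\int_\om v^2|\nabla\phi|^2+\int_\om(-v\Delta v)\phi^2$, and the choice of $v$ forces the second term to be $\le C\int_\om d^{p+1}\phi^2$; one application of Poincar\'e--Sobolev to $\psi$ then finishes. This is precisely your $\psi=x_n\tilde\phi$ idea executed on all of $\om$ at once; the engineered decay of $\Delta v$ makes the cross term land with weight $d^{p+1}$ rather than $d$ or $d^2$, which is exactly what your partition-of-unity commutators fail to achieve.
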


\begin{proof}  Assume $n\ge 3$ first. By the variational method there exists a $C^2$ positive solution of
\[
-\Delta v(x)= \omega(x) v(x)^{\frac{n+2}{n-2}-\frac{1}{2(n-2)}} \quad  \mbox{in }\om \quad \mbox{and}\quad v=0\quad \mbox{on }\pa \om,
\]
where $\omega(x)\ge 0$ is the first eigenfunction of $-\Delta$ in $\Omega$ with zero boundary condition. Moreover, there exists $C>0$ such that $\frac 1C d(x)\le v(x) \le C d(x)$, and thus,  $W:=-v^{-p} \Delta v$ is bounded. Then we have
\begingroup
\allowdisplaybreaks
\begin{align*}
\int_{\om} |\nabla(\phi v)|^2\,\ud x&= \int_{\om}  \phi^2 |\nabla v|^2 +v^2 |\nabla \phi|^2 +2v\phi \nabla v \nabla \phi\,\ud x\\&
=\int_{\om} (-v \Delta v \phi^2+v^2 |\nabla \phi|^2) \\&
=\int_{\om} v^2 |\nabla \phi|^2 +W\phi^2 v^{p+1}\\&
\le C\int_{\om} |\nabla \phi|^2 d^2 + \phi^2 d^{p+1},
\end{align*}
\endgroup
where we used the integration by parts in the second equality and the fact $v=0$ on $\pa \om$.
Since $v\phi=0$ on $\pa \om$, by the Poincar\'e-Sobolev inequality there exists $C>0$ such that
\[
C\int_{\om} |\nabla(\phi v)|^2\,\ud x \ge  \left(\int_{\om} |v\phi|^{p+1}\,\ud x\right)^{\frac{2}{p+1}}.
\]
If $n=1,2$, one can show \eqref{eq:weight-sob} similarly. 
\end{proof}

The second one is an ODE argument. 

 \begin{lem} \label{lem:ODE} Let $\zeta(\cdot):[0,T) \to [0,\infty)$ be a $C^1$ solution of
\be \label{eq:ode-ineq}
\frac{\ud \zeta}{\ud t} \le \al  \zeta^{\mu_1} (\zeta^{\mu_2}+\zeta^{\mu_3}),
\ee
where $\al>0$, $\mu_1>0$, $\mu_2\in [0,1)$ and $\mu_3 \in [0,1]$ are constants. If $\int_0^T \zeta^{\mu_1}\,\ud t <\infty$,  then
\[
\zeta(t) \le C,
\]
where the constant $C$ depends only on $\al, \mu_2, \mu_3$, $\zeta(0)$ and $\int_0^T \zeta^{\mu_1}\,\ud t$.
\end{lem}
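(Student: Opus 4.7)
\textbf{Proof plan for Lemma \ref{lem:ODE}.} The plan is to linearize the right-hand side of \eqref{eq:ode-ineq} in $\zeta$, and then read off the bound from a one-line Gronwall-type estimate driven by the integrability hypothesis.

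The first step is to exploit the fact that $\mu_2\le 1$ and $\mu_3\le 1$: combined with the elementary inequality $\zeta^{\mu}\le 1+\zeta$ valid for all $\zeta\ge 0$ and $\mu\in[0,1]$, this gives $\zeta^{\mu_2}+\zeta^{\mu_3}\le 2(1+\zeta)$, so \eqref{eq:ode-ineq} reduces to
\[
\zeta'(t)\le 2\alpha\,\zeta(t)^{\mu_1}\bigl(1+\zeta(t)\bigr).
\]
Dividing by $1+\zeta$ turns the left-hand side into $\tfrac{d}{dt}\ln(1+\zeta)$, yielding $\tfrac{d}{dt}\ln(1+\zeta(t))\le 2\alpha\,\zeta(t)^{\mu_1}$. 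Integrating from $0$ to any $t<T$, using the hypothesis $\int_0^T \zeta^{\mu_1}\,\ud s<\infty$ to control the right-hand side, and exponentiating would then give the fully explicit bound
\[
\zeta(t)\le \bigl(1+\zeta(0)\bigr)\exp\!\Bigl(2\alpha\int_0^T \zeta^{\mu_1}\,\ud s\Bigr)-1,
\]
which is exactly the claimed estimate, with a constant $C$ depending only on $\alpha$, $\zeta(0)$ and $\int_0^T\zeta^{\mu_1}\,\ud s$ (even independently of $\mu_2,\mu_3$, which is stronger than asserted).

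There is essentially no obstacle here; the whole point of the assumptions $\mu_2<1$ and $\mu_3\le 1$ is precisely to make this absorption into $1+\zeta$ work. The upper endpoint $\mu_3=1$ is in fact sharp: for $\mu_3>1$ the comparison ODE $y'=\alpha y^{\mu_1+\mu_3}$ blows up in finite time, yet one checks that its blow-up profile $y\sim (t^*-t)^{-1/(\mu_1+\mu_3-1)}$ is compatible with $y^{\mu_1}\in L^1(0,t^*)$, so the conclusion of the lemma would fail without the restriction $\mu_3\le 1$.
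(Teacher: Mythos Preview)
Your proof is correct, and the underlying mechanism---separate variables, integrate against $\zeta^{\mu_1}$, invert---is the same as the paper's. The difference is in execution: the paper works directly with $H(\zeta)=\int_0^\zeta (s^{\mu_2}+s^{\mu_3})^{-1}\,\ud s$ and then splits into cases $\mu_3=1$ versus $\mu_3<1$ to invert $H$, obtaining an exponential bound in the first case and a polynomial bound $\zeta\le (C+CH(\zeta(0))+C\alpha\int_0^T\zeta^{\mu_1})^{1/(1-\max\{\mu_2,\mu_3\})}$ in the second. You instead first coarsen $\zeta^{\mu_2}+\zeta^{\mu_3}\le 2(1+\zeta)$, so that effectively $H(\zeta)=\tfrac12\ln(1+\zeta)$ and the inversion is a one-liner. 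The price is that your bound is always exponential, even when $\mu_3<1$; for the statement of the lemma and for its application in the paper this is irrelevant. Your sharpness remark about $\mu_3>1$ is also correct.
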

\begin{proof} Let
\[
H(\zeta)=\int_0^\zeta (s^{\mu_2}+s^{\mu_3})^{-1} \,\ud s, \quad \zeta\ge 0.
\]
Integrating the equation from $0$ to $t$,  we have
\[
0\le H(\zeta(t))\le H(\zeta(0))+  \al \int_{0}^{T} \zeta^{\mu_1}(t)\,\ud t.
\]
If $\mu_3=1$, we have
\[
\zeta(t) \le C_1\exp\left\{C_1H(\zeta(0))+  C_1\al \int_{0}^{T} \zeta^{\mu_1}(t)\,\ud t\right\},
\]
where $C_1$ depends only on $\mu_2$. 
If $\mu_3<1$,  we have
\[
\zeta(t) \le \left(C_2+C_2H(\zeta(0))+  C_2\al \int_{0}^{T} \zeta^{\mu_1}(t)\,\ud t\right)^{\frac{1}{1-\max\{u_2,\mu_3\}}},
\]
where $C_2$ depends only on $\mu_2$ and $\mu_3$.
Therefore, we complete the proof.
\end{proof} 

\subsection{Integral bounds}
Let $u\in C^{3,2}(\overline\Omega\times[0,T])$ be a solution of  the fast diffusion equation \eqref{eq:FDE-1} satisfying \eqref{eq:pre-1}. Let
\begin{equation}\label{eq:defnQ}
\mathcal{R}=u^{-p} \mathcal{L} u=1-\frac{p\partial_t u}{u}
\end{equation}
and
\begin{equation}\label{eq:defnMq}
M_q(t)= \int_\om |\mathcal{R} (x,t)-1|^q u(x,t)^{p+1}\,\ud x.
\end{equation}
We will show in this section that $M_q$ is bounded in $[T/2,T]$ for all $1\le q<\infty$.

\begin{prop} \label{prop:integral-bound} Let $u \in C^{3,2}(\overline \om \times [0,T])$  be a positive solution of  \eqref{eq:FDE-1} satisfying \eqref{eq:pre-1}. Let $\mathcal{R} $ be defined in \eqref{eq:defnQ}, and $M_q$ in \eqref{eq:defnMq} for every $q>1$. Then there exists a constant $C>0$, depending only on $n, \om, T, p$, $b, c_0$, $ q$ and $\|u(\cdot,0)\|_{H_0^1(\om)}$, such that
\[
M_{q} \le C  \quad \mbox{for }t>T/4.
\]
\end{prop}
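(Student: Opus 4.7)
The plan is to derive a curvature-like evolution equation for $\mathcal{R}=u^{-p}\mathcal{L}u$, convert it to a differential inequality for $M_q$, and close the estimate via the ODE Lemma \ref{lem:ODE}, starting from the energy-dissipation bound $\int_0^T M_2\,\ud t<\infty$. The latter follows by integrating the identity \eqref{eq:energyderivative} and noting that $M_2=p^2\int u^{p-1}(\pa_tu)^2\,\ud x$, since $\mathcal{R}-1=-p\pa_tu/u$.

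Differentiating $\mathcal{R}=u^{-p}\mathcal{L}u$ in $t$ and using $\pa_tu=u(1-\mathcal{R})/p$ together with $\mathcal{L}u=u^p\mathcal{R}$ gives the divergence-form evolution
\[
\pa_t\mathcal{R}=\tfrac{p-1}{p}\mathcal{R}(\mathcal{R}-1)+\tfrac{1}{p}u^{-p-1}\nabla\cdot(u^2\nabla\mathcal{R}),
\]
the analog of the scalar-curvature evolution along the Yamabe flow. Writing $S=\mathcal{R}-1$ and $G_q=\int_\Omega|S|^{q-2}|\nabla S|^2u^2\,\ud x$, multiplying the equation by $q|S|^{q-2}Su^{p+1}$, combining with the contribution of $\pa_t u^{p+1}$, and integrating by parts (the boundary term vanishes since $u^2=0$ on $\pa\Omega$) yields
\[
\tfrac{\ud}{\ud t}M_q=-\tfrac{q(q-1)}{p}G_q+\tfrac{q(p-1)-(p+1)}{p}\int_\Omega|S|^qS\,u^{p+1}\,\ud x+\tfrac{q(p-1)}{p}M_q.
\]

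The crux is handling the reaction term when $q>(p+1)/(p-1)$, where its coefficient becomes positive. Proposition \ref{prop:time-up-2} gives $\pa_tu\le Cd(x)\le C'u$, hence $S\ge -C''$, so the contribution of $\{S<0\}$ to $\int|S|^qSu^{p+1}$ is a uniform constant; the work is in bounding the positive part $\int_{\{S>0\}}S^{q+1}u^{p+1}$. I apply the weighted Sobolev inequality (Lemma \ref{lem:sob}) to $\phi=|S|^{q/2}$, and use $d\sim u$ from \eqref{eq:pre-1}, to obtain
\[
\Bigl(\int_\Omega|S|^{q(p+1)/2}u^{p+1}\,\ud x\Bigr)^{2/(p+1)}\le C\bigl(q^2G_q+M_q\bigr),
\]
followed by H\"older interpolation between $M_q$ and this higher norm (valid for $q>2/(p-1)$ with interpolation parameter $\theta=2/[q(p-1)]$), and Young's inequality to absorb the $G_q$-factor on the right. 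This produces
\[
\tfrac{\ud}{\ud t}M_q\le -\tfrac{q(q-1)}{2p}G_q+C(q)\bigl(M_q^{\gamma_1}+M_q^{\gamma_2}\bigr)+C,
\]
where $\gamma_1=\tfrac{q(p-1)-2}{q(p-1)-p-1}$ and $\gamma_2=1+\tfrac{1}{q}$ both exceed $1$ but tend to $1$ as $q$ grows.

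The bootstrap is then iterated via Lemma \ref{lem:ODE}. For $q\in[1,(p+1)/(p-1)]$ the reaction coefficient is already nonpositive, so $\tfrac{\ud}{\ud t}M_q\le CM_q+C$, and combining with $\int_0^T M_2\,\ud t<\infty$ and a weighted H\"older interpolation (using $|S^-|\le C$ and $u\le C$) yields a pointwise bound on $M_q$ for such $q$. Inductively, given a pointwise bound on $M_{q_k}$, one splits $\gamma_1=\mu_1+\mu_2$ and $\gamma_2=\mu_1+\mu_3$ for $q=q_{k+1}$ slightly larger than $q_k$, with $\mu_1>0$, $\mu_2<1$, $\mu_3\le 1$; the integrability $\int_{T/4}^T M_{q_{k+1}}^{\mu_1}\,\ud t<\infty$ comes from a Jensen-type comparison with the pointwise bound on $M_{q_k}$ once $q_{k+1}\mu_1\le q_k$. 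Lemma \ref{lem:ODE} then upgrades this to a pointwise bound on $M_{q_{k+1}}$, and after finitely many steps any prescribed $q$ is reached. The hardest step is the third: matching the natural Sobolev scaling to the curvature-like dissipation $G_q$ is exactly where the subcritical/critical Sobolev restriction on $p$ enters (through Lemma \ref{lem:sob}), and the H\"older--Young balancing must be tuned so that the resulting inequality has the precise structure demanded by Lemma \ref{lem:ODE}.
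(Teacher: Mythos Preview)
Your approach is essentially the paper's: the same divergence-form evolution equation for $\mathcal{R}$, the same differential identity for $M_q$, the same use of the one-sided bound $(\mathcal{R}-1)^-\le C$ from Proposition \ref{prop:time-up-2}, the same weighted Sobolev inequality (Lemma \ref{lem:sob}) applied to $|\mathcal{R}-1|^{q/2}$, the same interpolation of $M_{q+1}$ between $M_q$ and $M_{q(p+1)/2}$, and the same closing via the ODE Lemma \ref{lem:ODE}.

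One step in your bootstrap needs correction. The integrability $\int M_{q_{k+1}}^{\mu_1}\,\ud t<\infty$ required by Lemma \ref{lem:ODE} does \emph{not} follow from a ``Jensen-type comparison'' with the pointwise bound on $M_{q_k}$: for $\mu_1<1$ Jensen goes the wrong way (it gives $M_{q_{k+1}}^{\mu_1}\ge c\,M_{q_{k+1}\mu_1}$, not $\le$), so the condition $q_{k+1}\mu_1\le q_k$ buys nothing. The paper instead extracts this integrability from the dissipation term you already retained: once $M_{q_k}$ is pointwise bounded, integrating your inequality $\tfrac{\ud}{\ud t}M_{q_k}\le -\tfrac{q_k(q_k-1)}{2p}G_{q_k}+C$ in $t$ gives $\int G_{q_k}\,\ud t\le C$, and the Sobolev bound then yields $\int M_{q_k(p+1)/2}^{2/(p+1)}\,\ud t\le C$. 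This forces the geometric step $q_{k+1}=q_k(p+1)/2$ and $\mu_1=2/(p+1)$; at the transition value $q_0=(p+1)^2/\bigl(2(p-1)\bigr)$ one checks that $\gamma_1-\mu_1=1$ exactly (the borderline case $\mu_3=1$ in Lemma \ref{lem:ODE}), and for $q>q_0$ it drops strictly below $1$. Your $\gamma_2=1+1/q$ also appears to be a slip: the linear term $CM_q$ in the identity simply gives $\gamma_2=1$, which is harmless since $1-\mu_1=(p-1)/(p+1)<1$.
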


\begin{proof} 
First of all, it follows from \eqref{eq:2-bound0} that
\be \label{eq:2-bound}
\begin{split}
\int_{0}^T M_2(t)\,\ud t \le  \frac{p}{2}  \int_\Omega |\nabla u(x,0)|^2 \,\ud x +C,
\end{split}
\ee
where $C$ depends only on $n,\om, p, b$ and $c_0$. 
By H\"older inequality and \eqref{eq:pre-1},  we have, for any $1\le q<2$,
\be \label{eq:1-bound}
\begin{split}
\int_0^T\int_{\om} |\mathcal{R}|^q u^{p+1}\,\ud x\ud t &\le CT^{1-q/2}  \left(\int_{0}^T M_2(t)\,\ud t\right)^{\frac{q}{2}} \\
&\le C\left(\int_\Omega |\nabla u(x,0)|^2\,\ud x+1 \right)^{\frac{q}{2}} .
\end{split}
\ee

Next, we derive some evolution equations.  By \eqref{eq:FDE-1}, we have
\be \label{eq:vol}
\pa_t u^{p+1}= -\frac{p+1}{p} (\mathcal{R}-1) u^{p+1}
\ee
and
\begin{align}
\pa_t \mathcal{R} &= -pu^{-p-1} \pa_t u  \mathcal{L} u+ u^{-p} \mathcal{L}  \pa_t u  \nonumber 
\\&=- u^{-2p} (-\mathcal{L} u+ u^{p}) \mathcal{L}u + \frac{1}{p}u^{-p} \mathcal{L}(u^{-p+1}(- \mathcal{L}u+ u^p))\nonumber\\
&=-\frac{1}{p}u^{-p} \mathcal{L} (u(\mathcal{R}-1))-  \mathcal{R} +\mathcal{R}^2.  
\label{eq:Q-evolve}
\end{align}
Since $u \in C^{3,2}(\overline \om \times [0,T])$, $u=\pa_t u=0$ on $\pa \om \times [0,T]$, and  $\mathcal{R} =1-pu_t/u$, using \eqref{eq:pre-1} we can show that  
\be \label{eq:Q-regular}
\sup_{\overline \om \times[0,T]} (|\mathcal{R} |+  |\nabla \mathcal{R}|)<\infty.
\ee  
For $q>1$, we have
\begin{align*}
\frac{\ud M_q}{\ud t}&= \int_{\om} q |\mathcal{R}-1|^{q-2}(\mathcal{R}-1) u^{p+1}\frac{\pa }{\pa t} \mathcal{R} \,\ud x+ \int_{\om} |\mathcal{R}-1|^{q} \frac{\pa }{\pa t} u^{p+1}\,\ud x  \\&
=\frac{q}{p} \int_{\om} |\mathcal{R}-1|^{q-2} (\mathcal{R}-1) u \Delta (u( \mathcal{R}-1)) \,\ud x+ \frac{q}{p}\int_{\om} b |\mathcal{R}-1|^{q}u^2\,\ud x    \\&
\quad + \int_{\om} \left( \Big(q-\frac{p+1}{p}\Big) |\mathcal{R}-1|^q (\mathcal{R} -1) +q|\mathcal{R}-1|^q\right) u^{p+1}\,\ud x .  
\end{align*}
Integrating by parts and using \eqref{eq:Q-regular}, we have
\begin{align*}
&\int_{\om} |\mathcal{R}-1|^{q-2} (\mathcal{R}-1) u \Delta (u( \mathcal{R}-1)) \,\ud x\\
&=-\int_{\om} \nabla[|\mathcal{R}-1|^{q-2} (\mathcal{R}-1) u] \, \nabla [u( \mathcal{R}-1)] \,\ud x\\
&=-(q-1) \int_{\om}  |\mathcal{R}-1|^{q-2}|\nabla (\mathcal{R}-1)|^2u^2  \,\ud x - \int_{\om} \nabla[|R-1|^qu]\nabla u\\
&=\int_{\om} |\mathcal{R}-1|^{q} (-bu^2-u^{p+1}\mathcal{R})\,\ud x-(q-1) \int_{\om}  |\mathcal{R}-1|^{q-2}|\nabla (\mathcal{R}-1)|^2u^2  \,\ud x. 
\end{align*} 
Hence,
\be \label{eq:evolution}
\begin{split}
&\frac{\ud M_q}{\ud t} = -\frac{q(q-1)}{p} \int_{\om}  |\mathcal{R} -1|^{q-2}|\nabla (\mathcal{R}-1)|^2u^2  \,\ud x \\&\quad + \int_{\om} \left( \frac{p-1}{p}\Big(q-\frac{p+1}{p-1}\Big) |\mathcal{R}-1|^q (\mathcal{R}-1) +\Big(q-\frac{q}{p}\Big) |\mathcal{R}-1|^{q} \right) u^{p+1} \,\ud x.
\end{split}
\ee 
Note that
\[
\int_{\om}  |\mathcal{R}-1|^q (\mathcal{R}-1) u^{p+1} \,\ud x = \int_{\om} |\mathcal{R}-1|^{q+1} u^{p+1}-2 |\mathcal{R}-1|^{q} (\mathcal{R}-1)^{-} u^{p+1}\,\ud x,
\]
where $(\mathcal{R}-1)^-=\max\{0,-(\mathcal{R}-1)\}$. By Proposition \ref{prop:time-up-2}, we have $(\mathcal{R}-1)^- \le C$ and thus
\be \label{eq:diff-ineq-1}
\begin{split}
&\frac{\ud }{\ud t} M_q(t) +\frac{p-1}{p}\left(\frac{p+1}{p-1}-q\right) M_{q+1} \\
&\quad\le C M_q- \frac{q(q-1)}{p}\int_{\om} |\mathcal{R}-1|^{q-2} |\nabla (\mathcal{R}-1)|^2 u^2 \,\ud x.
\end{split}
\ee
By Lemma  \ref{lem:sob}, we have
\begin{align*}
&\frac{q(q-1)}{p}\int_{\om} |\mathcal{R} -1|^{q-2} |\nabla (\mathcal{R}-1)|^2 u^2 \,\ud x \\ &= \frac{4(q-1)}{pq} \int_{\om} |\nabla |\mathcal{R}-1|^{q/2}| ^2u^2\,\ud x \\&
\ge \frac{1}{C} M_{\frac{q(p+1)}{2}}^{\frac{2}{p+1}} - CM_{q},
\end{align*}
where $C>1$ depends only on $\om$, $p$, $q$ and $c_0$.
Hence, we have
\be \label{eq:diff-ineq-2}
\frac{\ud }{\ud t} M_q(t)  +\frac{p-1}{p}\left(\frac{p+1}{p-1}-q\right) M_{q+1}  +\beta M_{\frac{q(p+1)}{2}}^{\frac{2}{p+1}} \le CM_q
\ee
for some $\beta>0$. (For $q<2$, one just need to consider approximations $M_{q,\va}=\int_{\om}[(\mathcal{R}-1)^2+\va^2]^{\frac{q}{2}}u^{p+1}$, and derive an inequality for $M_{q,\va}$ first, and finally send $\va\to 0$ to derive \eqref{eq:diff-ineq-2}.)

If $q>\frac{p+1}{p-1}$, by the interpolation inequality and Young inequality we have
\begin{align*}
M_{q+1} \le M_{q(p+1)/2}^{\frac{2}{q(p-1)}} M_{q}^{\frac{(p-1)(q+1)-(p+1)}{(p-1)q}} \le \va M_{q(p+1)/2}^{\frac{2}{p+1}} +C(\va) M_{q}^{\frac{(p-1)(q+1)-(p+1)}{q(p-1)-(p+1)}}.
\end{align*}
 Hence,
\be \label{eq:diff-ineq-3}
\begin{split}
\frac{\ud }{\ud t} M_q(t)  +\beta M_{\frac{q(p+1)}{2}}^{\frac{2}{p+1}} &\le C(M_q+  M_{q}^{\frac{(p-1)(q+1)-(p+1)}{q(p-1)-(p+1)}})\\
&=CM_q^{\frac{2}{p+1}}(M_q^{\frac{p-1}{p+1}}+  M_{q}^{\frac{p-1}{q(p-1)-(p+1)}+\frac{p-1}{p+1}}).
\end{split}
\ee

Lastly, we want to use the differential inequality of $M_q$ to prove its estimate. For $q\le \min\{2, \frac{p+1}{p-1}\}$,  by \eqref{eq:2-bound} and \eqref{eq:1-bound} we know $\int_{0}^T M_q\,\ud t\le C$. We can find $t_0<T/100$ such that $M_q(t_0)\le C$.  By \eqref{eq:diff-ineq-2}, we have $\frac{\ud }{\ud t} M_q\le C M_q$. Using  Lemma \ref{lem:ODE}, we have $M_q \le C$ for $t_0\le t\le T$. Then from integrating \eqref{eq:diff-ineq-2} it follows that $\int_{t_0}^T M_{q+1}\,\ud t \le C$ for $q<\frac{p+1}{p-1}$ and $q\le 2$. If $\frac{p+1}{p-1}>2$, we can keep using \eqref{eq:diff-ineq-2} and conclude that $M_{q} \le C$ in $[t_1, T]$ for some $t_1\le T/50$ and for all $q\le \frac{p+1}{p-1}$, and that $\int_{t_1}^T M_{q+1}(t)\,\ud t\le C$ for any $q<\frac{p+1}{p-1}$.  Then integrating \eqref{eq:diff-ineq-2} with $q=\frac{p+1}{p-1}$, we have
\begin{equation}\label{eq:Mqboard}
\int_{t_1}^T M_{\frac{(p+1)^2}{2(p-1)}}^{\frac{2}{p+1}} \,\ud t \le C.
\end{equation}
Note that
\begin{align*}
\frac{p-1}{q(p-1)-(p+1)}+\frac{p-1}{p+1} =1\quad\mbox{if}\quad q=q_0:=\frac{(p+1)^2}{2(p-1)}.
\end{align*}
Hence from \eqref{eq:diff-ineq-3} we have that
\[
\frac{\ud }{\ud t} M_{q_0}(t) \le CM_{q_0}^{\frac{2}{p+1}}(M_{q_0}^{\frac{p-1}{p+1}}+  M_{q_0}).
\]
By \eqref{eq:Mqboard} and Lemma \ref{lem:ODE}, we have $M_{q_0} \le C$ in $[t_2, T]$ for some $t_2\le T/25$. Let $q_k=q_0\left(\frac{p+1}{2}\right)^k$. Integrating \eqref{eq:diff-ineq-3} we have $\int_{t_2}^T M_{q_1}\le C$. Notice that since $q_k>q_0$,
\[
0<\frac{p-1}{q(p-1)-(p+1)}+\frac{p-1}{p+1} <1.
\]
Therefore, we can recursively use \eqref{eq:diff-ineq-3} and Lemma \ref{lem:ODE} to show that $M_{q_k} \le C$ in $[T/4, T]$ for all $k\ge 1$. We then complete the proof.
\end{proof}

By Proposition \ref{prop:integral-bound},  using H\"older's inequality we find that, for any $1\le q<\infty$, 
 \be \label{eq:almost}
 \int_{\om} \Big|\frac{\pa_t u}{u}\Big|^q\,\ud x \le C \quad \mbox{for }T/2\le t\le T,
 \ee
where  $C>0$ depending only on $n, \om, T, p$, $b, c_0$, $ q$ and $\|u(\cdot,0)\|_{H_0^1(\om)}$.
 
\subsection{Derivative bounds} 

By the integral bound \eqref{eq:almost}, we will be able to prove the derivative bound.

\begin{thm}\label{thm:uttbound} Let $u \in C^{3,2}(\overline \om \times [0,T])$  be a positive solution of  \eqref{eq:FDE-1} satisfying \eqref{eq:pre-1}. Then 
 $\pa_t^\ell u\in C^{0}(\overline \om\times [T/2, T])$ for every $\ell=0,1,2,\cdots$, and there holds 
\[
\|d^{-1} \pa_t^\ell u\|_{L^\infty(\om\times [T/2, T])} \le C,
\] 
where $C>0$ depends only on  $n,\om, T, p, b, c_0,\ell$ and $\|u(\cdot,0)\|_{H_0^1(\om)}$.   

Moreover, if $p$ is an integer, then $u\in C^\infty(\overline\om\times [T/2, T]))$, and 
\[
\|D_x^k\partial_t^\ell u\|_{L^\infty(\om\times [T/2, T]) }\le C,
\]
where $C>0$ depends only on  $n,\om, T, p, b, c_0,\ell,k$ and $\|u(\cdot,0)\|_{H_0^1(\om)}$.

If $p$ is not an integer, then $\pa_t^\ell u(\cdot,t)\in C^{2+p}(\overline \om)$ for every $\ell=0,1,2,\cdots$, and all $t\in[T/2,T]$, and there holds
\[
\sup_{t\in[T/2,T]}\|\partial_t^\ell u(\cdot,t)\|_{C^{2+p}(\overline\om) }\le C,
\]
where $C>0$ depends only on  $n,\om, T, p, b, c_0,\ell$ and $\|u(\cdot,0)\|_{H_0^1(\om)}$.
\end{thm}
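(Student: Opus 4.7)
The strategy is a bootstrap starting from Proposition~\ref{prop:integral-bound}: setting $\mathcal{R}=u^{-p}\mathcal{L}u=1-p\partial_tu/u$ and $v=\partial_tu$, the consequence \eqref{eq:almost} says $\mathcal{R}\in L^q(\Omega)$ for every $q<\infty$, uniformly on $[T/2,T]$. I plan to upgrade this successively to $\|v\|_{L^\infty}\le C$, then to the sharp $|v|\le Cd$, and finally to higher time derivatives and the stated spatial regularity.

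First, differentiating \eqref{eq:FDE-1} in $t$ gives the linear singular parabolic equation
\[
pu^{p-1}\partial_tv-\Delta v-bv-u^{p-1}v\bigl[1+(p-1)\mathcal{R}\bigr]=0 \quad\mbox{in }\Omega\times(T/2,T],
\]
with $v=0$ on $\partial\Omega$. Letting $\omega$ be smooth and comparable to $d$, this fits \eqref{eq:general} with $a=p(u/\omega)^{p-1}$ bounded above and below by \eqref{eq:pre-1}, $b_1=-b$, and $b_2=-(1+(p-1)\mathcal{R})(u/\omega)^{p-1}\in L^q$ for every $q$; likewise $\partial_ta\in L^q$. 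Flattening $\partial\Omega$ and applying Proposition~\ref{prop:local-bound} in boundary half-cylinders, together with interior De Giorgi iteration, yields $\|v\|_{L^\infty(\Omega\times[3T/4,T])}\le C$.

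The next and hardest step is the upgrade to $|v|\le Cd$. On each time slice $v$ solves the elliptic problem $-\Delta v=bv+u^{p-1}v[1+(p-1)\mathcal{R}]-pu^{p-1}\partial_tv$; the first three terms already lie in $L^q$ for every $q<\infty$ since $v\in L^\infty$ and $\mathcal{R}\in L^q$. For the last term, I differentiate the equation for $v$ once more in $t$ to obtain a linear singular parabolic equation for $w=\partial_tv$ whose coefficients are controlled by $v$, $\mathcal{R}$, and (when $p<2$) the ratio $v/u$, the latter being estimated inductively by first producing $v\in C^{\alpha,\alpha/2}$ up to $\partial\Omega$ via parabolic De Giorgi--Nash--Moser with $L^q$ coefficients. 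A further application of Proposition~\ref{prop:local-bound} then gives $\partial_tv\in L^\infty$. Calder\'on--Zygmund $W^{2,q}$ estimates and Sobolev embedding applied to the elliptic equation above give $v(\cdot,t)\in C^{1,\alpha}(\overline\Omega)$ uniformly in $t$, and because $v=0$ on $\partial\Omega$ this forces $|v(x,t)|\le Cd(x)$.

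Higher time derivatives are handled by induction on $\ell$: assuming $|\partial_t^ju|\le C_jd$ for $j<\ell$, the function $v_\ell=\partial_t^\ell u$ satisfies a linear singular parabolic equation of the same structure as the $\ell=1$ case, with coefficients polynomial in $\mathcal{R}$ (hence in $L^q$) and source polynomial in the ratios $v_1/d,\ldots,v_{\ell-1}/d$ (bounded by hypothesis), and repeating the two-step argument yields $|v_\ell|\le Cd$. For the spatial regularity, now that $\partial_tu/d\in L^\infty$, the right-hand side of the elliptic equation $-\mathcal{L}u=u^p-pu^{p-1}\partial_tu$ on each time slice belongs to $C^p(\overline\Omega)$ when $p$ is not an integer (the sole obstruction being $u^p$ with $u\sim d$) and to $C^\infty(\overline\Omega)$ when $p$ is an integer; standard elliptic Schauder, applied likewise on each slice to the elliptic equations satisfied by $\partial_t^\ell u$, produces the claimed $C^{2+p}$ or $C^\infty$ regularity. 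The principal difficulty is the second step: neither a Moser iteration on the evolution equation for $\mathcal{R}$ nor on the one for $v/u$ closes for the relevant one-sided bound, since in each case the quadratic term has the wrong sign; the workaround is to exploit instead the linear elliptic structure of the $v$-equation on each time slice, where the arbitrarily high integrability of $\mathcal{R}$ becomes decisive.
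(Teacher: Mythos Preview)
Your proposal is correct and follows essentially the same bootstrap as the paper: use \eqref{eq:almost} and Proposition~\ref{prop:local-bound} to get $v=\partial_tu\in L^\infty$, differentiate once more and apply Proposition~\ref{prop:local-bound} to the equation for $w=\partial_{tt}u$ to get $w\in L^\infty$, then use elliptic estimates on each time slice to obtain $v(\cdot,t)\in C^{1,\alpha}(\overline\Omega)$ and hence $|v|\le Cd$, and iterate in $\ell$.

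Two remarks. First, the detour you flag ``when $p<2$'' is unnecessary: the identity $v/u=(1-\mathcal{R})/p$ holds for all $p>1$, so every occurrence of $v/u$ in the coefficients of the $w$-equation is already in $L^q$ for all $q$ by Proposition~\ref{prop:integral-bound}, with no appeal to H\"older continuity of $v$. Second, the paper organizes the bootstrap slightly differently: after obtaining $w\in L^\infty$ it invokes the bespoke parabolic Schauder theory in the space $\mathscr{C}^\alpha$ (Theorem~\ref{thm:global-schauder}) to get $\partial_tu,\,D_xu,\,D_x^2u$ in $\mathscr{C}^\alpha$ simultaneously, which via Remark~\ref{rem:dividecheck} yields $v/u\in\mathscr{C}^\alpha$ with uniform bounds and feeds directly into the Schauder estimate for the next time derivative. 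Your route---elliptic $W^{2,q}$ on time slices plus Sobolev embedding---achieves the same $|v_\ell|\le Cd$ conclusion with more classical tools, at the cost of treating space and time regularity separately; the paper's $\mathscr{C}^\alpha$ machinery packages both at once and tracks the precise H\"older exponents needed for the final $C^{2+p}$ statement.
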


To prove the above estimate, we first prove the estimate of $u_t$, and then  bootstrap to other high order derivatives.
\begin{prop} \label{prop:L2}
Let $u \in C^{3,2}(\overline \om \times [0,T])$  be a positive solution of  \eqref{eq:FDE-1} satisfying \eqref{eq:pre-1}. For $v=\partial_t u$, we have
\[
\|v\|_{L^\infty(\om \times [T/2, T])} \le C
\]
and 
\[
\int_{T/2}^T\int_{\om} (|\nabla v|^2 + |\pa_t v |^2 u^{p-1})\,\ud x\ud t \le C,
\]
where $C>0$ depending only on $n, \om, T, p$, $b, c_0$, $ q$ and $\|u(\cdot,0)\|_{H_0^1(\om)}$. 
\end{prop}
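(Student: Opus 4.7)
The plan is to exploit the arbitrarily high integrability of the curvature-like quantity $\mathcal R=1-pu_t/u$ established in Proposition~\ref{prop:integral-bound} in order to view the equation for $v=\pa_t u$,
\[
p u^{p-1}\pa_t v - \Delta v + C(x,t)\,v = 0 \quad \text{in } \om\times(0,T], \qquad v=0 \ \text{on } \pa\om\times(0,T],
\]
where $C(x,t):=-b-pu^{p-1}+p(p-1)u^{p-2}v$, as a \emph{linear} parabolic equation in $v$ whose zeroth-order coefficient lies in $L^q$ for every finite $q$. Once this is in hand, the singular parabolic theory of Section~\ref{sec:linear} will yield a two-sided $L^\infty$ bound for $v$, and the energy estimates will follow from standard integration by parts.

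To verify that $C\in L^q(\om\times[T/4,T])$ for every finite $q$, observe that $|C|\le |b|+p u^{p-1}+p(p-1)u^{p-1}|v/u|$, so by \eqref{eq:almost} (which extends to $t\in[T/4,T]$ since Proposition~\ref{prop:integral-bound} does) together with \eqref{eq:pre-1}, $\int_\om |C|^q\,\ud x\le C$ uniformly in $t\in[T/4,T]$. Moreover, after locally flattening $\pa\om$, the $\pa_t$-coefficient $a:=p(u/x_n)^{p-1}$ is bounded above and below by \eqref{eq:pre-1}, and $\pa_t a = (p-1)(v/u)\,a$ lies in $L^q$ by \eqref{eq:almost}; thus the equation fits the framework of \eqref{eq:degiorgilinear}. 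Applying Proposition~\ref{prop:local-bound} in each boundary chart, together with standard interior $L^\infty$ bounds for uniformly parabolic equations away from $\pa\om$, and a finite covering argument, produces $\|v\|_{L^\infty(\om\times[T/2,T])}\le C\|v\|_{L^\gamma(\om\times[T/4,T])}$ for some small $\gamma>0$. Finally, $\|v\|_{L^\gamma}\le C$ follows from \eqref{eq:almost} because $|v|=u\,|v/u|\le C\,|v/u|$, which proves the first estimate.

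With $\|v\|_{L^\infty}\le C$ in hand, the gradient bound $\int_{T/2}^T\!\int_\om |\nabla v|^2\,\ud x\,\ud t\le C$ is obtained by testing the equation of $v$ with $v\chi(t)^2$ for a smooth time cutoff $\chi$ equal to $1$ on $[T/2,T]$ and $0$ near $t=T/4$, and absorbing the semilinear term $p(p-1)u^{p-2}v^2$ via $|v|\le C$ and the boundedness of $u$. For the weighted bound on $\pa_t v$, I would multiply the equation of $v$ by $\pa_t v$ and integrate over $\om$ (using that $v$ and $\pa_t v$ vanish on $\pa\om$) to produce the pointwise-in-time identity
\[
\int_\om p u^{p-1}(\pa_t v)^2\,\ud x = \frac{\ud}{\ud t}\mathcal E(t) + \mathcal F(t),
\]
where $\mathcal E(t) = -\tfrac12\int|\nabla v|^2 + \tfrac{b}{2}\int v^2 + \tfrac{p}{2}\int u^{p-1}v^2 - \tfrac{p(p-1)}{3}\int u^{p-2}v^3$, and $\mathcal F(t)$ is a linear combination of $\int u^{p-2}v^3$ and $\int u^{p-3}v^4$. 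All spatial integrals appearing in $\mathcal E$ and $\mathcal F$ other than $-\tfrac12\int|\nabla v|^2$ are uniformly bounded in $t$: indeed $|v|\le C$ and $u$ is bounded, and writing $\int u^{p-2}v^3=\int u^{p-1}(v/u)v^2$ and $\int u^{p-3}v^4=\int u^{p-1}(v/u)^2 v^2$ shows these are controlled via \eqref{eq:almost}. Integrating the identity from a well-chosen $s\in[T/4,T/2]$ for which $\int|\nabla v(\cdot,s)|^2\le C$ (such $s$ exists by a mean-value argument applied to the gradient bound just proved) up to $T$, and discarding the nonpositive boundary term $-\tfrac12\int|\nabla v(\cdot,T)|^2$, yields $\int_{T/2}^T\!\int_\om u^{p-1}(\pa_t v)^2\,\ud x\,\ud t\le C$.

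The main obstacle is the first step: verifying that the zeroth-order coefficient $C$ of the $v$-equation has an \emph{a priori} $L^q$ bound for sufficiently large $q$. This is precisely where Proposition~\ref{prop:integral-bound} and the curvature-like evolution identity \eqref{eq:Q-evolve} play their essential role; without the arbitrarily high integrability of $\mathcal R$, a lower bound for $v$ would be out of reach through the De Giorgi/Moser scheme, since the semilinear term $p(p-1)u^{p-2}v^2$ in the equation of $v$ has an unfavorable sign for bounding $v$ from below directly.
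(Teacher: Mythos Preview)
Your approach is correct and essentially coincides with the paper's: bound $\|v\|_{L^\infty}$ via Proposition~\ref{prop:local-bound} after checking the coefficients of the $v$-equation lie in $L^q$ by Proposition~\ref{prop:integral-bound}, then run energy estimates (test with $v$ for $\nabla v$, test with $\pa_t v$ for $u^{p-1}|\pa_t v|^2$). Two small points of polish: in the gradient step, ``boundedness of $u$'' does not control $u^{p-2}$ when $1<p<2$, so use the rewriting $u^{p-2}v^3=u^{p+1}(v/u)^3\le c_0^{p+1}d^{p+1}|v/u|^3$ together with \eqref{eq:almost} (exactly as you do in the $\pa_t v$ step); and for the mean-value selection of $s\in[T/4,T/2]$ you need the gradient bound on $[T/4,T/2]$ rather than $[T/2,T]$, which follows by shifting the cutoff---the paper avoids this by inserting a time cutoff $\eta^2$ directly into the $\pa_t v$-test instead of choosing a good starting time.
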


\begin{proof}  Recall that 
\be \label{eq:derivative}
pu^{p-1} \pa_t v +p(p-1) u^{p-2} v^2= -\mathcal{L} v+pu^{p-1} v \quad \mbox{in }\om \times (0,T]
\ee
and $v=0$ on $\pa \om \times (0,T]$.
 Using \eqref{eq:almost}, Proposition \ref{prop:local-bound} and the  De Giorgi-Nash-Moser estimates for uniformly parabolic equation, we have  
\[
\|v\|_{L^\infty(\om \times [T/2, T]) }\le C \left( \int_{T/4}^T \int_\om v^2 u^{p-1}\,\ud x\ud t\right)^{1/2}\le C,
\]
where $C>0$ depends only on $n, \om, T, p,b, c_0$. 

Using $v$ as a test function and using Proposition \ref{prop:integral-bound}, we have
\[
\int_{T/4}^{T} \int_\om |\nabla v |^2 -bv^2 \le C(M_2+M_3) \le C.
\]
Let $\eta(t)$ be a cutoff function satisfying $\eta=0$ for $t \le T/4$ and $\eta=1$ for $t\ge T/2$.
Multiplying  \eqref{eq:derivative} by $\pa _t v \eta^2$, integrating by parts and using  Proposition \ref{prop:integral-bound}, we have
\begin{align*}
\int_{T/2}^T \int_{\om} u^{p-1} (\pa_t v)^2 &\le C \left[M_3(T)+\int_{T/4}^TM_4\,\ud t+\int_{T/4}^{T} \int_\om (|\nabla v |^2-bv^2)   \ud x\ud t  \right]\\
& \le C.
\end{align*}
\end{proof}

\begin{proof}[Proof of Theorem \ref{thm:uttbound}]   
We will first prove 
\be \label{eq:tt-estimate}
|\pa_{tt} u| \le C \quad \mbox{in } \om\times [T/2, T] 
\ee
for some $C>0 $ depends only on $n,\om, T, p, b, c_0$ and $\int_{\om}|\nabla u(\cdot,0)|^2\,\ud x$.   

Since $u>0$ in $\Omega\times(0,T]$, the equation of $u$ is parabolic in $\Omega\times(0,T]$, and thus,  $u\in C^\infty(\Omega\times(0,T])$. Let $w=\pa_{tt} u=\pa_t v$. Since $u\in C^{3,2}(\overline \om \times [0,T])$, we have that $D_xv$ is $C^1$ in $x$ and $v$ is $C^1$ in $t$. By Lemma \ref{lem:lemma3.1inLSU} and Remark \ref{rem:dividecheck},  $\frac{v}{u} \in \mathscr{C}^{\al} (\overline\Omega \times [0,T])$ for some $\al>0$. Let $\eta(t)$ be a cutoff function satisfying $\eta=0$ for $t \le T/10$ and $\eta=1$ for $t\ge T/8$, and let $\tilde v=\eta v$. Then 
\[
pu^{p-1} \pa_t  \tilde v+p(p-1) u^{p-1}\frac{v}{u} \tilde v= -\mathcal{L} \tilde v+pu^{p-1} \tilde v+pu^{p-1}v\pa_t \eta \quad \mbox{in }\om \times (0,T],
\]
and $\tilde v=0$ on $\pa_{pa}(\om \times (0,T])$. By the Schauder estimates in Theorem \ref{thm:global-schauder}, we know that $\pa_tv  \in \mathscr{C}^{\al} (\overline\Omega \times [T/8,T])$ (without uniform estimates yet). Applying the difference quotient technique in the time variable to the equation \eqref{eq:derivative} for $v$, one can further obtain for $w=\pa_t v$ that $\pa_t w\in C(\overline\Omega\times[T/4,T])$ and $D^2_x w\in C(\overline\Omega\times[T/4,T])$.

Denote $p_i= p(p-1)\cdots (p-i)$, $i=1,2,3,\dots$. Then by differentiating \eqref{eq:derivative} in $t$, we have
\be  \label{eq:2nd-derivative}
p u^{p-1} \pa_t w +3p_1 u^{p-2} v w + \mathcal{L}w-pu^{p-1} w=-p_2 u^{p-3} v^3 +p_1 u^{p-2} v^2,
\ee 
which is now a linear equation. By \eqref{eq:almost}, Proposition \ref{prop:local-bound} and the  De Giorgi-Nash-Moser estimates for uniformly parabolic equation,   we have 
\begin{align*}
&\|w\|_{L^\infty(\Omega\times(T/2,T))} \\
&\le C \Big (\int_{\Omega\times[T/4,T]} u^{p-1} w^2\,\ud x\ud t\Big)^{1/2} + C \left\| \frac{|v|^3}{u^3}+\frac{v^2}{u^2}\right\|_{L^{s}(\Omega\times[T/4,T])}  
\end{align*}  
for some $s>\max\{\frac{\chi}{\chi-1},\frac{n+p+1}{2}\}$, where $\chi>1$ be the constant in Lemma \ref{lem:weightedsobolev} and $C>0 $ depends only on $n,\om, T, p, b, c_0$ and $\|u^{p-2} v \|_{L^{s}(\Omega\times[0,T])}$. Note that  $ v/u=\partial_t u/u=\frac{1}{p}(1-\mathcal{R}) $. By  Proposition \ref{prop:integral-bound} and Proposition \ref{prop:L2},  we conclude \eqref{eq:tt-estimate}.  

Using Proposition \ref{prop:L2} and applying elliptic estimates on each time slice to \eqref{eq:FDE-1}, we have for any $\beta\in (0,1)$
\[
\sup_{t\in[T/2,T]} \|D_x u(\cdot,t)\|_{C^\beta(\overline\Omega)}\le C.
\]
Using \eqref{eq:almost}, \eqref{eq:tt-estimate} and applying elliptic estimates on each time slice to \eqref{eq:derivative}, we have 
\[
\sup_{t\in[T/2,T]} \|D_x v(\cdot,t)\|_{C^\beta(\overline\Omega)}\le C.
\]
Using Lemma 3.1 on page 78 in \cite{LSU}, we have
\[
\|D_x u\|_{C_{x,t}^{\beta,\beta'}(\overline\Omega\times[T/2,T])}+\|D_x v\|_{C_{x,t}^{\beta,\beta'}(\overline\Omega\times[T/2,T])}\le C.
\]
Therefore, we can choose some $0<\alpha<\min\{1,p-1,\frac{2}{p-1}\}$ that
\be\label{eq:quotientholder1}
\|v/u\|_{\mathscr{C}^{\al} (\overline\Omega \times [3T/4,T])}\le C.
\ee
Then we can apply the Schauder estimates  in Theorem \ref{thm:global-schauder} to \eqref{eq:FDE-1}, \eqref{eq:derivative} and \eqref{eq:2nd-derivative}, and obtain  that
\begin{align*}
\|\pa_t u\|_{\mathscr{C}^{\al} (\overline\Omega \times [3T/4,T])}+\|D_x u\|_{\mathscr{C}^{\al} (\overline\Omega \times [3T/4,T])} +\sup_{t\in(3T/4,T)}\|D_x^2 u\|_{C^{\alpha} (\Omega)} &\le C,\\
\|\pa_t v\|_{\mathscr{C}^{\al} (\overline\Omega \times [3T/4,T])}+\|D_x v\|_{\mathscr{C}^{\al} (\overline\Omega \times [3T/4,T])} +\sup_{t\in(3T/4,T)}\|D_x^2 v\|_{C^{\alpha} (\Omega)} &\le C,\\
\|\pa_t w\|_{\mathscr{C}^{\al} (\overline\Omega \times [3T/4,T])}+\|D_x w\|_{\mathscr{C}^{\al} (\overline\Omega \times [3T/4,T])} +\sup_{t\in(3T/4,T)}\|D_x^2 w\|_{C^{\alpha} (\Omega)} &\le C.
\end{align*}
Hence
\be\label{eq:quotientholder2}
\|w/u\|_{\mathscr{C}^{\al} (\overline\Omega \times [3T/4,T])}\le C.
\ee
Applying elliptic estimates on each time slice to \eqref{eq:FDE-1}, we have
\[
\sup_{t\in(3T/4,T)}\|D_x^3 u(\cdot,t)\|_{C^{\alpha} (\Omega)} \le C.
\]
Moreover, for all $t\in [T/3,T]$, we have
\be\label{eq:estcontinue}
\begin{split}
&\|u^{1-p}\mathcal{L}u \|_{C^{2+\alpha}(\overline \om)}+ \|(u^{1-p}\mathcal{L} )^2 u\|_{C^\alpha(\overline \om)}\\
&\le C (\|u \|_{C^{2+\alpha}(\overline \om)}+\|u_t \|_{C^{2+\alpha}(\overline \om)}+\|u^{-1}u_t^2\|_{C^\alpha(\overline \om)}+\|u_{tt}\|_{C^\alpha(\overline \om)})\\
&\le C.
\end{split}
\ee

Now we can keep differentiating \eqref{eq:2nd-derivative},  using \eqref{eq:quotientholder1} and \eqref{eq:quotientholder2}, and applying the Schauder estimates obtained in Theorem \ref{thm:global-schauder} to show that for all $\ell=2,3,4,\cdots$,
\be\label{eq:quotientholderk}
\|\pa_t^{\ell-1} u/u\|_{\mathscr{C}^{\al} (\overline\Omega \times [7T/8,T])}\le C,
\ee
and
\be\label{eq:schauderk}
\begin{split}
&\|\pa_t^{\ell+1} u\|_{\mathscr{C}^{\al} (\overline\Omega \times [7T/8,T])}+\|D_x \pa_t^{\ell}u\|_{\mathscr{C}^{\al} (\overline\Omega \times [7T/8,T])} +\sup_{t\in(7T/8,T)}\|D_x^2 \pa_t^{\ell}u\|_{C^{\alpha} (\Omega)} \\
&\le C.
\end{split}
\ee
Applying elliptic estimates on each time slice to the equation of $\partial_t^{k-1}u$ (and differentiating it in $x$ once since $p>1$), one has 
\[
\sup_{t\in(7T/8,T)}\|D_x^3 \partial_t^{\ell-1}u(\cdot,t)\|_{C^{\alpha} (\Omega)} \le C.
\]
If $p$ is an integer, then for every $k\ge 0$, one can keep applying elliptic estimates to the equations of $u, \partial_t u,\cdots,\partial_t^{k+\ell}u$ on each time slice and using standard bootstrap arguments for elliptic equations to show that $\partial_t^{\ell}u(\cdot,t)\in C^{k+3+\alpha}(\overline\om)$ for all $t\in[T/2,T]$, and 
\[
\|D_x^{k+3}\partial_t^\ell u\|_{L^\infty(\overline\om\times [T/2, T])) }\le C.
\]
If $p$ is not an integer, then we can only different these equations in $x$ only $[p]$ (the integer part of $p$) times, and thus, obtain
\[
\sup_{t\in [T/2, T]}\|\partial_t^\ell u(\cdot,t)\|_{C^{2+p}(\overline\om) }\le C.
\]
\end{proof}

\section{Optimal boundary regularity} 
\label{sec:last}

Let $\lda_1>0$ be the first Dirichlet eigenvalue of $-\Delta$ in $\Omega$, and $b<\lda_1$ is a constant. Let $u$ be a bounded weak solution of 
\be \label{eq:main-new} 
\begin{cases}
\pa_t u^p =\Delta u +bu& \quad \mbox{in }\om \times (0,\infty),\\
u=0& \quad \mbox{on }\pa \om \times (0,\infty),
\end{cases}
\ee 
i.e.,  $u\in L^2_{loc}([0,\infty); H^1_0(\Omega))\cap L^\infty(\om \times (0,\infty))$, $u\ge 0$, $u^p\in C([0,\infty);L^{1}(\Omega))$,  and satisfies 
\[
\int_{\Omega\times(0,\infty)} \Big(u^p\pa_t\varphi -\nabla u\cdot\nabla\varphi+bu\varphi\Big)\,\ud x\ud t=0\quad\forall\,\varphi\in C_c^1(\Omega\times(0,\infty)).
\]
Note that $u(\cdot,0)$ is well defined, and is considered as the initial data if needed. Let $T^*$ be the extinction time of $u$. Consider the rescaled solution
\begin{align}\label{eq:changeutoU}
U(x,t)= \Big (\frac{p}{(p-1)(T^*-\tau)}\Big)^{\frac{1}{p-1}}u(x,\tau), \quad t =\frac{(p-1)}{p}  \ln \Big(\frac{T^*}{T^*-\tau}\Big).
\end{align}
Then 
\be \label{eq:main-scl-1} 
\begin{cases}
\pa_t U^p =\Delta U +bU+ U^{p}& \quad \mbox{in }\om \times (0,\infty),\\
U=0& \quad \mbox{on }\pa \om \times (0,\infty). 
\end{cases}
\ee   
Let $\delta>0$ and $T>2\delta$ be constants. 

If $1<p<\infty$ when $n=1,2$; or $1<p<\frac{n+2}{n-2}$ when $n\ge 3$, by Theorem 1.1 of DiBenedetto-Kwong-Vespri \cite{DKV} (and a similar proof for $b\neq 0$) we have 
\be \label{eq:pre-2}
\frac{1}{c_0}d(x) \le U(x,t)\le c_0 d(x), \quad t\in (\delta, \infty),
\ee 
for some $c_0\ge 1$ depending  on $\om$, $n, b, p, \delta$, $\|U(\cdot,0)\|_{H^{1}_0(\om)}$ and $\|U(\cdot,0)\|_{L^{p+1}(\om)}$ only. 

If $p=\frac{n+2}{n-2}$ and $n\ge 3$, from Lemma 5.2 and Proposition 6.2 in \cite{DKV}, we have 
\be \label{eq:pre-3}
\frac{1}{c_0}d(x) \le U(x,t)\le c_0 d(x), \quad t\in (\delta, T),
\ee 
with $c_0$ depending  on $\om$, $n,b,p,\delta$, and also $\|U\|_{L^{\infty}(\om\times(0,T))}$. Note that when $b=0$, $\|U\|_{L^{\infty}(\om\times(0,T))}$ may blow up as $T\to\infty$, see, e.g., Galaktionov-King \cite{GKing} and Sire-Wei-Zheng \cite{SWZ}. In \cite{JX20}, we proved that if $n\ge 4$ and $0<b<\lambda_1$, then $\|U\|_{L^{\infty}(\om\times(0,T))}$ is uniformly bounded independent of $T$.

It is known that for all $0<\delta<T<\infty$, $U\in C^\infty(\Omega\times[\delta,T])\cap C^\alpha(\overline\Omega\times[\delta,T])$ for some $\alpha\in(0,1)$.  We will prove its regularity up to the boundary $\pa\Omega$.
  
\begin{thm} \label{thm:last}   Let $U\in L^2_{loc}([0,\infty); H^1_0(\Omega))\cap L_{loc}^\infty(\overline\Omega\times[0,\infty))$, $U\ge 0$, $U^p\in C([0,\infty);L^{1}(\Omega))$  be a weak solution of \eqref{eq:main-scl-1}, and satisfy \eqref{eq:pre-2} or \eqref{eq:pre-3} (depending on the choices of $p$ and $n$ as above). 

 If $p$ is an integer, then $U\in C^\infty(\overline \om\times (0,\infty))$. Moreover, for any $2\delta<t<T$ and $x\in\overline\Omega$, 
\[
d(x)^{-1}|\pa_t^l  U(x,t)| +\|D_x^k\pa_t^l   U(\cdot,t)\|_{L^{\infty}(\overline\Omega)}  \le C, 
\]   
where $k,l\ge 0$ are integers,  and $C>0$ depends only on $n,\om, b, c_0, p,\delta, k$ and $l$.  
 
If $p$ is not an integer, then $ \pa_t^l U(\cdot, t) \in C^{2
+p}(\overline \om)$ for all $l\ge 0$ and all $t\in(0,\infty)$.  Moreover, for any $2\delta<t<T$ and $x\in\overline\Omega$,
\[
d(x)^{-1}|\pa_t^l  U(x,t)| +\|\pa_t^l   U(\cdot,t)\|_{C^{2+p}(\overline\Omega)}  \le C, 
\]   
where  $C>0$ depends only on $n,\om, c_0, b,p,\delta$ and $l$.  


\end{thm}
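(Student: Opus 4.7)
The plan is to combine the short-time existence theorem from Section \ref{sec:short-time} with the a priori estimates from Section \ref{sec:time-derivative} through an approximation argument, closing the loop via uniqueness of bounded weak solutions. To begin, I would pick a time $t_1 \in (\delta/2, \delta)$ at which $U$ already satisfies the bound \eqref{eq:pre-2} (or \eqref{eq:pre-3}) and $U(\cdot, t_1) \in H^1_0(\Omega) \cap L^\infty(\Omega)$. Using the density discussion following \eqref{eq:good-initial}, I would construct a sequence of initial data $U_0^{(j)} \in \mathcal{S}$ approximating $U(\cdot, t_1)$ in $H^1_0(\Omega) \cap L^{p+1}(\Omega)$ and arranged so that $\tfrac{1}{2c_0}d(x) \le U_0^{(j)}(x) \le 2c_0 d(x)$ uniformly in $j$; this can be done by adding a small multiple of a fixed positive element of $\mathcal{S}$ with the correct Hopf-type boundary decay.

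Next, I would adapt Theorem \ref{thm:short-time} to the equation \eqref{eq:main-scl-1}, which differs from the equation in that theorem only by the smooth lower-order term $+U^p$. This perturbs the functional $F$ used in the implicit function theorem argument by $-v^p$, a smooth term vanishing on $\partial\Omega$ of the order $d^p = d^{p-1}\cdot d$, which is compatible with the target space $\mathcal Y$; since $U_0^{(j)}$ is uniformly bounded from below by $d/(2c_0)$, the linearisation is still invertible in the space $\mathcal{X}$ of Theorem \ref{thm:short-time}. This yields classical solutions $U_j \in C^{3,2}(\overline\Omega\times[t_1,t_1+\tau_j])$ with $U_j(\cdot,t)\in\mathcal S$ for $t$ in this interval. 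By \cite{DKV} applied to $U_j$ (which is a classical, hence weak, solution), the bound \eqref{eq:pre-2} (or \eqref{eq:pre-3} in the critical case, using that $\|U_j\|_{L^\infty}$ is uniformly controlled) holds for $U_j$ on $[t_1,T]$ with a constant depending only on quantities uniformly bounded in $j$.

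Theorem \ref{thm:uttbound} then provides, for each $U_j$, uniform bounds on $d^{-1}|\partial_t^\ell U_j|$ together with $\|D_x^k\partial_t^\ell U_j\|_{L^\infty}$ when $p$ is an integer, or $\|\partial_t^\ell U_j(\cdot,t)\|_{C^{2+p}(\overline\Omega)}$ otherwise, depending only on $n,\Omega,p,b,c_0,\delta,k,\ell$ and the $H^1_0$-norm of the initial data. These estimates enable a continuation argument: whenever $U_j(\cdot,t^*)\in\mathcal S$ and satisfies a uniform lower bound $U_j(\cdot,t^*)/d \ge \mathrm{const}$, the local existence time is bounded below by controlled quantities, and the estimates of Theorem \ref{thm:uttbound} verify exactly that $U_j(\cdot, t^*)\in\mathcal S$ for all $t^*\in[t_1,T]$ (together with uniform lower bounds from \eqref{eq:pre-2}/\eqref{eq:pre-3}). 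Hence $U_j$ extends smoothly to $[t_1,T]$ with all the uniform estimates listed above.

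Finally, by uniqueness of bounded weak solutions of the fast diffusion equation with positive initial data \cite{DaK, LiP}, $U_j \to U$ in $C^0(\overline\Omega \times[t_1,T])$, and the uniform estimates pass to the limit along a subsequence, producing the conclusion on $[2\delta,T]$. The main obstacle I anticipate is the continuation step: one must show that the conditions defining $\mathcal{S}$, in particular $u^{1-p}\mathcal{L}u \in C^{2+\alpha}_0$ and $(u^{1-p}\mathcal{L})^2 u \in C^\alpha_0$, are preserved along the flow by the a priori estimates of Theorem \ref{thm:uttbound}, analogously to the bound \eqref{eq:estcontinue} that appeared in its proof. Once that bootstrap is arranged, the argument runs as outlined.
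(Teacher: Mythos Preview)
Your proposal is correct and follows the same architecture as the paper's proof: approximate the initial data by elements of $\mathcal{S}$, run short-time existence to obtain $C^{3,2}$ solutions, continue them on $[t_1,T]$ via the a priori estimates of Theorem~\ref{thm:uttbound} (with the DKV bounds controlling $u/d$ and \eqref{eq:estcontinue} verifying membership in $\mathcal{S}$ at each step), and pass to the limit. The paper organises things slightly differently in two places. First, rather than adapting Theorem~\ref{thm:short-time} to the rescaled equation with the extra $+U^p$ term, it undoes the rescaling via \eqref{eq:scaleforU} and works with $w$ solving $\partial_s w^p = \Delta w + bw$, to which Theorem~\ref{thm:short-time} applies verbatim. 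Second, it does not arrange the two-sided bound $d/C\le U_j\le Cd$ on the approximating initial data; instead it obtains it a posteriori for the approximate solutions by combining the comparison principle (upper bound), the Chen--DiBenedetto H\"older estimate plus Ascoli--Arzel\`a (uniform convergence to $w>0$, hence a uniform $L^{p+1}$ lower bound), and then the DKV global Harnack principle. Your route is a bit more direct but reopens the implicit-function-theorem argument; the paper's route is more bookkeeping but reuses Sections~\ref{sec:short-time}--\ref{sec:time-derivative} as black boxes. One small point to tighten: ``by uniqueness $U_j\to U$ in $C^0$'' is compressed --- you should first extract a convergent subsequence from the uniform estimates (or from the Chen--DiBenedetto H\"older bounds), and only then invoke uniqueness of bounded weak solutions to identify the limit with $U$.
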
 

\begin{proof} 
Using the standard energy estimates, we have that there exists $t_0\in[\delta/2,\delta]$ such that $U(\cdot,t_0)\in H^1_0(\Omega)$, and $\|U(\cdot,t_0)\|_{H^1_0(\Omega)}\le C$ for some $C>0$ depending only on $n,\om, c_0, b,p,\delta$. By a shifting in the time variable, we may assume that 
\[
t_0=0\quad\mbox{and denote }U_0=U(\cdot,0).
\]
 Let
\begin{equation}\label{eq:scaleforU}
w(x,s)=\left[\frac{p-(p-1)s}{p}\right]^{\frac{1}{p-1}}U(x,t(s)),\quad t(s)= \frac{p}{p-1}\log\Big(\frac{p}{p-(p-1)s}\Big).
\end{equation}
Then
\begin{equation}\label{eq:scaleforUinw}
\begin{cases}
\pa_s w^p=\Delta w+bw\quad\mbox{in }\Omega\times(0,\infty).\\
w=0\quad\mbox{on }\pa\Omega\times(0,\infty).
\end{cases}
\end{equation}

Step 1: If $U_0\in\mathcal{S}$ (defined in \eqref{eq:good-initial}), then the theorem follows from repeatedly using  Theorem \ref{thm:short-time}, the assumption \eqref{eq:pre-2} or \eqref{eq:pre-3}, Proposition \ref{prop:H10} and Theorem \ref{thm:uttbound}.

Step 2: Otherwise, since $U_0\in H^1_0(\Omega)\cap C_0(\overline\Omega)$,  one can take a sequence of functions $\{U_0^{(j)}\}$ in $\mathcal{S}$ to approximate $U_0$ in $H_0^1(\om)\cap C_0(\overline\Omega)$. Let $w^{j}$ be the weak solution to \eqref{eq:scaleforUinw} with initial data $w^{(j)}(\cdot, 0)=U_0^{(j)}$. It follows from the comparison principle that there exists $C>0$ independent of $j$ such that 
\[
w^{(j)}\le C \quad\mbox{in }\Omega\times(0,\infty).
\]
By the H\"older continuity estimate up to $\pa \om $ (see Chen-DiBenedetto \cite{CDi}), there exists $\alpha>0$ such that for every $\delta>0$, there exists $C>0$ independent of $j$ such that
\[
\|w^{(j)}\|_{C^\alpha(\overline\Omega\times[\delta,1])}\le C \quad\mbox{in }\overline\Omega\times[\delta,1].
\]
By the Ascoli-Arzela theorem and the uniqueness of weak solutions to \eqref{eq:scaleforUinw}, there exists a subsequence of $\{w^{(j)}\}$, which is still denoted as $\{w^{(j)}\}$, such that it converges uniformly to $w$ on $\overline\Omega\times[\delta,1]$. Since $w>0$ in  $\Omega\times[\delta,1]$, there exists $c>0$ independent of $j$ such that 
\[
\int_{\Omega} |w^{(j)}(x,t)|^{p+1}\,\ud x\ge c\quad\mbox{in }[\delta,1] \mbox{ for all large }j.
\]
Then, by Proposition 6.2 of DiBenedetto-Kwong-Vespri \cite{DKV}, there exists $C$ independent of $j$ such that 
\begin{equation}\label{eq:appequniformbound}
\frac{d(x)}{C}\le w^{(j)}\le Cd(x)\quad\mbox{in }\Omega\times[3\delta/2, 1] \mbox{ for all large }j.
\end{equation}
Meanwhile,  since $U^{(j)}_0\in\mathcal{S}$, then from Step 1, we know $w^{(j)}\in C^{3,2}(\overline\Omega\times[0,1])$ for all large $j$. Then, it follows from \eqref{eq:appequniformbound}, Proposition \ref{prop:H10} and Theorem \ref{thm:uttbound} that there exists a subsequence of $\{w^{(j)}\}$ converging to $w$ in $C^{3,2}(\Omega\times[2\delta,1])$. In particular, $w(\cdot,2\delta)\in\mathcal{S}$, and thus, the theorem follows from Step 1. 
\end{proof} 

\begin{proof}[Proof of Theorem \ref{thm:main}]
 It follows from Theorem \ref{thm:last} by taking $b=0$, together with \eqref{eq:dkvsubcritical} or \eqref{eq:dkvcritical}. 
\end{proof} 

\begin{proof}[Proof of Theorem \ref{thm:main2}]
 It follows from Theorem \ref{thm:last} by taking $b=0$, together with \eqref{eq:dkvsubcritical}  and Lemma 5.2 of DiBenedetto-Kwong-Vespri \cite{DKV} about the lower and upper bounds on $T^*$. 
\end{proof} 

\begin{proof}[Proof of Theorem \ref{thm:main3}]
 It follows from Theorem \ref{thm:last} by taking $b=0$, together with \eqref{eq:dkvcritical} and Lemma 5.2 of DiBenedetto-Kwong-Vespri \cite{DKV} about the lower and upper bounds on $T^*$. 
\end{proof} 

\begin{proof}[Proof of Corollary \ref{cor:main}]  
Let
\[
w=v-S.
\]
Then
\begin{align*}
pv^{p-1} w_t&=\Delta w + c(x,t) d(x)^{p-1}w \quad\mbox{in }\Omega\times(0,\infty),\\
w&=0 \quad\mbox{on }\pa\Omega\times(0,\infty),
\end{align*}
where
\[
c=\int_0^1 \left[\frac{\theta v+(1-\theta)S}{d(x)}\right]^{p-1}\,\ud\theta.
\]
Note that the function $c$ shares the same regularity and same estimates as $v$. 
Then it follows from Proposition \ref{prop:local-bound} and H\"older inequality that
\begin{align}\label{eq:finaldifferenceinfinity}
\|w\|_{L^\infty(\overline \Omega\times[T-2,T])}\le C \left(\int_{\Omega\times[T-3,T])}w^2 S^{p-1}\,\ud x\right)^{1/2}.
\end{align}
By Theorem \ref{thm:global-schauder} and elliptic Schauder estimates on each time slice, we have
\begin{align*}
\|D_x^k\partial_t^\ell w\|_{L^\infty(\om\times [T-1, T]) }\le C (k,\ell)\|w\|_{L^\infty(\overline \Omega\times[T-2,T])},
\end{align*}
where $1\le k<\infty$ if $p$ is an integer, and
 \begin{align*}
\sup_{t\in[T-1,T]}\|\partial_t^\ell w(\cdot,t)\|_{C^{2+p}(\overline\om) }\le C(\ell)\|w\|_{L^\infty(\overline \Omega\times[T-2,T])}
\end{align*}
 if $p$ is not an integer.
 In particular, combining with \eqref{eq:finaldifferenceinfinity}, we have
\begin{align}\label{eq:finaldifferencek}
\|D_x^k w(\cdot,T)\|_{L^\infty(\om)}\le C(k) \left(\int_{\Omega\times[T-3,T])}w^2 S^{p-1}\,\ud x\right)^{1/2},
\end{align}
where $1\le k<\infty$ if $p$ is an integer, and 
 \begin{align}\label{eq:finaldifferencek2}
\| w(\cdot,T)\|_{C^{2+p}(\overline\om) }\le  \left(\int_{\Omega\times[T-3,T])}w^2 S^{p-1}\,\ud x\right)^{1/2}
\end{align}
 if $p$ is not an integer.

Since $v(\cdot,t)=S(\cdot)=0$ on $\pa\Omega\times[1,\infty)$, we have 
\[
\left\|\frac{v(\cdot,t)-S}{S}\right\|_{C^{k}(\overline \om)}\le C \|v(\cdot,t)-S\|_{C^{k+1}(\overline \om)}=C \|w(\cdot,t)\|_{C^{k+1}(\overline \om)},
\] 
where $k$ is any positive integer if $p$ is an integer, and 
\[
\left\|\frac{v(\cdot,t)-S}{S}\right\|_{C^{1+p}(\overline \om)}\le C \|v(\cdot,t)-S\|_{C^{2+p}(\overline \om)}=C \|w(\cdot,t)\|_{C^{2+p}(\overline \om)}
\] 
if $p$ is not an integer. 
Then the corollary follows from combining \eqref{eq:finaldifferencek}, \eqref{eq:finaldifferencek2}, \eqref{eq:relativeerror} and \eqref{eq:sharpgammap}.
\end{proof}

\appendix

\section{Weak solutions to a singular linear equation}

Suppose that $a$ and $A$ satisfy  \eqref{eq:ellip}, $\pa_ta, b\in L^q(Q_1^+) $ for some $q>\frac{\chi}{\chi-1}$,  where $\chi>1$ is the constant in Lemma \ref{lem:weightedsobolev}, and $f_0,f_1,\cdots,f_n\in L^2(Q_1^+)$. Let $F=(f_1,\cdots,f_n)$.
\begin{defn}\label{defn:weaksolutionp}
We say $u$ is a weak solution of 
\be\label{eq:degiorgilinearappendix}
a(x,t) x_n^{p-1} \pa_t u -\mathrm{div}(A(x,t) \nabla u)+b(x,t) u=f_0(x,t) + \mathrm{div} F \quad \mbox{in }Q_1^+
\ee
with the partial boundary condition 
\be \label{eq:linear-eq-D-app}
u=0 \quad \mbox{on }\pa' B_1^+\times[-1,0],
\ee
if $u\in C ((-1,0]; L^2(B_1^+,x_n^{p-1}\ud x)) \cap  L^2((-1,0];H_{0,L}^1(B_1^+)) $ and satisfies
\begin{equation}\label{eq:definitionweaksolution}
\begin{split}
&\int_{B^+_1}a(x,s) x_n^{p-1}  u(x,s) \varphi(x,s)\,\ud x-\int_{-1}^s\int_{B_1^+} x_n^{p-1}(\varphi\partial_t a+a\partial_t \varphi)u\,\ud x\ud t\\
&+ \int_{-1}^s\int_{B_1^+} [(A(x,t) \nabla u)\cdot\nabla\varphi+b u\varphi]\,\ud x\ud t=\int_{-1}^s\int_{B_1^+} (f_0\varphi+F\cdot\nabla\varphi)\,\ud x\ud t
\end{split}
\end{equation}
a.e. $s\in (-1,0]$ for every $\varphi$ in the space 
\begin{equation}\label{eq:V110}
\begin{split}
V^{1,1}_0(Q_1^+):=\{\varphi\in L^2(Q_1^+): \ &\partial_t\varphi\in L^2(Q_1^+,x_n^{p-1}\ud x\ud t), D_x\varphi\in L^2(Q_1^+), \\
&\varphi\equiv 0\mbox{ on }\partial B_1^+\times(-1,0]\} 
\end{split}
\end{equation}
and $\varphi (\cdot,0)\equiv 0$ in $B_1^+$ (in the trace sense).
\end{defn}
 Let 
\[
\|\varphi\|_{V^{1,1}_0(Q_1^+)}=\|\partial_t\varphi\|_{L^2(Q_1^+,x_n^{p-1}\ud x\ud t)}+\|D_x \varphi\|_{L^2(Q_1^+)}.
\] 
Then $V^{1,1}_0(Q_1^+)$ is a Banach space embedded in $V_0^1(Q_1^+)$ (cf. \eqref{eq:V10norm}). Using Lemma \ref{lem:weightedsobolev}, one can verify that each integral in \eqref{eq:definitionweaksolution} is finite. Let $W^{1,1}_2(Q_1^+)=\{g\in L^2(Q_1^+): \pa_tg\in L^2(Q_1^+), D_x g \in L^2(Q_1^+)\}$ be the standard Sobolev space.  Then for $\varphi\in V^{1,1}_0(Q_1^+)$, one can check that $\forall\,\va>0$, $\varphi_\va(x,t):=e^{-\va /x_n}\varphi(x,t)\in W^{1,1}_2(Q_1^+)$ and $\|\varphi_\va-\varphi\|_{V^{1,1}_0(Q_1^+)}\to 0$ as $\va\to 0^+$.

\begin{defn}\label{defn:weaksolutionwithinitialtime}
We say that $u$ is a weak solution of \eqref{eq:degiorgilinearappendix} with the partial boundary condition \eqref{eq:linear-eq-D-app} and the initial condition $u(\cdot,-1)\equiv 0$, if  $u\in C ([-1,0]; L^2(B_1^+,x_n^{p-1}\ud x)) \cap  L^2((-1,0];H_{0,L}^1(B_1^+)) $, $u(\cdot,-1)\equiv 0$, and satisfies \eqref{eq:definitionweaksolution} for all $\varphi\in V^{1,1}_0(Q_1^+)$. 
\end{defn}

\begin{defn}\label{defn:weaksolutionglobal}
We say that $u$ is a weak solution of \eqref{eq:degiorgilinearappendix} with the full boundary condition $u\equiv 0$ on $\pa_{pa} Q_1^+$, if  $u\in C ([-1,0]; L^2(B_1^+,x_n^{p-1}\ud x)) \cap  L^2((-1,0];H_{0}^1(B_1^+)) $, $u(\cdot,-1)\equiv 0$, and satisfies \eqref{eq:definitionweaksolution} for all $\varphi\in V^{1,1}_0(Q_1^+)$.
\end{defn}

Note that $C ([-1,0]; L^2(B_1^+))\subset W^{1,1}_2(Q_1^+)$.

\begin{defn}\label{defn:weaksolutionglobalin}
Let $g\in W^{1,1}_2(Q_1^+)$. We say that $u$ is a weak solution of \eqref{eq:degiorgilinearappendix} with the inhomogeneous boundary condition $u\equiv g$ on $\pa_{pa} Q_1^+$, if  $u\in C ([-1,0]; L^2(B_1^+,x_n^{p-1}\ud x)) \cap  L^2((-1,0];H^1(B_1^+)) $, $u= g$ on $\pa_{pa} Q_1^+$, and $v:=u-g$ is a weak solution of
\begin{align*}
&a x_n^{p-1} \pa_t v -\mathrm{div}(A \nabla v)+b(x,t) v\\
&\quad=-a x_n^{p-1} \pa_t g -b(x,t) g+ f_0(x,t) + \mathrm{div} (A(x,t) \nabla g+F)
\end{align*}
with homogeneous boundary condition $v\equiv 0$ on $\pa_{pa} Q_1^+$.
\end{defn}

We have the following uniqueness of weak solutions.

\begin{thm}\label{thm:uniquenessofweaksolution}
Suppose that $a$ and $A$ satisfy  \eqref{eq:ellip}, $\pa_ta, b\in L^q(Q_1^+) $ for some $q>\frac{\chi}{\chi-1}$,  where $\chi>1$ is the constant in Lemma \ref{lem:weightedsobolev}, and $f_0, f_1,\cdots, f_n\in L^2(Q_1^+)$. Then there exists at most one weak solution of \eqref{eq:degiorgilinearappendix} with the full boundary condition $u\equiv 0$ on $\pa_{pa} Q_1^+$.
\end{thm}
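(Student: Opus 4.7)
The plan is to exploit linearity: let $u = u_1 - u_2$ denote the difference of two weak solutions, so that $u$ is a weak solution of \eqref{eq:degiorgilinearappendix} with $f_0 \equiv F \equiv 0$, $u(\cdot, -1) \equiv 0$, and $u = 0$ on all of $\partial_{pa}Q_1^+$. It suffices to show $u \equiv 0$ via an energy estimate. The obstruction is that $u$ itself is not in $V_0^{1,1}(Q_1^+)$---its time derivative is not controlled---so $u$ cannot be plugged directly into \eqref{eq:definitionweaksolution} as a test function. I will circumvent this by testing with a forward-in-time Steklov average of $u$.

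For $h > 0$ small, extend $u$ by $0$ outside $[-1, 0]$ and set $u^h(x,t) := h^{-1}\!\int_t^{t+h} u(x,\tau)\,\ud\tau$. Then $u^h = 0$ on $\partial B_1^+$ (inherited from $u \in H_0^1$), $\partial_t u^h = h^{-1}[u(\cdot, t+h) - u(\cdot, t)] \in L^2(Q_1^+)$, and $\nabla u^h \in L^2(Q_1^+)$, so $u^h \in V_0^{1,1}(Q_1^+)$ is admissible. Substitute $\varphi = u^h$ into \eqref{eq:definitionweaksolution} for a.e.\ $s \in (-1, 0)$ (with $h < -s$, so $u^h(s)$ still uses original values of $u$), and isolate the $\partial_t u^h$ term. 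The key pointwise convexity inequality
\[
u(t)\,\partial_t u^h(t) \;=\; \frac{u(t)[u(t+h) - u(t)]}{h} \;\le\; \tfrac{1}{2}\,\partial_t (u^2)^h(t), \qquad (u^2)^h(t) := h^{-1}\!\int_t^{t+h} u(\tau)^2\,\ud\tau,
\]
which follows from $[u(t) - u(t+h)]^2 \ge 0$, together with integration by parts in $t$ on the right side (using $(u^2)^h(-1) \to 0$ by continuity $u \in C([-1, 0]; L^2(x_n^{p-1}\,\ud x))$ and $u(\cdot, -1) = 0$) and the standard convergences $u^h(s) \to u(s)$ in $L^2(x_n^{p-1})$, $\nabla u^h \to \nabla u$ in $L^2(Q_1^+)$, and $u^h \to u$ in $L^{2q'}(Q_1^+)$ with $q' = q/(q-1) < \chi$ (via Lemma~\ref{lem:weightedsobolev}), yields after $h \to 0^+$:
\[
\frac{1}{2}\int_{B_1^+} a(s)x_n^{p-1} u(s)^2\,\ud x + \int_{-1}^s\!\!\int_{B_1^+} A\nabla u\cdot \nabla u\,\ud x\ud t \le \int_{-1}^s\!\!\int_{B_1^+} \Bigl(\tfrac{1}{2}x_n^{p-1}(\partial_t a)u^2 - bu^2\Bigr)\ud x\ud t.
\]

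To close the estimate, apply Hölder's inequality and the weighted Sobolev inequality: since $q > \chi/(\chi-1)$ is equivalent to $2q' < 2\chi$, setting $\kappa := 1/q' - 1/\chi > 0$ gives
\[
\int_{-1}^{s_0}\!\!\int_{B_1^+}(|\partial_t a| + |b|)u^2\,\ud x\ud t \le C\bigl(\|\partial_t a\|_{L^q(Q_1^{s_0})} + \|b\|_{L^q(Q_1^{s_0})}\bigr)\,|Q_1^{s_0}|^{\kappa}\Bigl(\sup_{t \le s_0}\|u(t)\|_H^2 + \|\nabla u\|_{L^2(Q_1^{s_0})}^2\Bigr),
\]
where $Q_1^{s_0} := B_1^+ \times [-1, s_0]$ and $\|\cdot\|_H$ denotes the weighted $L^2(x_n^{p-1}\,\ud x)$ norm. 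Combining with ellipticity and taking $\sup$ over $s \in [-1, s_0]$,
\[
\sup_{s \le s_0}\|u(s)\|_H^2 + \|\nabla u\|_{L^2(Q_1^{s_0})}^2 \le C_0(s_0)\Bigl(\sup_{s \le s_0}\|u(s)\|_H^2 + \|\nabla u\|_{L^2(Q_1^{s_0})}^2\Bigr),
\]
with $C_0(s_0) \to 0$ as $s_0 \to -1^+$ by absolute continuity of the $L^q$ integrals and $|Q_1^{s_0}|^\kappa \to 0$. Picking $s_0$ close enough to $-1$ forces $u \equiv 0$ on $[-1, s_0]$; iterating in uniform time-steps (whose length is determined solely by $\|\partial_t a\|_{L^q(Q_1^+)} + \|b\|_{L^q(Q_1^+)}$ via absolute continuity) then yields $u \equiv 0$ throughout $Q_1^+$.

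The main obstacle is the correct choice of averaging direction: the \emph{forward} Steklov produces the convexity inequality in the direction providing an \emph{upper} bound on the positive energy $\|u(s)\|_H^2$, whereas the backward Steklov would give the reverse and be useless for uniqueness. A secondary technical subtlety is verifying strong convergence $u^h \to u$ in $L^{2q'}(Q_1^+)$ needed for passing to the limit in the singular coefficient terms $\int(|\partial_t a|+|b|)u^2\,\ud x\ud t$; this rests on $2q' < 2\chi$, exactly the assumption $q > \chi/(\chi-1)$ coupled with the embedding $V_0^1(Q_1^+) \hookrightarrow L^{2\chi}(Q_1^+)$ from Lemma~\ref{lem:weightedsobolev}.
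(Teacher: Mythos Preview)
Your proposal is correct and follows essentially the same strategy as the paper: test with a Steklov average of $u$, pass to the limit to obtain an energy inequality, then use the weighted Sobolev embedding $V_0^1\hookrightarrow L^{2\chi}$ together with H\"older's inequality to force $u\equiv 0$ on a short initial time interval, and iterate. The paper compresses the Steklov-average step into one sentence and phrases the closing argument as $\|u\|_{L^{2\chi}}^2\le C\|u\|_{L^2}^2\le C(t+1)^{(\chi-1)/(2\chi)}\|u\|_{L^{2\chi}}^2$, whereas you make the forward-averaging direction and the convexity inequality $u\,\partial_t u^h\le\tfrac12\partial_t(u^2)^h$ explicit and extract the smallness from the factor $|Q_1^{s_0}|^{\kappa}$; these are cosmetic differences only.
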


\begin{proof}
Using the Steklov average of $u$ as a test function, and taking a limit in the end, and also using H\"older inequality and Lemma \ref{lem:weightedsobolev}, we have
\begin{equation}\label{eq:energyestimateu}
\|u\|^2_{V^1_0(B_1^+\times(-1,t])} \le C\|u\|^2_{L^2(B_1^+\times(-1,t])}+ C \sum_{j=0}^n\|f_j\|^2_{L^2(Q_1^+)},
\end{equation}
where $C>0$ depends only on $n$, $\lda, \Lda$, $p$, $\|\pa_t a\|_{L^q(Q_1^+)}$ and  $\|b\|_{L^q(Q_1^+)}$. 

Now let $f_j\equiv 0$ for all $j=0,1,\cdots,n$. By Lemma \ref{lem:weightedsobolev} and H\"older's inequality, we have from \eqref{eq:energyestimateu} that
\[
\|u\|^2_{L^{2\chi}(B_1^+\times(-1,t])}\le C\|u\|^2_{L^2(B_1^+\times(-1,t])}\le C (t+1)^{\frac{\chi-1}{2\chi}} \|u\|^2_{L^{2\chi}(B_1^+\times(-1,t])}.
\]
Therefore, $u\equiv 0$ in $B_1^+\times(-1,t])$ if $t-(-1)$ is small. Repeating this argument for finitely many times, we have $u\equiv 0$ in $B_1^+\times(-1,0]$.
\end{proof}

Next, we want to study the weak solutions of the following specially designed equation
\begin{equation}\label{eq:linear-eq-app}
a x_n^{p-1} \pa_t u -\mathrm{div}(A\nabla u)+b_1 u+b_2x_n^{p-1}u=x_n^{p-1}f_0 + \mathrm{div} F \quad \mbox{in }Q_1^+.
\end{equation}
We suppose that $-\mathrm{div} (A \nabla ) +b_1$ is coercive, i.e., there exists a constant $\bar \lda>0$ such that
\be \label{eq:coer-app}
\int_{B_1^+} A\nabla \phi \nabla \phi +b_1 \phi^2 \ge \bar \lda \int_{B_1^+} \phi^2 \quad \quad \forall \phi\in H^1_0(B_1^+), a.e.\ t\in[-1,1].
\ee
\begin{thm}\label{thm:existenceofweaksolution}
Suppose that $a$ and $A$ satisfy  \eqref{eq:ellip}, $a\in C^1(\overline Q_1^+)$, $b_2\in L^q(Q_1^+) $ for some $q>\frac{\chi}{\chi-1}$,  where $\chi>1$ is the constant in Lemma \ref{lem:weightedsobolev}, $b_1\in L^\infty(Q_1^+) $, and $f_0, f_1,\cdots, f_n\in L^2(Q_1^+)$, $F=(f_1,\cdots,f_n)$. Suppose that \eqref{eq:coer-app} holds. Then there exists a unique weak solution of \eqref{eq:linear-eq-app} with the full boundary condition $u\equiv 0$ on $\pa_{pa} Q_1^+$. Moreover, there exists $C>0$, which depends only on $n$, $\bar\lambda, \lda$, $\|\pa_t a\|_{L^q(Q_1^+)}$ and  $\|b_2\|_{L^q(Q_1^+)}$, such that 
\begin{equation}\label{eq:energyestimateunoL2}
\|u\|^2_{V^1_0(Q_1^+)} \le  C \sum_{j=0}^n\|f_j\|^2_{L^2(Q_1^+)}.
\end{equation}
\end{thm}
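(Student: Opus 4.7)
The plan is to prove existence by a weight regularization; uniqueness is already covered by Theorem~\ref{thm:uniquenessofweaksolution}. For $\va\in(0,1)$, I would replace the degenerate weight $x_n^{p-1}$ by $(x_n+\va)^{p-1}$ and solve
\begin{equation*}
a(x_n+\va)^{p-1}\pa_t u_\va - \mathrm{div}(A\nabla u_\va) + b_1 u_\va + b_2(x_n+\va)^{p-1}u_\va = (x_n+\va)^{p-1}f_0 + \mathrm{div}F
\end{equation*}
in $Q_1^+$ with zero data on $\pa_{pa}Q_1^+$. Since now $\lda\va^{p-1}\le a(x_n+\va)^{p-1}\le 2^{p-1}\Lda$ on $Q_1^+$, this equation is uniformly parabolic, and a unique weak solution $u_\va\in C([-1,0];L^2(B_1^+))\cap L^2((-1,0];H_0^1(B_1^+))$ exists by standard Galerkin or Lions--Magenes theory.

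The core of the argument is to derive an energy estimate uniform in $\va$. Testing with (the Steklov average of) $u_\va$ and passing to the Steklov limit yields, for every $s\in(-1,0]$,
\begin{equation*}
\begin{split}
\tfrac12\int_{B_1^+}a(x_n+\va)^{p-1}u_\va(s)^2 &+ \int_{-1}^s\int_{B_1^+}\bigl[A\nabla u_\va\cdot\nabla u_\va + b_1 u_\va^2\bigr]\\
\le \int_{-1}^s\int_{B_1^+}\Bigl[&\tfrac12|\pa_t a|(x_n+\va)^{p-1}u_\va^2 + |b_2|(x_n+\va)^{p-1}u_\va^2\\
&+ (x_n+\va)^{p-1}|f_0||u_\va| + |F||\nabla u_\va|\Bigr].
\end{split}
\end{equation*}
Combining the coercivity \eqref{eq:coer-app} with ellipticity \eqref{eq:ellip} (via a convex combination, absorbing $b_1\in L^\infty$), the left-hand side bounds from below by $\tilde\lda\bigl(\sup_{-1\le t\le s}\int x_n^{p-1}u_\va^2 + \int_{-1}^s\|\nabla u_\va\|_{L^2(B_1^+)}^2\bigr)$ for some $\tilde\lda>0$ independent of $\va$, since $x_n^{p-1}\le (x_n+\va)^{p-1}$.

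The main obstacle is controlling $|b_2|(x_n+\va)^{p-1}u_\va^2$ uniformly in $\va$ with only $b_2\in L^q$. Using $(x_n+\va)^{p-1}\le 2^{p-1}$, H\"older's inequality with conjugate exponent $q' = q/(q-1)$ (the hypothesis $q>\chi/(\chi-1)$ being equivalent to $q'<\chi$), interpolation between $L^{2q'}$ and $L^{2\chi}$ in $Q_s^+$, and the weighted Sobolev embedding Lemma~\ref{lem:weightedsobolev}, I obtain
\begin{equation*}
\int_{-1}^s\int_{B_1^+}|b_2|(x_n+\va)^{p-1}u_\va^2 \le C\|b_2\|_{L^q(Q_1^+)}(s+1)^{1-\frac1q-\frac1\chi}\|u_\va\|_{V_0^1(B_1^+\times(-1,s])}^2,
\end{equation*}
with $C$ independent of $\va$. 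The $\pa_t a$ term is handled identically (and is easier since $a\in C^1$ implies $\pa_t a\in L^\infty$), while the $f_0$ and $F$ terms are controlled by Cauchy--Schwarz and Young. Because the time exponent $1-\frac1q-\frac1\chi$ is strictly positive, the $b_2$ and $\pa_t a$ contributions are absorbed into the left-hand side on a sufficiently short initial interval $(-1,-1+\delta]$; a finite iteration across such intervals then delivers the global bound \eqref{eq:energyestimateunoL2} with a constant independent of $\va$.

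Finally, extracting a subsequence $u_{\va_k}\rightharpoonup u$ weakly in $V_0^1(Q_1^+)$, I would pass to the limit in the weak formulation \eqref{eq:definitionweaksolution}, using dominated convergence to handle $(x_n+\va)^{p-1}\to x_n^{p-1}$ against test functions $\varphi\in V_0^{1,1}(Q_1^+)$ (for which $x_n^{p-1}\pa_t\varphi$ and $x_n^{p-1}\varphi$ are already integrable). The time continuity $u\in C([-1,0];L^2(B_1^+,x_n^{p-1}\,\ud x))$ follows from the equation, which provides a bound for $\pa_t(x_n^{p-1}u)$ in a negative-order dual space, combined with a Lions--Magenes type lemma applied to $w:=x_n^{(p-1)/2}u$.
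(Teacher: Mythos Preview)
Your proposal is correct and follows essentially the same route as the paper: regularize the weight to $(x_n+\va)^{p-1}$, solve the resulting uniformly parabolic problem, derive an $\va$-uniform $V_0^1$ energy estimate via coercivity and Lemma~\ref{lem:weightedsobolev}, and pass to the weak limit. The only cosmetic difference is that the paper closes the energy estimate by interpolating $\|(x_n+\va)^{p-1}u_\va^2\|_{L^{q'}}\le \delta\|u_\va\|_{V_0^1}^2 + C(\delta)\|(x_n+\va)^{p-1}u_\va^2\|_{L^1}$ and then applying Gronwall, whereas you exploit the positive time exponent $1-\tfrac1q-\tfrac1\chi$ to absorb on short intervals and iterate---both are standard and equivalent.
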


\begin{proof} 
For all $\va>0$, there exists a unique (standard energy) weak solution $u_\va\in C ([-1,0]; L^2(B_1^+)) \cap  L^2((-1,0];H_{0}^1(B_1^+))$ to the uniformly parabolic equation
\be\label{eq:degiorgilinearappappendix}
\begin{split}
&a\cdot  (x_n+\va)^{p-1} \pa_t u_\va -\mathrm{div}(A\nabla u_\va)+b_1 u_\va+b_2\cdot (x_n+\va)^{p-1}u_\va\\
&\quad=(x_n+\va)^{p-1}f_0 + \mathrm{div} F \quad \mbox{in }Q_1^+.
\end{split}
\ee
with  $u_\va\equiv 0$ on $\pa_{pa} Q_1^+$. Using \eqref{eq:coer-app}, the energy estimate for $u_\va$ implies that
\begin{align*}
&\sup_{t\in[-1,s]}\int_{B_1^+}u_\va^2(x_n+\va)^{p-1}\,\ud x+\|\nabla u_\va\|^2_{L^2(B_1^+\times[-1,s])}\\
&\quad\le C \|(x_n+\va)^{p-1} u_\va^2\|_{L^{\frac{q}{q-1}}(B_1^+\times[-1,s])}+ C \sum_{j=0}^n\|f_j\|^2_{L^2(Q_1^+)} ,
\end{align*}
where $C>0$ depends only on $n$, $\bar\lambda, \lda$, $\|\pa_t a\|_{L^q(Q_1^+)}$ and  $\|b_2\|_{L^q(Q_1^+)}$. Since $q>\frac{\chi}{\chi-1}$, one can use H\"older's inequality, Young's inequality and Lemma  \ref{lem:weightedsobolev} to obtain  
\begin{align*}
 &\|(x_n+\va)^{p-1} u_\va^2\|_{L^{\frac{q}{q-1}}(B_1^+\times[-1,s])}\\
 &\le \delta \|u_\va\|^2_{V^1_0(B_1^+\times[-1,s])}+C(\delta)  \|(x_n+\va)^{p-1} u_\va^2\|_{L^{1}(B_1^+\times[-1,s])}.
\end{align*}
This implies that
\begin{align*}
&\sup_{t\in[-1,s]}\int_{B_1^+}u_\va^2(x_n+\va)^{p-1}\,\ud x+\|\nabla u_\va\|^2_{L^2(B_1^+\times[-1,s])}\\
&\quad\le C \|(x_n+\va)^{p-1} u_\va^2\|_{L^{1}(B_1^+\times[-1,s])}+ C \sum_{j=0}^n\|f_j\|^2_{L^2(Q_1^+)}.
\end{align*}
In particular,
\[
\int_{B_1^+}u_\va^2(x_n+\va)^{p-1}\,\ud x|_s\le \|(x_n+\va)^{p-1} u_\va^2\|_{L^{1}(B_1^+\times[-1,s])}+C \sum_{j=0}^n\|f_j\|^2_{L^2(Q_1^+)}.
\]
By Gronwall's inequality, 
\[
\|(x_n+\va)^{p-1} u_\va^2\|_{L^{1}(Q_1^+)}\le C \sum_{j=0}^n\|f_j\|^2_{L^2(Q_1^+)}.
\]
Thus, 
\begin{equation}\label{eq:energyestimatewithoutu}
\|u_\va\|^2_{V^1_0(Q_1^+)}\le C \sum_{j=0}^n\|f_j\|^2_{L^2(Q_1^+)}.
\end{equation}
Therefore, there exists $u\in  V_0^1(Q_1^+)$ and  a subsequence $\{u_{\va_j}\}$,  such that $u_{\va_j} \rightharpoonup u$ weakly  in (weak $*$ topology of) $V_0^1(Q_1^+)$. Then one can verify that this $u$ satisfies \eqref{eq:energyestimateunoL2} and \eqref{eq:definitionweaksolution}. Using that $a\in C(\overline Q_1^+)$, one can further verify that $u\in C ([-1,0]; L^2(B_1^+,x_n^{p-1}\ud x)) $.

The uniqueness follows from Theorem \ref{thm:uniquenessofweaksolution}.
\end{proof}

\begin{thm}\label{thm:energyestimateut}
Assume all the assumptions in Theorem \ref{thm:existenceofweaksolution}. We also  suppose that  the coefficients $A,b_1$ are independent of $t$ and are of $C^2(\overline B_1^+)$, and $a, b_2$ are of $C^1(\overline Q_1^+)$, and $F\equiv 0$. Let $u$ be the weak solution of \eqref{eq:linear-eq-app} with the full boundary condition $u\equiv 0$ on $\pa_{pa} Q_1^+$. Then
\begin{equation}\label{eq:energyestimateut}
\int_{Q_1^+} u^2\,\ud x\ud t+\int_{Q_1^+} x_n^{p-1}|\pa_t u|^2\,\ud x\ud t\le C \int_{Q_1^+} x_n^{p-1}f_0^2\,\ud x\ud t,
\end{equation}
where $C>0$ depends only on  $\lda, \bar\lambda$ and $\|b_2\|_{L^\infty(Q_1^+)}$.
\end{thm}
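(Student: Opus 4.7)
The plan is to reduce everything to the approximating problem \eqref{eq:degiorgilinearappappendix} used in the proof of Theorem \ref{thm:existenceofweaksolution} and exploit the time-independence of $A$ and $b_1$ via an energy identity obtained by testing with $\partial_t u_\va$.

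First, the $L^2$ bound on $u$ comes for free from the existence theorem. Applying \eqref{eq:energyestimateunoL2} to the equation \eqref{eq:linear-eq-app} (viewing $x_n^{p-1}f_0$ as the right-hand side $f_0$ of Theorem \ref{thm:existenceofweaksolution} and $F\equiv 0$) and using $x_n\le 1$, $p>1$, one gets
\[
\|u\|_{V^1_0(Q_1^+)}^2\le C\|x_n^{p-1}f_0\|_{L^2(Q_1^+)}^2\le C\|f_0\|_{L^2(Q_1^+;\,x_n^{p-1})}^2.
\]
Poincar\'e's inequality on $B_1^+$ (here $u\equiv 0$ on $\partial B_1^+$) then gives $\int_{Q_1^+}u^2\le C\|\nabla u\|_{L^2(Q_1^+)}^2\le C\|f_0\|_{L^2(Q_1^+;\,x_n^{p-1})}^2$, which takes care of the first half of the estimate.

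Next, for the weighted bound on $\partial_t u$, I would work with the approximating smooth solution $u_\va$ of \eqref{eq:degiorgilinearappappendix} (with homogeneous boundary and initial data) produced in the proof of Theorem \ref{thm:existenceofweaksolution}, and test against $\partial_t u_\va$. Since $A$ and $b_1$ are independent of $t$ and $u_\va(\cdot,-1)\equiv 0$, integration by parts (rigorously justified via Steklov averaging in $t$, or by a further mollification of $a,b_2,f_0$ so that $u_\va$ is classical) produces the identity
\[
\int_{-1}^{s}\!\!\int_{B_1^+}\!a(x_n{+}\va)^{p-1}|\partial_t u_\va|^2+\tfrac12\mathcal{E}(u_\va)(s)+\int_{-1}^{s}\!\!\int_{B_1^+}\!b_2(x_n{+}\va)^{p-1}u_\va\partial_t u_\va=\int_{-1}^{s}\!\!\int_{B_1^+}(x_n{+}\va)^{p-1}f_0\,\partial_t u_\va,
\]
where $\mathcal{E}(u_\va)(s):=\int_{B_1^+}[A\nabla u_\va\cdot\nabla u_\va+b_1u_\va^2]|_s$. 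The coercivity \eqref{eq:coer-app} gives $\mathcal{E}(u_\va)(s)\ge\bar\lambda\int u_\va^2|_s$. Cauchy–Schwarz with a small parameter, together with the bound $a\ge\lambda$, yields
\[
\int(x_n{+}\va)^{p-1}f_0\partial_t u_\va\le \tfrac{\lambda}{4}\int(x_n{+}\va)^{p-1}|\partial_t u_\va|^2+\tfrac{1}{\lambda}\int(x_n{+}\va)^{p-1}f_0^2,
\]
and, splitting the $b_2$-term as $|b_2|(x_n{+}\va)^{(p-1)/2}|u_\va|\cdot(x_n{+}\va)^{(p-1)/2}|\partial_t u_\va|$ and again using Cauchy–Schwarz,
\[
\Bigl|\int b_2(x_n{+}\va)^{p-1}u_\va\partial_t u_\va\Bigr|\le \tfrac{\lambda}{4}\int(x_n{+}\va)^{p-1}|\partial_t u_\va|^2+\tfrac{\|b_2\|_{\infty}^2}{\lambda}\int(x_n{+}\va)^{p-1}u_\va^2.
\]
Combining these estimates and absorbing the $\partial_t u_\va$ terms on the left gives
\[
\tfrac{\lambda}{2}\!\!\int_{Q_1^+}(x_n{+}\va)^{p-1}|\partial_t u_\va|^2+\tfrac{\bar\lambda}{2}\sup_s\!\!\int_{B_1^+}u_\va^2|_s\le \tfrac{1}{\lambda}\!\!\int_{Q_1^+}(x_n{+}\va)^{p-1}f_0^2+\tfrac{\|b_2\|_{\infty}^2}{\lambda}\!\!\int_{Q_1^+}(x_n{+}\va)^{p-1}u_\va^2.
\]
The last integral is controlled by the uniform-in-$\va$ bound \eqref{eq:energyestimatewithoutu} from the proof of Theorem \ref{thm:existenceofweaksolution}, so the whole right-hand side is $\le C\|f_0\|_{L^2(Q_1^+;\,x_n^{p-1})}^2$.

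Finally, since $u_\va\rightharpoonup u$ weakly in $V_0^1(Q_1^+)$ along a subsequence (as in the proof of Theorem \ref{thm:existenceofweaksolution}) and $\partial_t u_\va$ is uniformly bounded in $L^2(Q_1^+;\,x_n^{p-1}\,\ud x\ud t)$, one extracts a further subsequence so that $\partial_t u_\va\rightharpoonup v$ weakly in this weighted $L^2$ space; testing against $C_c^\infty(Q_1^+)$ identifies $v=\partial_t u$. Weak lower semicontinuity of the norm $\|\cdot\|_{L^2(Q_1^+;\,x_n^{p-1})}$ under $\va\to 0^+$ (noting $x_n^{p-1}\le (x_n+\va)^{p-1}$) then yields $\int_{Q_1^+}x_n^{p-1}|\partial_t u|^2\le C\|f_0\|_{L^2(Q_1^+;\,x_n^{p-1})}^2$, completing the proof. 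The only delicate step is the legitimacy of using $\partial_t u_\va$ as a test function on the weighted parabolic equation: the cleanest route is to first further mollify the coefficients $a,b_2$ and the datum $f_0$ in time (and $f_0$ near $\partial'B_1^+\times\{-1\}$ to secure the compatibility condition in Theorem \ref{thm:global-smooth}) so that $u_\va$ is a classical solution and the identity is rigorous, then remove the mollifications by the weak convergence argument above.
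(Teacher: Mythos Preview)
Your overall strategy matches the paper's: regularize via the $(x_n+\va)^{p-1}$-problem, test with the time derivative, use the time-independence of $A,b_1$ to turn the elliptic part into a perfect derivative, invoke coercivity, and pass to the limit. Two differences are worth flagging.

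First, when you control the residual term $\int(x_n+\va)^{p-1}u_\va^2$ by quoting \eqref{eq:energyestimatewithoutu}, that bound as stated gives $C\|f_0\|_{L^2(Q_1^+)}^2$ (unweighted), not $C\|f_0\|_{L^2(Q_1^+;\,x_n^{p-1})}^2$; the same issue applies to your first paragraph's invocation of \eqref{eq:energyestimateunoL2}. The weighted version is indeed true (just use $\int(x_n+\va)^{p-1}f_0u_\va\le \frac12\int(x_n+\va)^{p-1}(f_0^2+u_\va^2)$ in the energy estimate), but you have to say so. More importantly, routing through \eqref{eq:energyestimatewithoutu} imports that theorem's constant, which depends on $n,\|\partial_t a\|_{L^q},\|b_2\|_{L^q}$; the paper instead notes that your displayed inequality already contains $\frac{\bar\lambda}{2}\int_{B_1^+}u_\va^2|_s$ on the left and $C\int_{-1}^s\int(x_n+\va)^{p-1}u_\va^2$ on the right, so Gronwall closes the loop self-containedly and yields the cleaner dependence $C=C(\lambda,\bar\lambda,\|b_2\|_{L^\infty})$ stated in the theorem.

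Second, for the legitimacy of testing with $\partial_t u_\va$, the paper avoids your extra layer of mollification by working with the symmetric difference quotient pairing $\int(I(s+h)+I(s))\,u_\va^h$ and passing $h\to 0$; this only needs $u_\va,D_xu_\va,\partial_t u_\va\in C(\overline{Q_1^+})$ (which follows from parabolic Schauder theory after mollifying $f_0$ alone to $f_\va\in C_c^\infty$), and it circumvents any appeal to $D_x\partial_t u_\va$. Your route via further mollifying $a,b_2$ and invoking Theorem~\ref{thm:global-smooth} also works but is heavier.
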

\begin{proof}
For $\va>0$, let $f_\va\in C_c^\infty(Q_1^+)$ such that $f_\va\to f_0$ in $L^2(Q_1^+)$ as $\va\to 0$. 

Let $u_\va\in C ([-1,0]; L^2(B_1^+)) \cap  L^2((-1,0];H_{0}^1(B_1^+))$ be the unique weak solution of
\be\label{eq:degiorgilinearappappendix2}
\begin{split}
&a\cdot  (x_n+\va)^{p-1} \pa_t u_\va -\mathrm{div}(A\nabla u_\va)+b_1 u_\va+b_2\cdot (x_n+\va)^{p-1}u_\va\\
&\quad=(x_n+\va)^{p-1}f_\va \quad \mbox{in }Q_1^+.
\end{split}
\ee
with  $u_\va\equiv 0$ on $\pa_{pa} Q_1^+$. By the Schauder regularity theory, we know that $D_{x} u_\va, \pa_t u_\va\in C(\overline Q_1^+)$.

For small $h>0$, denote $$u_\va^h(x,t)=\frac{u_\va(x,t+h)-u_\va(x,t)}{h}$$ for all $-1\le t\le -h$, and denote the left hand side of \eqref{eq:degiorgilinearappappendix2} as $I(x,t)$. Then we have for  all $-1<t<-h$,
\begin{equation}\label{eq:partialttestfunction}
\begin{split}
&\int_{B_1^+\times(-1,t]} (I(x,s+h)+I(x,s))u_\va^h(x,s)\,\ud x\ud s\\
&=\int_{B_1^+\times(-1,t]}(x_n+\va)^{p-1}(f_\va (x,s+h)+f_\va (x,s))u_\va^h(x,s)\,\ud x\ud s.
\end{split}
\end{equation}
Using the symmetry of $A$, we have
\begingroup
\allowdisplaybreaks
\begin{align*}
&-\int_{B_1^+}\int_{-1}^t [\mathrm{div}(A\nabla u_\va(x,s+h))+ \mathrm{div}(A\nabla u_\va(x,s))]u_\va^h(x,s)\,\ud s\ud x\\
&=\frac{1}{h}\int_{B_1^+}\int_{-1}^t (A\nabla u_\va(x,s+h))\cdot \nabla u_\va(x,s+h)\,\ud s\ud x\\
&\quad -\frac{1}{h}\int_{B_1^+}\int_{-1}^t  (A\nabla u_\va(x,s))\cdot \nabla u_\va(x,s)]\,\ud s\ud x\\
&=\frac{1}{h}\int_{B_1^+}\int_{t}^{t+h} (A\nabla u_\va(x,s+h))\cdot \nabla u_\va(x,s+h)\,\ud s\ud x\\
&\quad - \frac{1}{h}\int_{B_1^+}\int_{-1}^{-1+h} (A\nabla u_\va(x,s))\cdot \nabla u_\va(x,s)\,\ud s\ud x\\
& \to \int_{B_1^+} (A\nabla u_\va(x,t))\cdot \nabla u_\va(x,t)\,\ud x \quad\mbox{as }h\to 0,
\end{align*}
\endgroup
where we used that $D_xu\in C^{0}(\overline B_1^+\times[-1,0])$ and $u(x,-1)\equiv 0$. 
Here, we used $u_\va^h(x,t)$ instead of $\pa_t u_\va$ to avoid involving $D_x\pa_t u_\va$ in the calculation.  Using similar arguments, by sending $h\to 0$ in \eqref{eq:partialttestfunction}, and using \eqref{eq:coer-app} and H\"older's inequality, we have
\begingroup
\allowdisplaybreaks
\begin{align*}
&\int_{B_1^+\times(-1,t]}a\cdot  (x_n+\va)^{p-1} |\pa_t u_\va|^2 \,\ud s\ud x +\bar\lambda \int_{B_1^+}  u_\va(x,t)^2\,\ud x \\
&\le  C \int_{B_1^+\times(-1,t]} (x_n+\va)^{p-1} (|u_\va|+|f_\va|)|\pa_tu_\va|\,\ud s\ud x\\
&\le \int_{B_1^+\times(-1,t]}(x_n+\va)^{p-1} \left[\frac{\lambda}{2}|\pa_t u_\va|^2  + C (|u_\va|^2+|f_\va|^2)\right]\,\ud s\ud x.
\end{align*}
\endgroup
Using $a\ge\lambda$, we have
\begin{equation}\label{eq:energyestimateutsupL2}
\begin{split}
&\int_{B_1^+\times(-1,t]} x_n^{p-1} |\pa_t u_\va|^2 \,\ud s\ud x+\int_{B_1^+}  u_\va(x,t)^2\,\ud x\\
&\le C\int_{B_1^+\times(-1,t]} u_\va^2\,\ud x\ud s+C\int_{Q_1^+} (x_n+\va)^{p-1}f_\va^2\,\ud x\ud s. 
\end{split}
\end{equation}
In particular, 
\[
\frac{\ud}{\ud t}\int_{B_1^+\times(-1,t]} u_\va^2\,\ud x\ud s\le C\int_{B_1^+\times(-1,t]} u_\va^2\,\ud x\ud s+C\int_{Q_1^+} (x_n+\va)^{p-1}f_\va^2\,\ud x\ud s.
\]
Since $u_\va(\cdot,0)\equiv 0$, by Gronwall's inequality, we have
\begin{align}\label{eq:L2control}
\int_{Q_1^+} u_\va^2\,\ud x\ud s\le C\int_{Q_1^+} (x_n+\va)^{p-1}f_\va^2\,\ud x\ud s.
\end{align}
Thus, we have
\begin{align}\label{eq:weakderivativeint}
&\int_{Q_1^+} x_n^{p-1} |\pa_t u_\va|^2 \,\ud s\ud x \le C\int_{Q_1^+} (x_n+\va)^{p-1}f_\va^2\,\ud x\ud s. 
\end{align}
Therefore, $\int_{(B_1^+\cap\{x_n>\delta\})\times(-1,0]} |\pa_t u_\va|^2 \le C(\delta)$ for every $\delta>0$. This implies the existence of weak derivative $\pa_t u$, and that $\pa_t u_\va$ weakly converges to $\pa_t u$ in $L^2((B_1^+\cap\{x_n>\delta\})\times(-1,0])$ for every $\delta$. Then, \eqref{eq:energyestimateut} follows from sending $\va \to 0$ in \eqref{eq:L2control} and \eqref{eq:weakderivativeint}.
\end{proof}

\section{Some useful lemma}
The following two iteration lemmas are used in the paper.
\begin{lem}[Lemma 4.3 in \cite{HL}]\label{lem:lemma4.3inHL}
Let $h(t)\ge 0$ be bounded in $[\tau_0,\tau_1]$ with $\tau_0\ge 0$. Suppose for $\tau_0\le t<s\le\tau_1$ we have
\[
h(t)\le \theta h(s)+\frac{A}{(s-t)^\alpha}+B
\]
for some $\theta\in [0,1), \alpha>0, A\ge 0, B\ge 0$. Then for any $\tau_0\le t<s\le\tau_1$, there holds
\[
h(t)\le C(\alpha,\theta)\left(\frac{A}{(s-t)^\alpha}+B\right).
\]
\end{lem}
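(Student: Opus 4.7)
The plan is to use the standard geometric-shrinkage iteration. Fix $\tau_0\le t<s\le\tau_1$, and pick a contraction ratio $\tau\in(0,1)$ with $\theta\tau^{-\alpha}<1$; a concrete choice is $\tau=\tau(\alpha,\theta)$ with $\tau^\alpha=(1+\theta)/2$ when $\theta>0$ (and $\tau=1/2$ if $\theta=0$). Define an increasing sequence $t_0=t$ and $t_{i+1}=t_i+(1-\tau)\tau^i(s-t)$, so that $t_i\nearrow s$ and $t_{i+1}-t_i=(1-\tau)\tau^i(s-t)$. All the $t_i$ lie in $[t,s]\subset[\tau_0,\tau_1]$, so the assumption applies at each step.

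Applying the hypothesis with the pair $(t_i,t_{i+1})$ yields
\[
h(t_i)\le \theta\, h(t_{i+1})+\frac{A}{[(1-\tau)\tau^i(s-t)]^\alpha}+B.
\]
Iterating this inequality $k$ times gives
\[
h(t)\le \theta^k h(t_k)+\frac{A}{(1-\tau)^\alpha(s-t)^\alpha}\sum_{i=0}^{k-1}(\theta\tau^{-\alpha})^i+B\sum_{i=0}^{k-1}\theta^i.
\]
Since $h$ is bounded on $[\tau_0,\tau_1]$ and $\theta<1$, the term $\theta^k h(t_k)\to 0$ as $k\to\infty$. By our choice of $\tau$ the geometric series $\sum (\theta\tau^{-\alpha})^i$ converges, and so does $\sum \theta^i$. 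Letting $k\to\infty$ therefore yields
\[
h(t)\le \frac{A}{(1-\tau)^\alpha(s-t)^\alpha}\cdot\frac{1}{1-\theta\tau^{-\alpha}}+\frac{B}{1-\theta}\le C(\alpha,\theta)\left(\frac{A}{(s-t)^\alpha}+B\right),
\]
which is the desired estimate.

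There is no genuine obstacle here; the lemma is classical and the only mild care needed is the quantitative choice of $\tau$ to make both the shrinkage $\theta^k$ and the series $\sum(\theta\tau^{-\alpha})^i$ behave well simultaneously, which is why the boundedness hypothesis on $h$ is used (to discard the $\theta^k h(t_k)$ term in the limit). The resulting constant $C(\alpha,\theta)$ depends only on $\alpha$ and $\theta$, as required.
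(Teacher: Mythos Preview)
Your proof is correct and is precisely the standard geometric-shrinkage iteration; the paper does not prove this lemma itself but cites it from Han--Lin \cite{HL}, where the argument is the same as yours.
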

\begin{lem}[Lemma 3.4 in \cite{HL}]\label{lem:lemma3.4inHL}
Let $h(t)$ be a nonnegative and nondecreasing function on $[0,R]$. Suppose that
\[
h(\rho)\le A\left[\Big(\frac{\rho}{r}\Big)^\alpha+\va\right] h(r)+Br^\beta
\]
for any $0<\rho\le r\le R$, with $A,B,\alpha,\beta$ nonnegative constants and $\beta<\alpha$. Then for any $\gamma\in [\beta,\alpha)$, there exists a constant $\va_0=\va_0(A,\alpha,\beta,\gamma)$ such that if $\va<\va_0$, we have for all $0<\rho\le r\le R$
\[
h(\rho)\le C\left[\Big(\frac{\rho}{r}\Big)^\gamma h(r)+B\rho^\beta \right].
\]
\end{lem}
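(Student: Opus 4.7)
The plan is a standard dyadic iteration on geometrically shrinking radii, exploiting the strict inequality $\gamma<\alpha$ to absorb the bad exponent at each step.

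First I would fix a dilation ratio $\tau=\tau(A,\alpha,\gamma)\in(0,1)$ small enough that $A\tau^{\alpha-\gamma}\le 1/2$; such a $\tau$ exists precisely because $\alpha>\gamma$. Then I would choose $\va_0=\va_0(A,\alpha,\gamma)>0$ small enough that $A\va_0\le\tfrac{1}{2}\tau^\gamma$. With these choices, applying the hypothesis with $(\rho,r)\mapsto(\tau r,r)$ and $\va<\va_0$ yields the one-step contraction
\[
h(\tau r)\le \tau^\gamma h(r)+Br^\beta\qquad\text{for every }0<r\le R.
\]

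Next, I would iterate this inequality by replacing $r$ with $\tau^j r$ and telescoping: by induction on $k\ge 1$,
\[
h(\tau^k r)\le \tau^{k\gamma}h(r)+Br^\beta\sum_{j=0}^{k-1}\tau^{(k-1-j)\gamma+j\beta}.
\]
In the main case $\gamma>\beta$, the sum is a geometric series bounded by $\tau^{(k-1)\beta}/(1-\tau^{\gamma-\beta})$, giving
\[
h(\tau^k r)\le \tau^{k\gamma}h(r)+C(A,\alpha,\beta,\gamma)\,B\,(\tau^k r)^\beta.
\]

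Finally, for an arbitrary radius $0<\rho\le r\le R$, I would select the unique integer $k\ge 0$ with $\tau^{k+1}r<\rho\le\tau^k r$. Monotonicity of $h$ gives $h(\rho)\le h(\tau^k r)$, and the elementary bounds $\tau^{k\gamma}\le \tau^{-\gamma}(\rho/r)^\gamma$ together with $(\tau^k r)^\beta\le \tau^{-\beta}\rho^\beta$ deliver exactly the claimed inequality. The borderline case $\gamma=\beta$, in which the geometric sum above becomes logarithmic and the argument breaks down, I would handle by first applying the already-proved version with some auxiliary $\gamma'\in(\beta,\alpha)$ and then using $(\rho/r)^{\gamma'}\le(\rho/r)^\beta$ (valid since $\rho\le r$ and $\gamma'>\beta$) to reinstate the exponent $\beta$ on the right-hand side. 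There is no serious obstacle here; the only thing to track carefully throughout is that $\va_0$ and the final constant $C$ depend only on $A,\alpha,\beta,\gamma$ and not on $B$, $h$, or $R$.
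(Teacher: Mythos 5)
Your proof is correct and is exactly the standard dyadic-iteration argument for this lemma, which is also the proof given in the cited reference (Han--Lin, Lemma 3.4); the paper itself only quotes the result. The one-step contraction, the geometric summation for $\gamma>\beta$, the choice of $k$ with $\tau^{k+1}r<\rho\le\tau^k r$, and the reduction of the borderline case $\gamma=\beta$ to an auxiliary $\gamma'\in(\beta,\alpha)$ are all handled correctly, with the constants depending only on $A,\alpha,\beta,\gamma$ as required.
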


The following lemma is Lemma 3.1 on page 78 in \cite{LSU}.

\begin{lem}[Lemma 3.1 on page 78 in \cite{LSU}]\label{lem:lemma3.1inLSU}
Let $\Omega\subset\R^n$ be a domain satisfying a cone condition, i.e., there exists a fixed spherical cone of some altitude $d$ such that, no matter at what point of $\overline\Omega$ its vertex is placed, the cone itself can be swung so that all of it is located in $\Omega.$ Let $T>0$ and $Q_T=\Omega\times(0,T]$. Let $u\in C(\overline Q_T)$ be differentiable in $x$, and satisfy
\begin{align*}
\sup_{x\in \Omega} \sup_{t,s\in[0,T],s\neq t}\frac{|u(x,s)-u(x,t)|}{|s-t|^\alpha}&\le\mu_1,\\
\sup_{t\in[0,T]} \sup_{x,y\in \Omega,x\neq y }\frac{|\nabla u(x,t)-\nabla u(y,t)|}{|x-y|^\beta}&\le\mu_2
\end{align*}
for some $\alpha,\beta\in (0,1)$, $\mu_1>0,\mu_2>0$. Then there exists $\mu_3>0$, depending only on $\alpha,\beta,\mu_1,\mu_2,d$ and the solid angle of the cone such that
\[
\sup_{x\in \Omega} \sup_{t,s\in[0,T],s\neq t}\frac{|\nabla u(x,s)-\nabla u(x,t)|}{|s-t|^\delta}\le\mu_3,
\]
where $\delta=\frac{\alpha\beta}{1+\beta}$.
\end{lem}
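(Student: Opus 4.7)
\medskip

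\noindent\textbf{Proof proposal.}
The plan is a classical interpolation-in-time argument: estimate a difference quotient of $u$ in a direction $e$ taken from the cone, which by assumption can always be fit at any $x\in\overline\Omega$ up to altitude $d$. First I would fix $x\in\overline\Omega$, $t,s\in[0,T]$ with $0<|t-s|\le d^{(1+\beta)/\alpha}$, pick a unit vector $e$ in the cone based at $x$ (so that $x+\sigma e\in\Omega$ for all $\sigma\in[0,h]$), and choose a small parameter $h\in(0,d]$ to be optimized later. For a single time, the fundamental theorem of calculus and the $C^\beta_x$ bound on $\nabla u$ give
\[
\Bigl|\tfrac{u(x+he,\tau)-u(x,\tau)}{h}-\partial_e u(x,\tau)\Bigr|
=\Bigl|\tfrac{1}{h}\int_0^h[\partial_e u(x+\sigma e,\tau)-\partial_e u(x,\tau)]\,d\sigma\Bigr|
\le \frac{\mu_2 h^\beta}{1+\beta}
\]
for $\tau=s$ and $\tau=t$.

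The next step is to subtract the two identities and use the $C^\alpha_t$ bound on $u$ itself to control the resulting difference of the discrete derivatives:
\[
\Bigl|\tfrac{u(x+he,s)-u(x+he,t)}{h}-\tfrac{u(x,s)-u(x,t)}{h}\Bigr|
\le \frac{2\mu_1|s-t|^\alpha}{h}.
\]
Combining, one finds
\[
|\partial_e u(x,s)-\partial_e u(x,t)|\le \frac{2\mu_2}{1+\beta}\, h^\beta+\frac{2\mu_1|s-t|^\alpha}{h}.
\]
I would then optimize by choosing $h^{1+\beta}=|s-t|^\alpha$, i.e.\ $h=|s-t|^{\alpha/(1+\beta)}$, which is admissible under our smallness assumption on $|t-s|$; this yields the desired exponent $\delta=\alpha\beta/(1+\beta)$ with a constant depending only on $\mu_1,\mu_2,\alpha,\beta$. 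If $|s-t|^{\alpha/(1+\beta)}>d$, one simply sets $h=d$ and absorbs the bounded contribution into $\mu_3$.

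The last step is to pass from directional derivatives $\partial_e u$ (with $e$ in the cone) to the full gradient $\nabla u$. Since the spherical cone has positive solid angle, one can select $n$ linearly independent unit vectors $e_1,\dots,e_n$ inside the cone, with a conditioning constant depending only on the solid angle; expressing $\nabla u$ in terms of $(\partial_{e_1}u,\dots,\partial_{e_n}u)$ then upgrades the above estimate to an estimate on $|\nabla u(x,s)-\nabla u(x,t)|$ with a constant $\mu_3$ depending only on $\alpha,\beta,\mu_1,\mu_2,d$ and the solid angle, as claimed. The only delicate point, and where the cone condition is essential, is to guarantee this selection of $n$ independent cone directions \emph{uniformly in} $x\in\overline\Omega$; once that is secured, no further estimate is needed.
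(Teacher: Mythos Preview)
The paper does not prove this lemma; it is quoted from Ladyzhenskaja--Solonnikov--Ural'ceva (Lemma~3.1, p.~78) and used as a black box, so there is no in-paper proof to compare against. Your argument is precisely the classical interpolation proof of that result and is correct.
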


\section{Campanato's method for a uniformly parabolic equation}
This appendix is for the proof of Proposition \ref{prop:interior}, deriving certain estimates of solutions to a uniformly parabolic equation (see \eqref{eq:uniformlyparabolicafterscaling}) with a parameter $\ell$. We shall obtain estimates with explicit dependence of $\ell$.

Let $p>1$, $\ell\in(0,1)$ be constants, and $u\in C^{2+\gamma}(\overline Q_1^+)$ (for every $\gamma\in(0,1)$) be a solution of 
\begin{equation}\label{eq:uniformlyparabolicafterscaling}
\begin{split}
& a(x,t) \phi(x_n) \pa_t  u- \mathrm{div}( A(x) \nabla  u) +\ell^2  b_1(x)  u + \ell^{p+1}  b_2(x,t)\phi(x_n)  u \\
&\quad= \ell^{p+1}\phi(x_n)  f(x,t) \quad \mbox{in }Q_{3/4}, 
\end{split}
\end{equation}
where $\phi\in C^\infty((-1,1))$, $\lambda\le\phi\le \Lambda$ on $[-3/4,3/4]$, $\|\phi\|_{C^2(-3/4,3/4)}\le\Lambda$, $A=(a_{ij})_{n\times n}, a_{ij}\in C^3(\overline B_1), b_1\in C^3(\overline B_1)$, $A$ and $a$ satisfy the ellipticity condition \eqref{eq:ellip}, $b_2\ge \lambda$, 
\begin{align*}
&[a]_{C^{\alpha,\alpha/2}(Q_{3/4})}+ [b_2]_{C^{\alpha,\alpha/2}(Q_{3/4})}+[a_{ij}]_{C^{\alpha}(B_{3/4})}+ [b_1]_{C^{\alpha}(B_{3/4})}\le \ell^\alpha  \Lambda_1,\\
&[f]_{C^{\alpha,\alpha/2}(Q_{3/4})}\le \ell^\alpha  \Lambda_2
\end{align*}
for some $\alpha\in (0,1), \Lambda_1>0,\Lambda_2>0$, and there holds the coercivity: there exists $\bar\lambda>0$ such that
\[
\int_{B_1} [(A\nabla \varphi)\cdot \nabla \varphi +\ell^2b_1 \varphi^2]\,\ud x \ge \bar \lda \int_{\Omega} \varphi^2 \quad \quad \forall \varphi\in H^1_0(B_1).
\]
We denote
\[
(u)_R=\frac{1}{|Q_R|}\int_{Q_R} u(x,t)\,\ud x\ud t.
\]
Then we have the following two propositions.
\begin{prop}\label{prop:appendixCvariablecoefficientfort}
We have for $0<\rho\le R<1/2$ that
\begin{align*}
&\frac{1}{ (\rho\ell)^{2\al} }\dashint_{Q_\rho} |\pa _t u- (\pa _t u)_\rho|^2 \le \frac{C}{(R\ell)^{2\al} }  \dashint_{Q_R} |\pa _t u-(\pa _t u)_R|^2\\
&\quad\quad\quad\quad\quad+C\sup_{Q_{1/2}} |\pa_t u|^2  + C \ell^{2p+2}(\|u\|_{L^\infty(Q_{1/2})}+ \Lambda_2)^2, 
\end{align*} 
where $C>0$ depends only on $n,p,\alpha,\lambda,\Lambda,\bar\lambda, \Lambda_1, \|b_2\|_{L^\infty(Q_{3/4})}$ and the $C^2(\overline B_1)$ norms of $A$ and $b_1$ (but not on $\ell$).
\end{prop}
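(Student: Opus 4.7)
The plan is to carry out Campanato's freezing-coefficients method for \eqref{eq:uniformlyparabolicafterscaling}, carefully tracking that the ellipticity constants are independent of $\ell \in (0,1)$ so that interior regularity bounds for this uniformly parabolic equation come out uniform in $\ell$.

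First I would freeze the coefficients at the origin: set $\bar a = a(0,0)$, $\bar A = A(0)$, $\bar b_1 = b_1(0)$, $\bar b_2 = b_2(0,0)$, $\bar f = f(0,0)$, and for each $R \in (0,1/2)$ let $u_1$ be the classical solution on $Q_R$ of the frozen equation
\[
\bar a \phi(x_n)\pa_t u_1 - \mathrm{div}(\bar A \nabla u_1) + \ell^2 \bar b_1 u_1 + \ell^{p+1}\bar b_2 \phi(x_n) u_1 = \ell^{p+1}\phi(x_n)\bar f
\]
with $u_1 = u$ on $\pa_{pa}Q_R$, and set $u_2 = u - u_1$. Since the frozen coefficients are independent of $t$ and $\phi$ is smooth, $\pa_t u_1$ satisfies the same homogeneous uniformly parabolic equation; Nash--Moser plus Schauder bootstrapping in the interior then gives
\[
\sup_{Q_{R/2}}\bigl(|\nabla \pa_t u_1|^2 + |\pa_{tt} u_1|^2\bigr) \le \frac{C}{R^{n+4}}\int_{Q_R}|\pa_t u_1 - (\pa_t u_1)_R|^2,
\]
and hence the Campanato-type decay
\[
\int_{Q_\rho}|\pa_t u_1 - (\pa_t u_1)_\rho|^2 \le C\left(\frac{\rho}{R}\right)^{n+4}\int_{Q_R}|\pa_t u_1 - (\pa_t u_1)_R|^2, \quad \rho \le R/2.
\]

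Next I would set up an energy estimate for $u_2$, which solves the same frozen equation with $u_2 = 0$ on $\pa_{pa}Q_R$ and right-hand side
\[
G = -(a-\bar a)\phi\pa_t u + \mathrm{div}((A-\bar A)\nabla u) - \ell^2(b_1-\bar b_1)u - \ell^{p+1}(b_2-\bar b_2)\phi u + \ell^{p+1}\phi(f-\bar f).
\]
The Hölder hypotheses give coefficient oscillations bounded by $\ell^\alpha\Lambda_1 R^\alpha$ on $Q_R$ and $|f - \bar f| \le \ell^\alpha \Lambda_2 R^\alpha$. Testing against $\pa_t u_2$, integrating the divergence term by parts in $x$ and then in $t$ (using $u_2(\cdot,-R^2) \equiv 0$), bounding the resulting $\nabla \pa_t u$ via interior Schauder estimates for \eqref{eq:uniformlyparabolicafterscaling}, and invoking Young's inequality with coercivity of $\bar A$, the target is
\[
\int_{Q_R}|\pa_t u_2|^2 \le CR^{n+2+2\alpha}\ell^{2\alpha}\Bigl[\sup_{Q_{1/2}}|\pa_t u|^2 + \ell^{2p+2}(\|u\|_{L^\infty(Q_{1/2})} + \Lambda_2)^2\Bigr].
\]

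Finally, writing $\pa_t u - (\pa_t u)_\rho = (\pa_t u_1 - (\pa_t u_1)_\rho) + (\pa_t u_2 - (\pa_t u_2)_\rho)$ and combining the two bounds yields
\[
h(\rho) \le C(\rho/R)^{n+4} h(R) + CR^{n+2+2\alpha}E^2
\]
for $h(r) = \int_{Q_r}|\pa_t u - (\pa_t u)_r|^2$ and $E$ the bracket from the $u_2$-estimate. Since $n+2+2\alpha < n+4$, Lemma \ref{lem:lemma3.4inHL} upgrades this to $h(\rho) \le C(\rho/R)^{n+2+2\alpha}h(R) + C\rho^{n+2+2\alpha}E^2$; dividing by $|Q_\rho|(\rho\ell)^{2\alpha}$ yields the proposition. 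The main obstacle will be the energy estimate for $u_2$: after integration by parts, the divergence term in $G$ produces a contribution involving $\nabla \pa_t u$, which is not given directly by hypothesis, and controlling it forces an appeal to interior parabolic Schauder regularity for \eqref{eq:uniformlyparabolicafterscaling}; one must verify that the corresponding constants depend only on the quantities listed in the proposition and, crucially, not on $\ell$. Steklov averaging may be needed to rigorously justify the formal integration by parts.
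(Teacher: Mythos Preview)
Your overall Campanato strategy matches the paper's, but there is one key difference in \emph{what} gets frozen, and it is exactly the difference that resolves the obstacle you flag at the end.

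You freeze all of $a, A, b_1, b_2, f$ at a point. The paper freezes only $a, b_2, f$ (at their averages $(a)_R$, $(b_2)_R$, $(f)_R$ over $Q_R$) and leaves $A(x)$ and $b_1(x)$ as they are. This is legitimate because $A$ and $b_1$ are $t$-independent and $C^2$ in $x$: the ``reference'' operator $-\mathrm{div}(A(x)\nabla\cdot)+\ell^2 b_1(x)$ still has $C^2$ coefficients and $v=\pa_t u_1$ still satisfies a homogeneous equation, so Proposition~\ref{prop:appendixCconstantcoefficient}(i) supplies the decay for $\pa_t u_1$. The payoff is that the $u_2$-equation now has \emph{no} divergence-form error term $\mathrm{div}((A-\bar A)\nabla u)$ and no $\ell^2(b_1-\bar b_1)u$ term; its right-hand side is simply $\ell^{p+1}\phi(f-(f)_R) - \ell^{p+1}\phi(b_2-(b_2)_R)u - \phi(a-(a)_R)\pa_t u$. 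Testing against $\pa_t u_2$ and using coercivity (exactly as in \eqref{eq:energyestimateut}) then gives
\[
\int_{Q_R}|\pa_t u_2|^2 \le C\ell^{2\al}R^{2\al}\int_{Q_R}|\pa_t u|^2 + C\ell^{2p+2+2\al}R^{n+2+2\al}\bigl(\|u\|_{L^\infty}^2+\Lambda_2^2\bigr)
\]
directly, with no appearance of $\nabla\pa_t u$ at all. So the ``main obstacle'' you identify is not overcome, it is sidestepped by a smarter freezing.

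Two smaller points. First, your decay estimate for $\pa_t u_1$ is slightly too optimistic: because of the zeroth-order terms $\ell^2 b_1$ and $\ell^{p+1}\bar b_2\phi$, the function $v-(v)_R$ satisfies an equation with source $g=-(v)_R(\ell^2 b_1+\ell^{p+1}\bar b_2\phi)$, and the gradient bound picks up an additional $\ell^4 R^2\int_{Q_R}|\pa_t u_1|^2$ on the right (cf.\ Proposition~\ref{prop:appendixCconstantcoefficient}(i)). This extra term is harmless---it is absorbed into $\sup_{Q_{1/2}}|\pa_t u|^2$ after using $\int_{Q_R}|\pa_t u|^2\le CR^{n+2}\sup|\pa_t u|^2$---but it should be there. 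Second, if you insist on freezing $A$, your proposed route via interior Schauder to bound $\nabla\pa_t u$ uniformly in $\ell$ is delicate and not clearly closable with the stated dependencies; the partial-freezing trick is both simpler and what the paper actually does.
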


\begin{proof}
We only need to consider $0<\rho<R/4$, since otherwise the inequalities hold trivially.  

Let $u_1$ be the solution of 
\begin{align*}
(a)_R \phi\pa _t u_1 - \mathrm{div} (A \nabla u_1) +\ell^2 b_1 u_1 +\ell^{p+1} (b_2)_R\phi u_1 &= (f)_R\ell^{p+1}\phi\quad \mbox{in }Q_R\\
 u_1&=u \quad \mbox{on }\pa_{pa} Q_R,
\end{align*}
and $u=u_1+u_2$.  We have
\begingroup
\allowdisplaybreaks 
\begin{align*}
&\int_{Q_\rho} |\pa _t u- (\pa _t u)_\rho|^2 \\
& \le 2 \int_{Q_\rho} |\pa _t u_1- (\pa _t u_1)_\rho|^2  +2  \int_{Q_\rho} |\pa _t u_2|^2 \\
& \le C\left(\frac{\rho}{R}\right)^{n+4} \left[ \int_{Q_R} |\pa _t u_1-(\pa _t u_1)_R|^2  +\ell^4 R^2  \int_{Q_R} |\pa_t u_1|^2\right] + 2 \int_{Q_R} |\pa _t u_2|^2 \\&
\le C\left(\frac{\rho}{R}\right)^{n+4} \left[ \int_{Q_R} |\pa _t u-(\pa _t u)_R|^2  +\ell^4 R^2  \int_{Q_R} |\pa_t u|^2\right] + C \int_{Q_R} |\pa _t u_2|^2,  
\end{align*}
\endgroup
where we used Proposition \ref{prop:appendixCconstantcoefficient} in the second inequality. 
Note that 
\begin{align*}
&(a)_R \phi  \pa _t u_2 - \mathrm{div} (A \nabla u_2) +\ell^2 b_1 u_2 +\ell^{p+1} (b_2)_R \phi u_2 \\&=\ell^{p+1} (f-(f)_R)\phi-\ell^{p+1}(b_2-  (b_2)_R) u\phi- (a- (a)_R)\phi \pa_t u \quad \mbox{in }Q_R 
\end{align*}
and $u_2=0$ on $\pa_p Q_R$.  Similar to \eqref{eq:energyestimateut}, we have
\[
 \int_{Q_R} |\pa _t u_2|^2\le C\ell^{2\al} R^{2\al} \int_{Q_R } |\pa_t u|^2 +C \ell^{2p+2+2\al} R^{n+2+2\al} (\|u\|_{L^\infty(Q_{R})}^2+\Lambda_2^2). 
\]
Let 
\[
M= \sup_{Q_{1/2}} |\pa_t u|  + \ell^{p+1} \|u\|_{L^\infty(Q_{3/4})}+\ell^{p+1} \Lambda_2.
\] 
Using $\int_{Q_R} |\pa_t u|^2 \le C R^{n+2}  \sup_{Q_{1/2}} |\pa_t u|^2 $,  we obtain 
\begin{align*}
\int_{Q_\rho} |\pa _t u- (\pa _t u)_\rho|^2  
\le C\left(\frac{\rho}{R}\right)^{n+4}  \int_{Q_R} |\pa _t u-(\pa _t u)_R|^2  + C M^2 \ell^{2\al} R^{n+2+2\al}. 
\end{align*}
By Lemma \ref{lem:lemma3.4inHL}, we have 
\begin{align*}
\frac{1}{ (\rho\ell)^{2\al} }\dashint_{Q_\rho} |\pa _t u- (\pa _t u)_\rho|^2 
\le \frac{C}{(R\ell)^{2\al} }  \dashint_{Q_R} |\pa _t u-(\pa _t u)_R|^2  + C M^2. 
\end{align*}
This finishes the proof.
\end{proof}

\begin{prop}\label{prop:appendixCvariablecoefficientforx}
We have for $0<\rho\le R<1/2$ that
\begin{align*}
&\frac{1}{ (\rho\ell)^{2\al} }\dashint_{Q_\rho} |\nabla  u- (\nabla  u)_\rho|^2 
\le \frac{C}{(R\ell)^{2\al} }  \dashint_{Q_R} |\nabla  u-(\nabla  u)_R|^2\\
& +C\sup_{Q_{1/2}} |\pa_t u|^2 \ell^{-2\al}   +C\sup_{Q_{1/2}} |\nabla  u|^2 + C\ell^{2}\|u\|^2_{L^\infty(Q_{1/2})} +C\ell^{2}\|f\|_{C^{\alpha,\alpha/2}(Q_{1/2})}^2,
\end{align*} 
where $C>0$ depends only on $n,p,\alpha,\lambda,\Lambda,\bar\lambda, \Lambda_1, \|b_2\|_{L^\infty(Q_{3/4})}$ and the $C^2(\overline B_1)$ norms of $A$ and $b_1$ (but not on $\ell$).
\end{prop}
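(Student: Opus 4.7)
}
The plan is to run the same Campanato/freezing-coefficients scheme used in the proof of Proposition \ref{prop:appendixCvariablecoefficientfort}, but now freezing \emph{all} coefficients (including $A$ and $b_1$, which are $x$-dependent but not $t$-dependent) at the origin, and applying a constant-coefficient interior decay for $\nabla u_1$ with the sharp exponent $n+4$. As before, it suffices to prove the desired inequality for $0<\rho\le R/4$, since for $R/4<\rho\le R$ it is trivial.

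Concretely, let $u_1$ be the unique solution of
\begin{align*}
(a)_R\,\phi\,\pa_t u_1-\mathrm{div}(A(0)\nabla u_1)+\ell^2 b_1(0)u_1+\ell^{p+1}(b_2)_R\phi u_1&=(f)_R\ell^{p+1}\phi\quad\mbox{in }Q_R,\\
u_1&=u\quad\mbox{on }\pa_{pa}Q_R,
\end{align*}
and set $u_2=u-u_1$. Then $u_2=0$ on $\pa_{pa}Q_R$ and
\begin{align*}
&(a)_R\phi\pa_t u_2-\mathrm{div}(A(0)\nabla u_2)+\ell^2 b_1(0)u_2+\ell^{p+1}(b_2)_R\phi u_2\\
&=\ell^{p+1}(f-(f)_R)\phi-\ell^{p+1}(b_2-(b_2)_R)u\phi-(a-(a)_R)\phi\,\pa_tu\\
&\quad+\mathrm{div}((A-A(0))\nabla u)-\ell^2(b_1-b_1(0))u\quad\mbox{in }Q_R.
\end{align*}
For $u_1$ the coefficients are constants, so the interior Campanato decay (the $\nabla$-version of Proposition \ref{prop:appendixCconstantcoefficient}, which is just the Campanato estimate for linear uniformly parabolic equations with constant coefficients applied to the function $u_1-x_i\cdot (\nabla u_1)_R^i$) yields, for $0<\rho\le R/4$,
\[
\int_{Q_\rho}|\nabla u_1-(\nabla u_1)_\rho|^2\le C\Bigl(\tfrac{\rho}{R}\Bigr)^{n+4}\Bigl[\int_{Q_R}|\nabla u_1-(\nabla u_1)_R|^2+\ell^4 R^2\int_{Q_R}|\nabla u_1|^2+\mbox{l.o.t.}\Bigr],
\]
where the lower-order remainder carries $\ell^{2(p+1)}R^{n+2+2}(\|u\|_\infty^2+\|f\|_\infty^2)$ coming from freezing the lower-order terms; replacing $u_1$ by $u$ on the right-hand side only costs an $\int_{Q_R}|\nabla u_2|^2$ correction.

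Next, I would control $\int_{Q_R}|\nabla u_2|^2$ by testing the equation of $u_2$ with $u_2$ itself, using the coercivity of $-\mathrm{div}(A(0)\nabla\cdot)+\ell^2 b_1(0)$ (which follows from \eqref{eq:coer-app-smooth} and rescaling for small $R$), the positivity $b_2\ge\lambda$, and the $C^\alpha$-regularity of the coefficients (each difference being bounded by $\ell^\alpha\Lambda_1(R+R^{\alpha/2})^\alpha$ in $Q_R$). This produces
\begin{align*}
\int_{Q_R}|\nabla u_2|^2 &\le C(\ell R)^{2\al}\Bigl(\int_{Q_R}|\nabla u|^2+\int_{Q_R}|\pa_tu|^2+\ell^{2(p+1)-2\alpha}\int_{Q_R}u^2\Bigr)\\
&\quad+C\ell^{2(p+1)+2\alpha}R^{n+2+2\alpha}\Lambda_2^2,
\end{align*}
and using $\int_{Q_R}|\nabla u|^2\le CR^{n+2}\sup_{Q_{1/2}}|\nabla u|^2$ and analogously for $|\pa_tu|^2$ and $u^2$, this becomes
\[
\int_{Q_R}|\nabla u_2|^2\le C\ell^{2\alpha}R^{n+2+2\alpha}\mathcal{M}^2,
\]
with
\[
\mathcal{M}^2=\sup_{Q_{1/2}}|\nabla u|^2+\sup_{Q_{1/2}}|\pa_tu|^2+\ell^{2(p+1)}\|u\|_{L^\infty(Q_{1/2})}^2+\ell^{2(p+1)}\Lambda_2^2.
\]

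Combining the two estimates and writing $\nabla u-(\nabla u)_\rho=\nabla u_1-(\nabla u_1)_\rho+\nabla u_2-(\nabla u_2)_\rho$, we arrive at
\[
\int_{Q_\rho}|\nabla u-(\nabla u)_\rho|^2\le C\Bigl(\tfrac{\rho}{R}\Bigr)^{n+4}\int_{Q_R}|\nabla u-(\nabla u)_R|^2+C\ell^{2\alpha}R^{n+2+2\alpha}\mathcal{M}^2,
\]
valid for $0<\rho\le R/4$ and trivially for $R/4<\rho\le R$. A direct application of the iteration Lemma \ref{lem:lemma3.4inHL} (with exponents $n+4>n+2+2\alpha$) then yields
\[
\frac{1}{(\rho\ell)^{2\al}}\dashint_{Q_\rho}|\nabla u-(\nabla u)_\rho|^2\le\frac{C}{(R\ell)^{2\al}}\dashint_{Q_R}|\nabla u-(\nabla u)_R|^2+C\ell^{-2\alpha}\mathcal{M}^2,
\]
and unpacking $\mathcal{M}^2$ recovers the four error terms in the statement, with the $\ell^{-2\alpha}$ weight appearing only on the $\sup|\pa_tu|^2$ term (as claimed) while the $\ell^{2(p+1)-2\alpha}$ prefactors on the $u$- and $f$-terms are absorbed into $C\ell^{2}$ since $p+1\ge 2\ge 1+\alpha$.

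\textbf{The main obstacle} is the new divergence term $\mathrm{div}((A-A(0))\nabla u)$, absent in the proof of Proposition \ref{prop:appendixCvariablecoefficientfort}, which forces the test-function energy estimate for $u_2$ to interact with $\nabla u$ itself. This is what produces the $\sup|\nabla u|^2$ term (not multiplied by any power of $\ell$) on the right-hand side; after dividing by $(\rho\ell)^{2\alpha}$ it is the presence of the $\ell^\alpha$-factor built into the \emph{Hölder seminorms} of $A,b_1,a,b_2$ that saves the day and makes the iteration close with a clean $\ell$-free coefficient. Carefully bookkeeping these $\ell$-powers (in particular verifying that $p+1\ge 1+\alpha$ gives $\ell^{2(p+1)-2\alpha}\le C\ell^{2}$ on the interval $\ell\in(0,1)$) is the only delicate point.
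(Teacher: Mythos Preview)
Your overall scheme---freeze all coefficients, apply the constant-coefficient decay of Proposition \ref{prop:appendixCconstantcoefficient}(ii) to $u_1$, energy-estimate $u_2$, then iterate via Lemma \ref{lem:lemma3.4inHL}---is exactly the paper's, and your choice to freeze $A,b_1$ at the origin rather than at their $Q_R$-averages is immaterial. But there is a real gap in how you invoke the constant-coefficient step.

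You write the decay for $\nabla u_1$ with all lower-order terms \emph{inside} the $(\rho/R)^{n+4}$ bracket, calling it ``just the Campanato estimate for linear uniformly parabolic equations with constant coefficients''. This overlooks that the weight $\phi(x_n)$ (playing the role of $(x_n+1)^{p-1}$ after rescaling) is \emph{not} constant: even when $a,A,b_1,b_2,f$ are frozen, differentiating the equation in $x_n$ produces a $\bar a\,\phi'(x_n)\pa_t u_1$ source, so $D_{x_n}u_1$ does not satisfy the homogeneous equation. This is why Proposition \ref{prop:appendixCconstantcoefficient}(ii) carries the extra terms
\[
CR^2\int_{Q_R}|\pa_t u_1|^2+C\ell^4R^2\int_{Q_R}|u_1|^2+C\ell^{2p+2}R^{n+4}|\bar f|^2
\]
\emph{outside} the bracket. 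The first of these contributes $CR^{n+4}\sup_{Q_{1/2}}|\pa_t u|^2$ to the pre-iteration inequality with \emph{no} $\ell^{2\alpha}$ factor; after the iteration and division by $(\rho\ell)^{2\alpha}\rho^{n+2}$ it becomes exactly the $C\ell^{-2\alpha}\sup|\pa_t u|^2$ term in the statement. Your final bookkeeping is accordingly inconsistent: iterating from an error $C\ell^{2\alpha}R^{n+2+2\alpha}\mathcal{M}^2$ yields $C\mathcal{M}^2$, not $C\ell^{-2\alpha}\mathcal{M}^2$, and your $\mathcal{M}^2$ as written contains only $\sup|\pa_t u|^2$, not $\ell^{-2\alpha}\sup|\pa_t u|^2$. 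Once you restore the missing $CR^2\int|\pa_t u|^2$ term outside the bracket (this is why the paper's $M_1$ contains $\ell^{-\alpha}\sup|\pa_t u|$ rather than $\sup|\pa_t u|$), the argument closes and matches the paper's proof.
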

\begin{proof}
We only need to consider $0<\rho<R/4$, since otherwise the inequalities hold trivially.

Let $u_1$ be the solution of 
\begin{align*}
(a)_R \phi\pa _t u_1 - \mathrm{div} ((A)_R \nabla u_1) +\ell^2 (b_1)_R  u_1 &+\ell^{p+1} (b_2)_R \phi u_1 \\
&= (f)_R\ell^{p+1}\phi  \mbox{ in }Q_R
\end{align*}
satisfying  $u_1=u  \mbox{ on }\pa_p Q_R$ and let $u_2=u-u_1$. We have 
\begingroup
\allowdisplaybreaks
\begin{align*}
&\int_{Q_\rho} |\nabla u- (\nabla u)_\rho|^2 \\
& \le 2 \int_{Q_\rho} |\nabla  u_1- (\nabla u_1)_\rho|^2  +2  \int_{Q_\rho} |\nabla  u_2|^2 \\
& \le C\left(\frac{\rho}{R}\right)^{n+4} \left[ \int_{Q_R} |\nabla  u_1-(\nabla u_1)_R|^2  +\ell^4 R^2  \int_{Q_R} |\nabla  u_1|^2\right] +C R^2 \int_{Q_R} |\pa_t u|^2 \\
& \quad + C R^{n+4} (\ell^2\|u\|_{L^\infty(Q_{1/2})} +\ell^{p+1}\|f\|_{L^\infty(Q_{1/2})})^2 + 2 \int_{Q_R} |\nabla  u_2|^2 \\
&\le C\left(\frac{\rho}{R}\right)^{n+4} \left[ \int_{Q_R} |\nabla  u-(\nabla u)_R|^2  +\ell^4 R^2  \int_{Q_R} |\nabla  u|^2\right] +C R^2 \int_{Q_R} |\pa_t u|^2\\
& \quad  + C R^{n+4} (\ell^2\|u\|_{L^\infty(Q_{1/2})} +\ell^{p+1}\|f\|_{L^\infty(Q_{1/2})})^2 + C \int_{Q_R} |\nabla  u_2|^2,
\end{align*}
\endgroup
where we used Proposition \ref{prop:appendixCconstantcoefficient} in the second inequality. Note that 
\begin{align*}
&(a)_R \phi\pa _t u_2 -\mathrm{div} ((A)_R \nabla u_2) +\ell^2 (b_1)_R u_2 +\ell^{p+1} (b_2)_R\phi u_2 \\
&= \ell^{p+1}\phi (f-(f)_R)-\ell^{p+1}(b_2-  (b_2)_R) \phi u- (a- (a)_R)\phi \pa_t u \\
&+\mathrm{div}  ((A-(A)_R) \nabla u) -  \ell^2 (b_1-(b_1)_R) u  \quad \mbox{in }Q_R 
\end{align*}
and $u_2=0$ on $\pa_p Q_R$.  Similar to \eqref{eq:energyestimateunoL2},  it follows that 
\begin{align*}
 \int_{Q_R} |\nabla  u_2|^2&\le C\ell^{2\al} R^{2\al} \int_{Q_R }( |\pa_t u|^2 +|\nabla u|^2) \\
 &\quad+ C \ell^{2\al} R^{n+2+2\al} (\ell^{2}\|u\|_{L^\infty(Q_{1/2})}+\ell^{p+1}\Lambda_2)^2 . 
\end{align*}
Let 
\[
M_1= \sup_{Q_{1/2}} |\pa_t u| \ell^{-\al}   +\sup_{Q_{1/2}} |\nabla  u| + \ell^{2-\alpha}\|u\|_{L^\infty(Q_{1/2})} +\ell^{p+1-\alpha}\|f\|_{C^{\alpha,\alpha/2}(Q_{1/2})} .
\] 
Using $\int_{Q_R} |\pa_t u|^2 \le C R^{n+2}  \sup_{Q_{1/2}} |\pa_t u|^2 $, $\int_{Q_R} |\nabla  u|^2 \le C R^{n+2}  \sup_{Q_{1/2}} |\nabla u|^2$,  we obtain 
\begin{align*}
\int_{Q_\rho} |\nabla  u- (\nabla  u)_\rho|^2  
\le C\left(\frac{\rho}{R}\right)^{n+4}  \int_{Q_R} |\nabla  u-(\nabla  u)_R|^2  + C M_1^2 \ell^{2\al} R^{n+2+2\al}. 
\end{align*}
By Lemma \ref{lem:lemma3.4inHL}, we have 
\begin{align*}
\frac{1}{ (\rho\ell)^{2\al} }\dashint_{Q_\rho} |\nabla  u- (\nabla  u)_\rho|^2 
\le \frac{C}{(R\ell)^{2\al} }  \dashint_{Q_R} |\nabla  u-(\nabla  u)_R|^2  + C M_1^2. 
\end{align*} 
This finishes the proof.
\end{proof}

In the proof of the above two propositions, we used the following result when (part of) the coefficients of the equation are constants.
\begin{prop}\label{prop:appendixCconstantcoefficient}
Let $0<\rho\le R<1/2$, and suppose the equation \eqref{eq:uniformlyparabolicafterscaling} holds in $Q_R$.
 
(i) Suppose $a=\bar a$, $b_2=\bar b$ and $f=\bar f$ for some constants $\bar a,\bar b, \bar f$. Then we have 
\[
\int_{Q_\rho} |\pa _t u- (\pa _t u)_\rho|^2  \le C \left(\frac{\rho}{R}\right)^{n+4} \left[ \int_{Q_R} |\pa _t u-(\pa _t u)_R|^2  +\ell^4 R^2  \int_{Q_R} |\pa_t u|^2\right],
\] 
where $C>0$ depends only on $n,\lambda,\Lambda,|\bar b|$ and the $C^2(\overline B_1)$ norms of $A$ and $b_1$ (but not on $\ell$).

(ii) Suppose additionally that $(a_{ij})$  and $b_1$ are also constants. Then we have 
\begin{align*}
\int_{Q_\rho} |\nabla u- (\nabla u)_\rho|^2  &\le  C \left(\frac{\rho}{R}\right)^{n+4} \left[ \int_{Q_R} |\nabla u-(\nabla  u)_R|^2  +\ell^4 R^2  \int_{Q_R} |\nabla u|^2\right]\\&
+C R^2 \int_{Q_R} |\pa_t u|^2 + C \ell^4 R^2 \int_{Q_R} |u|^2 + C\ell^{2p+2} R^{n+4}|\bar f|^2,
\end{align*}
where $C>0$ depends only on $n,\lambda,\Lambda,|\bar b|$ and the $C^2(\overline B_1)$ norms of $A$ and $b_1$ (but not on $\ell$).
\end{prop}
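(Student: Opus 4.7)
The proof is by the standard Campanato perturbation scheme. I would differentiate \eqref{eq:uniformlyparabolicafterscaling} in $t$ (for Part (i)) or in $x$ (for Part (ii)) to produce a (nearly) homogeneous uniformly parabolic equation for the derivative of interest, decompose the derivative into a reference piece satisfying the principal-part problem and a perturbation carrying the small lower-order terms, apply the classical Campanato $(\rho/R)^{n+4}$ oscillation decay to the reference piece, and control the perturbation by the parabolic energy inequality. The factors $\ell^2$, $\ell^{p+1}$ sitting in front of the lower-order coefficients and the forcing then propagate directly into the error terms.

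\emph{Part (i).} Since $a\equiv\bar a$, $b_2\equiv\bar b$, $f\equiv\bar f$ are constant, differentiating \eqref{eq:uniformlyparabolicafterscaling} in $t$ shows that $v:=\partial_t u$ solves the homogeneous equation
\[
\bar a\phi\partial_t v - \mathrm{div}(A\nabla v) + \ell^2 b_1 v + \ell^{p+1}\bar b\phi v = 0\quad\text{in }Q_R.
\]
I would then write $v=v_1+v_2$ on $Q_R$ with $v_1$ solving the reference problem $\bar a\phi\partial_t v_1 - \mathrm{div}(A\nabla v_1)=0$ with $v_1=v$ on $\partial_p Q_R$, and $v_2=v-v_1$ solving the residual equation with right-hand side $-\ell^2 b_1 v - \ell^{p+1}\bar b\phi v$ and vanishing parabolic boundary data. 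Rescaling $v_1(Ry,R^2 s)$ to $Q_1$ and invoking the classical Campanato oscillation estimate for uniformly parabolic equations with $C^2$ coefficients gives $\int_{Q_\rho}|v_1-(v_1)_\rho|^2 \le C(\rho/R)^{n+4}\int_{Q_R}|v_1-(v_1)_R|^2$; meanwhile the standard energy inequality together with Poincar\'e produces $\|v_2\|_{L^2(Q_R)}^2 \le C\ell^4 R^4\|v\|_{L^2(Q_R)}^2 \le C\ell^4 R^2\|v\|_{L^2(Q_R)}^2$ (using $R\le 1$). Combining these via the triangle inequality $(v-(v)_\rho) = (v_1-(v_1)_\rho)+(v_2-(v_2)_\rho)$ and bounding $|v_1-(v_1)_R| \le |v-(v)_R|+|v_2-(v_2)_R|$ yields the claimed estimate.

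\emph{Part (ii).} With $A$ and $b_1$ additionally constant, the only $x$-dependence remaining in the coefficients of \eqref{eq:uniformlyparabolicafterscaling} is through $\phi(x_n)$. Differentiating in any tangential direction $x_j$ with $j<n$ shows that $\partial_j u$ satisfies the same homogeneous parabolic equation as $v$ in Part (i), and the Part (i) argument applies verbatim to each tangential component. For $\partial_n u$, differentiating \eqref{eq:uniformlyparabolicafterscaling} in $x_n$ introduces $\phi'(x_n)$ in front of $\partial_t u$, $u$, and $\bar f$, yielding an equation of the same principal-part structure but with right-hand side $-\bar a\phi'\partial_t u - \ell^{p+1}\bar b\phi' u + \ell^{p+1}\phi'\bar f$. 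Splitting $\partial_n u = w_1+w_2$ with $w_1$ solving the homogeneous reference equation and $w_2$ absorbing the forcing with $w_2=0$ on $\partial_p Q_R$, the energy inequality for $w_2$ produces $\|w_2\|_{L^2(Q_R)}^2 \le C R^2\int_{Q_R}|\partial_t u|^2 + C\ell^{2p+2}R^2\int_{Q_R}|u|^2 + C\ell^{2p+2}R^{n+4}|\bar f|^2$. Since $p>1$ and $\ell<1$ imply $\ell^{2p+2}\le \ell^4$, these match the additional error terms in the claim, and combining the tangential and normal estimates delivers the full $\nabla u$ bound.

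The main technical obstacle will be tracking the $\ell$-dependence of every constant: the Campanato reference estimate must be $\ell$-independent (which it is, since the principal part carries no $\ell$), and the energy estimate must cleanly extract the $\ell^2$ or $\ell^{p+1}$ factor from the small lower-order coefficients rather than absorbing it into the constant. A secondary point is the role of the weight $\phi(x_n)$: being uniformly bounded above and below by positive constants on $[-3/4,3/4]$, it functions as a harmless positive weight multiplying $\partial_t$ and does not disturb the parabolic structure, but its derivative $\phi'$ is precisely what generates the genuinely new $C R^2\int|\partial_t u|^2$ error term in Part (ii) that is absent from Part (i).
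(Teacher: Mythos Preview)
Your approach is correct in spirit but differs from the paper's, and there is a minor discrepancy in the form of the estimate you obtain.

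The paper does \emph{not} use a reference/perturbation splitting. For Part~(i) it simply applies the parabolic Poincar\'e inequality on $Q_\rho$,
\[
\int_{Q_\rho}|v-(v)_\rho|^2 \le C\Big(\rho^2\int_{Q_\rho}|\nabla v|^2+\rho^4\int_{Q_\rho}|\pa_t v|^2\Big),
\]
and then observes that $v-(v)_R$ satisfies the full equation with right-hand side $g=-(v)_R\,\ell^2 b_1-(v)_R\,\ell^{p+1}\bar b\phi$; local $L^\infty$ estimates for $\nabla v$ and $\pa_t v$ in terms of $\|v-(v)_R\|_{L^2(Q_R)}$ and $\|g\|_{L^\infty}$ then give the claim directly. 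For Part~(ii) the paper never differentiates in $x_n$: instead it uses the equation algebraically to write $|D_{x_n}^2u|^2\le C(|\pa_t u|^2+|\nabla\nabla_{x'}u|^2+\ell^4u^2+\ell^{2p+2}\bar f^2)$, and then feeds in the Part~(i) estimates for $\pa_tu$ and $D_{x_i}u$, $i<n$. This explains why the $\pa_t u$ term in (ii) enters with the coarse factor $R^2$ rather than $(\rho/R)^{n+4}$.

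Your splitting argument is valid, but note that it does not quite yield the inequality as literally stated. The perturbation piece contributes $\int_{Q_\rho}|v_2-(v_2)_\rho|^2\le\|v_2\|_{L^2(Q_R)}^2\le C\ell^4R^4\int_{Q_R}v^2$, which carries \emph{no} factor of $(\rho/R)^{n+4}$; thus you prove
\[
\int_{Q_\rho}|v-(v)_\rho|^2\le C\Big(\frac{\rho}{R}\Big)^{n+4}\int_{Q_R}|v-(v)_R|^2+C\ell^4R^4\int_{Q_R}v^2,
\]
which is neither stronger nor weaker than the stated bound (for small $\rho/R$ the proposition's second term vanishes, yours does not). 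The paper's Poincar\'e-plus-sup-estimate route places the factor $\rho^{n+4}$ on every term from the outset, which is why both terms end up inside the bracket. Your variant is harmless downstream --- in Propositions~\ref{prop:appendixCvariablecoefficientfort} and~\ref{prop:appendixCvariablecoefficientforx} it still feeds the iteration lemma with an error of order $\ell^{2\al}R^{n+2+2\al}$ --- but if you want to match the stated inequality exactly you should switch to the paper's direct method.
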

\begin{proof}
We only need to consider $0<\rho<R/4$, since otherwise the inequalities hold trivially. 

(i) Let $v=\pa_t u$. Then
\[
\bar a \phi(x_n) \pa_t  v- \mathrm{div}( A(x) \nabla  v) +\ell^2  b_1(x)  v + \ell^{p+1} \bar b \phi(x_n)  v = 0 \quad \mbox{in }Q_{R}.
\]
By the Poincar\'e inequality
\begin{align}\label{eq:appendixCpoincare}
\int_{Q_\rho} |v- (v)_\rho|^2  &\le C \left(\rho^2 \int_{Q_\rho} |\nabla v|^2 +\rho^4\int_{Q_\rho} |\pa_t v|^2  \right).
\end{align}
Since
\begin{align*}
&\bar a \phi \pa_t\big( v- (v)_R\big)-\mathrm{div}\big( A\cdot\nabla (v-(v)_R)\big) +\ell^2 b_1 \big(v-(v)_R\big)\\
& + \ell^{p+1} \bar b \phi(v-(v)_R)= g \quad \mbox{in }Q_{R}, 
\end{align*}
where $g= -(v)_R\,\ell^2b_1-(v)_R\,\ell^{p+1} \bar b\phi$. By the local gradient estimates for uniformly parabolic equations,  we have
\begin{align*}
\sup_{Q_\rho} |\nabla v|^2 & \le \frac{C}{ R^{n+4}} \int_{Q_{R}} (v-(v)_R)^2 +  CR^2\|g\|_{L^\infty (Q_R)}^2\\& \le \frac{C}{ R^{n+4}} \left(\int_{Q_{R}} (v-(v)_R)^2 +  \ell^4 R^4 \int_{Q_R} v^2\right), 
\end{align*}
where we used $|(v)_R|^2\le \frac{C(n)}{R^{n+2}} \int_{Q_R} v^2 $ in the last inequality. Similarly,
\begin{align*}
\sup_{Q_\rho} |\pa_t v|^2 & \le \frac{C}{ R^{n+6}} \int_{Q_{R}} (v-(v)_R)^2 +  C\|g\|_{L^\infty (Q_R)}^2\\& \le \frac{C}{ R^{n+6}} \left(\int_{Q_{R}} (v-(v)_R)^2 +  \ell^4 R^2 \int_{Q_R} v^2\right).
\end{align*}
Plugging these two estimates into \eqref{eq:appendixCpoincare}, the conclusion of (i) follows.

(ii). Suppose additionally that $(a_{ij})$ and $b_1$ are constants. For $i=1, 2, \cdots, n-1$, $D_{x_i} u$ satisfies the same equation as $v$ in (i). Hence, the estimates for $D_{x_i} u$ follows in the same way as for $\pa_t u$ in (i). For $D_{x_n}u$, first, we have from the  Poincar\'e inequality that 
\[
\int_{Q_{\rho}}|D_{x_n}u- (D_{x_n}u)_\rho|^2 \le C (\rho^2 \int_{Q_\rho} |D_xD_{x_n}u|^2 + \rho^4 \int_{Q_\rho} |\pa_t D_{x_n}u|^2 ). 
\]
Second, by the equation of $u$, we have 
\[
|D_{x_n}^2 u|^2\le C(| \pa_t u|^2 +|\nabla \nabla_{x'} u|^2 + \ell^4 u^2 +  \ell^{2p+2} |\bar f|^2).
\]
Third, by the gradient estimates for the equation of $v$, we have 
\[
\rho^2 \int_{Q_\rho} |D_{x_n} \pa_t u|^2 \le C \int_{Q_{2\rho}} |\pa_t u|^2.
\]
Fourthly, by the gradient estimates for the equation of $D_{x_i} u$, we have 
\begin{align*}
\sup_{Q_\rho} |D_x D_{x_i} u|^2 &  \le \frac{C}{ R^{n+4}} \left(\int_{Q_{R}} (D_{x_i} u-(D_{x_i} u)_R)^2 +  \ell^4 R^4 \int_{Q_R} |D_{x_i} u|^2\right).
\end{align*}
Combining these four inequalities, the estimate in (ii) follows.
\end{proof}

\begin{rem}\label{rem:interiorbottom}
If one assumes that the equation \eqref{eq:uniformlyparabolicafterscaling} holds in $B_{3/4}\times[0,9/16]$ and $u(\cdot,0)\equiv 0$, then one can show similar estimates up to the bottom: 
\begin{align*}
&\frac{1}{ (\rho\ell)^{2\al} }\dashint_{Q_\rho} |\pa _t u- \pa_t u(0,0)|^2 \le \frac{C}{(R\ell)^{2\al} }  \dashint_{Q_R} |\pa _t u- \pa_t u(0,0)|^2\\
&\quad\quad\quad\quad\quad+C\sup_{Q_{1/2}} |\pa_t u|^2  + C \ell^{2p+2}(\|u\|_{L^\infty(Q_{1/2})}+ \Lambda_2)^2, \\
&\frac{1}{ (\rho\ell)^{2\al} }\dashint_{Q_\rho} |\nabla  u|^2 
\le \frac{C}{(R\ell)^{2\al} }  \dashint_{Q_R} |\nabla  u|^2\\
& +C\sup_{Q_{1/2}} |\pa_t u|^2 \ell^{-2\al}   +C\sup_{Q_{1/2}} |\nabla  u|^2 + C\ell^{2}\|u\|^2_{L^\infty(Q_{1/2})} +C\ell^{2}\|f\|_{C^{\alpha,\alpha/2}(Q_{1/2})}^2,
\end{align*} 
where $Q_R=B_R\times[0,R^2]$, $\pa_t u(0,0)=\ell^{p+1}f(0,0)/a(0,0)$. Consequently, if one has $|f(0,0)|\le \ell^\alpha\Lambda_2$, then we have
\begin{align*}
&\frac{1}{ (\rho\ell)^{2\al} }\dashint_{Q_\rho} |\pa _t u- (\pa_t u)_\rho|^2 \le \frac{2^{2\alpha}C}{\ell^{2\al} }  \dashint_{Q_{1/2}} |\pa _t u|^2\\
&\quad\quad\quad\quad\quad+C\sup_{Q_{1/2}} |\pa_t u|^2  + C \ell^{2p+2}(\|u\|_{L^\infty(Q_{1/2})}+ \Lambda_2)^2.
\end{align*} 
This remark will be used to obtain estimates up to the bottom in Theorem \ref{thm:global-schauder1} when the compatibility condition is enforced. We omit the details.
\end{rem}

\small

\bigskip

\noindent T. Jin

\noindent Department of Mathematics, The Hong Kong University of Science and Technology\\
Clear Water Bay, Kowloon, Hong Kong\\[1mm]
Email: \textsf{tianlingjin@ust.hk}

\medskip

\noindent J. Xiong

\noindent School of Mathematical Sciences, Beijing Normal University\\
Beijing 100875, China\\[1mm]
Email: \textsf{jx@bnu.edu.cn}

\end{document}